\newcommand{\bb}[1]{{\mathbb #1}}    % Bourbaki 
\newcommand{\bbR}{{\bb R}}
\newcommand{\bbA}{{\bb A}}
\newcommand{\bbZ}{{\bb Z}}
\newcommand{\bbC}{{\bb C}}
\newcommand{\lmod}{\backslash}
\newcommand{\PGL}{{\rm PGL}}
\newcommand{\SL}{{\rm SL}}
\newcommand{\SO}{{\rm SO}}
\renewcommand{\O}{{\rm O}} % orthogonale Gruppe
\newcommand{\Aff}{{\rm Aff}}
\newcommand{\Aut}{{\rm Aut}}
 \newcommand{\PSL}{{\rm PSL}}
\newcommand{\Out}{{\rm Out}}
\newcommand{\Inn}{{\rm Inn}}
\newcommand{\Diff}{{\rm Diff}}
\newcommand{\Ad}{{\rm Ad}}
\newcommand{\N}{{\rm N}}  %  Normalisator 
\newcommand{\Z}{{\rm Z}} %  Zentrum
\newcommand{\E}{{\rm E}}
\newcommand{\A}{{\rm A}}
\newcommand{\cA}{{\mathcal{A}}}
\newcommand{\cC}{{\mathcal{C}}}
\newcommand{\cO}{{\mathcal{O}}}
\newcommand{\cU}{{\mathcal{U}}}
\newcommand{\cT}{{\mathcal{T}}}
\newcommand{\cZ}{{\mathcal{Z}}}
\newcommand{\tensor}[1]{\otimes_{#1}}
\newcommand{\id}{{\rm id}}
\renewenvironment{matrix}[1] {\left( \begin{array}{#1}}{\end{array}\right)}
\medskip  \noindent  {\bf Definition} \hspace{0.5em} }%
\renewcommand{\S}{Section~}
\numberwithin{equation}{section}
\newcommand{\Id}{{\mathrm{Id}}}  
\newcommand{\R}{{\mathrm{R}}}
\newcommand{\Dev}{{\mathrm{Dev}}}
\newcommand{\ra}{\rightarrow}
\newcommand{\lev}{{\mathop{\mathrm{lev\, }}}}
\newcommand{\Mid}{{\, \Big\vert \;}}
\newcommand{\Isom}{{\rm Isom}}
\newcommand{\Vect}{{\rm Vect}}
\newcommand{\cH}{{\mathcal H}}
\newcommand{\cQ}{{\mathcal Q}} 
\newcommand{\cF}{{\mathcal F}}
\newcommand{\cK}{{\mathcal K}}
\newcommand{\cE}{{\mathcal E}}
\newcommand{\cP}{{\mathcal P}}
\newcommand{\cV}{{\mathcal V}}
\newcommand{\cM}{{\mathcal M}}
\newcommand{\mT}{{\mathfrak T}}
\newcommand{\mH}{{\mathfrak H}}
\newcommand{\mC}{{\mathfrak C}}
\newcommand{\mB}{{\mathfrak B}}
\newcommand{\mA}{{\mathfrak A}}
\newcommand{\mU}{{\mathfrak U}}
\newcommand{\mD}{{\mathfrak D}}
\newcommand{\lra}{{\, \longrightarrow \, }}
\newcommand{\tGL}{\widetilde \GL}
\theoremstyle{definition} %%% for statements in roman typeface
 \newtheorem{definition}{Definition}[section]
 \newtheorem{remark}[definition]{Remark}
 \newtheorem{example}[definition]{Example}
 \newtheorem{fact}[definition]{Fact}
\theoremstyle{plain}      %%% for statements in italic typeface
 \newtheorem{proposition}[definition]{Proposition}
 \newtheorem{theorem}[definition]{Theorem}
 \newtheorem{corollary}[definition]{Corollary}
 \newtheorem{lemma}[definition]{Lemma}
\newcommand{\dis}{\operatorname{dis}}
\newcommand{\Hom}{\operatorname{Hom}}
\newcommand{\GL}{\operatorname{GL}}
\renewcommand{\PSL}{\operatorname{PSL}}
\renewcommand{\SL}{\operatorname{SL}}
\newcommand{\Hyp}{{\mathbb H}}
\newcommand{\Teich}{{\mathfrak T}}
\newcommand{\Str}{{\mathfrak S}} % structure set 
\newcommand{\Mod}{{\mathfrak M}}
\newcommand{\Def}{{\mathfrak D}}
\newcommand{\StrM}{{\mathfrak{DM}}} % marked structure set 
\newcommand{\StrMf}{{\mathfrak{DM}_{f}}} % framed structure set
\newcommand{\Map}{{\mathrm{Map}}} % mapping class group 
\newcommand{\bbAo}{{\bbA^2 \! - 0}}  % the once punctured plane 
\newcommand{\tbbAo}{\widetilde{\bbA^2 \! - 0}} % and its covering 
\newcommand{\PbbAo}{{\mathrm P}({\bbA^2 \! - 0})} 
\author{Oliver Baues \thanks{e-mail: baues@math.uni-karlsruhe.de} \\
Institut f\"ur Algebra und Geometrie\\ 
Karlsruher Institut f\"ur Technologie \\ 
D-76128 Karlsruhe% \vspace*{1ex} 
}
\date{December 12, 2011}     % ver 1.0 
\begin{document}

\setcounter{tocdepth}{1}
\title{The deformations of flat affine structures on the two-torus}

\author{Oliver Baues \thanks{\today; 
% Work partially supported by SNF Grant No.~yy-63821.xx
} 
% and  Second author\thanks{Work partially
%supported by SNF Grant No.~xx-65213.yy}
}

\address{
Institut f\"ur Algebra und Geometrie\\ 
Karlsruher Institut f\"ur Technologie (KIT)\\ 
D-76128 Karlsruhe\\ % \vspace*{1ex} 
email:\,\tt{baues@math.uni-karlsruhe.de} 
}

\maketitle

\begin{abstract}  The group action which defines the 
moduli problem for the deformation space of flat affine structures on 
the two-torus is  the action of the affine group $\Aff(2)$ on $\bbR^2$.
%$\Aff(2)$ on the homogeneous space $$ X = \bbR^2 =\Aff(2) / \GL(2,\bbR)$$ 
Since this action has non-compact stabiliser $\GL(2,\bbR)$, the underlying locally 
homogeneous geometry is highly non-Riemannian. 
In this chapter, % our aim is to 
we describe the deformation space of all flat affine structures on the two-torus. 
In this context % Drop 
% As a consequence 
interesting % and new/unexpected 
phenomena arise in the topology 
of the deformation space, which, for example,
is \emph{not} a Hausdorff space.  
% For example?  
% are completely different/
This contrasts with  % to 
the case of constant curvature metrics, or 
conformal structures on surfaces, which are encountered in classical Teichm\"uller theory. As our main result on the space of deformations of flat affine structures on the two-torus we prove that the holonomy map from the  deformation space to the variety of conjugacy classes of homomorphisms 
from the fundamental group of the two-torus to the affine group is a local homeomorphism. 
%
%In this article we study the deformations of flat affine structures on the real 
%two-torus. We also survey construction methods for
%flat affine structures on two-dimensional manifolds. 
\end{abstract}

\begin{classification}
%2000 Mathematics Subject Classification: % Primary 53C30,  20G05; \\
% Secondary 11S90, 22E45, 53C50, 57R15, 57R20, 57S30
% 58D05, 58F07; 35Q53.
\end{classification}

\begin{keywords}
flat affine structure, locally homogeneous structure, surface, two-torus, development map, deformation space, moduli space, holonomy map, stratification
\end{keywords}

\tableofcontents   

\section{Introduction}  \label{sect:intro}
A flat affine structure on a smooth manifold is specified by an % maximal  
atlas with coordinate changes in the group of affine transformations of $\bbR^n$. 
A manifold together with such an atlas  %of affine coordinate charts
is called a \emph{flat affine manifold}. 
\index{flat affine manifold} \index{manifold!flat affine}
Equivalently, 
a flat affine manifold is a smooth manifold
which has a flat and torsion-free connection on the tangent bundle.
A particular class of examples is furnished by Riemannian flat manifolds,
but  the class of flat affine manifolds is much larger.  
The study of flat affine manifolds has a long history which can be 
traced back to the local theory of hypersurfaces and Cartan's projective 
connections. Global questions were first studied in the context 
of Bieberbach's theory of crystallographic groups, and they
have gained renewed interest in the more general setting by 
Ehresmann's theory of locally homogeneous spaces, and more 
recently  in Thurston's geometrisation program which shows the importance 
of locally homogeneous structures in the classification of manifolds. 
¥
Flat affine manifolds are \emph{affinely diffeomorphic}
if they are diffeomorphic by a diffeomorphism which looks like an 
affine map in the coordinate charts.  
The universal covering space of a flat affine 
manifold admits a local affine diffeomorphism into affine space 
$\bbA^n = \bbR^n$ which is called the \emph{development map}; 
\index{development maps}
its image is an open subset of $\bbR^n$, called the \emph{development image}. \index{development image}
The development map and image provide rough 
invariants for the classification of flat affine manifolds. % up to affine diffeomorphism. 

%If the development map is a diffeomorphism the manifold is called 
%complete. A compact Riemannian flat manifold is always complete 
%but this does not hold for a general compact flat affine manifold.
%Even in dimension three it is not completely understood which 
%compact topological manifolds admit a flat affine structure. 
%This is equivalent to the construction  of a flat and torsion-free connection on the tangent bundle of the manifold. 

%Two dimensional compact flat affine 
%manifolds are completely classified.  
%But already in dimension three the classification seems too difficult
%and it is not known which compact topological three-manifolds admit
%an affine structure. In the two-dimensional case,  

Benz\'ecri \cite{Benzecri} showed that a closed oriented surface which supports a flat affine structure must be diffeomorphic to a two-torus% $T^2$
,  thereby confirming in dimension two a conjecture of Chern that the Euler characteristic of a compact flat affine manifold must be zero.   
The flat affine structures on the two-torus and their development images 
were partially classified by Kuiper \cite{Kuiper} in 1953. The classification
was completed by independent work of Furness-Arrowsmith \cite{FurArr}
and Nagano-Yagi \cite{NaganoYagi} around 1972. Their works show that the flat affine structures on the two-torus fall into 
four main classes which have
development image the plane $\bbR^2$, the halfspace, the sector, 
or the once punctured plane, respectively. 

% Deformation and moduli space 
The \emph{moduli space} \index{moduli space!of flat affine structures}
of flat affine structures is by definition the set of flat affine structures up to affine diffeomorphism. 
More precisely, the group $\Diff(T^2)$  of all diffeomorphisms of the two-torus $T^2$ acts naturally on the set of flat affine structures on $T^2$. 
The set of  orbits classifies flat affine two-tori
up to affine diffeomorphism; it is called the {moduli space}. 
The \emph{deformation space}  \index{deformation space!of flat affine structures}
 is  the set 
of all flat affine structures divided by the action of the group  $\Diff_0(T^2)$ 
of diffeomorphisms  which are isotopic to the identity. This action classifies 
flat affine structures on $T^2$ up to isotopy, or equivalently affine %structures
two-tori with a marking. 

% Topology on the deformation space
The deformation space has a natural topology, which it
inherits from the $C^\infty$-topology on the space of development maps. 
In this chapter, our aim is to describe the topology of the deformation 
space  $\Def(T^2,\bbA^2)$ of all flat affine structures on the two-torus. 
% Holonomy coordinates on the deformation space 
The development process gives, for each flat affine two-torus,  a natural 
homomorphism $h: \bbZ^2= \pi_{1}(T^2) \ra \Aff(2)$ of the fundamental group of the torus to the plane affine group
$\Aff(2) = \Aff(\bbR^2)$. This homomorphism is called
the \emph{holonomy homomorphism} \index{holonomy homomorphism}  and it is defined up to conjugacy
with an affine map. The holonomy thus gives rise to a continuous 
open map $${hol}: \Def(T^2,\bbA^2) \ra \Hom(\bbZ^2, \Aff(2)) / \Aff(2) \;  $$ 
from the deformation space to the space of conjugacy classes of homomorphisms, which is called the \emph{holonomy map}. 
\index{holonomy map} \index{deformation space!holonomy map}
By a general theorem of Thurston and Weil concerning deformations of \emph{locally
homogeneous structures} on manifolds, % (see, for example, \cite{Epstein}) 
this map has an \emph{open} image.%  in the space of homomorphisms. 
%  in the quotient $\Hom(\bbZ^2, \Aff(2)) / \Aff(2)$.

% Comparison with Teichm\"uller space
As such, the deformation space of flat affine structures is the natural % affine 
analogue of the Teichm\"uller space of conformal structures, or, 
equivalently, constant curvature Riemannian metrics, on surfaces. 
Its  construction % of the deformation space of flat affine structures 
is completely analogous to the definition of the 
Teichm\"uller space for flat Riemannian metrics on the two-torus, 
or hyperbolic constant curvature $-1$ metrics on surfaces $M_{g}$, $g \geq 2$.
In these classic situations, both the Teichm\"uller space $\Teich_{g}$ 
and its quotient the moduli 
space are Hausdorff spaces. 
The Teichm\"uller space  of flat metrics
$\Teich_{1}$ is diffeomorphic to $\bbR^2$, and  the Teichm\"uller space of 
hyperbolic metrics $\Teich_{g}$, $g \ge 2$, is diffeomorphic
to $\bbR^{6g-6}$. Moreover, 
% as already observed by Fricke and Klein, 
% \marginpar{\bf check},  references fro this}
the corresponding holonomy map topologically identifies $\Teich_{g}$ with an open subset of the quotient space  $\Hom(\bbZ^2, \Isom(\bbR^2)) /
 \Isom(\bbR^2)$, for $g=1$, or, respectively, 
 a component of the space 
 $\Hom(\Gamma_{g}, \PSL(2,\bbR)) / \PSL(2,\bbR)$, $g \geq 2$. % as discovered by fricke and Klein. 
 
Here the analogy with the classical theory breaks down, and
neither of these facts are true for the deformation space of flat affine structures. 
In fact, the group action, which defines the 
moduli problem for the deformation space of flat affine structures on the two-torus, namely the action of the affine group 
$\Aff(2)$ on the homogeneous space $$ X = \bbR^2 =\Aff(2) / \GL(2,\bbR)$$ 
has non-compact stabiliser $\GL(2,\bbR)$, and therefore 
the underlying geometry on $X$ is highly non-Riemannian. 
This is illustrated by the fact that 
various kinds of flat affine structures, 
with sometimes strikingly distinct geometric properties, are supported on the two-torus. A fact which can be seen already from the various possible development images for flat affine structures, and which is also reflected in the structure and  
topology of the deformation space. 
Here phenomena arise which 
are completely different from the case of constant curvature metrics or conformal structures on surfaces.  

Another salient difference stems from the fact that the local model of the deformation space of flat affine structures, namely the
\emph{character variety}  $\Hom(\bbZ^2, \Aff(2)) / \Aff(2)$ arises 
as a quotient space of an algebraic variety by a \emph{non-reductive} group action. The properties of such actions
and their invariant theory are generally poorly understood. 

The case of deformation of \emph{complete} flat affine structures bears the \index{flat affine manifold!complete}
closest resemblance to the classical situation. A flat affine structure is
called \emph{complete} if the development map is a diffeomorphism, a property
which in the Riemannian situation is always guaranteed. 
A flat affine  two-torus is complete if and only if its development
image is the affine plane $\bbA^2$. The 
deformation space  of  complete affine structures
on the two-torus was studied recently in \cite{BauesG, BG}. 
It is shown there,
for example, that the holonomy map identifies the space of complete structures $\Def_{c}(T^2,\bbA^2)$ with 
a locally closed subspace of the space of homomorphisms $\Hom(\bbZ^2, \Aff(\bbR^2))$, and, moreover,  the space $\Def_{c}(T^2,\bbA^2)$ is 
homeomorphic to $\bbR^2$. However, the topology of the moduli space of complete flat affine structures, 
which, with respect to appropiately chosen coordinates for $\Def_{c}(T^2,\bbA^2)$, is homeomorphic to the quotient space of $\bbR^2$ by the natural action of $\GL(2,\bbZ)$,  is highly singular. 

This chapter is devoted to the study of the global and local structure of the space of deformations of \emph{all} flat affine structures on the two-torus. The deformation space of all flat affine
structures  is much larger than the deformation space of complete flat affine structures.  Indeed, the deformation space 
% of complete affine structures 
$\Def_{c}(T^2,\bbA^2)$ of complete flat affine structures on the two-torus forms a closed  two-dimensional subspace in the deformation space of all structures $\Def(T^2,\bbA^2)$, which itself is a space of dimension four.  In the general situation the holonomy map ${hol}: \Def(T^2,\bbA^2) \ra \Hom(\bbZ^2, \Aff(2)) / \Aff(2)$ for the deformation space of flat affine structures is no longer a homeomorphism onto its image. That is, there exist flat affine structures on the two-torus, which have the same holonomy group and which have dramatically different geometry. (Compare, in particular, Example \ref{ex:hol_notinj} in this chapter.)
Moreover, the holonomy image in $\Hom(\bbZ^2, \Aff(2))$ contains singular orbits for the affine group $\Aff(2)$, which in turn  give rise to non-closed points in the deformation space $\Def(T^2,\bbA^2)$. This also shows %, in particular, 
that the deformation
space is \emph{not} a Hausdorff space. It is a four-dimensional and connected space which has an intricate topology and it supports various substructures arising from the different types of affine flat geometries on  the two-torus. 

As our main result on the local structure of the space of deformations of flat affine structures %on the two-torus 
we prove  in this chapter that \emph{the holonomy map $hol$
%  for the space $\Def(T^2,\bbA^2)$ 
 is a local homeomorphism onto its image}. That is, at least locally the topology of the deformation
space $\Def(T^2,\bbA^2)$ is fully controlled by the character variety. We remark that this is \emph{not} a general phenomenon for 
deformation spaces of locally homogeneous structures, \emph{not even on surfaces}. Indeed, in Appendix B of this chapter, we specify a two-dimensional
homogeneous geometry whose deformation space of structures on the two-torus has a holonomy map $hol$ which locally near certain structures is a branched covering. Examples of deformation spaces of flat conformal structures on three-dimensional manifolds where the holonomy map $hol$ is not locally injective at exceptional points were found previously by Kapovich and are discussed in \cite{Kapovich}.\\

The chapter is organized as follows. In \S 2 we give a self-contained proof of Benz\'ecri's theorem which states that 
a closed orientable flat affine surface is diffeomorphic 
to the two-torus. In Section 3 we describe the deformation theory of compact locally homogeneous manifolds, including its
foundational results and give basic examples. Section 4 discusses
several methods to construct flat affine surfaces and introduces
the main classes of flat affine structures on the two-torus. 
In \S 5 we prove the main classification theorem for flat affine structures on the two-torus in detail, including the crucial and nontrivial fact that the development map of such a structure is always a covering map. Finally, in \S 6 we put the pieces
together in order to prove that the holonomy map for the deformation space of flat affine structures on the two-torus is a local homeomorphism to the character variety. In addition, Appendix A gives an account on conjugacy classes in $\GL(2,\bbR)$ and in its universal covering group. In Appendix B we describe a two-dimensional homogeneous geometry such that the holonomy map for  its deformation space of structures on the two-torus is not everywhere a local homeomorphism.

\subsubsection*{Acknowledgement} 
The author wishes to thank Wolfgang Globke, Bill Goldman and Athanase
Papadopoulos for their interest, advice and support 
during the long gestation of this article. I thank Athanase Papadopoulos especially for inviting this project as a contribution to Volume III of the ``Handbook of \mbox{Teichm\"uller} theory'', and Bill Goldman for sharing his 
insight on the subject. Most pictures
in the article were created by Wolfgang Globke with the software Omnigraffle for Macintosh.
%The purpose of this article is to describe and prove the underlying 
%phenomena in detail.  Give deformation space and eichm\"uller a symbol.
%History and first definitions. (also discuss Flat connections, Completeness. 
%Examples. rationali, holonomy, types of singularities in the deformation space)

\newpage

\begin{figure}[htbp]

\vspace{1.5cm}
\fbox{\begin{minipage}{0.9\textwidth} The following and other similar pictures
illustrate convergence of development maps in the deformation space of flat affine structures on the two-torus. Each development map gives rise to a tiling of an open domain in affine space which is deformed with the change of development maps.
\end{minipage}}
\vspace{1.5cm}

  % \centering
  \hspace{-3.1cm}
  \fbox{
    \includegraphics[width=21cm]{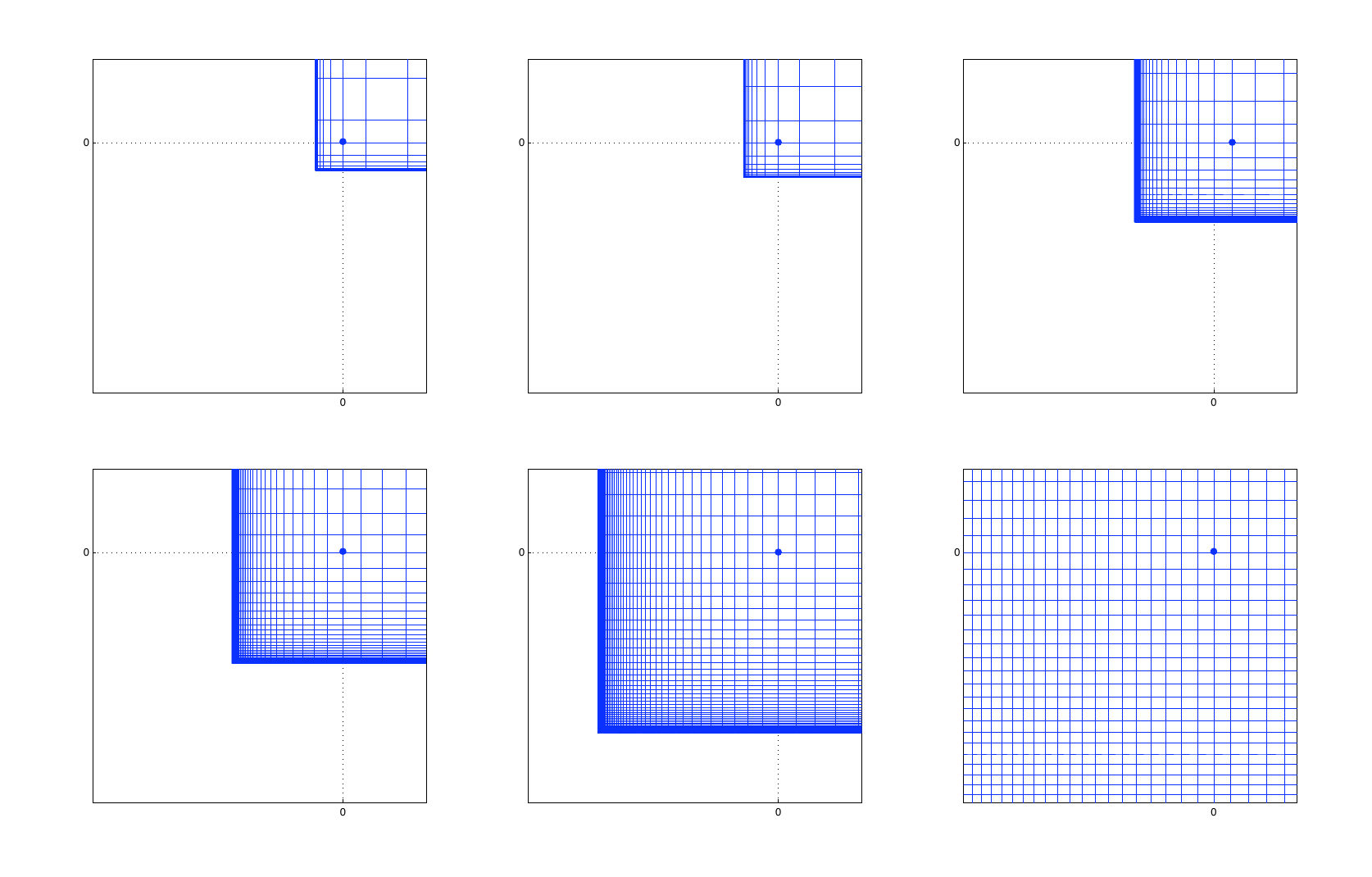}
  }
  \caption{Tiled sectors approaching the standard plane.}
  \label{fig:sect-plane}
\end{figure}

% \vspace{-8cm}
\begin{figure}[htbp]
  % \centering
  \hspace{-3.1cm}
  \fbox{
    \includegraphics[width=21cm]{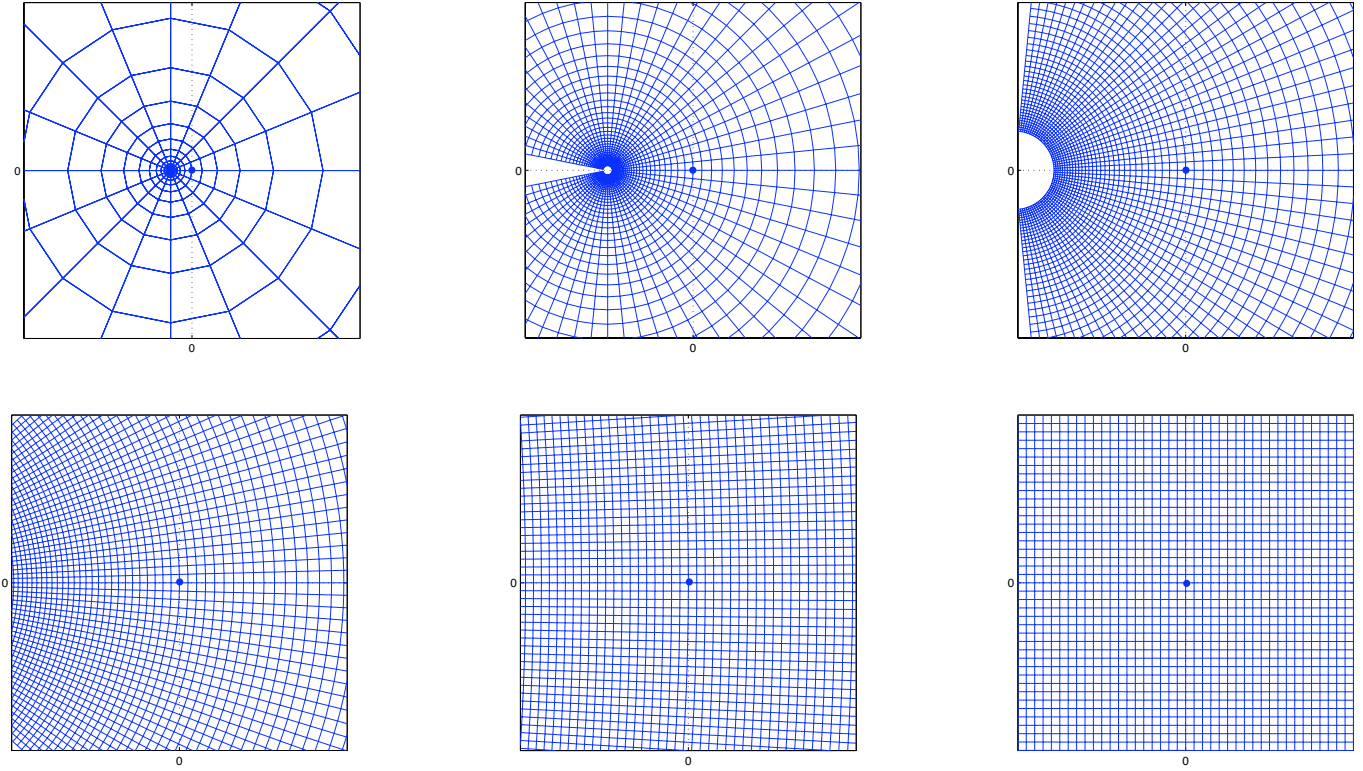}
  }
  \caption{Tiled punctured planes approaching the standard plane.}
  \label{fig:pplane-plane}
\end{figure}

\begin{figure}[htbp]
  % \centering
  \hspace{-3.1cm}
  \fbox{
    \includegraphics[width=21cm]{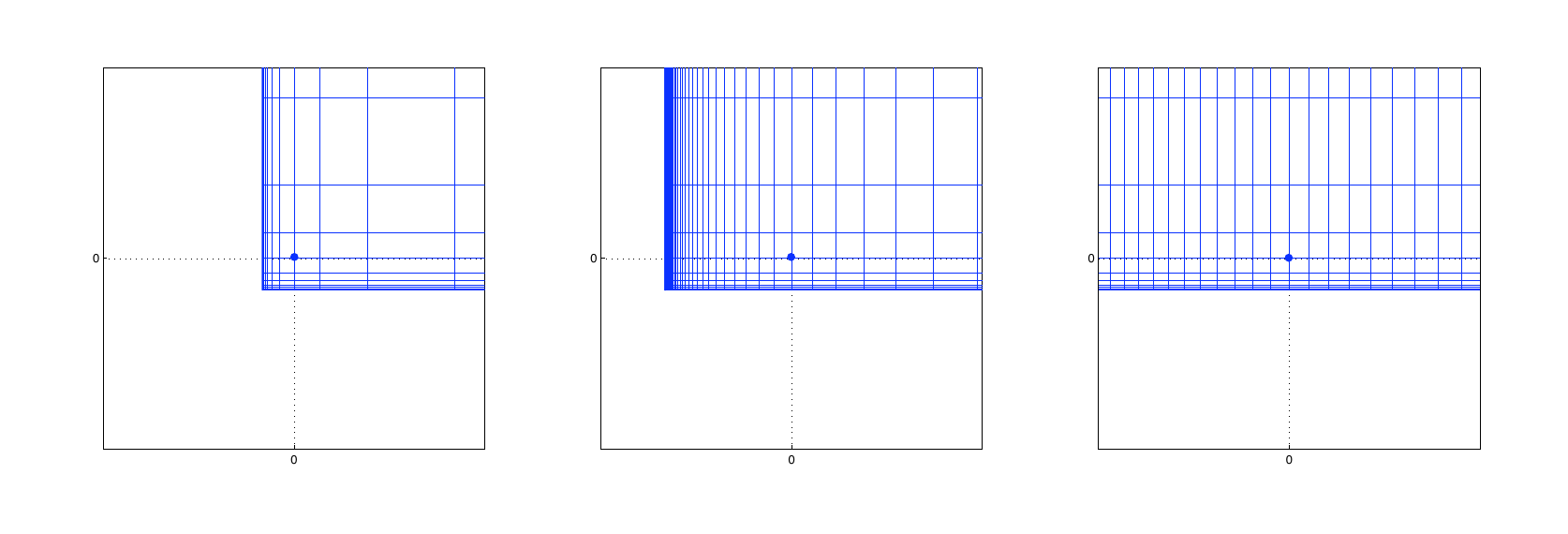}
  }
  \caption{Tiled sectors approaching a halfplane of type $\mathsf{C}_{2}$.}
  \label{fig:sect-hplane}
\end{figure}

\begin{figure}[htbp]
  % \centering
  \hspace{-3.1cm}
  \fbox{
    \includegraphics[width=21cm]{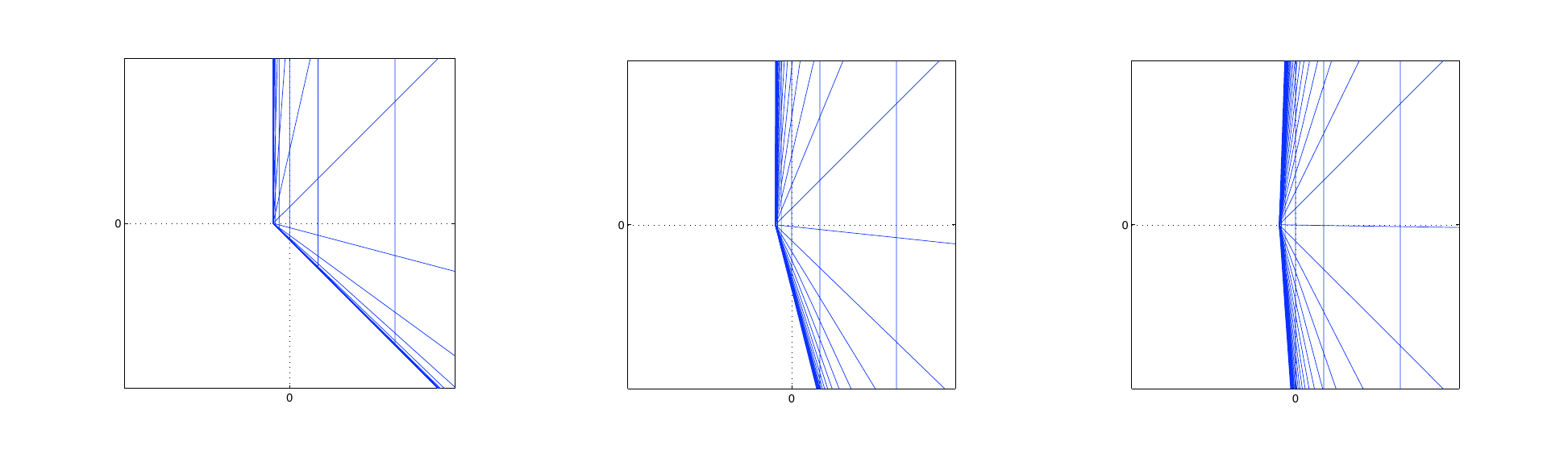}
  }
  \caption{Tiled sectors approaching a halfplane of type $\mathsf{C}_{1}$.}
  \label{figure:BC1}
\end{figure}

\begin{figure}[htbp]
  % \centering
  \hspace{-3.1cm}
  \fbox{
    \includegraphics[width=21cm]{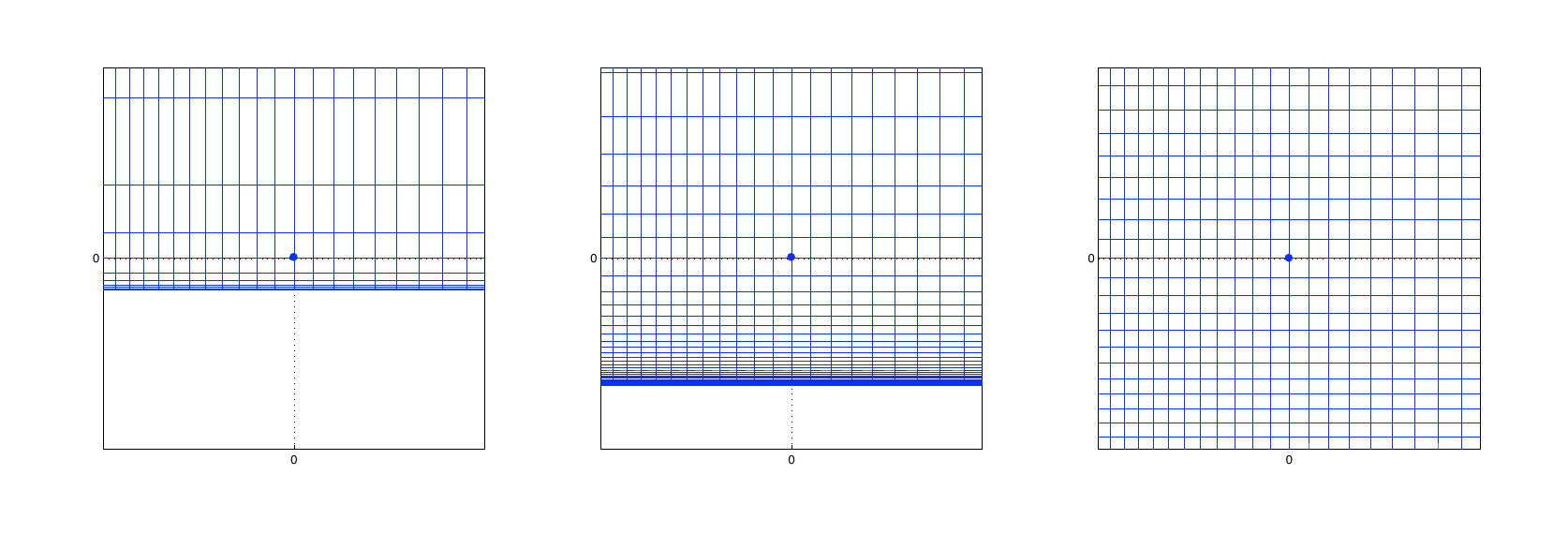}
  }
  \caption{Tiled halfplanes approaching the standard plane.}
  \label{fig:hplane-plane}
\end{figure}

% \newpage 

\section{The theorem of Benz\'ecri } %   Milnor and Wood} 
Let $M$ be a closed oriented  surface of genus $g$. 
Then the  Gau\ss-Bonnet theorem \cite{Hopf2} expresses the Euler characteristic
$$ \chi(M)= 2 -2g$$  as an integral over the Gau\ss\ curvature
of any Riemannian metric on $M$. In particular, a flat Riemannian closed
surface $M$ has Euler characteristic zero, and therefore it is 
diffeomorphic to a two-torus. If $M$ is a closed flat affine surface, the
Gau\ss-Bonnet theorem does not apply, since the corresponding flat
connection is possibly non-Riemannian. However, the strong 
topological restriction applies to flat affine surfaces, as well:
 \index{surface!flat affine} \index{Benz\'ecri's theorem} \index{Gau\ss-Bonnet theorem} \index{Euler characteristic} 
 \index{flat affine torus} \index{flat affine surface}

\begin{theorem}[Benz\'ecri, \cite{Benzecri}] \label{thm:Benzecri}
Let $M$ be a closed flat affine surface. 
Then $M$ has Euler characteristic zero.
\end{theorem}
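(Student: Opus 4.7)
The plan is to prove the theorem in three stages: reduce to the orientable case, rule out the sphere directly from the developing map, and treat higher genus via Milnor's inequality for flat oriented rank-$2$ real vector bundles. The main observation that makes the argument work is that a flat affine structure endows the tangent bundle $TM$ with a \emph{flat linear} connection (not merely a flat $PSL(2,\bbR)$-structure of the kind carried by hyperbolic surfaces), bringing Milnor's bound into play.

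First I reduce to the orientable case. If $M$ is non-orientable, the orientation double cover $\hat M \to M$ inherits a flat affine structure by pullback, and $\chi(\hat M) = 2\chi(M)$, so it suffices to prove the result for orientable $M$. Assume henceforth $M$ is closed and oriented, and suppose for contradiction that $\chi(M) \neq 0$. To dispose of $M \cong S^2$, observe that the developing map $D \colon S^2 \to \bbR^2$ is a local diffeomorphism from a compact connected surface to the plane; hence $D(S^2)$ is open in $\bbR^2$, and also compact and therefore closed. Connectedness of $\bbR^2$ then forces $D(S^2) = \bbR^2$, which is absurd since $\bbR^2$ is non-compact. So $M$ is a closed oriented surface of genus $g \geq 1$, and $\chi(M) \neq 0$ forces $g \geq 2$.

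For $g \geq 2$ I would then appeal to Milnor's inequality, which asserts that a flat oriented rank-$2$ real vector bundle $\xi$ over $\Sigma_g$ has $|e(\xi)| \leq g-1$. The flat affine structure equips $TM$ with a flat linear connection whose holonomy is the linear part $L \colon \pi_1(M) \to \GL^+(2,\bbR)$ of the affine holonomy, so $TM$ is such a bundle. Milnor's inequality therefore gives $|\chi(M)| = |e(TM)| \leq g-1$, while the Euler characteristic of a genus-$g$ surface satisfies $|\chi(M)| = 2g-2$. Combining, $2g-2 \leq g-1$, whence $g \leq 1$, contradicting $g \geq 2$.

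The main obstacle is supplying a self-contained proof of Milnor's inequality. The cleanest route is to use the internal direct product $\GL^+(2,\bbR) = \bbR^+ \times \SL(2,\bbR)$ (with $\bbR^+$ the positive scalars) to write $L = \alpha \cdot L_0$ with $\alpha = \sqrt{\det L}$ a character to $\bbR^+$ and $L_0 \colon \pi_1(M) \to \SL(2,\bbR)$. Since $\bbR^+$ is contractible, the flat line bundle associated to $\alpha$ is topologically trivial, so $TM$ is topologically isomorphic to the flat $\SL(2,\bbR)$-bundle $V$ with holonomy $L_0$ and $e(V)=\chi(M)$. One then represents $e(V)$ as a group $2$-cocycle on $\pi_1(\Sigma_g)$ via lifts to the universal cover $\widetilde{\SL}(2,\bbR)$ (with fiber $\bbZ$), estimates this cocycle by $1$ on each $2$-simplex using the geometry of triangle lifts, and pairs with the fundamental class realized by a $4g$-gon to obtain the bound $g-1$. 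The remaining steps—the developing-map argument for $S^2$ and the central decomposition of $\GL^+(2,\bbR)$—are formal.
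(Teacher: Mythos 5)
Your argument is a genuinely different route from the paper's, and it is logically sound \emph{if} Milnor's inequality with the sharp constant is granted as a known theorem. The paper proves Benz\'ecri's theorem directly and self-containedly: it builds a nonvanishing vector field along the boundary of a $4g$-gon compatible with the affine side pairings, extends it with one isolated zero, computes the index as a turning number via Poincar\'e--Hopf, and uses the fact that affine maps preserve antipodality of tangent vectors to bound the turn along each commutator block $a_ib_ia_i^-b_i^-$ by $2\pi$, giving $|2-2g|<g$ and hence $g=1$. Your reductions are all correct and partly identical to the paper's: the orientation double cover, the developing-map argument for $S^2$, the observation that the flat torsion-free connection makes $TM$ a flat oriented plane bundle with holonomy the linear part of the affine holonomy, and the splitting $\GL^+(2,\bbR)=\bbR^{>0}\times\SL(2,\bbR)$. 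What the paper's route buys is elementarity and a uniform treatment of all $g\ge 1$; what yours buys is that Benz\'ecri appears as a corollary of the stronger Milnor inequality, which is exactly how the paper frames the generalization immediately after its proof.

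The one concrete weakness is your proposed self-contained justification of the constant $g-1$, and unfortunately that constant carries the entire weight of your contradiction $2g-2\le g-1$. The scheme you describe --- bound the Euler cocycle by $1$ on each $2$-simplex and pair with a fundamental cycle carried by a triangulated $4g$-gon --- cannot produce $g-1$: a $4g$-gon triangulates into $4g-2$ simplices, so a per-simplex bound of $1$ gives $|e|\le 4g-2$, and even the optimal per-simplex estimate valid for all of $\mathrm{Homeo}^+(S^1)$ only yields Wood's bound $|e|\le 2g-2=|\chi(M)|$, which is saturated by the tangent bundle and gives no contradiction at all. The bound $|e|\le g-1$ is specific to the linear group and comes from Milnor's commutator lemma: writing $e=n$ where $\prod_{i=1}^g[\tilde A_i,\tilde B_i]=z^n$ for arbitrary lifts $\tilde A_i,\tilde B_i\in\widetilde{\SL}(2,\bbR)$ and $z$ a generator of the kernel of $\widetilde{\SL}(2,\bbR)\to\SL(2,\bbR)$, one shows each single commutator of lifts shifts the angular coordinate by strictly less than $2\pi$, whence $|n|<g$. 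That estimate is precisely the antipodality/turning estimate the paper applies directly to its vector field, so the two proofs share the same kernel; you should either prove this commutator lemma or cite Milnor for the inequality outright, rather than rely on the per-simplex cocycle count, which would fail.
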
 
\begin{proof} First we remark that the sphere $S^2$ does not admit a flat affine 
structure. In fact, since $S^2$ is simply connected,
the development image of a flat affine structure on $S^2$ 
would be compact and open in $\bbR^2$, which is absurd. 
 
Now we assume that $M$ has genus $g$, $g \geq 1$. Let 
$\mathsf{p}: \tilde M \ra M$ be the universal covering. 
Then $\tilde M$ is a flat affine manifold which is diffeomorphic to
$\bbR^2$. Moreover, $M$ is obtained by gluing 
a $4g$-gon $P \subset \tilde M$ % , contained in the universal covering space $\tilde M$, 
along its consecutive  sides $a_{1}, b_{1}, a_{1}^{-}, b_{1}^-, \ldots,
a_{g}, b_{g}, a_{g}^{-}, b_{g}^-$,
with side pairing transformations  $g_{a_{i}}, g_{b_{i}}$
such that $g_{a_{i}} a_{i}^- = a_{i}$ and $g_{b_{i}}  b_{i}^- = b_{i}$. 
These transformations are subject to the single cycle relation  
$$  \prod_{i= 1,\ldots ,g} \; g_{a_{i}} g_{b_{i}}
 g_{a_{i}}^{-1} g_{b_{i}}^{-1} = \id_{\tilde M} $$
% The diffeomorphisms $g_{a_{i}} , g_{b_{i}}$ 
and generate the discontinuous group of deck transformations 
of the covering $\mathsf{p}: \tilde M \ra M$. 
% and $P$ is a fundamental domain for the action of this group.  
In particular, the side pairing transformations are affine maps of $\tilde M$. 
Note however that the polygon $P \subset \tilde M$ 
is a closed oriented topological disc with piecewise 
smooth boundary. (The construction 
may be carried out,  in Euclidean geometry if $g=0$,
respectively hyperbolic geometry, for $g \geq 2$,  such that the edges of $P$ 
are geodesic segments. See, for example,  \cite{Ratcliffe}.)

Let  $\tilde x_{0}$ denote the vertex of $P$ belonging to the sides $a_{1}$ and
$b^-_{g}$. Let $v \neq 0$ be a tangent vector at $\tilde x_{0}$. 
Now choose a \emph{non-vanishing} vector field
$V$ along the boundary of $P$, such that $V(\tilde x_{0}) = v$, and, 
furthermore, such that $V$ restricted to $a_{i}$ (resp.\ $b_{i}$)
is related to $V$ restricted to  $a^-_{i}$ (resp.\ $b^-_{i}$)
by the corresponding side pairing transformation. (We can obtain such a $V$ 
by constructing vector fields along the closed curves $\mathsf{p} \,   a_{i}$,  $\mathsf{p} \, b_{i}$ which coincide at $x_{0} = \mathsf{p}(\tilde x_{0})$.)  
Next we extend $V$ to a vector field $X$ on $P$, which has an 
isolated singularity in the interior of $P$. 

The \emph{index} of the vector field $X$ at the singularity may be calculated as the 
\emph{turning number} of the restriction of $X$ to the positively traversed 
boundary of $P$, see \cite{Hopf2}. For this, recall that the turning 
number  $\tau(V) \in \bbZ$ 
of a closed non-vanishing vector field 
$V:  I \ra \,  \bbAo$ is defined by the equation 
$$\tau(V) \, 2\pi = \theta(1) - \theta(0) , $$ where
$\theta: I \ra \bbR$ is any lift of of the map $I \ra S^1$,  $t \mapsto V(t) / |V(t)|$. 
Now, since the flat affine manifold $\tilde M$ is simply connected, $\tilde M$ has
a global parallelism which identifies each tangent space $T_{x} \tilde M$
with $T_{\tilde x_{0}} \tilde M$.  Therefore, we can choose any scalar product in $T_{\tilde x_{0}} \tilde M$ to compute the index of $X$ by the above formula.  
%The turning number is not changed by an
%orientation preserving diffeomorphism, 
%The turning number may be calculated in
%any coordinate system.  (See \cite[\S 16, Lemma 1]{MilnorTop}.)
%It is clearly also invariant by homotopy of vector fields along 
%closed curves. 

%The latter facts also imply that the turning number of $V$ is the same 
%as the turning number of the vector field $D_{*} V$ along $D \circ \alpha$,
%where $\alpha$ is the positively traversed boundary of $P$,
%and $D: \tilde{M} \ra \A^2$ is the development map. 
%Since the holonomy maps $h( g_{a_{i}})$ and $h(g_{b_{i}})$ are affine, 
%they  preserve antipodality of any two vectors $D_{*} V(s)$ and $D_{*} V(t)$.
%We can thus deduce that the turn of $D_{*} V$ restricted to
%the positively traversed curve $D(a_{1} b_{1}  a_{1}^{-} b_{1}^-)$ 
%is less than $2 \pi$. And consequently,  $|\tau(D_{*} V )| <  g$. 

Since the side pairing maps $g_{a_{i}}$ and $g_{b_{i}}$ are affine 
transformations of $\tilde M$, they preserve antipodality of any two vectors 
$V(s)$ and $V(t)$.  This implies that the turn of $V$ restricted to
the positively traversed curve $ a_{1} b_{1}  a_{1}^{-} b_{1}^-$ 
is less than $2 \pi$. And consequently,  $|\tau( V )| <  g$. 

Our construction implies that the vector field $X$ on $P$ projects to
a vector field  on $M$. Therefore, by the Poincar\'e-Hopf theorem \cite{Hopf2, MilnorTop}, the index of $X$ equals the Euler 
characteristic $\chi(M)$ of $M$.  We thus obtain the 
estimate $$  | \chi(M) | =  | 2-2g | \,  < g \; . $$
This implies  $g = 1$. 
\end{proof}

Benz\'ecri's theorem was generalised by Milnor \cite{Milnor}  to 
the more general 
\begin{theorem} Let $E$ be a flat rank
two vector-bundle over a closed orientable surface $M_{g}$, $g \geq 1$,
then $  |\chi(E) | <  g  $.  
\end{theorem}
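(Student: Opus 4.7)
The plan is to imitate the proof of Theorem \ref{thm:Benzecri}, replacing the tangent bundle $T\tilde M$ by the pulled-back flat bundle and bounding the Euler number $\chi(E)$ by the turning number of a suitably equivariant boundary section over a polygonal fundamental domain for $M_g$.

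First I would represent $M_g$ as a $4g$-gon $P$ with side pairings $g_{a_i}, g_{b_i} \in \pi_1(M_g)$ obeying the single cycle relation, exactly as in Benz\'ecri's argument. The flat rank-two bundle $E \to M_g$ is determined by its holonomy representation $\rho : \pi_1(M_g) \to \GL(2,\bbR)$. On the simply connected polygon $P$, the pulled-back flat bundle is globally trivialized by parallel transport, so sections of $E|_P$ are identified with maps $P \to \bbR^2$, and reidentification across the glued sides is mediated by the linear maps $\rho(g_{a_i}), \rho(g_{b_i})$.

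Next I would construct a non-vanishing section $V : \partial P \to \bbR^2 \setminus \{0\}$ satisfying $V|_{a_i} = \rho(g_{a_i}) \, V|_{a_i^-}$ and the analogous relation on $b_i^-, b_i$. Such a $V$ exists because the restriction of $E$ to each loop $\mathsf{p}(a_i), \mathsf{p}(b_i) \subset M_g$ is a rank-two bundle over a circle, and any such bundle admits a nowhere-vanishing section. Choosing sections that coincide at the base point $x_0 = \mathsf{p}(\tilde x_0)$ and lifting them back to $\partial P$ produces $V$. I then extend $V$ to a section $X$ of $E|_P$ with a single isolated zero in the interior. The equivariance ensures that $X$ descends to a section of $E \to M_g$ with one isolated zero; by the Poincar\'e-Hopf theorem for vector bundles, $\chi(E)$ equals its index, which in turn equals the turning number $\tau(V)$ of $V$ along $\partial P$ computed with respect to any chosen inner product on $\bbR^2$.

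The final step is the estimate $|\tau(V)| < g$. The decisive observation, unchanged from Benz\'ecri's proof, is that every $A \in \GL(2,\bbR)$ satisfies $A(-v) = -A(v)$, so linear holonomies preserve antipodality of vectors. Consequently the rotation of $V$ along each commutator segment $a_i b_i a_i^- b_i^-$ is strictly less than $2\pi$, and summing over the $g$ cycles yields $|\chi(E)| = |\tau(V)| < g$. The main obstacle I anticipate lies in arranging the equivariance of the boundary section $V$: in the tangent-bundle case this was automatic through the global parallelism on $\tilde M$, whereas here it requires an appeal to the triviality of rank-two bundles over $S^1$ together with a matching of the chosen sections at the basepoint vertex. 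Once this technicality is settled, the turning number estimate transcribes directly from the previous proof.
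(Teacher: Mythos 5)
The paper does not actually prove this statement: it is quoted as Milnor's theorem with a citation to \cite{Milnor}, and only the special case $E = TM_g$ (Benz\'ecri's theorem, Theorem \ref{thm:Benzecri}) is proved in the text. Your proposal supplies a proof by transcribing that argument to an arbitrary flat rank-two bundle, and it is correct. The one point that genuinely changes --- and which you correctly identify and resolve --- is the construction of the equivariant non-vanishing boundary section $V$: in the tangent-bundle case the global parallelism of $\tilde M$ does this work, whereas here one chooses nowhere-zero sections of $E$ over the $2g$ loops $\mathsf{p}(a_i)$, $\mathsf{p}(b_i)$, all taking a common value $v_0 \neq 0$ at $x_0$; pulling these back gives a well-defined continuous $V$ on $\partial P$ because the value at every vertex is forced to equal $\rho(\gamma)v_0$, where $\gamma$ is the deck transformation carrying $\tilde x_0$ to that vertex, independently of which loop's section is being restricted. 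The remaining steps (Poincar\'e--Hopf for sections of an oriented rank-two bundle, index equal to the turning number of $V$ in the flat trivialization of $E|_P$, and the antipodality estimate giving a turn of less than $2\pi$ per handle) go through verbatim, since the clutching maps are now literally the constant linear maps $\rho(g_{a_i})$, $\rho(g_{b_i})$ and every element of $\GL(2,\bbR)$ commutes with $-\Id$. Two minor points to tidy up: you should either assume $E$ orientable or note that otherwise $e(E)$ is torsion, so $\chi(E)=0$ and the inequality is vacuous for $g \geq 1$; and the decisive estimate that the turn along each $a_i b_i a_i^- b_i^-$ is strictly less than $2\pi$ is inherited from the paper with the same brevity --- it rests on the fact that for $A \in \GL^+(2,\bbR)$ the induced degree-one circle homeomorphism commutes with the antipodal map, so its angle-distortion function is $\pi$-periodic and hence varies by less than $\pi$ between any two directions; spelling this out would make the adaptation fully self-contained.
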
 

% \noindent
Here, $\chi(E)$ denotes the evaluation of the Euler-class $e(E) \in H^2(M,\bbZ)$
on the fundamental homology class of $M$. In case of  the tangent bundle 
$E =TM_{g}$, the equality  $$ \chi(TM_{g}) = \chi(M_{g})$$  (see
\cite[\S 11]{MilnorStasheff}) implies Benz\'ecri 's theorem. Wood  \cite{Wood}
interpreted Milnor's result in the context of circle-bundles. See  \cite{Goldman5}
for a recent survey on Benz\'ecri 's theorem, Milnor's inequality and related topics.

\paragraph{Generalizations to higher dimensions}
Weak analogues of the Milnor-Wood estimate for the Euler-class of higher-dimensional manifolds were
subsequently given by Sullivan \cite{Sullivan} and Smillie in his doctoral thesis \cite{Smillie}. % where
See also \cite{BucherGelander} for a recent contribution in this realm. 

The Chern conjecture % (see \cite[Appendix XY]{KN2}) 
asserts that any compact flat affine \index{Chern conjecture}
manifold should have Euler characteristic zero. 
Kostant and Sullivan \cite{KS} observed that every compact \emph{and} complete
flat affine manifold has Euler characteristic zero.  
There are some additional affirmative results under
various assumptions on the holonomy group, see
for example \cite{Goldman}. % Goldman , Choi and other papers
The original conjecture, however, remains a difficult  
open problem. 

Another fruitful generalization of the Milnor-Wood inequality 
concerns the representation theory 
of surface groups into higher-dimensional simple Lie groups,
see \cite{BIW} for a survey. 

\section{Locally homogeneous structures and their deformation 
spaces} 

Let $M$ be a smooth manifold and fix a universal 
covering space $\mathsf{p}:\tilde{M} \rightarrow M$. 
Let $X$ be a homogeneous space for the
Lie group $G$, on which $G$ acts effectively. 
The manifold $M$ is said to be 
locally modeled on $(X,G)$ \index{manifold!$(X,G)$-}
if $M$ admits an atlas of charts with range in $X$ such that the
coordinate changes are locally restrictions of elements of $G$. 
A maximal atlas with this property
is called an {\em $(X,G)$-structure} on $M$. 
The manifold $M$ together with an $(X,G)$-structure is called an
{\em $(X,G)$-manifold}, or \emph{locally homogeneous space} 
\index{locally homogeneous space}
modeled on $(X,G)$. A map between two $(X,G)$-manifolds is called an \emph{$(X,G)$-map} if it coincides with the action of an element of 
$G$ in the local charts. If the $(X,G)$-map is a diffeomorphism it
is called an {\em $(X,G)$-equivalence} and accordingly the two 
manifolds are called \emph{$(X,G)$-equivalent}.

\subsection{$(X,G)$-manifolds, development map and holonomy}
Every $(X,G)$-manifold comes equipped with some extra structure, called the development and the holonomy. 
Via the covering projection $\mathsf{p}:\tilde{M} \rightarrow M$ the universal covering space of 
the $(X,G)$-manifold $M$ inherits a unique $(X,G)$-structure
from $M$.  We fix $x_0 \in M$, and 
a local $(X,G)$-chart at $x_0$.  %  and $\tilde{x}_0 \in \mathsf{p}^{-1}(x_0)$.  
The corresponding 
{\em development map\/} of the $(X,G)$-structure % on $M$ 
is the $(X,G)$-map  % local diffeomorphism 
$$ D:  \tilde{M}  \rightarrow   X$$ which is obtained by 
analytic continuation of the local chart. 
% The development map is a local diffeomorphism.
\index{development maps!for $(X,G)$-structures}

For every  $(X,G)$-equivalence $\Phi$ of $\tilde{M}$,
there exists a unique element $h(\Phi) \in G$ such that
\begin{equation}  D \circ \Phi= h(\Phi) \circ D \; .  \label{eq:devel1}
\end{equation} 
% Note that the resulting homomorphism $h$ has a discrete kernel. 
The fundamental group $\pi_1(M) = \pi_1(M,x_0)$ acts 
on  $\tilde{M}$ via deck transformations. % which are $(X,G)$-equivalences of $\tilde{M}$. 
This induces the {\em holonomy homomorphism\/} \index{holonomy homomorphism} 
$$h: \pi_1(M,x_0) \longrightarrow G$$ which satisfies 
\begin{equation}  \label{eq:hol} D \circ \gamma =   h(\gamma)  \circ D  \; ,  \;  
\text{ for all $\gamma \in \pi_1(M,x_0)$.}
\end{equation}  
Note that, after the choice of the development map (which corresponds to a choice of a germ of an $(X,G)$-chart in $x_0$ and also the choice 
of a lift $\tilde{x}_{0} \in \tilde{M}$ of $x_{0}$), the holonomy homomorphism 
$h$ is well defined.  
%The group $G$ acts on  $\Hom(\pi_1(M) , G)$ by
%conjugation, and the holonomy 
%is defined by the $(X,G)$-structure as an element of  $\Hom(\pi_1(M) , G)/G$. 
Therefore, the $(X,G)$-structure on $M$ determines 
the \emph{development pair} \index{development pair} 
$(D,h)$ up to the action of $G$, where
$G$ acts by left-composition on $D$, and by conjugation on $h$.
%In particular, the conjugacy class of $h$ under the action of $G$
%is an invariant of the $(X,G)$-structure. 
% the development pair $(D,h)$ up to the action of an element of $G$.   
%
%Clearly, the development map already determines the $(X,G)$-structure
%on $\tilde{M}$, and 
Specifying a development pair is equivalent to constructing an 
$(X,G)$-structure on $M$:

\begin{proposition} \label{prop:devpair}
Every local diffeomorphism $ D:  \tilde{M}  \rightarrow  X$ which satisfies \eqref{eq:hol},
for some $h: \pi_1(M,x_0) \rightarrow G$,
defines a unique $(X,G)$-structure on $M$,
and every $(X,G)$-structure on $M$ arises in this way.
\end{proposition}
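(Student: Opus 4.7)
The first half of the proposition, namely that every $(X,G)$-structure on $M$ produces such a pair $(D,h)$, has been established in the preceding discussion, so the task is the converse: given a local diffeomorphism $D \colon \tilde{M} \to X$ and a map $h \colon \pi_{1}(M,x_{0}) \to G$ satisfying \eqref{eq:hol}, I want to manufacture a maximal $(X,G)$-atlas on $M$ and check that it is the unique such structure whose development pair is $(D,h)$.

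The plan is to build charts on $M$ by pulling $D$ down through the covering map. For every $\tilde{x} \in \tilde{M}$, I first choose an open neighborhood $\tilde{U} \subset \tilde{M}$ of $\tilde{x}$ small enough that two things hold simultaneously: $D|_{\tilde{U}}$ is a diffeomorphism onto an open subset of $X$ (using that $D$ is a local diffeomorphism), and $\mathsf{p}|_{\tilde{U}}$ is a homeomorphism onto an evenly covered neighborhood $U = \mathsf{p}(\tilde{U})$ of $x = \mathsf{p}(\tilde{x})$. Both are possible by ordinary covering space theory. Then
\[
\varphi_{\tilde{U}} \;=\; D \circ (\mathsf{p}|_{\tilde{U}})^{-1} \colon U \longrightarrow X
\]
is a well-defined chart on $M$. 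The collection of all such $\varphi_{\tilde{U}}$ covers $M$.

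The main content of the proof is to show that the transition maps of this atlas are locally given by elements of $G$. Suppose $\varphi_{\tilde{U}_{1}}$ and $\varphi_{\tilde{U}_{2}}$ have overlapping domains $U_{1} \cap U_{2} \neq \emptyset$, and pick a connected component $V$ of $U_{1} \cap U_{2}$ together with a point $x \in V$. Its two lifts $\tilde{x}_{1} \in \tilde{U}_{1}$, $\tilde{x}_{2} \in \tilde{U}_{2}$ are related by a unique deck transformation $\gamma \in \pi_{1}(M,x_{0})$, that is $\tilde{x}_{2} = \gamma \cdot \tilde{x}_{1}$, and by connectedness of $V$ and the uniqueness of path-lifting, the same $\gamma$ implements $(\mathsf{p}|_{\tilde{U}_{2}})^{-1} = \gamma \circ (\mathsf{p}|_{\tilde{U}_{1}})^{-1}$ throughout $V$. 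The equivariance hypothesis \eqref{eq:hol} then gives
\[
\varphi_{\tilde{U}_{2}}\,\big|_{V} \;=\; D \circ \gamma \circ (\mathsf{p}|_{\tilde{U}_{1}})^{-1}\big|_{V} \;=\; h(\gamma) \circ \varphi_{\tilde{U}_{1}}\,\big|_{V},
\]
so the transition is the restriction of the element $h(\gamma) \in G$. This is the step I expect to be the main obstacle, since I must carefully track how the two local inverses of $\mathsf{p}$ differ and confirm that a single deck transformation governs an entire connected component of the overlap; once that is in place, everything is forced. Effectiveness of the $G$-action on $X$ ensures the element $h(\gamma)$ implementing the transition is unique, so the atlas is well defined, and as a side benefit one checks that any map $h$ for which \eqref{eq:hol} can hold is automatically a homomorphism.

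The constructed atlas generates a unique maximal $(X,G)$-atlas, hence a unique $(X,G)$-structure on $M$. To close the loop I verify that its development pair is $(D,h)$: by construction every local chart, pulled back to $\tilde{M}$ via $\mathsf{p}$, agrees with $D$ on the corresponding evenly covered sheet, so analytic continuation of a chart at $x_{0}$ recovers precisely $D$, and the associated holonomy homomorphism is $h$ by \eqref{eq:devel1}. Finally, uniqueness of the structure with this development pair is immediate: any $(X,G)$-atlas on $M$ whose development map is $D$ must contain charts of the form $D \circ (\mathsf{p}|_{\tilde{U}})^{-1}$ as above, and therefore its maximal extension coincides with the one we produced.
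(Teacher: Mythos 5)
Your proof is correct. Note that the paper states Proposition \ref{prop:devpair} without any proof at all, treating it as a standard consequence of the preceding discussion of development pairs; your argument is the canonical one (build charts $D \circ (\mathsf{p}|_{\tilde U})^{-1}$ on evenly covered neighborhoods and use the equivariance \eqref{eq:hol} to see that transitions are restrictions of the $h(\gamma)$), and the key step you correctly identify and handle is that a single deck transformation relates the two local sections of $\mathsf{p}$ over an entire connected component of the overlap. The only implicit ingredient worth naming is that when you conclude the transition element is \emph{unique} and that $h$ is forced to be a homomorphism, you are using not just effectiveness but the standard rigidity assumption on $(X,G)$-geometries --- that two elements of $G$ agreeing on a nonempty open subset of $X$ coincide --- which the paper also relies on tacitly (e.g.\ for the development map to be well defined by analytic continuation), so this is not a gap in context.
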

%\paragraph{Properly discontinuous actions}
%Let $\Gamma$ be a group of diffeomorphisms
%of a manifold $M$. Then $\Gamma$ is said to act \emph{properly 
%discontinuously} on $M$ if, for all compact subsets $\cK \subset M$,
%the set $$ \Gamma_{\cK} = \{ \gamma \in \Gamma \mid \gamma \cK \cap \cK \neq
%\emptyset \} $$ is finite. 
%If $\Gamma$ acts properly 
%discontinuously and freely on $M$ then 
%the quotient space $M/\, \Gamma$ 
%is a smooth manifold, and the projection
%map $\pi: M \ra M/\, \Gamma$ is a smooth
%covering map. Now if $\Gamma$ acts by
%$(X,G)$-equivalences on an $(X,G)$-manifold
%$M$, then the quotient space $M/\, \Gamma$ inherits 
%a natural  $(X,G)$-manifold structure from $M$.
%In fact, the development maps of $M$ and $M/\, \Gamma$
%coincide (as well, as their universal coverings.)

\subsubsection{Compactness and completeness of 
$(X,G)$-manifolds}
An important special case arises if the development map
is a diffeomorphism. Recall the following definition:

\begin{definition}[\textbf{Proper actions}] 
A discrete group $\Gamma$ is said to
act \emph{properly discontinuously}  on $X$ if, for all compact
subsets $\kappa \subseteq X$, the set $$\Gamma_{\kappa} = 
\{ \gamma \in \Gamma \mid \gamma
\kappa \cap \kappa \neq \emptyset \} $$ is finite. 
More generally, if $\Gamma$ is a locally compact group, and
 $\Gamma_{\kappa}$ is required to be compact, 
 % for all $\kappa$,
then the action is called \emph{proper}. 
\end{definition}

\begin{example}[\textbf{$(X,G)$-space forms}] \index{space form}
Let $\Gamma$ be a group of $(X,G)$-equivalences
% (that is, $\Gamma$ is a subgroup of $G$) 
acting properly discontinuously and freely on $X$. Then 
$X/\, \Gamma$ is a manifold which inherits an $(X,G)$-structure
from $X$.
If $X$ is simply connected the identity map
of $X$ is a development map for $X/\, \Gamma$. 
\index{$(X,G)$-space form}
%Traditionally, such a manifold of the form $X/\, \Gamma$ is also called an \emph{$(X,G)$-space form}.
\end{example}

In general, if the development map is a covering map 
onto  $X$, the $(X,G)$-manifold $M$ will be called  \emph{complete}. \index{manifold!$(X,G)$-} \index{$(X,G)$-structures!complete}
 \\

Simple examples (cf.\ the Hopf tori in Example \ref{ex:hopftori})
show that compactness of $M$ 
does not imply completeness.  

\begin{example}[\textbf{Compactness and completeness}] \label{ex:cac}
If $G$ acts properly on $X$ then
every \emph{compact}  $(X,G)$-manifold is complete.
\end{example}

In general, the relation between the properties of the $G$-action on $X$, and the completeness properties of compact 
$(X,G)$-manifolds is only vaguely understood. 
See \cite{Carriere} for a striking contribution in this direction
in the context of flat affine manifolds. 
Further discussion of $(X,G)$-geometries and the properties of the development process may be found in \cite{Epstein,Thurston}. \\

It may well happen that an $(X,G)$-geometry does not
admit (non-finite) proper actions (see \cite{Kob,Kulkarni,IozziWitte}) or no compact $(X,G)$-manifolds 
at all \cite{Benoist_Labourie}.

\begin{example}[\textbf{Calabi-Marcus phenomenon}] 
% [{\bf $(\bbAo, \, \SL(2,\bbR))$ has no complete form}] 
\label{ex:bbAo1}
Let $\bbAo$ be the once-~punc\-tured affine plane. 
It is easily observed that every discrete subgroup of $\SL(2,\bbR)$
which acts properly on $\bbAo$ must be finite, see Figure \ref{fig:CalabiMarcus}.  This is called the \emph{Calabi-Markus phenomenon}. \index{Calabi-Markus phenomenon}
\end{example}

%\begin{figure}[htbp] 
%  \centering
% % \hspace{-3.1cm}
%  % \fbox{
%    \includegraphics[height =  6 cm, trim= 0 45 0 15]{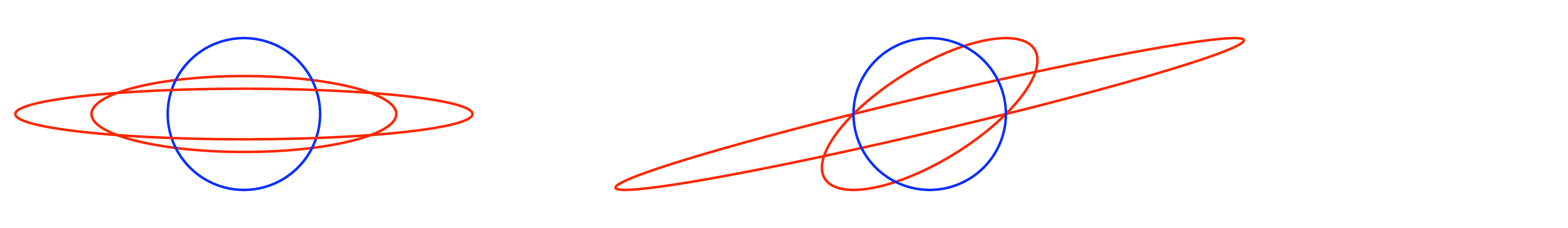}
%  %}
%  \caption{Dynamics of a hyperbolic rotation and a shearing acting on $\bbAo$}
%  \label{fig:CalabiMarcus}
%\end{figure}

\begin{figure}[htbp] 
  \centering
 % \hspace{-3.1cm}
  % \fbox{
    \includegraphics[clip=true, height= 2cm, trim= 32cm 12cm 10cm 12cm]{}
  %}
  \caption{Dynamics of a hyperbolic rotation and a shearing acting on $\bbAo$}
  \label{fig:CalabiMarcus}
\end{figure}

It follows that the homogeneous space $(\bbAo, \SL(2,\bbR))$ has only quotients by
finite groups. Therefore, a complete space modeled on $(\bbAo, \SL(2,\bbR))$ cannot
be compact. In fact, we will remark in Example \ref{ex:bbAo2} below that there do not exist compact $(\bbAo, \SL(2,\bbR))$-manifolds at all. 
% neither admits a (possibly non-complete) compact form.  

\paragraph{Prominent $(X,G)$-structures on surfaces}
Let $M_{g}$ denote an orientable  surface of genus $g$.
In the context of this paper,  the following $(X,G)$-structures play a prominent  role. \index{$(X,G)$-structures!on surfaces}
\begin{example} % [\bf Prominent $(X,G)$-structures on surfaces $M_{g}$]
\hspace{1cm}
\begin{enumerate} 
\item $(\mathbb{S}^2, \O(2))$, spherical geometry, $g=0$.
\item $(\bbR^2, \E(2))$, plane Euclidean geometry, $g=1$.
\item $({\mathbb H}^2,  \PSL(2,\bbR))$, plane hyperbolic geometry, $g \geq 2$.
\item $(\bbR^2, \Aff(2))$, plane affine geometry, $g=1$.
\item $({\mathbb P}^2(\bbR), \PSL(3,\bbR))$, plane projective geometry, $g \geq 0$.
%\item Let $U =  \bbAo) \subset \bbR^2$ then $(U, \SL(2,\bbR)$ and $(U,\GL(2,\bbR))$
%are subgeometries of plane affine geometry.
%\item $X $, subgeometry of plane affine geometry. 
\end{enumerate} 
Every compact orientable surface of genus $g \geq 2$ supports hyperbolic structures. 
Also every compact surface supports a projective structure, see \cite{ChoiGoldman}. 
By Benz\'ecri's theorem the only compact surfaces which support a flat affine structure are the two-torus and the Klein bottle. The classification of flat affine structures on the two-torus 
was completed in the 1970's, see \S \ref{sect:classification} of this article. Subsequently, Bill Goldman in his undergraduate thesis \cite{Goldman0} classified projective structures on the two-torus in 1977. 
\end{example}

% \\

Note that
every Euclidean or hyperbolic compact surface is complete
(compare Example \ref{ex:cac}). 
The majority of flat affine structures on the two-torus are \emph{not}  complete but  the development map of a flat affine structure on 
the two-torus is always a covering onto its image (see Theorem 
\ref{thm:classification}). The 
development map of a projective structure % \marginpar{On the two-torus?}
on a surface may not even be a covering \cite{ChoiGoldman}. 

\subsubsection{$(X,G)$-subgeometries} \label{sect:sub_geom}
We may relate different locally homogeneous geometries by inclusion
as follows.
%\begin{example}[\bf $(X,G)$-subgeometries] 
\begin{definition} \label{def:subgeometry}
Let  $(X,G)$ and  $(X',G')$ be homogeneous spaces and 
$\rho: G' \ra G$ a homomorphism 
together with a $\rho$-equivariant local diffeomorphism 
$o:  X' \ra X$. % In this situation,  
Then we say that \emph{$(X',G')$ is subjacent to} or 
\emph{a subgeometry of  $(X,G)$}. The subgeometry is 
called \emph{full} if the map $o$ is surjective onto $X$.
The subgeometry is called a \emph{covering of geometries} 
if $o:  X' \ra X$
is a regular covering map with group of deck transformations
precisely the kernel of $\rho$. 
\end{definition}  
\noindent 
If  $(X',G')$ is a subgeometry of $(X,G)$ then $X'$ is an
$(X,G)$-manifold with development map $o: X' \ra X$.
Note also that $o$ is a covering map onto its image,  
since $o$ is an equivariant map of homogeneous spaces.
The group $G'$ then acts as a group of $(X,G)$-equivalences of $X'$, 
so that $X'$ is, in fact,  a homogeneous $(X,G)$-manifold. % \end{example} 

\begin{example}
Let $\mathsf{p}: \tbbAo \,\ra \, \bbAo$ be the universal covering 
of the once-punctured affine plane $\bbAo$, and $\widetilde \GL(2,\bbR) \ra  \GL(2,\bbR)$
the universal covering group of $\GL(2,\bbR)$. Then the subgeometry $$ 
\mathsf{p}: (\widetilde \bbAo, 
\widetilde \GL(2,\bbR)) \;  \lra \; (\bbAo, \GL(2,\bbR))$$
 is a full subgeometry and, indeed, it is a covering of geometries.  
\end{example}

If  $(X',G')$ is a subgeometry of $(X,G)$ then, in particular, \emph{every $(X',G')$-manifold with development
map $D'$ inherits naturally an $(X,G)$-manifold structure 
with development map $o \circ D'$}. \\

This  observation provides a useful tool to construct $(X,G)$-manifolds. 
Assume, for instance,  that $G'$ acts properly on $X'$ and $\Gamma' \leq G'$ is a discrete subgroup. Then $\Gamma' \lmod X'$ is an $(X',G')$-manifold which inherits an $(X,G)$-structure via $o$. 
The following special case is of particular
importance:

\begin{definition}[\textbf{\'Etale $(X,G)$-representations}] \label{def:etale}
%Let $\rho: G' \ra G$, $o: G'/H' \ra G/H$
%be as above. 
%
\index{etale@\'etale representation} \index{representation!\'etale}  
If $G'$ acts on $X'$ with finite stabilizer then an inclusion of geometries $\rho: G' \ra G$ as above is  called an \emph{\'etale representation} of 
$G'$ into $(X,G)$. 
\end{definition}

If $\rho$ is \'etale with open orbit $\rho(G') x_{0}$ the group manifold $G'$ inherits via the 
orbit map $$ o: G' \ra X \, , \; g' \mapsto \rho(g') x_{0}$$ a natural 
$(X,G)$-structure which is invariant by left-multiplication of $G'$. In particular, if $\Gamma' \leq G'$ is a discrete subgroup then the coset space $\Gamma' \lmod G'$ inherits an $(X,G)$-manifold structure.   

%By the classification theorem, Theorem \ref{thm:classification}, all development
%images of flat affine structures on the two-torus arise as orbits of 
%\'etale affine representation of the abelian Lie group
%$\bbR^2$.

\begin{example}[\textbf{Geometries subjacent % transitive groups of  $\bbAo$
to the punctured plane}]
The affine automorphism group of the once punctured 
affine plane $\bbAo$ is the linear group $\GL(2,\bbR)$.
The homogeneous geometry $\left(\bbAo, \GL(2,\bbR)\right)$ has full subgeometries $\left(\bbAo, \GL(1,\bbC)\right)$ and $(\bbAo, \SL(2,\bbR))$. Note that % $\left(\bbAo, \GL(1,\bbC)\right)$ 
the first one arises from an \'etale affine representation of the abelian Lie group $\bbR^2$. 
Further subgeometries, which are not full,  
are defined by the abelian 
\'etale Lie subgroups $\mathsf{C}_{1}$ and $\mathsf{B}$, which  are listed in (2) and (3) of Example \ref{ex:domains}. 
Of course, all these homogeneous spaces define particular subgeometries of plane affine geometry, as well.
\end{example}

\subsubsection{Existence of compact forms}
A compact manifold $M$, which is locally modeled on $(X,G)$, 
% is also 
will be called a \emph{compact form} for $(X,G)$. Given a
homogeneous space $(X,G)$, it is possibly a difficult problem to decide if it has compact form. 

\begin{example}[\textbf{$(\bbAo, \, \SL(2,\bbR))$ has no compact form}] 
\label{ex:bbAo2}
By the Calabi-Markus phenomenon (see Example \ref{ex:bbAo1}),  
$(\bbAo, \, \SL(2,\bbR))$ has only quotients by finite groups. 
Since $(\bbAo, \, \SL(2,\bbR))$ is a subgeometry of plane affine geometry, Benz\'ecri's theorem (Theorem \ref{thm:Benzecri}) and
the classification of flat affine structures with development image 
$\bbAo$ (see Theorem \ref{thm:classification} )
% for flat affine structures on the two-torus 
imply the stronger result that \emph{there is no compact locally homogeneous surface modeled on the homogeneous space $(\bbAo, \SL(2,\bbR))$}.
% $(\bbAo), \, \SL(2,\bbR))$ has no compact form at all. 
\end{example}  

On the contrary, the % homogeneous 
spaces $(\bbAo,\GL(2,\bbR))$ and $\left(\bbAo, \GL(1,\bbC)\right)$ evidently 
have complete compact forms. For example, every lattice 
$\Gamma \leq \GL(1,\bbC)$ acts properly discontinuously and
freely on  $\bbAo$, which thus gives rise to  a compact flat affine manifold $\bbAo\, \big/ \, \Gamma$.\\ % Compare also Example \ref{ex:basic1})).

Benz\'ecri's theorem implies that 
every orientable compact form of the space $(\bbAo,\GL(2,\bbR))$ 
is diffeomorphic to the two-torus, and in particular it has abelian
fundamental group $\bbZ^2$. 

\begin{example}[\textbf{Compact forms of $(\bbAo,\GL(2,\bbR))$}]
The classification theorem asserts that the development 
map of a flat affine structure on the two-torus is a covering map onto the development image (cf.\ Proposition \ref{prop:Deviscovering}). 
In particular, 
%\emph{there is no compact locally homogeneous surface modeled 
%on the homogeneous space $(U, \SL(2,\bbR))$}. Moreover, 
every compact locally homogeneous surface modeled on the homogeneous spaces $(\bbAo, \GL(1,\bbC))$ or $(\bbAo, \GL(2,\bbR))$ is 
\emph{diffeomorphic to the two-torus} and 
either it is complete (which is always true for $(\bbAo, \GL(1,\bbC))$-structures)
or its development image is a sector of halfspace in $\bbA^2$
(see \S \ref{sect:etale_affine}).  
\end{example}

%The classification of all compact 
%forms of $(\bbAo,\GL(2,\bbR))$ will be carried out 
%in \S \ref{sect:tori_bbao}. 
%In section The classification of all compact 
%forms of $(\bbAo,\GL(2,\bbR))$ will be carried out 
In \S \ref{sect:tori_bbao}, we describe the construction of
all compact $(\bbAo, \GL(2,\bbR))$- manifolds which are
\emph{complete}. The classification theorem for all structures, including
the non-complete case, is stated in \S \ref{sect:bbaoclass}.

\subsection{Convergence of development maps}
The space of  $(X,G)$-\-de\-ve\-lop\-ment maps for the manifold $M$  \index{development maps!space of}  \index{development maps!convergence of}
is the set $$ \Dev(M) = \Dev(M,X,G) $$ of all local 
 $C^\infty$-diffeomorphisms 
$$ D: \tilde{M} \rightarrow X $$ which, %  (\ref{eq:hol}),  
for some $h \in \Hom(\pi_1(M),G)$,  and, for all $\gamma \in  \pi_{1}(M)$,  satisfy   %That is, for all $\gamma \in  \pi_{1}(M)$,  
%\begin{equation}  \label{eq:hol}
$$  D \circ \gamma =   h(\gamma)  \circ D . $$%  \;  \text{ ,
%}$$ %   \; ,  \;  
%\text{ for all $\gamma \in \pi_1(M,x_0)$.}
%\end{equation}  
We endow the space of development maps with the compact $C^{\infty}$-topology. \index{compact $C^{\infty}$-topology}
\index{topology!compact $C^{\infty}$-}
In this topology, a sequence of smooth maps converges if and only if  \emph{it and all its derivatives} (computed in  local coordinate charts) converge uniformly on the compact subsets of $\tilde M$. In particular, $\Dev(M)$ thus becomes a Hausdorff second countable topological space.

%to defined as follows: 
%Lift $\Phi \in \Diff(M)$
%to a map $\tilde{\Phi} \in \Diff(\tilde{M})$. Then, for $D \in \Dev(M)$,  
%$$ \Phi \cdot D := D \circ  \tilde{\Phi}$$  
%is again in $\Dev(M)$, and it is well defined
%in  $\Dev(M)/G$.  

% \subsubsection{Convergence of development maps}
\subsubsection{Convergence of holonomy} \label{sect:convhol}
Let $M$ be compact. Then $\pi_{1}(M)$ is finitely generated, and we equip $\Hom(\pi_1(M),G)$ with 
the topology of pointwise convergence.  \index{holonomy!convergence of}
Then the map $$\mathsf{hol}: \Dev(M) \ra  \Hom(\pi_1(M),G) \;  , \; D \mapsto h$$ 
% which  associates to $D$ its holonomy homomorphism $h$,  
is continuous, since $G$ has the
$C^\infty$-topology of maps on $X$.  The main theorem
on deformations of $(X,G)$-structures  
(see Theorem \ref{thm:Deformations} below) 
asserts that a small deformation of holonomy 
induces a deformation of development
maps. That is, the map $\mathsf{hol}$ admits local sections.
By compactness of $M$, the convergence of
development maps is controlled % by the behavior 
on a fundamental domain and by the holonomy. 
% the convergence of holonomy homomorphisms.  
\begin{fact}[Holonomy determines convergence] \label{fact:convergence}
Let $U \subset \tilde{M}$ be 
an open subset with compact closure such that 
$\mathsf{p}(U) = M$, where $\mathsf{p}: \tilde M \ra M$ is the universal covering. 
Then a sequence  of development maps $D_{i} \in \Dev(M)$ with 
holonomy $h_{i}$ converges to a development map $D$ 
if and only if the restrictions of $D_{i}$ to $U$ converge to 
$D$ and the homomorphisms $h_{i}$ converge
to the holonomy $h$ of $D$. % in $\Hom(\pi_1(M),G)$. \\
\end{fact}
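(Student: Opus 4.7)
The forward direction is almost automatic. If $D_i \to D$ in the compact $C^\infty$-topology on $\Dev(M)$, then trivially $D_i|_U \to D|_U$ in $C^\infty$, and the continuity of the map $\mathsf{hol}$ recorded in \S\ref{sect:convhol} gives $h_i = \mathsf{hol}(D_i) \to \mathsf{hol}(D) = h$.

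For the converse, suppose $D_i|_U \to D|_U$ in $C^\infty$ and $h_i \to h$ pointwise on $\pi_1(M)$. I would fix an arbitrary compact $K \subset \tilde{M}$ and reduce the $C^\infty$-convergence on $K$ to a finite union of estimates pulled back to $U$. Since $\mathsf{p}(U) = M$, the translates $\{\gamma U : \gamma \in \pi_1(M)\}$ form an open cover of $\tilde{M}$, so compactness of $K$ selects finitely many deck transformations $\gamma_1,\ldots,\gamma_n$ with $K \subset \bigcup_{j=1}^n \gamma_j U$. A smooth partition of unity $\{\psi_j\}$ subordinate to this cover on a neighborhood of $K$ produces compact subsets $K_j := \mathrm{supp}(\psi_j) \cap K \subset \gamma_j U$ with $K = \bigcup_j K_j$, so it suffices to prove $D_i \to D$ in $C^\infty(K_j)$ for each $j$.

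The key step invokes the equivariance relation \eqref{eq:hol}: for $y \in U$ one has
$$ D_i(\gamma_j y) = h_i(\gamma_j) \cdot D_i(y), \qquad D(\gamma_j y) = h(\gamma_j) \cdot D(y). $$
On the compact set $\gamma_j^{-1} K_j \subset U$ the hypothesis gives $C^\infty$-convergence $D_i|_{\gamma_j^{-1}K_j} \to D|_{\gamma_j^{-1}K_j}$, while $h_i(\gamma_j) \to h(\gamma_j)$ in the Lie group $G$. Since the action of $G$ on $X$ is smooth, left-composition by $g \in G$ depends continuously on $g$ in the compact $C^\infty$-topology of maps into $X$, and right-composition with the fixed diffeomorphism $\gamma_j$ of $\tilde{M}$ is likewise continuous; combining these yields $D_i \to D$ in $C^\infty(K_j)$. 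Summing the resulting $C^m$-estimates over $j=1,\dots,n$ for each derivative order $m$ gives $C^\infty$-convergence on $K$, and since $K \subset \tilde{M}$ was arbitrary, $D_i \to D$ in $\Dev(M)$. The only technical point is this equivariant transport of convergence through the $G$-action, which rests on the joint continuity of smooth left-composition in the $C^\infty$-topology; everything else is a standard covering argument made possible by the compactness of $\overline{U}$ and the finite generation of $\pi_1(M)$.
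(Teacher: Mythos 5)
Your proof is correct. The paper states this Fact without proof --- offering only the preceding remark that ``convergence of development maps is controlled on a fundamental domain and by the holonomy'' --- and your argument (cover a compact set $K\subset\tilde M$ by finitely many translates $\gamma_j U$, then transport $C^\infty$-convergence from $U$ to each piece via the equivariance $D_i\circ\gamma_j=h_i(\gamma_j)\circ D_i$ together with $h_i(\gamma_j)\to h(\gamma_j)$ in $G$) is precisely the standard justification the paper implicitly relies on.
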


A particular property of the $C^{\infty}$-topology is that it does not control the behavior  of maps outside compact sets. This allows
for possibly unexpected phenomena:  
% (Compare, Example \ref{ex:tzu} below.)
%However, the following is true.
%however, this weakness is somewhat remedied if $M$ is compact, as is demonstrated  by  the following important property.
\begin{example}[{\textbf{Openness of embeddings fails}}] \label{ex:embeddings}
Let $\Dev_{e}(M)$ be the subset of development maps which are
injective. Let $\cK \subset \tilde M$ be a compact fundamental
domain for the action of $\pi_{1}(M)$.  By \cite[Chapter 2, Lemma 1.3]{Hirsch}, the set of development maps which  are injective on $\cK$ is open with respect to the $C^1$-topology. In particular, it is open with respect to the $C^\infty$-topology. However, the global behavior of development maps is controlled by the holonomy. Therefore, even if  $M$ is compact $\Dev_{e}(M)$ may not be an open subset in $\Dev(M)$.
%\end{proposition}
%\begin{proof} 
On the two-torus there are injective development maps in $\Dev(T^2, \bbA^2, \Aff(2))$ which contain a non-trivial covering map in every small neighborhood. This is even true
for the development of the 
standard translation structure, cf.\  \S \ref{sect:orbit_closures} %Example \ref{ex:iiio} 
and Figure \ref{figure:qHopfTo trans}.
\end{example}
\begin{figure}
\begin{center}
\includegraphics[scale=0.5, clip=true, trim=  0cm 0.35cm 0cm 0.7cm ]{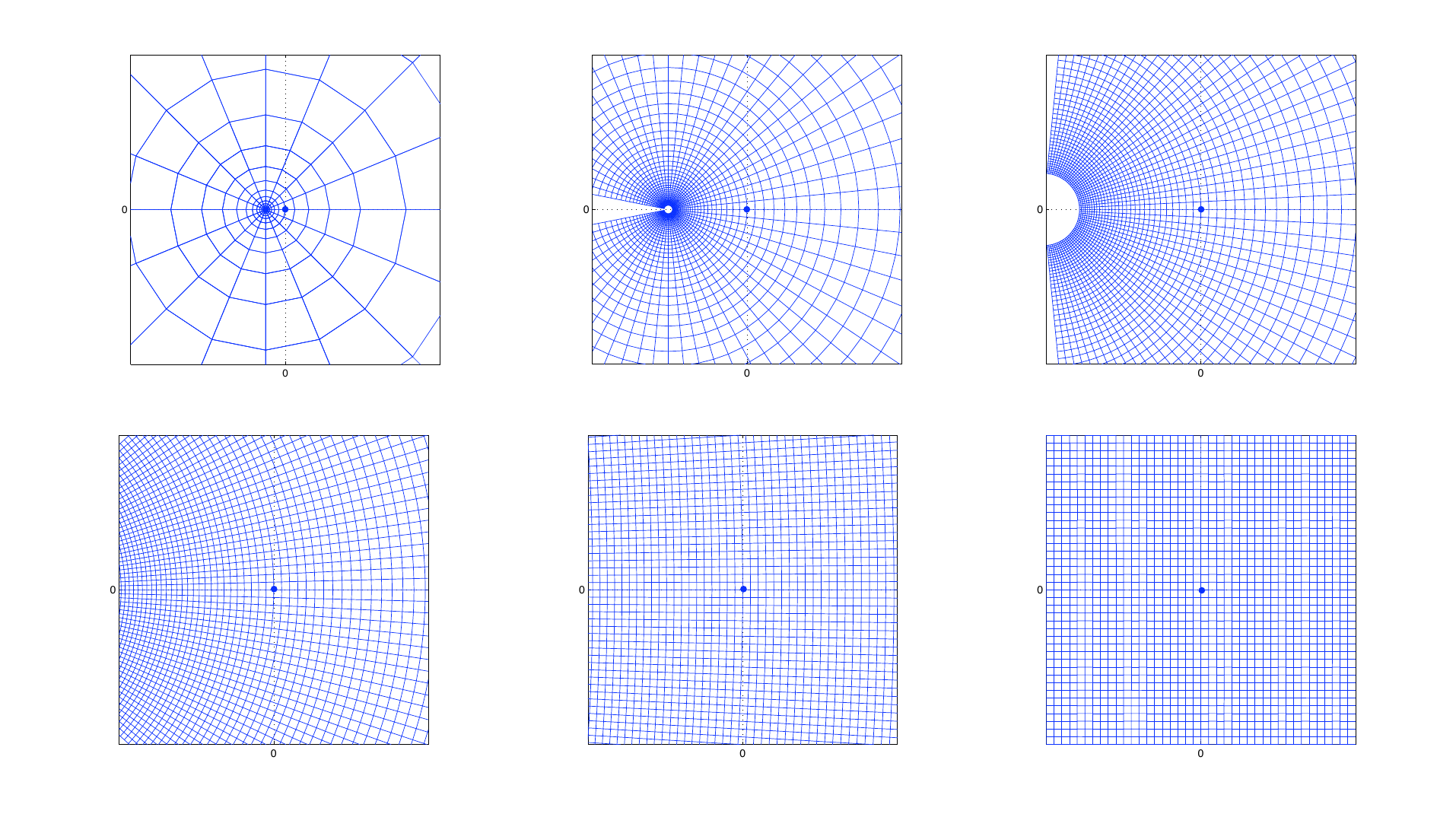}
\caption{The quotient of a Hopf torus deforms to a translation torus.} 
% \label{figure:Hopftori}
\label{figure:qHopfTo trans}
\end{center}
\end{figure}
\subsubsection{Deformation %%  of holonomy 
%and deformation of 
%and 
of development maps}
If $M$ is compact then, as observed by Thurston \cite{Thurston1}, 
building on earlier work of Weil \cite{Weil}, a small deformation of holonomy in the space of homomorphisms 
$\Hom(\pi_1(M) , G)$  induces a deformation of $(X,G)$-development maps. 
Before stating the theorem precisely, we discuss the   \index{development maps!deformation of}

\paragraph{Action of
diffeomorphisms of $M$  on development pairs.} \index{development pair}
Let $x_{0} \in M$ and $\tilde x_{0} \in \tilde M$, $\mathsf{p}(\tilde x_{0}) = x_{0}$, be basepoints and $\Phi \in \Diff(M,x_{0})$ a basepoint preserving diffeomorphism  
with lift  $\tilde{\Phi} \in \Diff(\tilde{M}, \tilde x_{0})$. 
The group $\Diff(M,x_{0})$ of basepoint preserving diffeomorphisms then acts on
development pairs, by mapping $D \in \Dev(M)$ to
$D \circ \tilde {\Phi}$.  
%It follows that, for $D \in \Dev(M)$, the holonomy groups $h_D(\pi_1(M))$ 
%and $h_{ \Phi \cdot D}(\pi_1(M))$ coincidel 
%If $\Phi$ is homotopic to the identity, then the holonomy 
%homomorphism of $D$ and of $\Phi \circ D$ coincide as 
%element of  $\Hom(\pi_1(M,x_{0}) , G)$. 
We let $\Diff_{1}(M,x_{0})$ denote the subgroup of $\Diff(M,x_{0})$ consisting of diffeomorphisms which
are homotopic to the identity by a basepoint preserving
homotopy,  and $\Diff_{0}(M,x_{0})$ the identity component of
$\Diff(M,x_{0})$ (that is, the subgroup of elements 
which are isotopic to the identity).
Then the action of $\Diff_{1}(M,x_{0})$ and its subgroup 
$\Diff_{0}(M,x_{0})$ on the set of development
maps $\Dev(M)$ leaves the holonomy invariant, since
$\Diff_{1}(M,x_{0})$ acts trivially on $\pi_{1}(M,x_{0})$. 
%  \Dev(M)\big/\,\Diff_{0}(M,x_{0}) has the quotient topology
\\

See \cite{Epstein,Goldman,Lok,BeGe} for more detailed discussion % and proofs
of the following: \index{deformation theorem}
 \index{Thurston deformation theorem}

\begin{theorem}[Deformation theorem, Thurston et al.] \label{thm:Deformations}
% \hspace{1cm} \newline 
Let $M$ be a compact manifold. Then the induced map 
\begin{equation}  \mathsf{hol}:  %  \Dev(M)\big/\,\Diff_{0}(M,x_{0})  
\; \Diff_{0}(M,x_{0}) \backslash \,\Dev(M) \longrightarrow \Hom(\pi_1(M) , G)  \label{sfholmap} \end{equation}
which associates to a development map its holonomy homomorphism is a local homeomorphism. 
\end{theorem}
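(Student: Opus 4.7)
The continuity of the holonomy map $\mathsf{hol}: \Dev(M) \to \Hom(\pi_1(M), G)$ was observed in \S \ref{sect:convhol}, and since $\Diff_0(M, x_0)$ acts trivially on $\pi_1(M, x_0)$, the map descends to a continuous map on the quotient. The plan is to show that the descended map is locally surjective and locally injective near each class, with continuous local inverses.

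\textbf{Local injectivity.} I would fix $D_0 \in \Dev(M)$ and suppose $D_1, D_2 \in \Dev(M)$ are both $C^\infty$-close to $D_0$ and satisfy $\mathsf{hol}(D_1) = \mathsf{hol}(D_2) = h$. Compactness of $M$ together with the local diffeomorphism property of $D_0$ yields a uniform radius $\epsilon > 0$ such that $D_0$ restricts to a diffeomorphism on every $\epsilon$-ball of $\tilde M$ in a chosen $\pi_1(M)$-invariant Riemannian metric. For $D_1, D_2$ close enough to $D_0$ on a compact fundamental domain $\overline U \subset \tilde M$, the composition $\tilde \phi = D_2^{-1} \circ D_1$ is well defined locally and patches to a smooth map $\tilde \phi: \tilde M \to \tilde M$ close to $\id$. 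From $D_i \circ \gamma = h(\gamma) \circ D_i$ for $i = 1, 2$ together with local injectivity of $D_2$, one reads off $\tilde \phi \circ \gamma = \gamma \circ \tilde \phi$, so $\tilde \phi$ descends to a diffeomorphism $\phi$ of $M$ close to $\id_M$. Since $\Diff_0(M, x_0)$ is open in $\Diff(M, x_0)$ for the $C^\infty$-topology, $\phi \in \Diff_0(M, x_0)$, and $D_1 = D_2 \circ \tilde \phi$ represents the same class as $D_2$ in the quotient.

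\textbf{Local surjectivity.} For the harder direction, given $h$ close to $h_0$ I would build $D \in \Dev(M)$ near $D_0$ with $\mathsf{hol}(D) = h$ via the Ehresmann--Thurston--Lok patching construction. The plan is to pick a finite good cover $\{V_\alpha\}_{\alpha \in A}$ of $M$ by open sets with connected, contractible pairwise intersections, so that each $V_\alpha$ admits an injective lift $\tilde V_\alpha \subset \tilde M$ on which $\phi_\alpha = D_0|_{\tilde V_\alpha}$ is an embedding into $X$; the compatibility data then form a $G$-valued cocycle $(g_{\alpha\beta})$ determined by $h_0$ and the combinatorics of the cover. A small deformation of $h_0$ to $h$ produces a small deformation $(g'_{\alpha\beta})$ of the cocycle. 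I would then rebuild the charts $(\phi'_\alpha)$ on a slightly shrunken cover $\{V'_\alpha\}$ by smoothly interpolating between $\phi'_\alpha$ and $g'_{\alpha\beta} \phi'_\beta$ over each nonempty overlap via a partition of unity together with the exponential map of $G$, keeping all intermediate elements of $G$ close enough to the identity that the $C^1$-openness of local diffeomorphisms preserves immersivity. Analytic continuation of the resulting chart system then yields a development map $D \in \Dev(M)$ with $\mathsf{hol}(D) = h$ depending continuously on $h$; Proposition \ref{prop:devpair} recovers the corresponding $(X,G)$-structure.

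The main obstacle is the patching step in local surjectivity. Perturbing the holonomy of each generator of $\pi_1(M)$ alters the equivariance requirement across every translate of a fundamental domain, and the perturbed chart system must simultaneously remain a global local diffeomorphism. Compactness of $M$ is essential here, both for the finiteness of the good cover and for extracting a uniform injectivity radius of $D_0$, without which the local-to-global patching would fail.
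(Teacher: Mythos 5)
Your proposal is correct in outline, but it takes a genuinely different route from the paper. The paper's proof is deliberately a sketch in the special case of a flat affine two-torus presented by gluing a quadrilateral $\mathcal P \subset \bbA^2$: a small deformation $h_{\epsilon}$ of the holonomy is realized by deforming $\mathcal P$ to a quadrilateral $\mathcal P_{\epsilon}$ that satisfies the gluing conditions for $h_{\epsilon}$, rebuilding the holonomy covering $\bar X_{\epsilon}$ and its development $D_{\epsilon}$, and invoking Fact \ref{fact:convergence} to conclude $D_{\epsilon} \to D_{o}$; the general case and the local injectivity statement are delegated to \cite{Lok,Weil}. You instead give the general Ehresmann--Thurston--Lok argument: a cocycle deformation over a finite good cover for local surjectivity, and the map $D_2^{-1} \circ D_1$ for local injectivity. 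Your route buys generality (arbitrary compact $(X,G)$-manifolds, no polygonal fundamental domain required) and actually supplies the injectivity half, which the paper only asserts after the theorem statement; the paper's route buys concreteness, since the deformed polygon is exactly the picture exploited throughout the chapter and yields an explicit development section for the torus. One caveat on your patching step: interpolating $\phi'_{\alpha}$ and $g'_{\alpha\beta}\phi'_{\beta}$ by a partition of unity does not literally produce an atlas with $G$-valued transition functions; the standard repair, carried out in \cite{Lok} and \cite{Epstein}, is to read the interpolation as producing a single $h$-equivariant immersion $\tilde M \ra X$ (equivalently, a section of the deformed flat $(X,G)$-bundle that stays transverse to the foliation, by $C^1$-openness over the compact base), from which Proposition \ref{prop:devpair} recovers the structure. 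Since you flag exactly this as the delicate point and invoke the correct compactness estimates, I regard this as a presentational rather than a mathematical gap.
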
 
The theorem states that the map $\mathsf{hol}:  \Dev(M) \longrightarrow \Hom(\pi_1(M) , G)$
is continuous and open. %  and
% and two nearby development maps with the same holonomy 
% locally the elements of a fiber of $\mathsf{hol}$ 
% are isotopic.
In addition, % the holonomy map 
$\mathsf{hol}$ locally admits  \emph{continuous} sections.
Such a section is called a {\em development section}. 
More specifically, it is proved (see below) that 
\emph{every convergent sequence of holonomy maps lifts to a convergent sequence of
development maps}, and two nearby development maps with identical holonomy are isotopic by a basepoint preserving diffeomorphism. 
Therefore, a sequence of points  in the quotient space $\Diff_{0}(M,x_{0}) \backslash \,\Dev(M)$ 
is convergent if and only if there exists a corresponding lifted
sequence of development maps which converges. \\

The main idea in the proof of Theorem \ref{thm:Deformations} due to Weil \cite{Weil} is easy to grasp. Here we sketch the construction of the development section in the particular case of flat affine two-tori. In addition, we consider only tori which are obtained by gluing polygons in the 
plane (cf.\ \S \ref{sect:agluing}). % \cite{BauesG}
A similar approach is also 
valid for non-homogeneous tori which 
are obtained as quotients of the universal covering affine 
manifold of $\bbAo$ 
(cf.\ \S \ref{sect:tori_bbao}), and, 
in fact, in the general case of arbitrary $(X,G)$-manifolds, compare \cite{Lok,Weil}. A somewhat different approach 
to this result is explained in \cite{Goldman}
and the recent survey \cite{Goldman4} on locally homogeneous manifolds. 

\begin{proof}%[Construction of the development section] 
[Proof of Theorem \ref{thm:Deformations}]
Let $M$ be a flat affine two-torus and $D_{o}: \bbR^2 \ra \bbA^2$,
$h_{o}: \bbZ^2 \ra \Aff(2)$ a development pair for $M$. Let $h_{\epsilon}
\in \Hom(\bbZ^2, \Aff(2))$, $\epsilon \geq 0$, be a small deformation of $h_{o}$. To
obtain the development section, we construct a curve of development maps
$D_{\epsilon}$ with holonomy $h_{\epsilon}$, which converges to $D_{o}$
in the compact $C^\infty$-topology.    

For this, we assume that the development pair of $M$ is represented 
as the identification space of a polygon $\mathcal P$ in affine space.
In fact, as explained in \cite[\S 2]{BauesG},  $\mathcal{P}$ can be chosen to be a quadrilateral contained in $\bbA^2$, 
which  is glued along its sides by the generators $\gamma_{1}$, $\gamma_{2}$   of $\pi_{1}(T^2) =  \bbZ^2$ using the holonomy images $h_{o}(\gamma_{i}) \in \Aff(2)$.
The generators satisfy cycle relations 
and certain gluing conditions.
Next we fix a diffeomorphism of the standard unit square in $\bbR^2$
with $\mathcal{P}$. Using $h$, this extends $\pi_{1}(T^2)$-equivariantly
to a smooth covering % from  $\bbR^2$ to 
$$ \bbR^2 \; \ra \; \bar{X}= (P \times \Gamma) \big/ \sim \; \;  ,$$ 
where the identification space $\bar{X}$ is a flat affine manifold which is obtained as
the disjoint union of
the polygons $\gamma P$,  $\gamma \in \Gamma$, 
glued along their edges as determined by the
side pairings $h_{o}(\gamma_{i})$.  Here  $\Gamma = h_{o}(\bbZ^2)$
is the holonomy group of $M$.
Moreover, the inclusion $\mathcal P \rightarrow \bbA^2$
extends to a development map $$ \bar D: \bar X \ra \bbA^2 \; .$$ The composition
of both maps yields the desired 
development map $D: \bbR^2 \ra  \bbA^2$ with
holonomy $h_{o}$. The space $\bar X$ is the holonomy covering space of
$M$, see \cite[Proposition 2.1]{BauesG} for a detailed  account.  

The development section $D_{\epsilon}$  may now be obtained in a similar 
manner. In fact, for small $\epsilon>0$,  $\mathcal{P}$ 
can be deformed continuously to a quadrilateral $\mathcal{P}_{\epsilon}$, which
satisfies the gluing conditions with respect to $h_{\epsilon}$. (See Figure 
\ref{figure:deform1} for an illustration.) This gives
rise to a series of identification spaces 
$\bar X_{\epsilon} = (P \times \Gamma_{\epsilon})  / \sim_{\epsilon}$, and corresponding development maps $D_{\epsilon}:  \bbR^2 \ra  \bbA^2$ with
holonomy $h_{\epsilon}$. By the above  Fact \ref{fact:convergence}, the developments maps
$D_{\epsilon}$ converge to $D_{o}$ in the $C^\infty$-topology.   
\end{proof}

The above construction of the development section is illustrated in Figures \ref{figure:deform1} and \ref{figure:deform2}.  

\begin{figure}[htbp]  
\begin{center}

\includegraphics[scale=0.5, clip=true, trim=  0cm 0.35cm 0cm 0.7cm ]{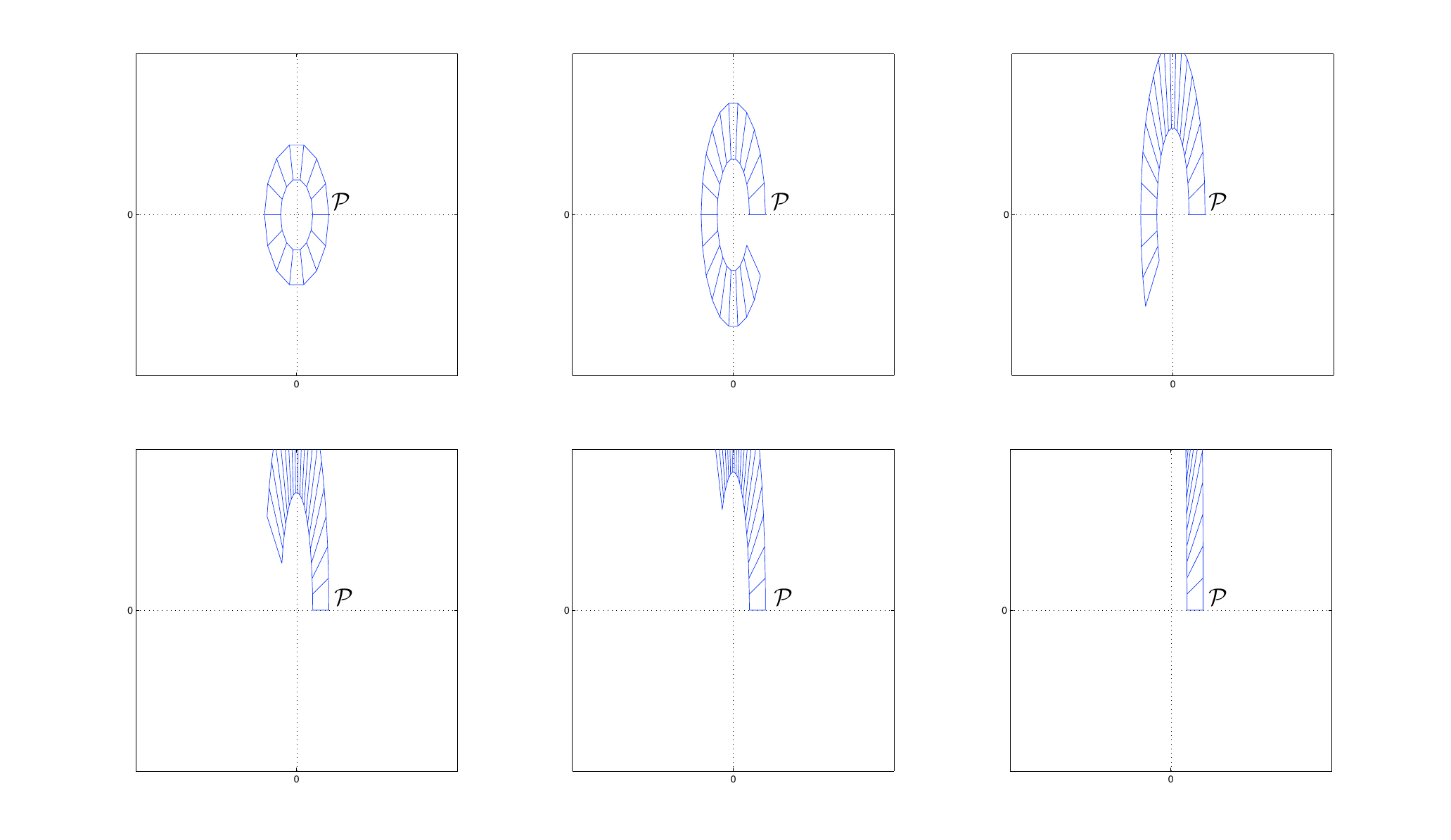}

\caption{The fundamental polygon $\mathcal{P}$ and its development deform with the holonomy.}
% \label{figure:Hopftori}
\label{figure:deform1}
\end{center}
\end{figure}

\begin{figure}[htbp]  
\begin{center}
\includegraphics[scale=0.5, clip=true, trim=  0cm 0.35cm 0cm 0.7cm ]{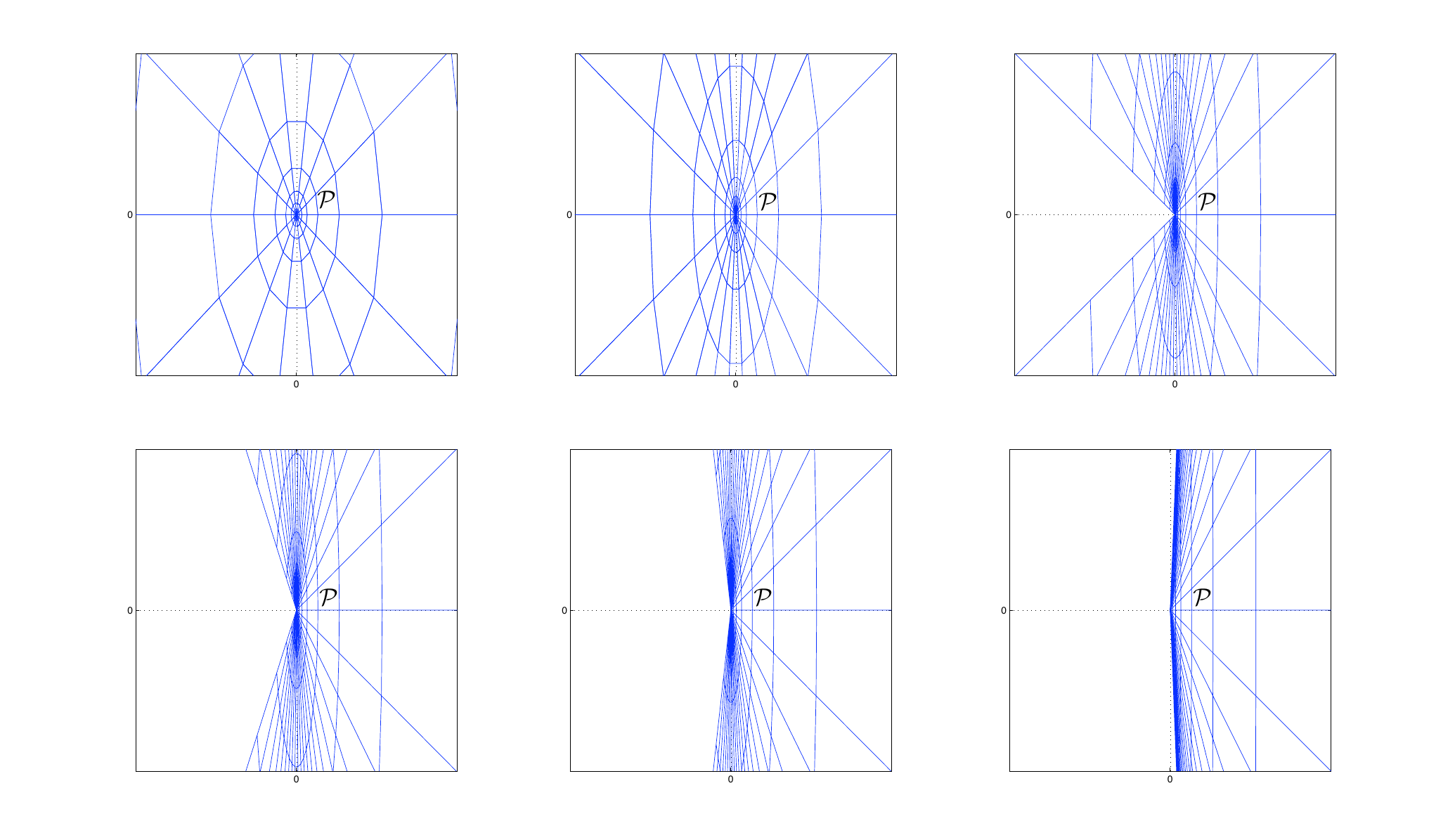}
\caption{A family of development maps for the 
 once-punctured plane $\bbAo$ collapses to 
 the development process of an affine half-plane.
}
% \label{figure:Hopftori}
\label{figure:deform2}
\end{center}
\end{figure}

% \paragraph{Topological rigidity of development maps}
\subsubsection{Topological rigidity of development maps}
\index{development maps!topological rigidity of}
\index{rigidity!of development maps}
Although local rigidity holds by the deformation theorem, 
it may fail globally. 
If the map $\mathsf{hol}$ in \eqref{sfholmap} is \emph{not} injective (as happens in the case of flat affine two-tori, see the basic Examples \ref{ex:finhopftori}, \ref{ex:gentorusnh} and also \S \ref{sect:thedefspace} for further discussion), there do 
exist non-isomorphic  $(X,G)$-manifolds with the same holonomy
homomorphism $h$.  On the contrary,  if the domain of discontinuity $\Omega$ 
for the holonomy group $h(\Gamma)$, $\Gamma = \pi_{1}(M)$, 
on $X$ is large then the development is uniquely determined. This is the case, for example, if $\Omega = X$ and $h(\Gamma)$ is the holonomy of a compact complete $(X,G)$-manifold.  

\begin{example}[\textbf{Discontinuous holonomy}] \label{ex:toprig}
\index{holonomy!discontinuous} \index{domain of discontinuity}
\index{holonomy homomorphism} 
Let $D: \tilde M  \ra X$ be the development map for an 
$(X,G)$-structure on the compact manifold $M$ with holonomy homomorphism 
$h$. If $h(\Gamma)$ acts 
properly discontinuously and freely with compact quotient 
on $X$ then
$D$ is a covering map onto $X$. (In fact, % this follows easily, 
$D$ is a covering, since the local diffeomorphism
on compact manifolds 
$M \ra  X/h(\Gamma)$ induced by $D$ is a covering map.)
It follows that every 
other development map $D': \tilde M  \ra X$ with holonomy homomorphism $h$
is of the form $D' = D \circ \Phi$, where $\Phi \in \Diff(\tilde M)$
is a diffeomorphism which centralizes the deck transformation 
group $\Gamma$.
\end{example}

A more involved argument allows to show that  
$D$ is determined by $h(\Gamma)$ 
if the Hausdorff dimension of $X -\Omega$ is small, see \cite{GoKa}.  % Goldman and Kamishima 

\begin{example} The  development maps of compact $(\tbbAo, \widetilde \GL^+ \! (2,\bbR))$-forms are rigid,  see \S \ref{sect:bbaoclass}, Theorem \ref{thm:bbao_rigid}.
We remark that
%, next to the complete
%examples, which are constructed in \S \ref{sect:tori_bbao},
the domain of discontinuity for the holonomy group of such a manifold can be  a proper open 
subset of $\tbbAo$.  
\end{example}

\subsection{Deformation spaces of $(X,G)$-structures}

%\subsubsection{Moduli space and deformation space} 
%{the moduli- problem for $(X,G)$ structures on $M$}

Let $\Str(M) = \Str(M,X,G)$ denote the set of
all $(X,G)$-structures on $M$. The group $\Diff(M)$ of all diffeomorphisms of $M$ \index{diffeomorphism group!of manifold}
\index{manifold!diffeomorphism group of}
acts naturally on this set such that two $(X,G)$-structures are in the same orbit if and only if they are $(X,G)$-equivalent. The  set of all $(X,G)$-structures on $M$ up to $(X,G)$-equivalence is called the {\em moduli space} $\Mod(M) = \Mod(M,X, G)$ of $(X,G)$-structures. 

\begin{definition} \index{deformation space!of $(X,G)$-structures}
The {\em deformation space\/} for 
$(X,G)$-structures on $M$ is the quotient space 
$$ \Def(M) = \Def(M,X,G) =  \Str(M,X,G) / \Diff_{1}(M)$$
of equivalence classes of $(X,G)$-structures up to {\em homotopy\/}. 
\end{definition}
\noindent 
Thus, two $(X,G)$-structures define the same point 
in $\Def(M)$ if they are equivalent by an $(X,G)$-equivalence which is homotopic to the identity of $M$.  The moduli space 
$ \Mod(M)$ is the quotient space of the deformation space $\Def(M)$ by the group of homotopy  \index{moduli space!of $(X,G)$-structures}
classes of diffeomorphisms of $M$.

\begin{remark} There is some inconsistency in the literature about the definition
of the deformation space. Many authors define $\Def(M)$
to be the space of structures up to {\em isotopy}.  
If $M$ is a surface (two-dimensional manifold)  two homotopic diffeomorphisms are isotopic, by classical results of Dehn, Nielsen, and Baer  (see
for example \cite{Stillwell}). % \cite{Dehn, Nielssen; Stillwell}.
Therefore, in this case, these two definitions coincide.
The corresponding fact fails in higher dimensions, even for tori,
see \cite{HS}.
\end{remark}   

We observe that the Lie group $G$ acts by left-composition
on the space of development maps. 
This action is continuous  \emph{and  free}, and the set of 
 $(X,G)$-structures  naturally identifies with the quotient by the action of $G$, that is,  
 $$ \Str(M,X,G) =  G \,  \backslash \Dev(M,X,G)  \; . $$ 
Indeed, if $g \in G$ and $D \in \Dev(M,X,G)$ is a development map then $g \circ D$ is another development map for the same 
$(X,G)$-structure on $M$. This exhibits the deformation space
as a double quotient space 
$$ \Def(M,X,G) =   G \,  \backslash \Dev(M,X,G)  / \,  \Diff_{1}(M) \; . $$
The $C^\infty$-topology on the set of $(X,G)$-structures is the quotient topology inherited from $\Dev(M,X,G)$. 
(Thurston \cite[Chapter 5]{Thurston} also gives a direct
description of the topology on $\Str(M)$ in terms of 
convergence of sets of local charts which define 
the elements of $\Str(M)$, see \cite[1.5.1]{Epstein}.) 
% This topology is equivalent to the topology 
The deformation space and the moduli space carry   
the quotient topology inherited from the 
set of $(X,G)$-structures.

\subsubsection{Orientation components of the deformation space}
Let $X$ be a $G$-space which is orientable. We let $G^+$ denote the normal subgroup
of orientation preserving elements of $G$. 
Now assume that $M$ is an $(X,G)$-manifold which is orientable. We fix an orientation for $M$.
Then there is a  disjoint decomposition 
%of the space of development maps 
\begin{equation} \label{eq:devdec}
\Dev(M,X,G) = \Dev^+(M,X,G) \cup  \Dev^-(M,X,G) \, ,
\end{equation}
where $\Dev^+(M,X,G)$ and $\Dev^-(M,X,G)$ denote the
closed (and open) subspaces % of $\Dev(M,X,G)$ 
which consist of orientation preserving and of orientation reversing, development maps, respectively.
% and they are mapped  to open subsets of $\Hom(\pi_1(M) , G^+)$ under the holonomy map  $\mathsf{hol}$
%in \eqref{sfholmap}. 
%Since, the action of $G^+$ on development maps and on homomorphisms preserves the components of the decomposition \eqref{eq:devdec} and their holonomy images.  
% In fact, the image of hol 
%  in \eqref{sfholmap}.  %we have the notions of 
%positive and negative holonomy homomorphisms. 
%These form open subsets of $\Hom(\pi_1(M) , G)$, and, accordingly,  the image of $\mathsf{hol}$ is a union of these subsets. 

% Therefore, if $(X,G)$ is orientable 
Since $M$ is orientable, the components of the decomposition \eqref{eq:devdec} are preserved by the action of $\Diff_{1}(M,x_{0})$ on development maps. Furthermore, the action of
$G^+$ on development maps preserves the components.
%
%Since the action of $G^+$ on development maps 
%preserves the components of the decomposition \eqref{eq:devdec}, t
Therefore, the deformation space $\Def(M,X,G^+)$ decomposes into two disjoint open and closed subsets, the \emph{orientation components}, \index{deformation space!orientation components of}
\begin{equation} \label{eq:defdec}
\Def(M,X,G^+) = \Def^+(M,X,G) \cup  \Def^-(M,X,G) \; .
\end{equation}
Note that every orientation reversing element of $G$ exchanges the orientation components of  $\Dev(M,X,G)$ and therefore also 
of $\Def(M,X,G^+)$. Hence, if
$G$ contains orientation reversing elements 
then the subgeometry $(X,G^+) \ra (X,G)$ induces a homeomorphism $$ \Def^+(M,X,G)  \approx  \Def(M,X,G)  \; . $$

\subsubsection{The topology of the deformation space}
The following classical and fundamental example gives a role model for the investigation of the properties of deformation
spaces for locally homogeneous structures. 
\begin{example}[Teichm\"uller space $\Teich_{g}$] \index{Teichm\"uller space}
Let $G^+= \PSL(2,\bbR)$ be the group of orientation preserving isometries of the 
hyperbolic plane $\Hyp_{2}$ and $M = M_{g}$ a surface of
genus $g$, $g \geq 2$. 
By the uniformization theorem, the Teichm\"uller space $\Teich_{g}$ of conformal structures on a surface $M_{g}$, $g \geq 2$,
may be considered as the deformation space of constant curvature $-1$ metrics, that is, $$\Teich_{g} = \Def^+(M_{g}, \Hyp_{2}, \PSL(2,\bbR)) \, . $$ 
The space $\Teich_{g}$ is homeomorphic to $\bbR^{6g-6}$.
Recall that the 
\emph{mapping class group} \index{mapping class group}
$$ \Map_{g}= \Diff^+(M_{g})/ \Diff_{0}(M_{g}) \cong \Out^+(\Gamma_{g}) $$  % \marginpar{extended MCG?}
is the group of isotopy classes of orientation 
preserving diffeomorphisms of a surface. 
This group 
acts properly discontinuously on $\Teich_{g}$, and the 
moduli space of conformal structures $$ \Mod(M_{g}) = 
\Teich_{g}/ \, \Map_{g} $$ is a Hausdorff space. 
(See, for example, \cite{Abikoff,EE,Ratcliffe} and 
other chapters of this handbook \cite{BIW, Goldman3}).
\end{example}

In general, however, the topology on the moduli space and the deformation space can be highly singular, as we can see, in particular,  from Examples \ref{ex:completeas} and  \ref{ex:defissing} below. The local properties of the 
deformation space are reflected in the character variety 
$\Hom(\pi_1(M) , G)/G$, which is the space of 
conjugacy classes of representations of 
$\pi_1(M)$ into $G$. \index{character variety}

\paragraph{The holonomy map on the deformation space}
%Let $\Hom(\pi_1(M) , G)/G$ be the space of 
%conjugacy classes of representations of $\pi_1(M)$ in 
%$G$. 
Since  $\Str(M,X,G)$ has the quotient topology from development maps,  the  holonomy \eqref{sfholmap} % of development maps  % (cf.\  \S \ref{sect:convhol}) 
%thus gives rise to 
induces a continuous map 
% $\mathsf{hol}: 
\begin{equation*} \overline{\mathsf{hol}}:  \Str(M,X,G) \longrightarrow \Hom(\pi_1(M) , G)/G  \; ,   \label{sfholmap2}
\end{equation*} which 
gives rise  to the map 
\begin{equation} \label{holmap} 
 hol:  \Def(M) \longrightarrow \Hom(\pi_1(M) , G)/G  \; .
\end{equation} 
% The map $hol$ 
The continuous map $hol$ associates to a homotopy class of $(X,G)$-structures on $M$ the corresponding conjugacy class of its holonomy homomorphism $h$. By the deformation theorem  (Theorem \ref{thm:Deformations}), 
$hol$ is furthermore an open map. \\

The map $hol$ thus encodes  
a good picture of the topology on $\Def(M)$:  

\begin{example}[Teichm\"uller space $\Teich_{g}$ is a cell] 
\label{ex:Teichm}  \index{Teichm\"uller space} \index{surface!hyperbolic}
%Let $G= \PSL(2,\bbR)$ be the isometry group of the 
%hyperbolic plane $\Hyp_{2}$, and $M_{g}$ a surface of
%genus $g$, $g \geq 2$. 
The holonomy image of hyperbolic structures on 
$M_{g}$,  $g \geq 2$,  % \mathsf{hol}(\Dev(M_{g}))$
% , \Hyp_{2}, \PSL(2,\bbR))$ 
is the subspace $\Hom_{c}(\Gamma_{g}, \PSL(2,\bbR))$ 
of the space $\Hom(\Gamma_{g}, \PSL(2,\bbR))$ 
which consists of injective homomorphisms with
discrete image. The space $\Hom_{c}(\Gamma_{g}, \PSL(2,\bbR))$ has two connected components \cite{Go88}. The  
components $\Hom_{c}^+(\Gamma_{g}, \PSL(2,\bbR))$ and
 $\Hom_{c}^-(\Gamma_{g}, \PSL(2,\bbR))$ arise
from the orientation of development maps.  
%As already known to Fricke and
%Klein \cite{FrickeKlein}
The group $\PSL(2,\bbR)$ acts freely and properly 
(by conjugation) on $$ \Hom_{c}^+(\Gamma_{g}, \PSL(2,\bbR)) \; ,$$ the quotient
space being homeomorphic to $\bbR^{6g-6}$. 
(See, for example,  \cite[Theorem 9.7.4]{Ratcliffe}). 
By completeness of hyperbolic structures on $M_{g}$, every
development map is a diffeomorphism. The topological 
rigidity of development maps (cf.\ Example \ref{ex:toprig})
implies that the induced map 
$$\Teich_{g}= \Def^+(M_{g}, \Hyp_{2}) \, \;  \stackrel{hol}{\longrightarrow} \, \; \Hom_{c}^+(\Gamma_{g}, \PSL(2,\bbR))/  \PSL(2,\bbR)) $$
is a homeomorphism. 
\end{example}

In general, it seems difficult to decide if the map 
\index{holonomy map!is a local homeomorphism}
$hol$ is a local homeomorphism, as well. Indeed, Kapovich
\cite{Kapovich} constructed examples of deformation spaces such 
that the map $hol$ is \emph{not} everywhere a local homeomorphism. 
We construct such a counterexample  for the deformation space of 
a two-dimensional geometric structure on the two-torus in Appendix B. \index{holonomy map!is not a local homeomorphism}
In the case of flat affine two-tori though, we shall show that $hol$ is a local homeomorphism (see \S \ref{sect:holislh}). 
\index{holonomy map!for flat affine structures}

\paragraph{The induced map of a subgeometry}
Let $o: (X',G') \ra (X,G)$ be a subgeometry with $\rho: G' \ra G$ the associated homomorphism  (see  
\S \ref{sect:sub_geom}). There is an associated  map 
\begin{equation} \label{eq:subgeometry0}
\Dev(M, X') \, \lra \, \Dev(M,X) \, , \; D' \mapsto D = o \circ D'
\end{equation}
and a map on homomorphisms 
$$  \Hom(\pi_{1}(M), G')  \, \lra \,  \Hom(\pi_{1}(M), G) \, , \; h' \mapsto h = \rho \circ h'  \; , $$
where $h = \mathsf{hol}(D)$.
These maps allow to relate 
% give rise to a holonomy equivariant map which relates
the deformation spaces in  a commutative diagram 
of the form  
\begin{align} \label{eq:subgeometry1}
 \xymatrix{
\; \; \Def(M, X') \; \; \ar[d] \ar[r]^(0.37){hol}   & \; \; \Hom(\pi_{1}(M), G')/  G'   \; \; \ar[d]  \\
\; \; \Def(M, X) \; \; \ar[r]^(0.37){hol}  &  \; \; 
\Hom(\pi_{1}(M), G)/  G  \; \; . }    % \label{eq:subgeometry1}
\end{align}

Note that the properties of the induced map $\Def(M, X') \ra \Def(M,X)$ can vary wildy with various types of subgeometries. 
In general, the induced map need not be injective nor surjective.\\

Recall the notion of covering of geometries from Definition
\ref{def:subgeometry}. We shall require the following lemma:
\begin{lemma} \label{lem:covering_geoms}
If $o: (X',G') \ra (X,G)$ is a covering of geometries then 
the induced map on deformation spaces 
$$ \Def(M, X') \lra \Def(M,X) $$
is a homeomorphism.
\end{lemma}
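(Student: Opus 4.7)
The plan is to construct a continuous inverse $\Psi$ to the map $\Phi: \Def(M,X') \to \Def(M,X)$, $[D'] \mapsto [o \circ D']$, by lifting development maps through the covering $o$. That $\Phi$ itself is well-defined and continuous is immediate: post-composition with the smooth map $o$ is continuous in the $C^{\infty}$-topology on $\Dev(M,X')$, it intertwines the $G'$- and $G$-actions via $\rho$ (by $\rho$-equivariance of $o$, which is part of the definition of a subgeometry), and it commutes with the $\Diff_{1}(M,x_{0})$-action since that action is on the source $\tilde M$; hence it descends continuously to deformation spaces.

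For the inverse $\Psi$, I will start with $D \in \Dev(M,X,G)$ and choose basepoints $\tilde x_{0} \in \tilde M$ together with $y_{0} \in o^{-1}(D(\tilde x_{0}))$. Since $\tilde M$ is simply connected and $o$ is a covering map, there is a unique lift $D': \tilde M \to X'$ with $D'(\tilde x_{0}) = y_{0}$, which is automatically a smooth local diffeomorphism because $o$ is. To construct the holonomy $h'$, I will exploit the covering-of-geometries hypothesis: $o$ is regular with deck group \emph{exactly} $\ker \rho$, so $\rho(G') = G$. For each $\gamma \in \pi_{1}(M,x_{0})$ choose some $g'_{\gamma} \in G'$ with $\rho(g'_{\gamma}) = h(\gamma)$; then both $g'_{\gamma} \circ D'$ and $D' \circ \gamma$ are lifts of $h(\gamma) \circ D = D \circ \gamma$ along $o$, so they differ by a deck transformation $\delta_{\gamma} \in \ker \rho \subset G'$. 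Setting $h'(\gamma) := \delta_{\gamma} g'_{\gamma} \in G'$ gives $D' \circ \gamma = h'(\gamma) \circ D'$ with $\rho \circ h' = h$, so $(D', h')$ is a development pair for an $(X',G')$-structure. Changing $y_{0}$ within its fiber by $\delta' \in \ker \rho$ replaces $D'$ by $\delta' \circ D'$, which yields an equivalent $(X',G')$-structure; hence $\Psi([D]) := [D']$ is well-defined, and by uniqueness of lifts $\Phi \circ \Psi = \Psi \circ \Phi = \id$.

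For continuity of $\Psi$, suppose $D_{n} \to D$ in $\Dev(M,X)$ in the $C^{\infty}$-topology and choose basepoint lifts $y_{0}^{n} \to y_{0}$ via a local section of $o$ near $y_{0}$. On a compact subset $\overline U \subset \tilde M$ with $\mathsf{p}(\overline U) = M$, each $D'_{n}|_{\overline U}$ is locally of the form $o^{-1} \circ D_{n}|_{\overline U}$ for a smooth local inverse of $o$, so $D'_{n}|_{\overline U} \to D'|_{\overline U}$ in $C^{\infty}$. For the holonomies, $\rho \circ h'_{n} = h_{n} \to h = \rho \circ h'$ pointwise; combined with $h'_{n}(\gamma) \cdot D'_{n}(\tilde x_{0}) = D'_{n}(\gamma \tilde x_{0}) \to D'(\gamma \tilde x_{0}) = h'(\gamma) \cdot D'(\tilde x_{0})$ and the fact that $\rho$ is a local diffeomorphism of Lie groups, this forces $h'_{n}(\gamma) \to h'(\gamma)$ in $G'$ for each $\gamma$. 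By Fact~\ref{fact:convergence}, $D'_{n} \to D'$ in $C^{\infty}$, so $\Psi$ is continuous. The main obstacle will be the delicate verification that the pointwise-defined lifted holonomy $h'(\gamma)$ is genuinely a global element of $G'$ satisfying the equivariance on all of $\tilde M$; this is where the regularity of $o$ and the matching of its deck group with $\ker \rho$ must be used, to reconcile the two lifting ambiguities (elements of $G$ through $\rho$, and maps into $X$ through $o$) into a coherent choice.
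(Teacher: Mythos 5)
Your proposal is correct and takes essentially the same route as the paper, whose entire proof is the one-line assertion that, since $o$ is a covering, the map $D' \mapsto o \circ D'$ descends to a $\Diff(M)$-equivariant homeomorphism $\Str(M,X') \to \Str(M,X)$; your lifting construction (unique lifts of $D$ through $o$ on the simply connected $\tilde M$, reconciliation of the holonomy via the identification of the deck group with $\ker\rho$, and local sections for continuity) is precisely the argument that substantiates that assertion. The only step worth flagging is your inference that $\rho(G') = G$ "follows" from the covering-of-geometries definition — it does not formally follow from Definition~\ref{def:subgeometry} as stated, but it is needed for surjectivity of the induced map and holds in all the paper's applications, so it is best recorded as an implicit hypothesis rather than a deduction.
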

\begin{proof} Indeed, since $o$ is a covering the above map \eqref{eq:subgeometry0}, $D' \mapsto D$,  
on development maps 
descends to a $\Diff(M)$-equivariant map on the sets of structures
$$ \Str(M,X') \lra \Str(M,X) $$ 
which is a homeomorphism. 
\end{proof}

\subsubsection{The topology on the space of $(X,G)$-structures}
% \index{space!of $(X,G)$-structures} 
\index{$(X,G)$-structures!space of}
The topology on the space $\Str(M,X,G)$ is rather well behaved. 
In fact,  $\Str(M,X,G)$ is  a Hausdorff and  metrizable   
topological space. This can be seen by representing an 
$(X,G)$-structure on $M$ as an integrable higher order structure
in the sense of Ehresmann (cf.\ \cite[\S I.8]{SKobayashi}). 
We discuss two important examples now:

\begin{example}[$\Str(M,  \Hyp_{2}, \PSL(2,\bbR))$]
The space of hyperbolic structures on a surface $M$ is homeomorphic \index{$(X,G)$-structures!hyperbolic}
to the space of hyperbolic (constant curvature $-1$) Riemannian metrics
with the $C^\infty$-topology on the space of Riemannian metrics.
It can also be equipped with the structure of a contractible Fr\'echet manifold, see \cite{EE}. Similarly, the space $\Str(M, \bbR^2, \E(2))$ of flat Euclidean structures is homeomorphic to
the space of flat Riemannian metrics on $M$.
\end{example}

In the case of flat affine structures, the action of the affine 
group on development maps admits a global slice: 
\index{$(X,G)$-structures!flat affine}

\begin{example}[$\Str(M,\bbA^n)$ is Hausdorff]  
\label{ex:framepreserving}
%[Frame preserving development maps]
 Let $\Dev(M, \bbA^n)$ be the \index{development maps!space of}
set of development maps for flat affine structures on $M$. 
We choose a base frame $E_{x_{0}}$ on $\bbA^n$ and a 
frame $F_{\tilde m_{0}}$ 
on $\tilde M$, respectively, 
and let $$ \Dev_{f}(M,\bbA^n) = \Dev_{f}(M,F_{{\tilde m}_{0}}, E_{x_{0}})$$  denote the set of frame preserving
development maps. Since $\Aff(n)$ acts simply transitively on
the frame bundle of $\bbA^n$, there is a well defined continuous
retraction $\Dev(M, \bbA^n) \ra \Dev_{f}(M,\bbA^n)$, and, in fact, there is 
a homeomorphism $$ \Dev(M, \bbA^n) \approx \Aff(n) \times  \Dev_{f}(M,\bbA^n) \; .$$
This shows 
that the quotient  $\Dev(M, \bbA^n)/ \Aff(n)$ is homeomorphic to 
the subspace $\Dev_{f}(M, \bbA^n)$ and the affine group $\Aff(n)$ acts properly on the set
of development maps. In particular, the space of flat affine structures 
$\Str(M, \bbA^n)$ is a Hausdorff space. 
\end{example}
Another way to understand the topology on $\Str(M,\bbA^n)$
is to identify flat affine structures with % the space of 
\emph{flat torsion free
connections} on the tangent bundle of $M$. These form 
a space of sections of a quotient of the bundle of $2$-frames
over $M$, see
\cite[Proposition IV.7.1]{SKobayashi}. 
In \S \ref{sect:affine_conns} % \ref{sect:thedefspace} 
of this chapter we employ this approach to study flat affine structures on the two-torus.

\subsubsection{The subspace of complete $(X,G)$-structures}
\index{deformation space!of complete $(X,G)$-structures}
\index{$(X,G)$-structures!complete}
Let $\Def_{c}(M)$ denote the subset of the deformation space
$\Def(M)$ which consists of complete $(X,G)$-space forms 
(that is, the subspace corresponding to development maps which are diffeomorphisms).   % and $\Hom_{c}(\pi_1(M), G)$ 
We denote with $\Hom_{c}(\pi_1(M), G)$ 
the set of all injective homomorphisms $\pi_1(M) \ra G$, 
such that the image acts properly discontinuously on $X$.
We call $\Hom_{c}(\pi_1(M), G)$ the \emph{set of discontinuous homomorphisms}. 
The holonomy homomorphisms  belonging to
the elements of $\Def_{c}(M)$ form an open subset 
of $\Hom_{c}(\pi_1(M), G)$. 
In fact, by the rigidity of development maps belonging to discontinuous 
holonomy homomorphisms (cf.\ Example \ref{ex:toprig}),
 a small deformation of holonomy, 
which remains in the domain of discontinuous homomorphisms,  lifts to a 
deformation of complete $(X,G)$-manifold structures on $M$. Therefore, Theorem
\ref{thm:Deformations} implies that
the restricted map 
\begin{equation*}  \mathsf{hol}:  %  \Dev(M)\big/\,\Diff_{0}(M,x_{0})  
\; \Diff_{0}(M,x_{0}) \backslash \,\Dev_{c}(M) \longrightarrow \Hom_{c}(\pi_1(M) , G)  % \label{sfholmap} 
\end{equation*}
is a local homeomorphism. 
Then the following result is easily observed (see also \cite{BauesV}):

 \begin{theorem} \label{thm:cdefspace}
 Let $M$ be a smooth compact manifold such that
 the natural homomorphism $\Diff(M)/\Diff_{1}(M) \ra \Out(\pi_{1}(M))$ is injective.  Then the induced map
 $$   hol:  {\Def}_{c}(M) \longrightarrow \Hom_{c}(\pi_1(M), G)/G  $$
 is a homeomorphism onto its image.
 \end{theorem}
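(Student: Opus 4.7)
The plan is to establish that $hol$ is continuous, open, and injective; any such map is automatically a homeomorphism onto its image. Continuity is immediate from the definition of the quotient topology on $\Def_c(M)$, and openness is already in hand from the discussion preceding the theorem: the Thurston deformation theorem (Theorem \ref{thm:Deformations}), combined with the rigidity of discontinuous holonomy recalled in Example \ref{ex:toprig}, guarantees that small deformations of discontinuous holonomy homomorphisms lift to deformations of complete $(X,G)$-structures, so $hol$ is open on $\Def_c(M)$. The substantive step, and the one that actually uses the hypothesis on $\Diff(M)/\Diff_1(M)$, is to prove injectivity.

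For injectivity, I would start from two classes $[D_1],[D_2]\in\Def_c(M)$ with $hol([D_1])=hol([D_2])$. After post-composing $D_2$ with a suitable $g\in G$ (which does not change its class in $\Def_c(M)$), one arranges that both development maps carry the \emph{same} holonomy homomorphism $h\colon\pi_1(M,x_0)\to G$. Since $h(\pi_1(M))$ acts properly discontinuously and freely with compact quotient on $X$, the topological rigidity of Example \ref{ex:toprig} supplies a diffeomorphism $\Phi\in\Diff(\tilde M)$ centralizing the deck transformation group $\Gamma\cong\pi_1(M)$ such that $D_2=D_1\circ\Phi$. The centralizing property ensures $\Phi$ descends to a diffeomorphism $\phi\in\Diff(M)$ that realises the $(X,G)$-equivalence between the two structures.

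To conclude, I would verify that $\phi\in\Diff_1(M)$, so that $[D_1]=[D_2]$ in $\Def_c(M)$. Since $\Phi$ commutes with every element of $\Gamma$, the automorphism of $\pi_1(M,x_0)$ induced by $\phi$ (after the standard identification of $\pi_1(M,\phi(x_0))$ with $\pi_1(M,x_0)$ by an auxiliary arc) coincides with an inner automorphism, hence represents the identity in $\Out(\pi_1(M))$. The injectivity hypothesis $\Diff(M)/\Diff_1(M)\hookrightarrow\Out(\pi_1(M))$ then forces $\phi\in\Diff_1(M)$, establishing injectivity.

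The main obstacle I expect is purely the basepoint bookkeeping in the last step: one must verify that the centralizing condition on $\Phi$, after transport across the arc from $x_0$ to $\phi(x_0)$, really yields the trivial class in $\Out(\pi_1(M))$ and not some a priori nontrivial coset. Once this is tracked carefully, the hypothesis on $\Diff(M)/\Diff_1(M)$ converts triviality in $\Out(\pi_1(M))$ into membership in $\Diff_1(M)$, and the theorem follows from the continuous-open-injective schema.
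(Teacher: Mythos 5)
Your argument is correct and is exactly the one the paper has in mind: the paper declares the theorem ``easily observed'' after setting up the local homeomorphism property of $\mathsf{hol}$ on $\Dev_c(M)$ and the rigidity of development maps with discontinuous holonomy (Example \ref{ex:toprig}), and your proof simply supplies the omitted injectivity step by combining that rigidity with the hypothesis on $\Diff(M)/\Diff_{1}(M)\ra\Out(\pi_{1}(M))$. The basepoint bookkeeping you flag does work out as you expect, since identifying $\pi_1(M,\phi(x_0))$ with $\Gamma$ via the lift $\Phi(\tilde x_0)$ makes $\phi_*$ literally the identity on $\Gamma$, so the class in $\Out(\pi_1(M))$ is trivial.
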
 

Note that the assumptions of the theorem are satisfied, for example, if $X$ is contractible.
%\begin{example}  Complete structures form a closed 
%subset of the deformation space.
%\end{example}

\begin{example}[Complete flat affine structures on $T^2$]   \label{ex:completeas} \index{flat affine torus!complete} \index{flat affine manifold!complete} \index{$(X,G)$-structures!flat affine}
The holonomy image of development maps 
% $\mathsf{hol}$
% of $\Dev_{c}(T^2, \bbA^2,\Aff(2)))$
% , \Hyp_{2}, \PSL(2,\bbR))$ 
for complete flat affine structures on the two-torus 
is $\Hom_{c}(\bbZ^2, \Aff(2))$, that is, it
consists of all injective homomorphisms with
properly discontinuous image.
As is shown in \cite[\S 4.4]{BauesG}, 
% $\Hom_{c}(\bbZ^2, \Aff(2))$ 
this is a locally closed 
subset of $\Hom(\bbZ^2, \Aff(2))$, defined by algebraic
equalities and inequalities, and it has two connected
components.  Moreover, the conjugation action of the 
group $\Aff(2)$ on $\Hom_{c}(\bbZ^2, \Aff(2))$ is orbit
equivalent to its restriction to the subgroup $\GL(2,\bbR)$. 
The latter group acts freely and properly 
on $\Hom_{c}(\bbZ^2, \Aff(2))$ and the quotient
space is homeomorphic to $\bbR^2$.  Since 
$$ \Def_{c}(T^2, \bbA^2) \, \;  \stackrel{hol}{\longrightarrow} \, \; \Hom_{c}(\bbZ^2, \Aff(2))/\Aff(2)  $$
is a homeomorphism, the
deformation space of complete flat affine structures 
$\Def_{c}(T^2,  \bbA^2)$  is homeomorphic to $\bbR^2$. 
%Moreover, the projection map $$ \Hom_{c}(\bbZ^2, \Aff(2)) \lra \Def_{c}(T^2, \bbA^2)$$
%admits continuous sections.
As is shown in \cite{BauesG,BG}, natural 
coordinates can be chosen such that the action of $\Map^{+}(T^2) = \SL(2,\bbZ)$
on $\Def_{c}(T^2, \bbA^2)$ corresponds to the standard representation of  $\SL(2,\bbZ)$
on $\bbR^2$. \index{deformation space!of complete flat affine structures}
\index{deformation space!of flat affine structures}
\end{example}

% \begin{example}[Deformation of lattices, A.\ Weil 1962 \cite{Weil}]  
\subsubsection{Deformation of lattices (A. Weil, 1962)}  \label{sect:lattices} \index{lattice!deformation of} \index{deformation of lattice} \index{Weil A.}
Let $G$ be a simply connected Lie group and $\Gamma_{o} \leq G$ a cocompact lattice. We put $$ M_{o} = G/ \Gamma_{o} \; , $$ where $\Gamma_{o}$ acts by left-multiplication on the universal cover $\tilde M_{o} =G$ of $M_{o}$. Let $(X,G_{L}) = (G,G_{L
})$ be the homogeneous
geometry which is defined by the action of $G$ on
itself by left-\-mul\-tipli\-cation. Since the action of $G$ on itself is proper, every $(G,G_{L})$-manifold is complete. Hence 
%, the deformation space for the $(G,G_{L})$-manifold $M_{o}$ consists of complete structures only,  that is,  
$$  \Def(M_{o}, G) = \Def_{c}(M_{o}, G) \;  $$
and the holonomy image of $ \Def(M_{o}, G) $ is contained in 
the space of lattice homomorphisms
$$ \Hom_{L}(\Gamma_{o},G) = \{ \rho: \Gamma_{o} \hookrightarrow G \mid \rho(\Gamma_{o}) \text{ is a lattice in } G \} \; . $$
We call  the space of conjugacy classes of lattice homomorphisms
$$  \Def_{L}(\Gamma_{o}, G) =  \Hom_{L}(\Gamma_{o},G)/G $$
the deformation space of the lattice $\Gamma_{o}$. \index{deformation space!of lattice}¥
The holonomy map  % $hol$ 
\begin{equation} \label{eq:hol_lat}  hol: \Def(M_{o}, G) \,  \longrightarrow \,  \Def_{L}(\Gamma_{o}, G)
\end{equation}
therefore locally embeds $\Def(M_{o}, G)$ as an open (and closed) subspace of the deformation space of $\Gamma_{o}$. 
This is the original setup which is studied in the seminal paper  \cite{Weil} by Andr\'e Weil. Fundamental results on the nature of the involved spaces  $\Hom_{L}(\Gamma_{o},G)$ and $\Def_{L}(\Gamma_{o}, G)$  are obtained in the foundational 
papers \cite{Weil, Weil2, Wang}, see also \cite{BeGe}. 
For a recent contribution in the context of solvable Lie groups $G$, see \cite{BK}; 
the examples which are constructed  in \cite[Section 2.3]{BK} show that there exist deformation spaces of the form
$\Def(M_{o}, G)$, which have infinitely many connected components.  
% \end{example}

\paragraph{Rigidity of lattices and 
action of the automorphism group of $G$} \index{rigidity!of lattice} 
\index{lattice!rigidity of}
Note that the group $\Aut(G)$ of automorphisms of 
$G$ has natural actions on the space of development maps 
$\Dev(M_{o}, G)$ and on $\Hom_{L}(\Gamma_{o},G)$.
Indeed, let $\phi \in \Aut(G)$, and $D: G \ra G$ a  development map for an $(G,G_{L})$-structure on $M_{o}$ with
holonomy $\rho \in \Hom_{L}(\Gamma_{o},G)$. 
Then the composition $$ \phi \circ D: G \ra G $$ is a
development map with holonomy $\phi \circ \rho$. 
These actions descend to actions on $ \Def(M_{o}, G)$,
$\Def_{L}(\Gamma_{o}, G)$ respectively, such that
\eqref{eq:hol_lat} becomes an equivariant map.

\begin{example}[Rigid lattices] \index{lattice!rigid}
A lattice $\Gamma_{o}$ is called \emph{rigid} in $G$
if $\Aut(G)$ acts transitively on $\Def_{L}(\Gamma_{o}, G)$. 
For example, lattices in nilpotent Lie groups $G$, or 
lattices in simple Lie groups $G$ not locally isomorphic to
$\SL(2,\bbR)$ are rigid, see \cite{OnVi}. In these two cases
we then have identities
$$ \Def(M_{o}, G) \stackrel{\approx}{\lra}  \Def_{L}(\Gamma_{o}, G) \approx \Aut(G) / \Inn(G) \; ,
$$
for any lattice $\Gamma_{o} \leq G$. Here, $ \Inn(G)$ denotes the
group of inner automorphisms of $G$.
\end{example}

More generally, we call a lattice $\Gamma_{o}$ \emph{smoothly rigid}, if the holonomy map \eqref{eq:hol_lat} is a homeomorphism, \index{lattice!smoothly rigid}
that is, if $\Def(M_{o}, G) = \Def_{L}(\Gamma_{o}, G)$. For example, lattices in solvable Lie groups are smoothly rigid, by a theorem of Mostow; but there do exist solvable Lie groups which  admit non-rigid lattices. See \cite{OnVi}, or \cite{BK} and the references therein for specific examples. \\

The deformation spaces of the form $\Def(M_{o}, G)$ play an important role in the analysis of general deformation spaces,
since many geometric structures arise from \'etale representations.
An illustrative example is given by the stratification of 
% \index{stratification!of deformation space} 
\index{deformation space!stratification of}
the space of deformations of flat affine structure
on the two-torus which is studied in detail in 
\S  \ref{sect:def_homogeneous}. 

\paragraph{The induced map of an \'etale representation}
\index{etale@\'etale representation} \index{representation!\'etale}  
Let $G'$ be a simply connected Lie group and $\Gamma_{o} \leq G'$ a cocompact lattice. We put $M_{o} = G'/\Gamma_{o}$. 
Let us assume for simplicity that $\Gamma_{o}$ is smoothly 
rigid as well. 
Now let $(X,G)$ be a homogeneous space and $\rho:G' \ra G$ be an \'etale representation (see Definition \ref{def:etale}). Then the orbit map % $o: G' \ra \rho(G') x_{o}$, 
which is associated to an
open orbit of $G'$ defines a subgeometry 
$$ o: (G',G'_{L}) \ra (X,G)
\, $$
which in turn gives rise to a map \eqref{eq:subgeometry1} of deformation
spaces 
$$   \Def(M_{o}, G') \, \lra  \, \Def(M_{o},X,G) \; , $$
that is, we obtain a map 
$$  \Def_{L}(\Gamma_{o}, G') =  \Hom_{L}(\Gamma_{o},G')/G' \ra \Def(M_{o},X,G) \; . $$
This map factors over the action  
of the normalizer $\N_{G}(\rho)$ of $\rho(G')$ in $G$,
that is, we have an induced map 
$$ \Hom_{L}(\Gamma_{o},G')/ \, \N_{G}(\rho) \ra \Def(M_{o},X,G) \; . $$
We remark that, if $\Gamma_{o}$ is rigid in $G'$ then 
$$ \Hom_{L}(\Gamma_{o},G')/ \, \N_{G}(\rho) = \Aut(G') /N \; ,$$
where $N \leq \Aut(G')$ denotes the image of 
$\N_{G}(\rho)$ in $\Aut(G')$. 

%\paragraph{Marked structures arising from automorphisms 
%of $G$.} 
%

\subsubsection{Dynamics of the $G$-action on $\Hom(\Gamma,G)$}
In Examples \ref{ex:Teichm} and \ref{ex:completeas} above, the map $hol$ is a homeomorphism,  and the corresponding deformation spaces are Hausdorff.  These properties hold in particular  if the holonomy image in  $\Hom(\Gamma,G)/G$ is obtained as a quotient by a \emph{proper} group action. In fact, if $G$ acts properly (and freely) on the image of $\mathsf{hol}$, then, \index{slice theorem}
 by the slice theorem (cf.\ \cite{Palais}), the projection map 
$\Hom(\Gamma,G) \ra \Hom(\Gamma,G)/G$ admits a section near every
holonomy homomorphism. It then follows from Theorem \ref{thm:Deformations} that 
$hol: \Def(M) \ra \Hom(\Gamma,G)/G$ is a local homeomorphism. 
%Moreover, in this situation  
% \emph{a sequence of points in the deformation space converges if and 
% only if it may be represented by a sequence of development maps which converges in $\Dev(M)$}. 

\begin{example}[Subvariety of stable points]
If $G$ is a reductive \index{stable point of reductive group action}
linear algebraic group, then, by a general fact on representations 
of such groups, there exists a Zariski-open subset of  
stable points in $\Hom(\Gamma,G)$, where $G$ acts properly. 
Recall that, for any representation of $G$ on a vector space, or 
any action of $G$ on an affine variety $V$, a point $x \in V$ is called \emph{stable}
if the orbit $Gx$ is closed and $\dim G x = \dim G$.
The set of stable points may be empty though. For the
action of $G$ on $\Hom(\Gamma, G)$ it is non-empty if there are 
points $\rho \in \Hom(\Gamma,G)$ such that $\rho(\Gamma)$ 
is sufficiently dense in $G$. In the specific context  where $\Gamma$ is abelian 
(or solvable),  $\Hom(\Gamma,G)$ has no stable points
(as follows from \cite[Theorem 1.1]{Millson}).
See \cite{Millson, Goldman} 
for further discussion of these facts and 
for some applications. 
\end{example}

One cannot expect $\Def(M)$ to be a Hausdorff space, in general. 
In fact, the image of $hol$ in $\Hom(\pi_1(M) , G)/G$ may 
contain non-closed points. In this situation 
also $\Def(M)$ has non-closed points. The following example 
is due to Bill Goldman: 
\index{deformation space!non-closed point of}

\begin{example}[Non-closed points in $\Def(T^2, \bbA^2)$]
\label{ex:defissing}
% For $\lambda >0$, l
Let
$$ A_{\epsilon} = \begin{matrix}{cc} \lambda & \epsilon \\
0 & \lambda \end{matrix} \; , \; \text{where $\lambda>1$.} $$
Then 
$M_{\epsilon} =  \langle A_{\epsilon} \rangle \lmod \, \bbAo$
% Then $M_{\epsilon} $
is  a flat affine two-torus,
which has an
infinite cyclic holonomy group generated by 
$A_{\epsilon}$ (see also Example \ref{ex:expholonomy}).
Let $\rho_{\epsilon}$ denote a corresponding 
holonomy homomorphism for $M_{\epsilon}$. 
Since the $A_{\epsilon}$,  $\epsilon \neq 0$,  
are all conjugate elements of $\GL(2,\bbR)$, 
the closure of the $\GL(2,\bbR)$-orbit of $\rho_{1} \in 
\Hom(\bbZ^2,\GL(2,\bbR))$  
contains the holonomy homomorphism $\rho_{0}$.
Therefore, the orbit $[\rho_{1}]$ is not closed in 
 $\Hom(\bbZ^2,\Aff(2))/\Aff(2)$. By Corollary \ref{cor:bbaotop},
 $M_{1}$ defines a non-closed point in the deformation space. 
 \end{example}

 Observe that $\rho_{0}$ is the holonomy of the Hopf torus ${\mathcal H}_{\lambda}$. \index{Hopf torus}
 %  which is generated by $A_{\lambda} =  \begin{matrix}{cc} \lambda & 0 \\ 0 & \lambda \end{matrix}$. 
 By Theorem \ref{thm:Deformations} there exists   
 a corresponding family of development maps 
 $D_{\epsilon}$  % : \tbbAo \ra \bbA^2$ % for $M_{\epsilon}$
 with holonomy $\rho_{\epsilon}$  
 which converges to the development
 map of the Hopf torus $M_{0}= {\mathcal H}_{\lambda}$.
 We observe that these development maps belong to 
 affine structures which are isotopically equivalent to the tori $M_{\epsilon}$. Hence, the closure of $M_{1}$ in
 the deformation space contains the Hopf torus
 ${\mathcal H}_{\lambda}$.
 %  as follows. 
%: \tbbAo \ra \bbA^2$ for $M_{\epsilon}$.
% with holonomy $\rho_{\epsilon}$ which 
% converges to the development
% map of the Hopf torus $M_{0}= {\mathcal H}_{\lambda}$. 

(To see explicitly how the development maps for the 
tori $M_{\epsilon}$ converge to
the Hopf torus in the deformation space,  we may use  
the constructions in \S \ref{sect:tori_bbao} in this chapter.
In fact, we construct  
 $M_{\epsilon}$  as 
 a quotient space $M_{\epsilon} = \cT_{A_{\epsilon}, \id, 2}$ of $\tbbAo$, as in  Example 
 \ref{ex:gentorusnh}. Then we
deform the development $D= D_{0}$ of $M_{0}$ 
as in the proof of Theorem \ref{thm:Deformations} to
obtain a sequence of development maps 
 $D_{\epsilon}: \, \tbbAo  \, \rightarrow \, \bbA^2$
 for $\cT_{A_{\epsilon}, \id, 2}$ which converges to $D_{0}$.) 
  
 \subsubsection{Dynamics of the $\Diff_{0}(M)$-action on $(X,G)$-structures}
 In favorable cases, the topology on $\Def(M)$ may be determined by constructing
 slices for the action of $\Diff_{0}(M)$ on  $\Str(M,X,G)$. The study of the action of 
 $\Diff_{0}(M)$ on $\Str(M,X,G)$ may then be used to deduce information on the
topology (diffeomorphism groups carry the 
$C^\infty$-topology) of $\Diff_{0}(M)$, or, vice versa, on the 
topology of $\Str(M,X,G)$. The theory of slices for action of diffeomorphism 
groups on spaces of Riemannian metrics was developed by Palais and
Ebin \cite{Ebin}. \index{slices for group of diffeomorphisms} 
\index{diffeomorphism group!of manifold} \index{manifold!diffeomorphism group of}
Recall that a continuous action of $\Diff(M)$ on a space $\mathcal S$ is called
proper if the map $\Diff(M) \times \mathcal{S} \ra \mathcal{S} \times \mathcal{S}$, 
 $(g,s) \mapsto ( g \cdot s, s)$ is proper. If the action is proper, the quotient
 space is Hausdorff (cf.\  \cite[III, \S 4.2]{BourbakiTop}).
 \index{proper group action}
 \index{diffeomorphism group!proper action of}
 
\begin{example}[$\Diff(M_{g})$ acts properly] 
The group of diffeomorphisms of a closed surface $\Diff(M_{g})$   
acts properly on the space of conformal structures \index{conformal structure}
$\Str(M_{g}, \Hyp_{2}, \PSL(2,\bbR))$, if $g \ge 1$. 
\index{diffeomorphism group!of surface} \index{surface!diffeomorphism group of}
In particular, the identity component 
$\Diff_{0}(M_{g})$ acts properly and freely.
Moreover, the projection map $$\Str(M_{g}, \Hyp_{2}, \PSL(2,\bbR)) \ra \Teich_{g}$$  is a trivial 
$\Diff_{0}(M_{g})$-principal bundle. Since
$\Str(M_{g}, \Hyp_{2}, \PSL(2,\bbR))$ and 
$\Teich_{g}$ are contractible, this implies at once 
that \emph{the group $\Diff_{0}(M)$
is contractible}. These results were shown in \cite[\S5 D]{EE}. 
\index{diffeomorphism group!contractible}
\end{example}

Similar results hold also for the space $\Teich_{1}$ of conformal structures 
(flat Riemannian metrics) on the two-torus. In fact, $\Diff(T^2)$ acts 
properly on the space $\Str(T^2, \bbR^2, E(2))$,
and the moduli space of such structures is a Hausdorff space. 
\index{diffeomorphism group!of two-torus} \index{torus!diffeomorphism group of}
However, the action of 
$\Diff_{0}(T^2)$ is not free,
since every flat Riemannian
structure on $T^2$ has 
$S^1\times S^1$ acting as a group of isometries.
In this situation, we may replace
$\Diff(T^2)$ with the subgroup $\Diff(T^2,x_{0})$. 
Indeed, $S^1 \times S^1$ is a deformation
retract of $\Diff_{0}(T^2)$ and the group
$\Diff_{0}(T^2,x_{0})$ is contractible  (cf.\ \cite{EE}).\\

In general, the action of $\Diff_{1}(M)$ on a space of structures  $\Str(M,X,G)$ 
need not be free neither proper, as we show in the following examples.

\paragraph{Action of $\Diff(T^2)$ on the space of flat affine structures}
% $\Str(T^2,\bbA^2)$}  
\index{diffeomorphism group!action on flat affine structures}
In the case of flat affine structures on the two-torus, the action 
of $\Diff_{0}(T^2)$ % (and of $\Diff_{0}(T^2,x_{0})$) 
on the set of all affine structures
$\Str(T^2,\bbA^2)$ is not proper, for otherwise $\Def(T^2, \bbA^2)$ would be 
a Hausdorff space.  But, in fact, as we show in  
Example \ref{ex:defissing}, $\Def(T^2, \bbA^2)$ has singularities. \\

An interesting in-between  
case arises when restricting to the subspace $\Str_{c}(T^2,\bbA^2)$ of \emph{complete} flat  affine structures. This case bears some resemblance to the case of conformal structures, although here the action of $\Diff(T^2)$ on the set of structures $\Str_{c}(T^2,\bbA^2)$ is not proper. However,  the action of the subgroup $\Diff_{0}(T^2)$ on  $\Str_{c}(T^2,\bbA^2)$ \emph{is}  proper. 

\begin{example}[Action of $\Diff(T^2)$ on $\Str_{c}(T^2,\bbA^2)$]
% \label{ex:Defc}
Observe first that every complete flat affine structure on $T^2$
is homogeneous and the identity component of its automorphism 
group acts simply transitively. This follows from the classification
given in Theorem \ref{thm:classification}.
%  see,  in particular,  Proposition \ref{prop:complete_class}. 
Therefore, like in the case of Euclidean structures, 
$\Diff_{0}(T^2, x_{0})$ acts freely on $\Str_{c}(T^2, \bbA^2)$ 
and $$   \Def_{c}(T^2, \bbA^2)  =  
\Str_{c}(T^2, \bbA^2) / \Diff_{0}(T^2, x_{0}) \; . $$ 

Since $\mathsf{hol}:  \Diff_{0}(M,x_{0}) \backslash \, \Dev_{c}(T^2, \bbA^2) \ra \Hom_{c}(\bbZ^2,\Aff(2))$ locally admits continuous equivariant sections (see the discussion before Theorem \ref{thm:cdefspace}), it follows that the map
$$ \Str_{c}(T^2, \bbA^2) \ra \Def_{c}(T^2, \bbA^2)$$ is a locally trivial principal bundle
for $\Diff_{0}(T^2, x_{0})$. (It is also a universal bundle, since $\Str_{c}(T^2, \bbA^2)$ is contractible, as we see in Proposition \ref{prop:ss_iscon} below.) This already implies
that $ \Diff_{0}(T^2, x_{0})$ acts properly on 
$\Str_{c}(T^2, \bbA^2)$. On the other hand,  $\Diff(T^2, x_{0})$ does \emph{not} act
properly, since the action of the (extended) mapping class group 
of the two-torus
$$ \Diff(T^2, x_{0}) /\Diff_{0}(T^2, x_{0}) \cong \GL(2,\bbZ)$$ 
on the deformation space $\Def_{c}(T^2, \bbA^2) = \bbR^2$ 
% is the standard representation, which 
is not properly discontinuous. (See also Example \ref{ex:completeas}).
%The projection map 
%$$ \Hom_{c}(\bbZ^2, \Aff(2)) \lra \Def_{c}(T^2, \bbA^2)$$
%admits continuous sections.
\end{example}

In the previous example a slightly stronger result holds.  
Indeed, by the proof of \cite[Corollary 4.9]{BauesG} the projection map 
$$ \Hom_{c}(\bbZ^2, \Aff(2)) \lra \Def_{c}(T^2, \bbA^2)$$
admits a global (continuous) section.
%  $\Def_{c}(T^2, \bbA^2) \ra \Hom_{c}(\bbZ^2,\Aff(2))$. 
Since the space $\Def_{c}(T^2, \bbA^2) = \bbR^2$ is contractible, we may use the covering homotopy theorem to conclude that there exists a continuous section % \marginpar{\bf !!}
$s:  \Def_{c}(T^2, \bbA^2) \ra \Str(T^2, \bbA^2)$.
This shows that the 
above principal bundle is trivial. (For an explicit construction of such a section, refer to \S \ref{sect:devsection} of this chapter.) 

\index{deformation space!of complete flat affine structures}
A typical application is:
\begin{proposition} \label{prop:ss_iscon}
The space $\Str_{c}(T^2, \bbA^2)$ of complete flat affine structures on the two-torus is contractible. 
\end{proposition}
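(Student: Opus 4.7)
The plan is to package the ingredients already assembled in the immediately preceding example and remarks into a single product decomposition. By the preceding example, the natural projection
$$ \Str_{c}(T^2, \bbA^2) \lra \Def_{c}(T^2, \bbA^2) $$
is a principal $\Diff_{0}(T^2,x_{0})$-bundle, and it has been noted (using the proof of Corollary 4.9 of \cite{BauesG} together with the covering homotopy theorem, since the base $\Def_{c}(T^2, \bbA^2) \iso \bbR^2$ is contractible) that this bundle admits a continuous global section $s: \Def_{c}(T^2, \bbA^2) \ra \Str_{c}(T^2, \bbA^2)$. Hence the bundle is trivializable.

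First I would spell out the trivialization. Using freeness and properness of the $\Diff_{0}(T^2,x_{0})$-action on $\Str_{c}(T^2, \bbA^2)$ established in the previous example, the map
$$ \Def_{c}(T^2, \bbA^2) \times \Diff_{0}(T^2,x_{0}) \lra \Str_{c}(T^2, \bbA^2), \quad (\xi, \phi) \longmapsto \phi \cdot s(\xi) $$
is a $\Diff_{0}(T^2,x_{0})$-equivariant homeomorphism. Thus $\Str_{c}(T^2, \bbA^2)$ is homeomorphic to the product $\Def_{c}(T^2, \bbA^2) \times \Diff_{0}(T^2,x_{0})$.

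The conclusion is then immediate since both factors are contractible: the base is homeomorphic to $\bbR^2$ by Example \ref{ex:completeas}, while the fiber $\Diff_{0}(T^2,x_{0})$ is contractible by the Earle-Eells result recalled earlier in this section. A product of two contractible spaces is contractible, proving the proposition. The only place where care is needed -- and what I would view as the main technical point rather than a serious obstacle -- is ensuring that the section $s$ is genuinely a section of the principal bundle (and not merely of the set-theoretic projection), so that the trivialization formula above is well defined and continuous; but this is built into the construction of $s$ via the covering homotopy theorem applied to the principal bundle structure.
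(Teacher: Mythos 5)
Your proposal is correct and follows essentially the same route as the paper's own proof: both use the global continuous section (obtained from \cite[Corollary 4.9]{BauesG} together with the covering homotopy theorem over the contractible base $\Def_{c}(T^2,\bbA^2)\approx\bbR^2$) to trivialize the free $\Diff_{0}(T^2,x_{0})$-action, yielding the product decomposition $\Str_{c}(T^2,\bbA^2)\approx \Diff_{0}(T^2,x_{0})\times\Def_{c}(T^2,\bbA^2)$ and then concluding by contractibility of both factors. Your write-up merely makes the trivialization map $(\xi,\phi)\mapsto\phi\cdot s(\xi)$ explicit, which the paper leaves implicit.
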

\begin{proof} The group $ \Diff_{0}(T^2, x_{0})$ acts freely on 
the set of complete flat affine structures.
By the above, invariant sections exists for this action of $ \Diff_{0}(T^2, x_{0})$. It follows that  there is a homeomorphism 
$$ \Str_{c}(T^2, \bbA^2) \approx  \Diff_{0}(T^2, x_{0}) \times \Def_{c}(T^2, \bbA^2) \; .$$
 In particular, since 
$\Diff_{0}(T^2,x_{0})$  and 
$ \Def_{c}(T^2, \bbA^2)$ are contractible, the space $\Str_{c}(T^2, \bbA^2)$ is contractible.
\end{proof}

See \S \ref{sect:affine_conns} for a description of $\Str(T^2, \bbA^2)$ and
$\Str_{c}(T^2, \bbA^2)$ as subsets of  the affine space of torsion free 
flat affine connections of $T^2$. 
% Drop the following most likely. 
%\paragraph{Action of
%diffeomorphisms of $M$  on $\Str(M)$.}
%Observe that the group of 
%diffeomorphisms of $M$ acts on $\Str(M)$.
%The action  of a diffeomorphism  on $\Str(M)$ is  induced by 
%composition with local coordinate charts. The quotient space
%by this action of $\Diff(M)$ is the  set of all $(X,G)$-structures on $M$ up to $(X,G)$-equivalence, and it is called the {\em moduli space of $(X,G)$-structures\/}  $\Mod M(M) = \Mod M(M,X, G) $ on $M$. 
%%Then $\Mod M(M)$ is obtained as the quotient of $\Str(M)$ 
%% the set of all $(X,G)$-structures on $M$
%%  by the action of $\Diff(M)$.
%
%Let $\Diff_1(M)$ denote the subgroup of $\Diff(M)$ which consists of diffeomorphisms  which are homotopic to the identity.
%Thus,  $\Def(M)$ is obtained as the quotient of $\Str(M)$ by $\Diff_1(M)$.
%Let $\Diff_0(M)$ denote the identity component of $\Diff(M)$. This is the normal subgroup of $\Diff(M)$ formed by diffeomorphisms which are isotopic to the identity. It is contained in  $\Diff_1(M)$ as a subgroup.
%
%The action of $\Diff(M)$ on the set of $(X,G)$-structures $\Str(M)$ corresponds to the action on $\Dev(M)/G$ which is induced by composition of (lifts of)  diffeomorphisms
%of $M$ with development maps. 
%
%(Similarly, we define the topology on moduli space $\Mod(M)$ to be the quotient topology inherited from the action of $\Diff(M)$ on $\Str(M)$.) 

\subsection{Spaces of marked structures} \label{sect:markedstructures}
\index{$(X,G)$-structures!marked}
Let $M_{0}$ be a fixed smooth manifold. 
A diffeomorphism $f:M_{0} \ra M$, where $M$ is an $(X,G)$-manifold is called a  \emph{marking} of $M$. Two marked  $(X,G)$-manifolds $(f,M)$ and $(f',M')$ are called equivalent if there exists 
an $(X,G)$-equivalence $g: M' \ra M$ such that
$g \circ f'$ is \emph{homotopic} to $f$. Let $\StrM(M_{0}, X,G)$
denote the set of classes of marked $(X,G)$-manifolds. 
By composing with $f$,  every
local $(X,G)$-chart for the marked manifold $(f,M)$ extends to
the development map of an $(X,G)$-structure on 
$M_{o}$. This correspondence descends to a bijection 
of $\StrM(M_{0}, X,G)$ with the deformation space 
$\Def(M_{0},X,G)$. % \, = \, \Diff_{1}(M_{0}) \backslash \Dev(M_{0})/G $.
We can thus topologize the space of classes 
of markings with the topology induced from the 
$C^\infty$-topology on development maps. 

\begin{example}[Teichm\"uller metric on $\Teich_{g}$]
\index{Teichm\"uller metric} \index{Teichm\"uller space}
Classically, Teichm\"uller space $\Teich_{g}$ is represented 
as a space of marked conformal structures on Riemann surfaces.
%(Recall  that by the uniformization theorem there is
%a one to one correspondence of closed hyperbolic surfaces with 
%Riemann surfaces (one-dimensional complex manifolds) of genus
%$g \geq 1$.) 
Let $S_{0}$ be a closed Riemann surface of genus $g$.
A marking of a Riemann surface $R$ is an orientation preserving 
quasi-conformal homeomorphism $f: S_{0} \ra R$. 
Two marked surfaces $(f,R)$ and $(f',R')$ are equivalent 
if there exists a biholomorphic map $h: R' \ra R$ such that
$h \circ f'$ is \emph{homotopic} to $f$. Teichm\"uller space 
 $\Teich_{g}$ is the set of classes of marked surfaces.  
 The infimum of dilatations $K(\ell)$, where $\ell: R \ra R'$ is
 a quasiconformal map homotopic to $f' \circ f$, defines
 the \emph{Teichm\"uller distance} of $(f,R)$ and
 $(f',R')$ in $\Teich_{g}$:
 $$ d_{\Teich}([f,R], [f',R']) = \inf \log K(\ell) \; . $$
With the metric topology induced by $ d_{\Teich}$, 
$\Teich_{g}$  is homeomorphic to 
the Fricke space 
$$ \mathfrak{F}(S_{0}) = \Hom_{c}(\Gamma_{g}, \PSL(2,\bbR))/  \PSL(2,\bbR) , $$
as defined in Example \ref{ex:Teichm}. See \cite{Abikoff} and
\cite{PT} in Vol.\ I of this handbook for
reference on this material. Therefore, the topology defined 
by Teichm\"uller's metric coincides with the topology on $\Teich_{g}$, which is defined by
the convergence of development maps for hyperbolic structures.
\end{example}

We may also consider various refined versions of 
classes of marked $(X,G) $-manifolds and corresponding
deformation spaces. 

\paragraph{$(X,G)$-manifolds with basepoint}
Fix a basepoint $x_{0} \in X$ and write $X= G/H$, 
where $H = G_{x_{0}}$.
The space $\StrM_{\mathsf{p}}(M_{0}, X,G)$ of 
\emph{basepointed} marked structures is defined 
as follows. Let  $\mathsf{p}: (\tilde M_{0}, \tilde m_{0}) \ra (M_{0}, m_{0})$ be a fixed universal cover. A
marking of $(M,m)$ is  a based diffeomorphism 
$f:(M_{0},m_{0}) \ra (M,m)$. Two marked basepointed
 $(X,G)$-manifolds are equivalent if  there exists an $(X,G)$-\-equivalence $g: (M',m') \ra (M,m)$ such that
$g \circ f'$ is  homotopic to $f$ by a basepoint preserving homotopy. 
 Let $ \Dev_{\mathsf{p}}(M_{0})$ be the
set of basepoint preserving development maps. For every 
based local 
$(X,G)$ chart $\varphi$, defined  near $m_{0}$, there exists a unique 
development map $D$ for $M_{0}$, which extends $\varphi \circ f \circ \mathsf{p}$ from a neighborhood  of $\tilde m_{0}$. This correspondence induces a homeomorphism 
$$ \StrM_{\mathsf{p}}(M_{0}, X,G) \, \stackrel{\approx}{\lra}  \, 
\Diff_{1}(M_{0}, m_{0}) \backslash \Dev_{\mathsf{p}}(M_{0})/H  \; . $$

\begin{example}[Homogeneous $(X,G)$-structures] 
\label{ex:hom_structures} \index{$(X,G)$-structures!homogeneous}
Note that the natural (forgetful) map 
$\StrM_{\mathsf{p}}(M, X,G) \ra  \StrM(M, X,G)$ is surjective, but it is usually not
injective. In fact, let $D$ be a development map.
Then for the classes $G \circ D \circ \Diff_{1}(M,m)$  and 
$G \circ D \circ \Diff_{1}(M)$ to
coincide it is necessary  that 
%the centralizer of $\Gamma$ in the 
% group of $(X,G)$-equivalences of $\tilde M$ (with respect to $D$)
% acts transitively on $\tilde M$. That is,
the group of $(X,G)$-equivalences acts transitively on $M$.
That is, $D$ is the development map of a homogeneous 
 $(X,G)$-structure. We let $\Dev_{h}(M,X,G)$ denote the set of development
maps of homogeneous $(X,G)$-structures.  
\end{example} 

Note further that, in general, 
$\Diff_{1}(M_{0},m_{0})$ is a proper subgroup of % the group of 
all basepoint preserving diffeomorphisms 
which are \emph{freely} homotopic to the identity. The difference is obtained by the natural action of $\pi_{1}(M_{0}, m_{0})$
on based homotopy classes of maps. % reference: Davis Kirk 
However, if $\pi_{1}(M_{0}, m_{0})$ is abelian, 
the inclusion is an isomorphism.

\begin{example} \label{ex:def_homaff}  
\index{flat affine torus!homogeneous} 
\index{$(X,G)$-structures!flat affine}
Let $\Def_{h}(T^2,\bbA^2)$ be the deformation
space of homogeneous flat affine structures on the two-torus. Then
$$\Def_{h}(T^2,\bbA^2) = \Diff_{1}(T^2, x_{0}) \backslash 
\Dev_{h}(T^2)/\Aff(2) . $$ In particular, since every complete 
affine two-torus 
is homogeneous,  $\Def_{h}(T^2,\bbA^2)$ contains 
the subspace of complete flat affine structures
$$\Def_{c}(T^2,\bbA^2) = \Diff_{1}(T^2, x_{0}) \backslash 
\Dev_{c}(T^2)/\Aff(2)  \; .$$
\end{example}

\subsubsection{Framed $(X,G)$-manifolds} \label{sec:framedXG}
The holonomy of a marked 
$(X,G)$-\-mani\-fold is a $G$-conjugacy class 
of homomorphisms. %  in $\Rep(\pi_{1}(M_{0},m_{0}),G)$.
To get rid of the dependence on the conjugacy class, 
one introduces framed structures. The holonomy theorem
implies that the deformation space of framed structures is
a locally compact Hausdorff space. We shall discuss only the particular simple case of $(\bbA^n,\Aff(n))$-manifolds. 

\begin{example} \label{ex:fat}
Let $m \in M$. 
A frame $\cF_{m}$ for a flat affine structure 
on $M$ is a choice of basis of the tangent space $T_{m} M$. 
The pair $(M,\cF_{m})$ is called a \emph{framed flat affine manifold}. 
Fix a frame $\cF_{m_{0}}$ for $M_{0}$, as well, and call a frame preserving diffeomorphism $(M_{0}, \cF_{m_{0}}) \ra ( M, \cF_{m})$ a marking of $( M, \cF_{m})$.
Two marked  framed flat affine manifolds $(f,M, \cF_{m})$ and 
$(f',M', \cF'_{m'})$ are called equivalent if there exists 
a frame preserving affine diffeomorphism  
$g: (M',\cF'_{m'}) \ra (M,\cF_{m})$ such that $g \circ f'$ is based homotopic to $f$.
The set of classes is denoted $\StrMf(M_{0}, \bbA^2)$.
\end{example} 

Let us fix a base frame 
$\cE_{x_{0}}$ on affine space $\bbA^n$. Given a marked framed flat affine manifold $(f,M, \cF_{m })$, there exists a unique frame preserving affine chart for $M$, which is defined near $m$. 
This chart lifts to a unique development 
map $D: (\tilde M_{0}, \tilde \cF_{\tilde m_{0}}) \ra (\bbA^n, \cE_{x_{0}})$. 
%If we use frame preserving markings to define $\StrMf(M_{0}, \bbA^2)$, 
%we also get frame preserving development maps. 
The correspondence descends to a bijection 
$$ \StrMf(M_{0}, \bbA^n) = 
%with the Chabauty space.
% $\Diff_{1}(M_{0}) \backslash \Dev(M_{0},x_{0})$.
% and a bijection $\StrMf(M_{0}, x_{0},X,G)$ 
\Diff_{1,f}(M_{0}, \cF_{m_{0}}) \backslash \Dev_{f}(M_{0}) \; ,$$ where $\Diff_{1,f}(M_{0}, \cF_{m_{0}})$ denotes the group of frame preserving
diffeomorphisms which are based homotopic to the identity. 
By the deformation theorem, there is a map 
$$ \mathsf{hol}: \StrMf(M_{0},\bbA^n) \ra \Hom(\pi_{1}(M,m_{0}), \Aff(n)) \;  $$ 
which is continuous and which is a local 
homeomorphism onto its image. 
\emph{This shows that $\StrMf(M_{0},\bbA^n)$  is a locally compact Hausdorff space}.\\

As is apparent from 
Example \ref{ex:framepreserving}, the natural map 
$\StrMf(M_{0},\bbA^n) \ra \StrM(M_{0},\bbA^n)$ is 
surjective, and it factors over $\StrM_{\mathsf{p}}(M_{0},\bbA^n)$.
The following tower of maps thus sheds some light on
the topology of the deformation space of flat affine structures: 
\begin{align} \label{eq:deftower}
 \xymatrix@1{
\StrMf(M_{0},\bbA^n) \ar[d]
\\
\StrM_{\mathsf{p}}(M_{0},\bbA^n) \ar[d] \\ 
\Def(M_{0},\bbA^n)  =   \StrM(M_{0},\bbA^n)  \; .}  
\end{align}
Note that the group $\GL^+(n,\bbR) = \Diff_{1}(M_{0},m_{0})/ 
\Diff_{1,f}(M_{0}, \cF_{m_{0}})$ acts on $\StrMf(M_{0},\bbA^n)$,
such that the first projection map in the tower is the quotient map for this action. In particular, \emph{$\StrM_{\mathsf{p}}(M_{0},\bbA^n)$
arises as the quotient space of a locally compact Hausdorff
space by a reductive group action}. 
(Note also that the holonomy map is equivariant with respect to 
the conjugation action of $\GL^+(n,\bbR)$ 
on holonomy homomorphisms.)

\begin{example}[Homogeneous framed flat affine structures on the torus] \label{ex:def_homfaff}
As we have seen already in Example \ref{ex:def_homaff} above, the lower map 
in the tower \eqref{eq:deftower} is a bijection on homogeneous flat affine structures, that is, $\Def_{h}(T^2,\bbA^2) = \StrM_{\mathsf{p},h}(T^2,\bbA^2)$.
The deformation space of homogeneous structures 
is thus obtained as a quotient by an action of 
$\GL^+(2,\bbR)$:
 $$ 
\Def_{h}(T^2,\bbA^2) =  {\StrMf}\ _{\!  \! \!,h}(T^2,\bbA^2) /
\GL^+(2,\bbR)   \; . $$    
We shall further
study  this quotient space in \S \ref{sect:homog_structs}.
Observe that the action of 
$\GL^+(2,\bbR)$ is not free, 
since, in fact, the Hopf tori have non-trivial stabilizers.  
On the other hand,  as follows from the discussion in
Example \ref{ex:completeas}, $\GL^+(2,\bbR)$ acts freely on the subspace of complete 
affine structures, and the map 
${\StrMf}\  _{\! \! \!,c}(T^2,\bbA^2) \ra  \Def_{c}(T^2,\bbA^2)$
is a trivial $\GL^+(2,\bbR)$-principal bundle. 
\end{example}

\section{Construction of flat affine %and projective 
surfaces} \index{surface!flat affine}
%In this section we discuss several methods to construct 
%flat affine surfaces. 

%\'Etale affine representations 
%give rise to flat affine Lie groups. 
%
A flat affine manifold is called \emph{homogeneous} if its group of
affine automorphisms acts transitively.  
\index{flat affine manifold!homogeneous}  \index{flat affine torus!homogeneous}
Homogeneous  flat affine manifolds may be constructed from 
 \'etale affine representations 
of two-dimensional Lie groups in 
a straightforward way. Compact examples
can be derived from \'etale affine representations 
of the two-dimensional group manifold $\bbR^2$
by taking quotients with a discrete uniform subgroup.
Every flat affine surface constructed in this way is then
a \emph{homogeneous} flat affine torus.
In fact, an easy argument (see \S \ref{sect:classification}) 
shows that \emph{all} homogeneous flat affine surfaces are obtained in this way.
Therefore, all homogeneous flat affine tori are 
affinely diffeomorphic to quotients of abelian 
Lie groups with left-invariant flat affine structure. 
This also relates homogeneous
affine structures on tori to two-dimensional associative algebras,
a point of view which will be discussed in \S \ref{sect:thedefspace}.
In \S \ref{sect:etale_affine}, we describe 
the classification of  abelian  \'etale affine representations on $\bbA^2$.
By the above remarks, this amounts to a rough classification of 
homogeneous flat affine tori.

A genuinely more geometric  approach is to construct flat affine 
surfaces by gluing patches of affine space along their boundaries. 
% On the other hand, 
The affine version of Poincar\'e's
fundamental polygon theorem allows
to construct flat affine tori by gluing affine 
quadrilaterals along their sides. 
The flat affine two-tori thus obtained depend on
the shape of the quadrilateral and also on the 
particular affine transformations which are used in the gluing process.
% The construction then 
%
This, in turn, gives natural  
coordinates for an open subset in the deformation space
of flat affine structures on the two-torus.  
%Conversely,  
As it turns out, the flat affine tori which are obtained by 
gluing an affine quadrilateral along its sides are all homogeneous,
and they form a dense subset in the deformation space of homogeneous 
flat affine structures on the torus.  This material is explained  in \S \ref{sect:agluing}.  \index{torus!flat affine}
 \index{flat affine torus}

To construct all flat 
affine two-tori, it is required to glue more general objects. 
In the following sections \S \ref{sect:tori_bbao} and \S \ref{sect:acylinders}, 
we discuss in detail a construction method for flat affine 
tori with development image 
$\bbAo$, which builds on the idea of cutting flat affine surfaces into simple building blocks. 
% mention further work of Choi Goldman
Here flat affine tori % with development image $\bbAo$
are constructed by gluing several copies of 
half annuli in $\bbAo$, or cutting the surface into 
affine cylinders. Equivalently, these tori are obtained 
by gluing certain strips which are situated in the universal covering space of $\bbAo$, and which project to 
annuli in $\bbAo$.  In this way also \emph{non-homogeneous} examples 
of flat affine tori arise.  

As follows from the main classification theorem, which will be proved  in 
 \S \ref{sect:classification}, 
the above construction methods exhaust all flat affine two-tori. 
% with development image $\bbAo$

\subsection{Quotients of flat affine Lie groups}
\label{sect:etale_affine}

If a Lie group $G$ has an \'etale action 
(cf.\ Definition \ref{def:etale}) on affine space 
we call it an  \'etale affine \index{etale@\'etale representation!affine}
Lie group.  An \'etale affine Lie group carries a natural
left invariant flat affine structure, and,  thus, for every discrete
subgroup $\Gamma \leq G$, the quotient space $\Gamma \lmod G$ inherits the structure of a flat affine manifold. If $G$ is 
abelian the resulting flat affine structure is homogeneous. \\

The following result will be established in the course of the proof 
of the  classification theorem (see  \S\ref{sect:univcovs}): 
\index{flat affine torus!homogeneous} 

\begin{proposition} Every homogeneous flat affine two-torus is 
affinely diffeomorphic to a quotient of an abelian \'etale affine Lie group.
\end{proposition}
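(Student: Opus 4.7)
The strategy is to produce an abelian connected two-dimensional Lie subgroup $A$ of the affine automorphism group of $\tilde M = \bbR^2$ acting simply transitively, and then to identify the deck group as a discrete subgroup $\Gamma$ of $A$ giving the presentation $M \cong \Gamma \backslash A$. To begin, let $G = \Aut(M)^{0}$, which by hypothesis acts transitively on $M$. Lifting the $G$-action to $\tilde M$ one obtains a connected Lie group $\bar G$ of affine diffeomorphisms of $\bbR^2$ acting transitively and containing the deck transformation group $\pi_{1}(M) \cong \bbZ^2$ as a central subgroup. Centrality of the deck group is the standard continuity argument: for $\gamma \in \pi_{1}(M)$, the map $\bar G \ni \phi \mapsto \phi \gamma \phi^{-1} \in \pi_{1}(M)$ is continuous from a connected group into a discrete one, hence constant with value $\gamma$.

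The main step is to exhibit a closed connected abelian Lie subgroup $A \subset \bar G$ of dimension two that acts simply transitively on $\bbR^2$ and contains $\pi_{1}(M)$. I see two routes to such an $A$. The conceptual route is at the level of the Lie algebra $\lie{g}$ of $\bar G$: the presence of a rank-$2$ central discrete subgroup constrains $\lie{g}$ considerably (ruling out semisimple factors and forcing enough commutativity), and, combined with the two-dimensionality of the homogeneous space $\bar G / K = \bbR^2$, it produces a $2$-dimensional abelian subalgebra $\lie{a} \subset \lie{g}$ transverse to the isotropy. The more explicit route is to invoke the classification of flat affine structures on the two-torus to be established in Section~\ref{sect:classification}, according to which the development image $U = D(\tilde M)$ is one of the four model domains $\bbA^2$, a halfplane, a sector, or $\bbAo$; in each case one writes down directly an abelian two-dimensional subgroup of $\Aff(2)$ acting simply transitively on the (universal cover of the) domain, such as the group of translations in the complete case or $\GL(1,\bbC) = \bbR^{*} \times \SO(2)$ for the $\bbAo$ case.

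Once $A$ has been produced, the conclusion is straightforward. Acting simply transitively by affine transformations on $\bbR^2$, the group $A$ descends via the development map to a connected abelian Lie subgroup of $\Aff(2)$ whose orbit is the open development image $U$ and whose isotropy is trivial; this is exactly an \'etale affine representation of $A$ in the sense of Definition~\ref{def:etale}. The deck group $\pi_{1}(M) \subset A$ becomes a discrete subgroup $\Gamma \subset A$, and the natural identifications $\Gamma \backslash A \cong \pi_{1}(M) \backslash \tilde M \cong M$ are all affine diffeomorphisms, realizing $M$ as the desired quotient of the abelian \'etale affine Lie group $A$. The main obstacle is the middle step: producing the abelian subgroup $A$ relies essentially on the interplay between the transitivity of $\bar G$ on the two-dimensional space $\tilde M$ and the presence of a rank-$2$ central discrete subgroup, either via abstract Lie-algebraic analysis or via the detailed geometric classification of development images.
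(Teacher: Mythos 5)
Your overall strategy --- realize $M$ as $\Gamma\backslash A$ for a two-dimensional abelian subgroup $A$ of the lifted automorphism group, acting simply transitively on $\tilde M$ and containing the (central) deck group --- is the right one and is essentially the paper's, which obtains the proposition in the course of proving the classification theorem (\S\ref{sect:univcovs}). However, both routes you offer to the key middle step are deficient. The ``conceptual'' route rests on a false claim: a rank-two central discrete subgroup does \emph{not} rule out semisimple factors of $\lie{g}$. The Hopf tori are exactly the counterexample: there $\bar G$ is (a covering of a quotient of) $\GL^{+}(2,\bbR)$, namely $\widetilde{\GL}^{+}(2,\bbR)$, whose center $D\cdot\mathcal Z\cong\bbR\times\bbZ$ contains the deck group $\langle \tilde A_{\lambda},\tau^{2}\rangle\cong\bbZ^{2}$, yet $\bar G$ has Levi factor $\widetilde{\SL}(2,\bbR)$ and its center is only one-dimensional. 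The required $A$ is then the specific maximal abelian subgroup $\tilde K D=\widetilde{\GL(1,\bbC)}$, and producing it needs the actual structure of $\bar G$, not a general Lie-algebraic argument.

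The explicit route is closer to the paper but elides the two points where the real work happens. First, an abelian group $N\le\Aff(2)$ acting simply transitively on the development image does not automatically act on $\tilde M$: it must be lifted through the development map, and this is Proposition \ref{prop:liftofN}, proved by noting that the affine vector fields generating $N$ commute with the holonomy, hence descend to the compact $M$, have complete flows there, and therefore integrate to an action of $\tilde N$ on $\tilde M$. (You cannot instead identify $\tilde M$ with the universal cover of the development image at this stage, since the fact that $D$ is a covering map is itself deduced, in the homogeneous case, from the existence of the lifted simply transitive action.) Second, you must show that the deck group actually lies in $A=\tilde N$, equivalently that the holonomy lies in a connected abelian subgroup that is transitive on the development image; this is exactly where homogeneity enters, and where the non-homogeneous tori $\cT_{A,B,k}$ --- whose holonomy sits in $\mathsf{B}$ or $\mathsf{C}_{1}$, neither of which is transitive on $\bbAo$ --- escape the conclusion. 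The paper secures this by taking $N$ to be the identity component of a maximal abelian subgroup of $\Aff(2)$ containing $h(\Gamma)$, classifying the possibilities (Example \ref{ex:domains}, plus ruling out the unipotent group $\mathsf{N}$), and then checking in \S\ref{sect:univcovs} that for homogeneous $M$ the lift $\tilde N$ is simply transitive on $\tilde M$. Your write-up lists ``containing $\pi_{1}(M)$'' as a requirement on $A$ but never establishes it.
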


\noindent Up to affine conjugacy there are six types $\mathsf{T}, 
\mathsf{D}, \mathsf{C_{1}}, \mathsf{C_{2}}, \mathsf{B}, \mathsf{A}$ of
 \'etale \emph{abelian} subgroups in the affine group $\Aff(2)$. Both
 the plane and the halfplane admit two distinct simply transitive 
 abelian affine actions $\mathsf{T}, 
\mathsf{D}$, and  $\mathsf{C_{1}}, \mathsf{C_{2}}$ respectively.  

\begin{example}[Affine automorphisms of development images] 
\label{ex:domains}
\hspace{1cm} 
\begin{enumerate} 
\item {\bf  (\emph{The plane $\bbA^2$})} The groups 
$$ \mathsf{T} =  \left\{ 
\begin{matrix}{ccc}  1 & 0 & u   \\  0 & 1 & v \\ 
0 &  0 & 1  
\end{matrix} % \mid u,v \in \bbR
\right\} \;  % \subseteq \Aff(2)
  \text{ and } \;  \mathsf{D} =
 \left\{ 
\begin{matrix}{ccc}  1 & v & u+ \frac {1}{2} v^2  \\  0 & 1 & v \\ 
0 &  0 & 1  
\end{matrix}  % \mid u,v \in \bbR
\right\} \; % \subseteq \Aff(2)
$$
are abelian groups of affine transformations which are simply transitive on the plane. 

%The group 
%% \item 
%$$   \cA_{\lambda} = \left\{ \begin{matrix}{ccc}    \exp(t\lambda)  & 0 & s \\
%0 & 1 & t \\ 
%0 &  0 & 1  \end{matrix} \right\} \; \subseteq \Aff(2) $$
%is a simply transitive solvable, non-abelian, group of affine transformations, for
%$\lambda \neq 0$. 

\item {\bf  (\emph{The half space ${\cal H}$})} 
Let  ${\cal H}$ be the
half space $y >0$. Then
$$  \Aff({\cal H}) = 
\left\{ \begin{matrix}{ccc} \alpha & z & v  \\  0 & \beta & 0 \\  
0 &  0 & 1  \end{matrix} \Mid  \alpha \neq 0 ,\,\beta > 0\right\} \; $$
is its affine automorphism group.
The subgroups %  \vspace{1ex} \\
$$ \mathsf{C}_{1} = \left\{ \begin{matrix}{cc}  \exp(t) & z  \\  0 &  \exp(t)   \\ 
\end{matrix} \right\}  \subset \GL(2,\bbR) $$
and 
$$ \mathsf{C}_{2} = \left\{ \begin{matrix}{ccc}  1 & 0 & v \\ 0 &  \exp(t) & 0 \\ 
0 &  0 & 1  \end{matrix} \right\} \subset \Aff(2)$$ 
are simply transitive abelian groups of affine transformations on ${\cal H}$. 
The half spaces $(x,y)$, $y>0$ and $y <0$ are open orbits for the groups
$\mathsf{C}_{i}$.

%The groups 
%$$   \cC_{\lambda, \tau} = \left\{ \begin{matrix}{ccc}    \exp(t\lambda)  & 0 & s \\
%0 &  \exp(- t\lambda)  & (\exp(- t\lambda) -1) \tau \\ 
%0 &  0 &  1 \end{matrix} \right\} \; \subseteq \Aff(2) $$
%are  solvable, non-abelian, groups of affine transformations, with 
%two half\-spaces as open orbits, $\lambda \neq 0$. \\

%%\item  
%The group of upper triangular matrices
%$$ \cC =  \left\{ 
%\begin{matrix}{cc} 
%  \exp(t\lambda) & b \\  0 &  \exp(- t \lambda)  
%\end{matrix} \right\}  \; \subseteq \GL(2)
%$$ 
%is a solvable, non-abelian,  \emph{linear}  group of transformations
%with two half\-spaces as open orbits. \\

\item {\bf  (\emph{The sector ${\cal Q}$})} Let ${\cal Q}$ denote the upper right
open quadrant. Then 
$$   \mathsf{B} = \Aff(\mathcal Q)^0 =  \left\{ 
\begin{matrix}{cc} 
   a  & 0 \\  0 &  b  
\end{matrix} \Mid a>0, b> 0  \right\}  \; \subseteq \GL(2,\bbR)
$$ 
is an abelian, simply transitive \emph{linear}  group of transformations of  ${\cal Q}$.
%which has the four open quadrants as open  orbits. 

%\item {\bf  (\emph{The parabolic domains $U = \mathcal P_{2}^+, \mathcal P_{2}^-$})}
%Let $\mathcal P_{2}^+ = \{ (y, x) \mid y > {1 \over 2}  x^2 \} $ be the convex domain 
%enclosed by a parabola.
%$$   \Aff(\mathcal P^\pm_{2})^0  = 
%\left\{ 
%\begin{matrix}{ccc}   \lambda^2  & 0 & 0 \\
%0 &  \lambda & 0 \\ 
%0 &  0 &  1 \end{matrix}
%\begin{matrix}{ccc}  1 & v &  {1 \over 2} v^2  \\  0 & 1 & v \\ 
%0 &  0 & 1  
%\end{matrix} \mid \lambda >0 
%\right\} \; \subseteq \Aff(2)
% $$
%is solvable, non-abelian, with open orbits $\mathcal P_{2}^+$
%and the exterior $\mathcal P_{2}^-$ of a parabola.    

\item  {\bf  (\emph{The punctured plane $\bbAo$}) }
$$   \mathsf{A}  = \left\{   \exp(t) \begin{matrix}{cc}    \cos \theta & \sin \theta \\
- \sin \theta  &  \cos \theta  \\ 
\end{matrix} \right\} \; \subseteq \GL(2,\bbR) = \Aff(\bbAo) $$
is an abelian \emph{linear} group, which is simply transitive on 
$\bbAo$. \end{enumerate}
\end{example}

Let $\mathsf{G} \leq \Aff(2)$ be one of the above 
 groups and $\Gamma \leq \mathsf{G}$ a lattice. Then $\Gamma$ acts properly discontinuously and with compact quotient on every open orbit $U \subset \bbA^2$ of  $\mathsf{G}$ and the quotient
 space $$  M = \; \;
 _{\mbox{$\Gamma$}} \,  \lmod \, U $$  is a flat affine two-torus. 
The group $\Gamma$ is a discrete abelian subgroup 
of $\Aff(2)$ and, by choosing an appropriate  fundamental domain,  its action defines a tessellation of the open domain $U$. In fact,  a convex affine quadrilateral may be chosen as
fundamental domain. 
% See \S \ref{sect:agluing} for further discussion. 
%Figure \ref{figure:basictess1} and Figure \ref{figure:basictess2} give  some examples.  
See Figure \ref{figure:basictess} and Figure \ref{figure:basictess2} for some examples.  
Since $\mathsf{G}$ is abelian and centralizes $\Gamma$, the affine action of $\mathsf{G}$ on $U$ descends to  $M$. Thus, $\mathsf{G}$ acts on $M$ by affine transformations, and $M$ is a homogeneous flat affine
manifold. 

\begin{figure}[htbp]  
\begin{center}
\includegraphics[scale=0.5, clip=true, trim=  0cm 0.35cm 0cm 0.1cm ]{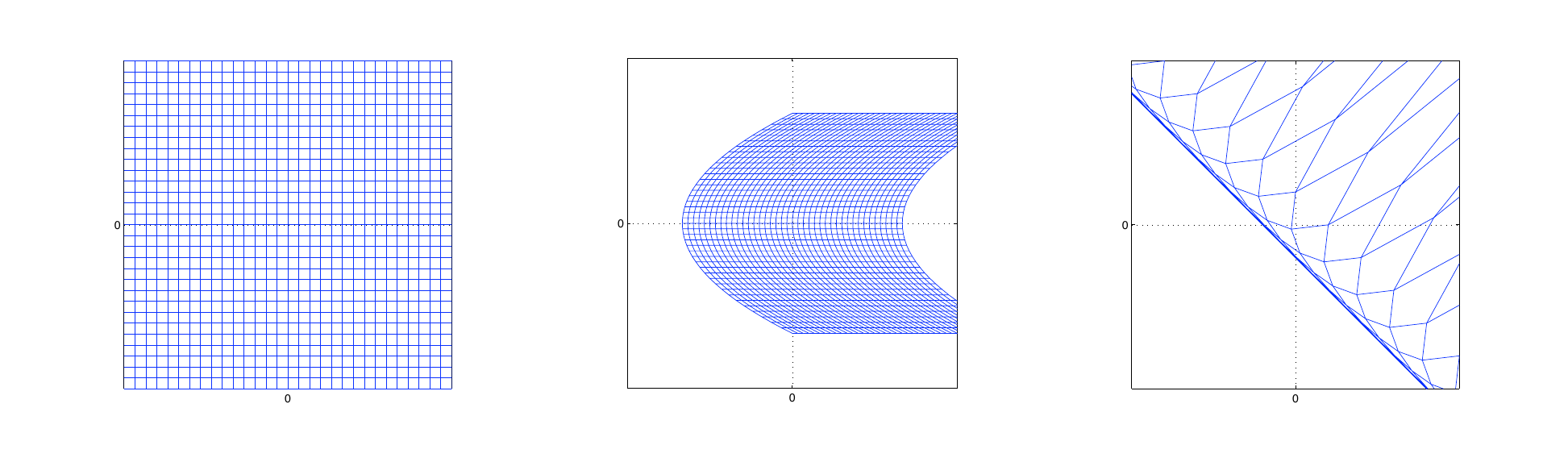}
\caption{Tesselations of homogeneous affine domains of type $\mathsf{T},\mathsf{D}, \mathsf{C}_{1}$.}
% \label{figure:Hopftori}
\label{figure:basictess}
\end{center}
\end{figure}

For further reference we note the following:
\begin{lemma}[Normalisers of \'etale affine groups] \label{lem:normalizers}
\begin{enumerate}
\item 
The  \'etale affine groups $\mathsf{A}$, $\mathsf{B}$ have index two and eight in their normalizers in $\Aff(2)$. 
The quotients are generated  by the reflections 
$$
\begin{matrix}{cc} 
0 & 1\\
1 & 0 \\
\end{matrix} \in \GL(2,\bbR) 
\text{, respectively, }
\begin{matrix}{cc}
-1& 0 \\
0 & 1 \\
\end{matrix}, 
\begin{matrix}{cc}
1& 0 \\
0 & -1 \\
\end{matrix},
\begin{matrix}{cc} 
0 & 1\\
1 & 0 \\
\end{matrix}  .  
$$ 
\item The normalizers in $\Aff(2)$ of the \'etale affine groups $\mathsf{C}_{1}$, $\mathsf{C}_{2}$ are 
$$  \left\{ \begin{matrix}{cc}  \alpha & z  \\  0 &  \beta   \\ 
% 0 &  0 & 1 
\end{matrix} \right\} \; \subset \GL(2,\bbR), \; 
\left\{ \begin{matrix}{ccc}  \alpha & 0 & v \\  0 &  \beta  & 0 \\ 
0 &  0 & 1  \end{matrix} \right\} \; \subset  \Aff(2)$$
respectively.  
\item The normalizer in $\Aff(2)$ of the \'etale affine group $\mathsf{D}$  is the semi-direct 
product generated by $\mathsf{D}$ and the linear group 
\[ {\cal N}_\mathsf{D} = \left\{ \begin{matrix}{ccc}d^2 & b  & 0  \\
                                               0  & d &   0\\
                                               0  & 0 &  1 \\
                          \end{matrix} \, \Mid  \;   d \in \bbR^{*}, \, b \in \bbR \, \right\}  \; .  \]

\end{enumerate}
\end{lemma}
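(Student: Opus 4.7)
The strategy is to treat each étale group in turn, and in each case first find a geometric invariant that any normalizer in $\Aff(2)$ must preserve (a fixed point, a distinguished orbit, or the Lie algebra), thereby reducing the computation to a manageable subgroup of $\Aff(2)$; and then check directly by conjugation that the claimed candidates for normalizers indeed work. A recurring technical fact is that if a subgroup $\mathsf{G} \leq \Aff(2)$ is linear (i.e.\ fixes the origin) and the origin is the unique common fixed point of $\mathsf{G}$, then any $\Aff(2)$-normalizer of $\mathsf{G}$ is automatically contained in $\GL(2,\bbR)$; and if $\mathsf{G}$ is simply transitive on $\bbA^2$, then the $\Aff(2)$-normalizer equals $\mathsf{G}$ times the linear stabilizer of the origin inside the normalizer, so it again suffices to work in $\GL(2,\bbR)$.

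For $\mathsf{A}$ and $\mathsf{B}$, both groups fix the origin with no other common fixed point, so the normalizer lies in $\GL(2,\bbR)$. Recognizing $\mathsf{A}$ as the image of $\bbC^*$ acting on $\bbR^2 \cong \bbC$ by multiplication, the $\GL(2,\bbR)$-normalizer is the union of $\bbC$-linear and $\bbC$-antilinear automorphisms, which is $\mathsf{A} \cup \mathsf{A}\cdot J$ for $J$ the complex conjugation reflection $\begin{pmatrix}0&1\\1&0\end{pmatrix}$; hence the quotient is of order two. For $\mathsf{B}$, the normalizer in $\GL(2,\bbR)$ is the full monomial group (diagonal and anti-diagonal matrices), because any normalizer must permute the two common eigenlines of $\mathsf{B}$; modding by the positive-diagonal subgroup $\mathsf{B}$ gives a group of order eight generated by the two diagonal sign changes and the coordinate swap.

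The cases $\mathsf{C}_1$ and $\mathsf{C}_2$ are similar in spirit. For $\mathsf{C}_1$, the subgroup of elements of determinant one is the unipotent piece $\bigl\{\begin{pmatrix}1&z\\0&1\end{pmatrix}\bigr\}$, whose common fixed line is $\bbR e_1$; any normalizer must preserve this line, hence be upper triangular, and a direct check shows every such matrix does normalize $\mathsf{C}_1$. For $\mathsf{C}_2$, the stretching subgroup $\{t=0\text{ trivial, } v=0\}$ is intrinsically characterized as the set of elements with non-empty fixed set, and it fixes the $x$-axis pointwise; thus any normalizer preserves the $x$-axis setwise, so its translation part lies in $\bbR e_1$ and its linear part is upper triangular of the form $\begin{pmatrix}\alpha&b\\0&\beta\end{pmatrix}$. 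A short computation of the conjugation action on the Lie algebra (spanned by $\partial_x$ and $y\partial_y$) then forces $b=0$, yielding the claimed form.

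The case of $\mathsf{D}$ will be the main obstacle since $\mathsf{D}$ is simply transitive and the normalizer contains genuinely nontrivial affine phenomena. By the simple transitivity reduction, it suffices to compute the linear normalizer $\N_{\GL(2,\bbR)}(\mathsf{D})$. The Lie algebra $\mathfrak{d} \subset \aff(2)$ is spanned by $X_1=(0,e_1)$ and $X_2=\bigl(\begin{pmatrix}0&1\\0&0\end{pmatrix},\, e_2\bigr)$. For $\phi = \begin{pmatrix}a&b\\c&d\end{pmatrix} \in \GL(2,\bbR)$, the requirement that $\Ad(\phi) X_1 = (0,\phi e_1) \in \mathfrak{d}$ forces $c=0$, and the requirement that $\Ad(\phi) X_2 \in \mathfrak{d}$ then fixes the matrix component as $\tfrac{a}{d}\begin{pmatrix}0&1\\0&0\end{pmatrix}$ while the translation component becomes $\phi e_2 = b e_1 + d e_2$. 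Matching these coefficients against $\lambda_1 X_1 + \lambda_2 X_2$ yields simultaneously $\lambda_2 = a/d$ and $\lambda_2 = d$, whence the key relation $a = d^2$. This singles out precisely $\mathcal{N}_\mathsf{D}$ as the linear normalizer; the full normalizer in $\Aff(2)$ is then $\mathsf{D}\cdot\mathcal{N}_\mathsf{D}$, and since $\mathsf{D}\cap\mathcal{N}_\mathsf{D}=\{\mathrm{id}\}$ (the only element of $\mathcal{N}_\mathsf{D}$ with trivial translation and unipotent linear part of type $\begin{pmatrix}1&v\\0&1\end{pmatrix}$ is the identity) and $\mathsf{D}$ is normal, this product is a semidirect product as claimed.
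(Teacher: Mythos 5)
Your proof is correct and complete. Note that the paper states Lemma \ref{lem:normalizers} without any proof (it is introduced only with ``for further reference we note the following''), so there is nothing in the text to compare your argument against; the only trace of a justification anywhere in the source is an orphaned fragment about the group $\mathcal{A}_1=\mathsf{D}$ that likewise merely asserts that its normalizer is generated by $\mathsf{D}$ and $\mathcal{N}_{\mathsf{D}}$. Your two reduction principles --- that a normalizer must preserve the common fixed-point set of the group, which forces linearity for $\mathsf{A}$, $\mathsf{B}$ and $\mathsf{C}_1$, and that simple transitivity reduces the computation for $\mathsf{D}$ to the linear stabilizer of the origin --- are exactly the right tools, and the computations check out: the relation $a=d^2$ obtained from $\Ad(\phi)X_2\in\mathfrak{d}$ reproduces $\mathcal{N}_{\mathsf{D}}$, the identification of $N_{\GL(2,\bbR)}(\mathsf{A})$ with the $\bbC$-linear and $\bbC$-antilinear maps gives index two, and the monomial group modulo the positive diagonal group indeed has order eight with the stated generators. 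Two small presentational points: for $\mathsf{C}_1$ you should state explicitly that the normalizer is linear (your recurring fact applies, since the common fixed set of $\mathsf{C}_1$ is the origin) before invoking preservation of the fixed line of the determinant-one subgroup; and the characterization of the distinguished subgroup of $\mathsf{C}_2$ as ``the elements with non-empty fixed set'' deserves one line of verification (an element with $v\neq 0$ translates the first coordinate and therefore has no fixed point), since that conjugation-invariant description is precisely what makes any normalizer preserve the $x$-axis and hence have translation part in $\bbR e_1$ and upper-triangular linear part.
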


\begin{figure}[htbp]  
\begin{center}
\includegraphics[scale=0.5, clip=true, trim=  0cm 0.35cm 0cm 0.1cm ]{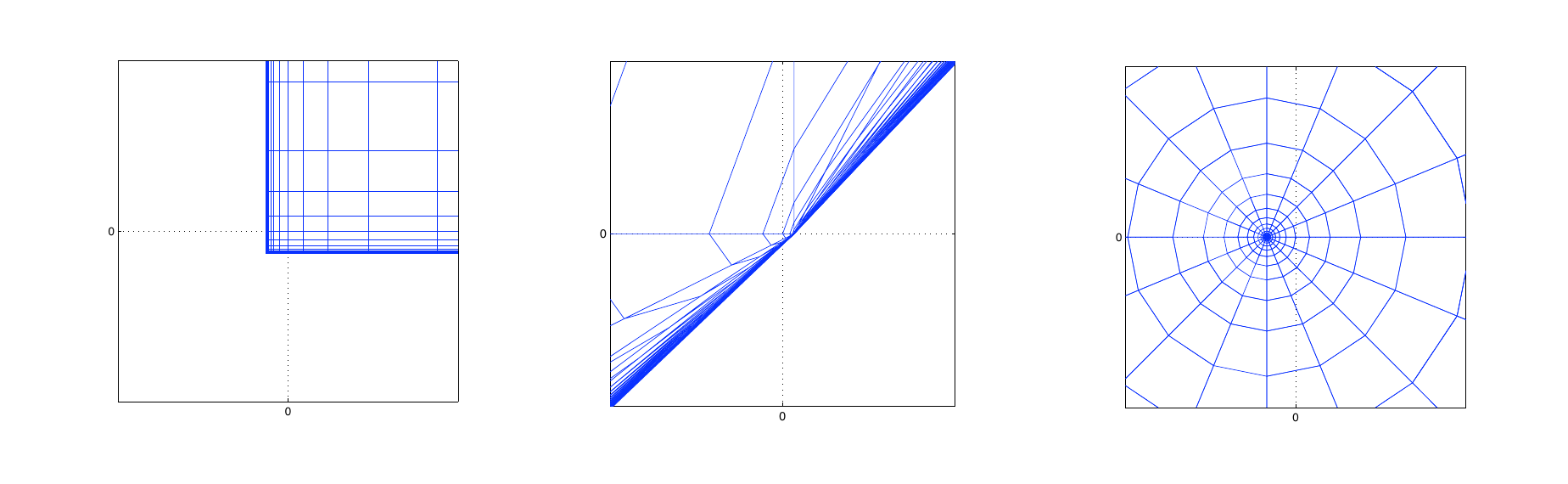}
\caption{Tesselations of homogeneous affine domains of type $\mathsf{B},\mathsf{C}_{1},\mathsf{A}$.}
% \label{figure:Hopftori}
\label{figure:basictess2}
\end{center}
\end{figure}

 In the case of the group $\mathsf{A}$, which is not simply connected, we can consider, more generally, the universal covering group  $\tilde {\mathsf{A}}$ of 
 $\mathsf{A}$. The covering homomorphism 
 $\tilde {\mathsf{A}} \ra \mathsf{A}$ turns $\tilde {\mathsf{A}}$ 
 into an \'etale affine Lie group.
 Let $\Gamma$ be a lattice in $\tilde {\mathsf{A}}$. Then
% by Example \ref{ex:etale}, 
$$ M =  \; \; _{{\mbox{$\Gamma$}}} \, \lmod \, \tilde {\mathsf{A}} $$   
inherits a flat  affine structure, for which the orbit map $\tilde {\mathsf{A}} \ra \bbAo$ at any point $x \in \bbAo$ 
is a development map. The holonomy group $h(\Gamma) \leq \mathsf{A}$ is 
the image of $\Gamma$ under the covering $\tilde {\mathsf{A}} \ra \mathsf{A}$. Since $\Gamma$ is central in $\tilde {\mathsf{A}}$, the group
 $\tilde {\mathsf{A}}$ acts on $M$ by affine transformations. 
 In particular, as before, $M$ is a \emph{homogeneous} 
 flat affine two-torus. 

\begin{figure}[htbp]  
\begin{center}
\includegraphics[scale=0.4, clip=true, trim=  0cm 0.9cm 0cm 0.7cm ]{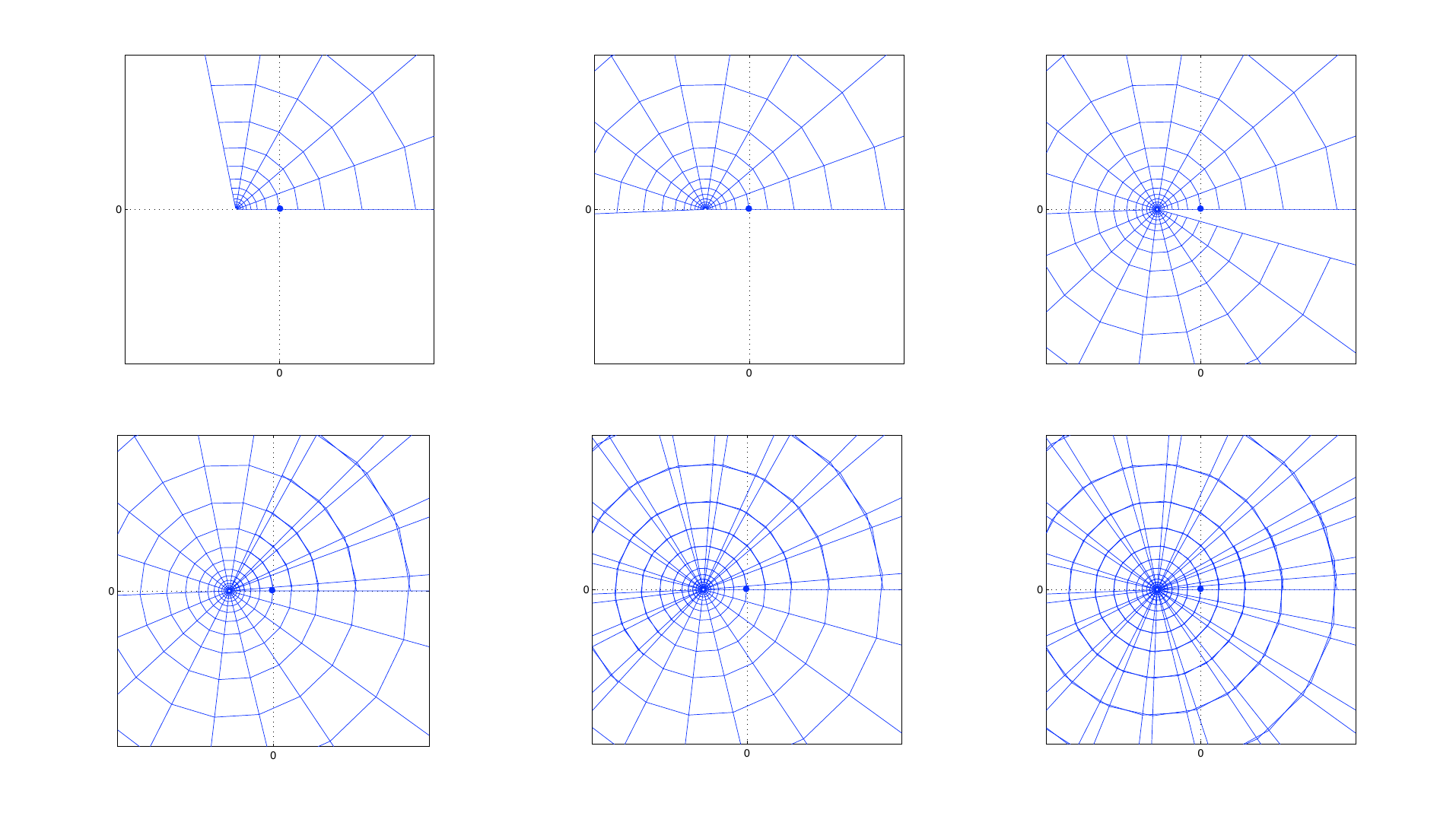}

\caption{Development process with non-discrete holonomy group.}
% \label{figure:Hopftori} 
\index{holonomy!non-discrete}
\label{figure:non-discrete}
\end{center}
\end{figure}

Note that, in this case, it may also happen that the holonomy $h(\Gamma)$ is \emph{not} discrete in $\Aff(2)$, see Figure \ref{figure:non-discrete}. % \\ % \vspace{1cm}

If the holonomy is cyclic,  as is the case for Hopf tori 
(Example \ref{ex:hopftori}), $M$ cannot be constructed by gluing an affine quadrilateral which is contained in the development image. However, $M$ may always be obtained by gluing a 
strip which is situated in 
$\tilde {\mathsf{A}}$, see Example \ref{ex:cylinders2}.

\subsection{Affine gluing of polygons}  \label{sect:agluing}
%%%%%

Let $\cP \subset \bbA^2$ be polygon with ${\cal S}$ its set of sides.
Let $\{ g_S \in \Aff(2) \mid  \,  S \in {\cal S} \}$ be a set of affine transformations pairing the sides of $\cP$ and let 
$M$ be the corresponding identification space of $\cP$.
%and $\Gamma \subset \Aff(2)$  
%the group generated by the side-pairing transformations 
%$\{ g_S \}$. 
We say that the \emph{affine gluing criterion}\footnote{See \cite{BauesG} for more details on this definition} holds  if, for each vertex $x \in \cP$ with cycle of edges $S_1,  \ldots, {S_m}$, % belonging to the vertex $x \in \cP$, 
the \emph{cycle relation} $g_{S_1} \cdots g_{S_m} = 1$ holds, 
and furthermore the corners at $x$ of the polygons 
$g_{S_1} \cdots g_{S_i} \cP$, $i = 1, \ldots ,m$, add up 
subsequently to a disc, while intersecting only in their consecutive
boundaries. This disc then provides an affine coordinate neighborhood  in the identification space of $\cP$ defined
by the pairing of sides.  If the gluing criterion is satisfied the identification space $M$ inherits the structure of a flat affine manifold from $\cP$. \\

The following result is the analogue of Poincar\'{e}'s fundamental polygon theorem (cf.\ \cite{Maskit} for the classical 
\index{Poincar\'{e} fundamental polygon theorem}
version) for gluing flat affine surfaces: 
% \paragraph{The holonomy covering} 

\begin{proposition}[\mbox{see \cite[Proposition 2.1]{BauesG}}] \label{prop:agluing}
If the affine gluing criterion holds then the group
$\Gamma  \subset \Aff(2)$ generated by the side-pairing transformations 
$\{ g_S | \, S \in {\cal S} \}$ acts properly discontinuously and with fundamental domain $\cP$
on a flat affine surface $\bar{X}$ which develops 
$\Gamma$-equivariantly onto an open set $U$ in $\bbA^{2}$.  
The inclusion of $\cP$ into $\bar{X}$ identifies $M$ and the orbit space 
$ _{{\mbox{$\Gamma$}}} \lmod \bar{X}$.
\end{proposition}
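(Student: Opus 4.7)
My plan is to build $\bar X$ as a formal gluing of copies of the polygon $\cP$ indexed by the group $\Gamma$, and then to use the affine gluing criterion to verify that this object is a flat affine surface together with a local diffeomorphism to $\bbA^2$. Concretely, for each side $S \in \cS$ let $S^-$ denote the side paired with $S$ by the relation $g_{S^-}(S) = S^-$ (so that $g_{S}$ identifies points of $S^-$ with points of $S$). I would form the disjoint union $\tilde \cP = \cP \times \Gamma$ and take $\bar X = \tilde \cP / \sim$, where $\sim$ is generated by the identifications $(x,\gamma) \sim (g_S(x),\,\gamma\, g_S^{-1})$ for all $x \in S^-$, $\gamma \in \Gamma$. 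The left action of $\Gamma$ on the second factor descends to a $\Gamma$-action on $\bar X$ with $\cP \times \{e\}$ as strict fundamental domain, and the map $\bar D\colon \bar X \to \bbA^2$, $[x,\gamma] \mapsto \gamma(x)$, is well-defined, continuous, and $\Gamma$-equivariant.

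The heart of the argument is to verify that $\bar D$ is a local diffeomorphism, because once this is established the rest follows almost formally. I would treat three cases according to the location of a point $p \in \cP$ that represents a class $[p, e] \in \bar X$. For $p$ in the interior of $\cP$ this is immediate, since $\bar D$ restricts to the inclusion $\cP \hookrightarrow \bbA^2$. For $p$ in the relative interior of a side $S$, only two copies of $\cP$ meet at $[p,e]$, namely those indexed by $e$ and $g_S^{-1}$, and the union $\cP \cup g_S^{-1}\cP$ covers an open neighborhood of $p$ in $\bbA^2$ because $g_S^{-1}$ is an affine map sending $S$ into the opposite side of $\cP$ along $p$; the local chart via $\bar D$ is therefore an affine open set. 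The crucial case is when $p$ is a vertex: the cycle of edges $S_1, \dots, S_m$ at $p$ and the hypothesis $g_{S_1} \cdots g_{S_m} = 1$ together with the disc condition ensure that the finite collection of polygons $g_{S_1} \cdots g_{S_i}\cP$ glues around $p$ to a topological disc lying inside $\bbA^2$, and this disc pulls back to an open neighborhood of $[p,e]$ on which $\bar D$ is a homeomorphism onto an affine open set. This is the step I expect to be the main obstacle: it requires checking carefully that the identifications in $\bar X$ around $[p,e]$ correspond exactly to the tessellation of the disc by the finitely many corners and that no collapsing occurs.

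Once $\bar D$ is shown to be a local diffeomorphism, $\bar X$ is a (Hausdorff) surface, since Hausdorffness is inherited from $\bbA^2$ on charts covering all of $\bar X$ and two distinct points either lie in a single chart or admit disjoint chart neighborhoods whose $\bar D$-images avoid each other (using compactness of $\cP$). The flat affine structure on $\bar X$ is the one pulled back from $\bbA^2$ through these local charts, and by construction $\bar D$ becomes the development map. Let $U = \bar D(\bar X) \subset \bbA^2$, which is open because $\bar D$ is a local diffeomorphism. The $\Gamma$-equivariance $\bar D(\gamma \cdot z) = \gamma \bar D(z)$ shows that $\Gamma$ acts on $\bar X$ by affine equivalences and sends $U$ to itself.

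It remains to check proper discontinuity of $\Gamma$ on $\bar X$ and to identify the orbit space with $M$. For the first, given a compact $K \subset \bar X$, cover $K$ by finitely many of the disc neighborhoods constructed above; each such neighborhood meets only finitely many translates $\gamma\cdot\cP$ (namely those appearing in the vertex cycle, edge pairing, or interior chart), so only finitely many $\gamma \in \Gamma$ satisfy $\gamma K \cap K \ne \emptyset$. Freeness follows from the same local description: no non-trivial $\gamma$ fixes a chart neighborhood. For the second, the quotient map $\cP \times \Gamma \to \bar X \to \Gamma \lmod \bar X$ factors through $\cP \times \{e\}$ and identifies exactly the boundary points that are glued in $M$, giving a canonical homeomorphism $M \cong \Gamma \lmod \bar X$, which is moreover an affine diffeomorphism on the complement of the image of $\partial \cP$, hence on all of $M$ by continuity of the affine atlas.
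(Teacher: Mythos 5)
Your proposal is correct and follows essentially the same route as the paper: the paper does not prove Proposition \ref{prop:agluing} in the text but defers to \cite[Proposition 2.1]{BauesG}, and the construction it sketches in the proof of Theorem \ref{thm:Deformations} is exactly your identification space $\bar X = (\cP\times\Gamma)/\sim$ with development $[x,\gamma]\mapsto\gamma(x)$, the gluing criterion being used precisely to verify that $\bar D$ is a local diffeomorphism at edge and vertex classes. The remaining points (Hausdorffness, proper discontinuity via the finitely many translates of $\cP$ meeting a compact set, and the identification $M\cong\Gamma\lmod\bar X$) are handled as in the standard Poincar\'e polygon argument, so nothing essential is missing.
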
 

It follows that $M$ inherits a natural flat affine structure from $\cP$. In fact, the surface $\bar{X}$ is the holonomy covering space
of $M$ and the group $\Gamma$ is the holonomy group of $M$.
Note also that the construction of $\bar{X}$ is sketched in the proof of Theorem \ref{thm:Deformations}. 
The situation is pictured in the following commutative diagram
of maps: 
\begin{eqnarray*}
  \bar{X} & \stackrel{\bar D}{\longrightarrow}  &  U \subset \bbA^{2}  \\ 
  \downarrow &        & \downarrow  \\
M=  \; _{{\mbox{$\Gamma$}}} \lmod \bar{X} & \longrightarrow   & {_{{\mbox{$\Gamma$}}} \lmod U} \; \; . \\  
\end{eqnarray*} 

\begin{example}
Figure \ref{figure:glue1} 
shows how to glue a trapezium $\cT$ with angle $\alpha < \pi$. 
The sides $S_1$ and $S_3$  are glued with a homothety. 
$S_2$ and $S_4$ are glued with a rotation of angle $\alpha$. 
The developing image is $U = \bbAo$. 
It is tessellated by the translates of $\cT$ if and only if  $m \alpha = 2 \pi$ for an 
integer $m$. If the angle $\alpha$ is rational, $p \alpha = q 2\pi$, the development $\bar{X} \rightarrow \bbAo$ is
a finite cyclic covering of degree $q$. Otherwise the development is an infinite cyclic covering and $\bar{X}$ is simply connected. 
\end{example}

\begin{figure}[ht] 
\unitlength1mm
\begin{center}
\begin{picture}(20, 25)    %   picture for gluing  
\small 

\thinlines 
\put(10,23){\circle*{1}}
\put(10,23){\line(-1, -2){9}} 
\put(10,23){\line( 1, -2){9}}
\put(9,18){$\alpha$}
\thicklines

% obere Kante 
\put(6,15){\line(1, 0){8}}
\put(6,15){\vector(1, 0){7}}
\put(6,15){\vector(1, 0){5}}
\put(8,11){$S_3$}

% linke Kante 
\put(6,15){\line(-1, -2){4}}
\put(2,7){\vector(1, 2){3}}
\put(-2,10){$S_4$}

% rechte Kante 
\put(14,15){\line(1, -2){4}}
\put(18,7){\vector(-1,2){3}}
\put(18,10){$S_2$}

% untere Kante 
\put(2, 7){\line(1, 0){16}}
\put(2, 7){\vector(1, 0){11}}
\put(2, 7){\vector(1, 0){9}}
\put(8,3){$S_1$}

\end{picture} 
\caption{Gluing a trapezium in $\bbA^{2}$.}
\label{figure:glue1} 
\end{center}
\end{figure}
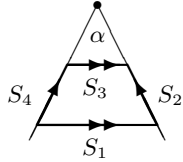 

\subsubsection{Gluing affine quadrilaterals} 
Theorem \ref{thm:Benzecri} implies that the gluing criterion imposes strong restrictions on the possible combinatorial types of polygons and pairings.
However, flat affine two-tori are easily obtained by gluing an affine 
quadrilateral $\cP$ in the way indicated in Figure \ref{gluingq}. 
The equivalence
class of the flat affine manifold thus obtained depends
on the affine equivalence class of $\cP$ and the particular
side pairing transformations chosen, see \cite[\S 3]{BauesG}. 
\begin{figure}[ht]  \unitlength1mm
\begin{center}
\begin{picture}(20, 7)(0,10)    %   picture for gluing  
\small 
% linke Kante 
\put(2,7){\line(0, 1){8}}
\put(2,7){\vector(0, 1){5}}
% obere Kante 
\put(2,15){\line(1, 0){16}}
\put(2,15){\vector(1, 0){11}}
\put(2,15){\vector(1, 0){9}}
% rechte Kante 
\put(18,7){\line(0, 1){8}}
\put(18,7){\vector(0,1){5}}
% untere Kante 
\put(2, 7){\line(1, 0){16}}
\put(2, 7){\vector(1, 0){11}}
\put(2, 7){\vector(1, 0){9}}
\end{picture} \end{center}
\caption{Gluing a torus.}
\label{gluingq}
\end{figure}
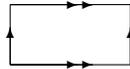 

The gluing conditions for such a pairing are easily verified:
\begin{lemma}[{\cite[Lemma 3.2]{BauesG}}] \label{gluelemma}
Let 
$ A,B  \in \Aff(2)$, $A (0,0) =(1,0) $, $A(0,1) = p$, $B(0,0) = (0,1)$,
$B(1,0)= p$.
The side pairing transformations  $\{ A, A^{-1}\! , B, B^{-1} \}$ for the polygon with vertices $\cP = ( (0,0), (1,0), p, (0,1) )$ satisfy the gluing conditions if and only if\/  $\det l(A)>0$ and $\det l(B)>0$ (where 
$l$ denotes the linear part of an affine transformation) and 
$[A,B] = \Id$. 
\end{lemma}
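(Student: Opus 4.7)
The plan is as follows. The pairings identify all four vertices of $\mathcal{P}$ into a single equivalence class --- $A$ sends $v_1 \to v_2$ and $v_4 \to v_3$, while $B$ sends $v_1 \to v_4$ and $v_2 \to v_3$ --- so $\mathcal{P}$ presents a single vertex cycle. I will track the consecutive copies of $\mathcal{P}$ encountered counterclockwise around the lifted vertex $\tilde v = v_1$ in the holonomy cover, using the fact that across a paired side $g_S(S)$ of $\mathcal{P}$ one meets the adjacent copy $g_S^{-1}\mathcal{P}$. The successive polygons will be $\mathcal{P}$, then $A^{-1}\mathcal{P}$ (across $S_4$), then $A^{-1}B^{-1}\mathcal{P}$ (across $A^{-1}(S_1)$), then $A^{-1}B^{-1}A\mathcal{P}$ (across $A^{-1}B^{-1}(S_2)$), and finally back to $\mathcal{P}$ across $A^{-1}B^{-1}A(S_3)$. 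Combinatorial closure in $\Gamma \subset \Aff(2)$ is exactly $A^{-1}B^{-1}AB = \Id$, equivalently $[A,B] = \Id$; this is the cycle relation at the single vertex.

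Next I will show that the sign conditions $\det l(A), \det l(B) > 0$ are forced by the requirement that the side pairings glue the interior of $\mathcal{P}$ to an adjacent copy lying on the opposite side of the paired edge, so that the quotient is a manifold rather than a branched surface. Along the oriented segment from $v_1$ to $v_4$, the interior of $\mathcal{P}$ lies to the right, while along the oriented segment from $v_2$ to $v_3$ it lies to the left; since $A$ carries the first segment onto the second matching endpoints to endpoints, the gluing will put the adjacent copy on the correct (opposite) side of $S_2$ if and only if $A$ preserves planar orientation, i.e. $\det l(A) > 0$. The same analysis applied to $B: S_1 \to S_3$ yields $\det l(B) > 0$.

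For the converse I will verify that the three conditions suffice for the four corners at $\tilde v$ to form a disc. Granted $[A,B] = \Id$, the four polygons meeting at $\tilde v$ may be written as $\mathcal{P}, A^{-1}\mathcal{P}, A^{-1}B^{-1}\mathcal{P} = B^{-1}A^{-1}\mathcal{P}, B^{-1}\mathcal{P}$. I will compute the two tangent directions at $\tilde v$ of the edges bounding each of these wedges: they are $\{(1,0),(0,1)\}$ for $\mathcal{P}$, then $\{(0,1), -l(A)^{-1}(1,0)\}$ for $A^{-1}\mathcal{P}$, and analogously for the remaining two wedges in terms of $l(A)^{-1}, l(B)^{-1}$. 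Positivity of $\det l(A)$ and $\det l(B)$, combined with the fact that $p$ lies in the region that makes $\mathcal{P}$ a convex quadrilateral (so e.g. $p_2 > 0$), ensures that consecutive directions occur in strictly counterclockwise cyclic order, while the abelian relation closes the sequence up on the first direction $(1,0)$. The hard part will be precisely this tiling verification: the cycle relation alone only gives combinatorial closure of the loop of polygons in the cover, and to rule out overlaps or gaps among the four affine-image wedges one must use both the orientation positivity of $l(A), l(B)$ and the convexity of the quadrilateral $\mathcal{P}$ to control the cyclic order of the tangent directions at $\tilde v$.
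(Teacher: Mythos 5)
The paper states this lemma without proof, quoting it from \cite[Lemma 3.2]{BauesG}, so there is no in-text argument to compare against; your proposal is the natural direct verification of the affine gluing criterion of Section \ref{sect:agluing} and it is correct. The combinatorial bookkeeping is right: all four vertices lie in one cycle, the successive polygons around $\tilde v$ are $\cP$, $A^{-1}\cP$, $A^{-1}B^{-1}\cP$, $A^{-1}B^{-1}A\cP$, and closing the cycle is exactly $A^{-1}B^{-1}AB=\Id$; the sidedness argument for the necessity of $\det l(A)>0$ and $\det l(B)>0$ is also correct, since an orientation-reversing pairing places the adjacent copy on the same side of the shared edge and forces an open overlap of consecutive corners. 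For the step you single out as the hard part, the verification closes off more cleanly than you suggest: no explicit control of the cyclic order of $d_1=(1,0)$, $d_2=(0,1)$, $d_3=-l(A)^{-1}(1,0)$, $d_4=-l(B)^{-1}(0,1)$ is required. Because $\cP$ is convex, each corner is a salient cone of angle in $(0,\pi)$, and because the gluing maps are invertible and orientation preserving, each of the four wedges at $\tilde v$ is the \emph{counterclockwise} cone from $d_i$ to $d_{i+1}$, again of angle in $(0,\pi)$. The four counterclockwise increments therefore sum to a positive integer multiple of $2\pi$ that is strictly less than $4\pi$, hence to exactly $2\pi$, which is precisely the statement that the corners tile a disc with disjoint interiors. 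Convexity of $\cP$ enters only through the bound angle $<\pi$ on each wedge, so your invocation of it is in the right place.
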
  

We remark further: 
\begin{proposition} Every flat affine two-torus obtained by gluing a quadrilateral $\cP$ on its sides is homogeneous. Conversely, 
if $M$ is a homogeneous flat affine two-torus with non-cyclic affine holonomy group, then $M$ may be obtained by gluing a quadrilateral. 
\end{proposition}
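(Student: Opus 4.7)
The plan is to reduce both directions to the classification of abelian \'etale affine Lie groups in Example \ref{ex:domains}, combined with the gluing machinery of Lemma \ref{gluelemma} and Proposition \ref{prop:agluing}.

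For the forward implication, assume $M$ is obtained by gluing a quadrilateral $\cP$ via side-pairings $A, B \in \Aff(2)$. By Lemma \ref{gluelemma} we have $[A,B] = \Id$ and both linear parts have positive determinant, so the holonomy group $\Gamma = \langle A,B\rangle \leq \Aff^+(2)$ is abelian of rank two. I would embed $\Gamma$ into a connected two-dimensional abelian Lie subgroup $\mathsf{G} \leq \Aff(2)$ as follows: since $A$ and $B$ commute, they lie in a common connected abelian subgroup of $\Aff^+(2)$, and the hypothesis that $\Gamma$ has rank two excludes $\Gamma$ from every one-parameter subgroup, so a short Lie algebra argument produces a two-dimensional connected abelian $\mathsf{G} \supseteq \Gamma$. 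Up to affine conjugation, $\mathsf{G}$ is one of the groups in Example \ref{ex:domains} (or its universal cover $\tilde{\mathsf{A}}$ in the rotational case), and it acts with an open orbit containing $\cP$. Because $\Gamma$ is central in $\mathsf{G}$, left multiplication by $\mathsf{G}$ on the holonomy cover $\bar X$ descends through $\Gamma$ to a transitive affine action of $\mathsf{G}$ on $M$, so $M$ is homogeneous.

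For the converse, by the proposition recalled at the start of \S \ref{sect:etale_affine}, a homogeneous flat affine two-torus $M$ is affinely diffeomorphic to $\Lambda \backslash \mathsf{G}$ for some abelian \'etale affine Lie group $\mathsf{G}$ from Example \ref{ex:domains} and a lattice $\Lambda \cong \bbZ^2$. The development is the orbit map $\mathsf{G} \to U \subseteq \bbA^2$ at a basepoint, and the affine holonomy is the image of $\Lambda$ under $\mathsf{G} \to \Aff(2)$. The non-cyclicity hypothesis excludes $\Lambda$ from meeting the (at most cyclic) kernel of $\mathsf{G} \to \Aff(2)$ in a finite-index subgroup, so $\Lambda$ embeds as a rank-two abelian group $\langle A,B\rangle \subset \Aff(2)$ with $[A,B]=\Id$. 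Fix a point $x_0 \in U$ and form the four orbit points $x_0, Ax_0, ABx_0, Bx_0$; by commutativity they are the vertices of a quadrilateral $\cP$. A preliminary global affine change of coordinates sending $x_0 \mapsto (0,0)$, $Ax_0 \mapsto (1,0)$, $Bx_0 \mapsto (0,1)$ brings the data $(\cP, A, B)$ into the normal form of Lemma \ref{gluelemma}, after which Proposition \ref{prop:agluing} identifies $M$ with the gluing of $\cP$.

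The main obstacle is verifying, in the converse, that $\cP$ is a genuine convex quadrilateral inside $\bbA^2$ (so that its straight-line sides make sense) and that the cycle relation and local-disc conditions at the vertex of $\cP$ are satisfied. For $U = \bbA^2$ (types $\mathsf{T}, \mathsf{D}$) and for $U$ a halfspace or sector (types $\mathsf{C}_1, \mathsf{C}_2, \mathsf{B}$) the convexity of $U$ makes the containment $\cP \subset U$ automatic for any $x_0$. The delicate case is type $\tilde{\mathsf{A}}$ with $U = \bbAo$: the straight segment $[x_0, Ax_0]$ could cross the origin when $A$ contains a large rotational part. This is resolved by exploiting the freedom to replace $(A,B)$ by any $\bbZ$-basis of the rank-two lattice $\Lambda \subset \tilde{\mathsf{A}}$; using simultaneous Diophantine approximation of the angular components of the generators, one can choose a basis whose rotational parts are small, so that the orbit quadrilateral lies in a convex angular sector of $\bbAo$ avoiding the origin. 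Convexity of $\cP$ then yields the required $2\pi$ angle sum at the unique identified vertex, completing the verification of the gluing criterion.
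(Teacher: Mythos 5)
Your forward direction follows the same route as the paper's proof but omits the two steps that actually carry it. First, $\mathsf{G}$ does not act on the holonomy cover $\bar X$ by ``left multiplication'' in any a priori sense: the action of the connected abelian group $N \supseteq h(\Gamma)$ on $\bbA^2$ has to be \emph{lifted} through the development map, which the paper does in Proposition \ref{prop:liftofN} by lifting the affine vector fields tangent to $N$ and using compactness of $M$ to see that their flows are complete. Second, transitivity of the lifted action is not automatic from ``$\mathsf{G}$ has an open orbit containing $\cP$'': one must show the development image \emph{equals} that open orbit $U$. The paper obtains $D(\tilde M)\subseteq U$ because the development image is the union of the $h(\Gamma)$-translates of $\cP\subseteq U$ and $h(\Gamma)$ preserves $U$, and the reverse inclusion from openness and $N$-invariance of $D(\tilde M)$; only then do the open $\tilde N$-orbits exhaust the connected space $\tilde M$. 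Without these steps your ``descends to a transitive affine action'' is an assertion, not a proof.

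For the converse the paper gives no proof at all, so you are on your own, and your argument fails at its first step. The inference ``non-cyclic affine holonomy $\Rightarrow$ $\Lambda\cap\ker(\mathsf{G}\to\Aff(2))=1$ $\Rightarrow$ the holonomy has rank two'' is false: for $\mathsf{G}=\tilde{\mathsf{A}}$ the quotient $\Lambda/(\Lambda\cap\ker)$ can be $\bbZ\oplus\bbZ/k$ with $k>1$, which is non-cyclic of rank one. Concretely, the torus $\cH_{\lambda,\pi,1}$ of Example \ref{ex:finquothopftori} is homogeneous with non-cyclic holonomy $\langle -\E_{2},\lambda \E_{2}\rangle\cong\bbZ\oplus\bbZ/2$; since every holonomy element is a scalar about a common fixed point, the four points $x_0, Ax_0, ABx_0, Bx_0$ are collinear for \emph{every} choice of generators and basepoint, so no gluing quadrilateral exists. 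This case must be excluded or treated separately (and it is in fact already an issue for the statement as literally written). Separately, your Diophantine repair in the $\tilde{\mathsf{A}}$ case cannot work: if $\Lambda$ is generated by a lift of a pure dilation and a lift of an irrational rotation, the only lattice elements with small angular component are the powers of the dilation, so no basis with two small rotational parts exists. Fortunately none is needed, because the gluing criterion of Lemma \ref{gluelemma} is purely local at the vertex and does not require the holonomy translates of $\cP$ to be disjoint in $\bbAo$; but the verification that actually matters --- that the orbit quadrilateral is simple, convex and avoids the origin for an honest $\bbZ$-basis of $\Lambda$ in each of the types of Example \ref{ex:domains} --- is nowhere carried out.
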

\begin{proof} For the proof that the gluing torus of $\cP$ is
homogeneous, we have to appeal to some of the facts
which are explained in \S \ref{sect:classification}. In particular,
we use Proposition \ref{prop:devimages} and 
Proposition \ref{prop:liftofN}.
The gluing conditions imply that the minimal connected 
abelian subgroup $N$ which contains the holonomy of $M$ is at least two-dimensional. Therefore, $N$ is one of the two-dimensional abelian subgroups listed in Example \ref{ex:domains}.
By Proposition \ref{prop:liftofN}, $N$ acts on the
development image of $M$. Let $U$ be an open orbit
for $N$, which is one of the domains of Example \ref{ex:domains}. 
Then, by convexity of the homogeneous domain $U$, the polygon $\cP$ is contained in $U$. The construction of $M$ and its development process show that the development image of $M$ is covered by the holonomy translates of $\cP$. Since the holonomy is contained in $N$, it follows that the development image of $M$ is contained in the open orbit $U$.  On the other hand, by Proposition \ref{prop:liftofN}, the development image contains $U$. This implies that the development image of $M$ equals the open orbit $U$.  
Therefore, $N$ acts transitively on the development image, and thus also on $M$. In particular, it follows that $M$ is homogeneous. 

We omit the proof of the converse statement. A special case
is treated in \cite[\S 4.6]{BauesG}.
\end{proof} 

\subsubsection{The gluing variety}
%Consider a convex quadrilateral $$ P = ( \, (0,0), (1,0),p , (0,1) \, ) \in \Delta_c \, . $$
%We are interested in the set of pairs
%of affine transformations $A= g_{S_1}$ and
%$B= g_{S_2}$, such that $\Phi = \{ A, A^{-1}, B, B^{-1} \}$
%is a proper affine pairing of the combinatorial type 
%illustrated in figure \ref{gluingq}.
By Lemma \ref{gluelemma}, the set of side pairings  $$
{\cal V} = \{ (p, A, B ) \} \, \subset  {\bbR}^2 \times {\bbR}^6 \times {\bbR}^6 \; , $$ 
which satisfy the gluing conditions for the  (convex) quadrilateral $$\cP = ( \, (0,0), (1,0),p , (0,1) \, ) $$
form a semi-algebraic subset
of $ {\bbR}^2 \times {\bbR}^6 \times {\bbR}^6$.
It is easily computed that ${\cal V}$ is four-dimensional
and that the set of solutions with respect to a fixed $\cP$ is 
of dimension two. We let $\mathsf{p}: {\cal V} \ra \bbR^2$ denote the projection to the first factor. Note that the projection of  ${\cal V}$ to the matrix factors  ${\bbR}^6 \times {\bbR}^6$
defines an 
\emph{embedding} 
$  {\cal V} \, \hookrightarrow \,  \Hom(\bbZ^2, \Aff(2))$
as a subset of the holonomy image.
We call the set 
${\cal V}$ the {\em gluing variety of quadrilaterals}.\\

\paragraph{Embedding into the deformation space}
Observe that the gluing of a quadrilateral $\cP$ 
naturally constructs a \emph{framed affine two-torus}.
If we choose a diffeomorphism of the unit-square
with $\cP$, the development process of the gluing extends this
diffeomorphism to the development map of a \emph{marked} framed affine two-torus (see Example \ref{ex:fat}). This defines a continuous map
$$ {\cal V} \supset \mathsf{p}^{-1} (\cP) \ra \StrMf(T^2,\bbA^2) \; ,
$$
where $\mathsf{p}: {\cal V} \ra \bbR^2$ is the projection
to the first factor.  
We may furthermore choose a natural identification
of the unit square with $\cP$ (for example, by decomposing 
any quadrilateral into two triangles and using affine identifications  
of the triangles). Then, using \eqref{eq:deftower}, we obtain
 % the development process  gives rise to 
 a continuous (open) embedding
$$ \nu:  {\cal V} \ra  \StrMf(T^2,\bbA^2) \;  $$
to the space of classes of framed flat affine tori.
Note that $ \nu$ 
is a section of  the holonomy map $\mathsf{hol}: 
\StrMf(T^2,\bbA^2) \ra  \Hom(\bbZ^2, \Aff(2))$.
Since $\cal V$ also defines 
a slice for the $\GL(2,\bbR)$-orbits on  $
\Hom(\bbZ^2, \Aff(2))$, the map $\nu$ descends 
to an embedding $ {\cal V} \ra \StrM_{\mathsf{p}}(T^2,\bbA^2)$, 
whose image consists of homogeneous structures.
Therefore, by the discussion in Example \ref{ex:def_homfaff},  
\emph{the gluing variety $ {\cal V}$
embeds as a locally closed subset of the 
deformation space $\Def(T^2,\bbA^2)$}.

%%%%%%%%%%%%%%%%%%%%%%%%%%%%%
\subsection{Tori with development image $\bbAo$} \label{sect:tori_bbao}

Here we discuss how to glue flat affine tori from annuli which are contained in the once punctured plane or the universal covering 
flat affine manifold of the once punctured plane.

\subsubsection{Hopf tori and quotients of $\bbAo$} \label{sect:Hopftori}
%  and gluing of annuli in the plane}
The simplest examples of flat affine tori with development image 
$\bbAo$ are obtained by gluing closed
annuli  % $\cA \subset \bbAo$ 
along their boundary curves. 

% \paragraph{Homogeneous examples}
\begin{example}[Hopf tori]  \label{ex:hopftori}
Let $A_{\lambda}$, 
% = 
% \bigl(\begin{smallmatrix} \lambda & 0 \\ 0 & \lambda \end{smallmatrix}\bigr)$, 
%\begin{matrix}{cc} \lambda & 0 \\ 0 & \lambda \end{matrix}$, 
$\lambda >0$,  be a dilation with scaling factor $\lambda$, 
and $\Gamma =  \,  \langle  \, A_{\lambda} \,  \rangle$ 
the subgroup of $\GL(2,\bbR)$
generated by $A_{\lambda}$. Then $\Gamma$ acts properly discontinuously on 
$\bbAo$ and the quotient space 
$$ \cH_{\lambda} =  \;  \genfrac{}{}{0pt}{0}{}{\Gamma}\backslash \; \left(\bbAo\right)$$ is a compact flat affine two-torus 
$\cH_{\lambda}$, which is called a \emph{Hopf torus}.  
 \end{example} 

The Hopf torus $\cH_{\lambda}$ is obtained by gluing a closed  
annulus  $\cA_{\lambda} \subset \bbAo$ of width $\lambda$
along its boundary circles, see Figure \ref{figure:annuli}.

\begin{figure}[htbp]  
\begin{center}

\includegraphics[scale=0.5, clip=true, trim=  0cm 0.35cm 0cm 0.1cm ]{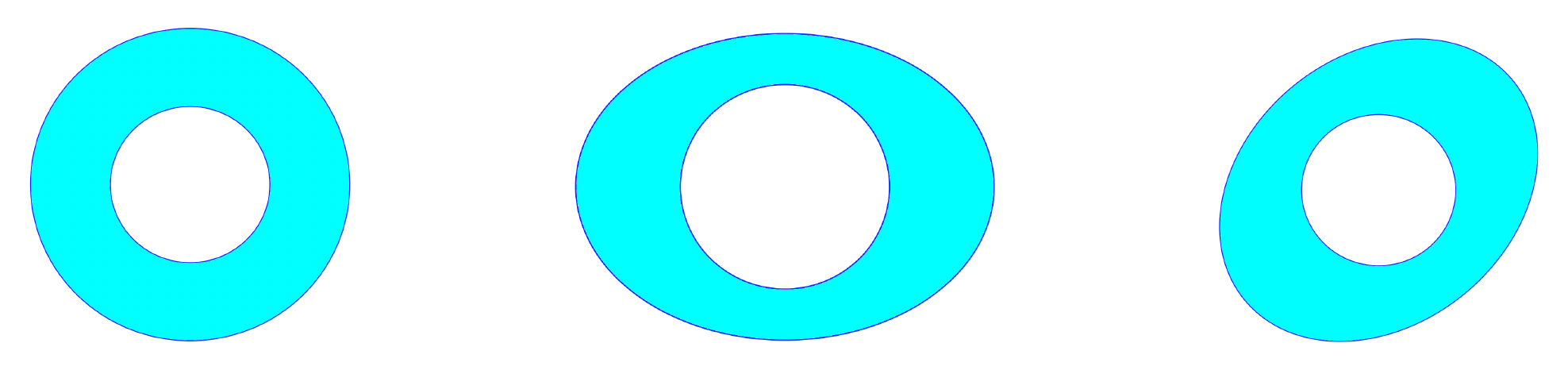}
\caption{Gluing of annuli: Hopf torus, non-homogeneous flat affine tori.}
% \label{figure:Hopftori}
\label{figure:annuli}
\end{center}
\end{figure}

The geometric construction of the Hopf tori $\cH_{\lambda}$ may be refined 
% to obtain further examples of homogeneous flat affine tori 
as follows:  

\begin{example}[Finite coverings of  Hopf tori]  \label{ex:finhopftori}
Let $$ X_{k} \,  \lra  \; \bbAo$$ be a $k$-fold covering flat affine manifold. 
%of $\bbAo$. 
Then we may lift the action of $A_{\lambda}$ on $\bbAo$ to
a properly discontinuous action of an affine transformation 
$A_{\lambda,k}$ of $X_{k}$. The quotient spaces 
$$ \cH_{\lambda,k} = \;   \langle  A_{\lambda,k} \rangle \, \lmod  X_{k} $$ 
are % homogeneous 
flat affine manifolds, which are $k$-fold 
covering spaces of $\cH_{\lambda}$. 
Geometrically, $X_{k}$ is a  
topological annulus with
a flat affine structure which is obtained 
% as a union of finite pieces 
by cutting $\bbAo$ at a radial line and then 
gluing $k$ copies of $\bbAo$ along this 
geodesic ray. A \emph{geodesic} in a 
flat affine manifold is a curve which 
corresponds to a straight line in all affine coordinate 
charts. Thus, correspondingly, the manifolds $\cH_{\lambda,k}$ 
are obtained by gluing $k$ copies of $\cH_{\lambda}$ 
at a closed geodesic. 
\end{example} 

Note that the family of Hopf tori $\cH_{\lambda,k}$ 
gives a simple example of a family of distinct 
flat affine manifolds which have identical holonomy homomorphism.

\begin{example}[Finite quotients of  Hopf tori]  \label{ex:finquothopftori}
Let $\R_{\alpha}$ be a rotation with angle $\alpha= \frac{p}{q} \pi $
a rational multiple of $\pi$. Then the finite group of rotations 
of order $2q$ generated by $\R_{\alpha}$ acts without fixed points
on $ \bbAo$ and on the Hopf tori $\cH_{\lambda,k}$. 
Therefore, the  quotient spaces $$ \cH_{\lambda, \alpha,k} =  \;  \langle  \R_{\alpha} \rangle
 \,  \lmod \cH_{\lambda,k}$$ are flat affine two-tori.
%\marginpar{Note that 
%the tori  $H_{\lambda, \alpha}$ 
%may also be constructed by dividing out the action of the finite rotation 
%group of order $q$ on $\tilde X_{p}$.} 
\end{example} 

Since $A_{\lambda}$ is in the center of $\GL(2,\bbR)$,
$\cH_{\lambda}$ is a homogeneous flat affine manifold 
with affine automorphism group 
$$ \Aff(\cH_{\lambda})= \GL(2,\bbR)/\Gamma \; .
$$ Hence, its finite coverings
$ \cH_{\lambda,k}$ are homogeneous, as well. Similarly, 
$ \cH_{\lambda, \alpha,k}$ are homogeneous flat affine two-tori, 
with $\Aff(\cH_{\lambda, \alpha})^0$
isomorphic to $\GL(1,\bbC)/\Gamma$, except for $\alpha = \pi$. In the latter case $\Aff(\cH_{\lambda, \pi}) = \PGL(2,\bbR)/ \Gamma$.

\paragraph{Expanding holonomy}
Non-homogeneous quotients of $\bbAo$ may be constructed
by using expanding elements of $\GL(2,\bbR)$.
A matrix $A \in \GL(2,\bbR)$ is called an \emph{expansion} if 
it has real eigenvalues $\lambda_{1}, \lambda_{2} > 1$.  
($A^{-1}$ is then called a \emph{contraction}.)
Every expansion acts properly
discontinuously on $\bbAo$, see Figure \ref{figure:expanding}.
This motivates the following: 
%The following slightly more general notion is useful in our context:  
\begin{definition}[Expanding elements]
A matrix $A \in \GL(2,\bbR)$  is called  \emph{expanding} if it
acts properly on  $\bbAo$ and
every compact subset of $\bbAo$ is moved to infinity by its iterates 
$A^k$, $k \to \infty$.
\end{definition}
Note that, if $A$ is expanding,  it is either an expansion, or a product of 
an expansion with $\R_{\pi}$, or it is conjugate to a product of a 
dilation and a rotation. 

\begin{figure}[htbp] 
  \centering
 % \hspace{-3.1cm}
  % \fbox{
    \includegraphics[clip=true, height= 2.2cm, trim= 32cm 11.5cm 12cm 11.5cm]{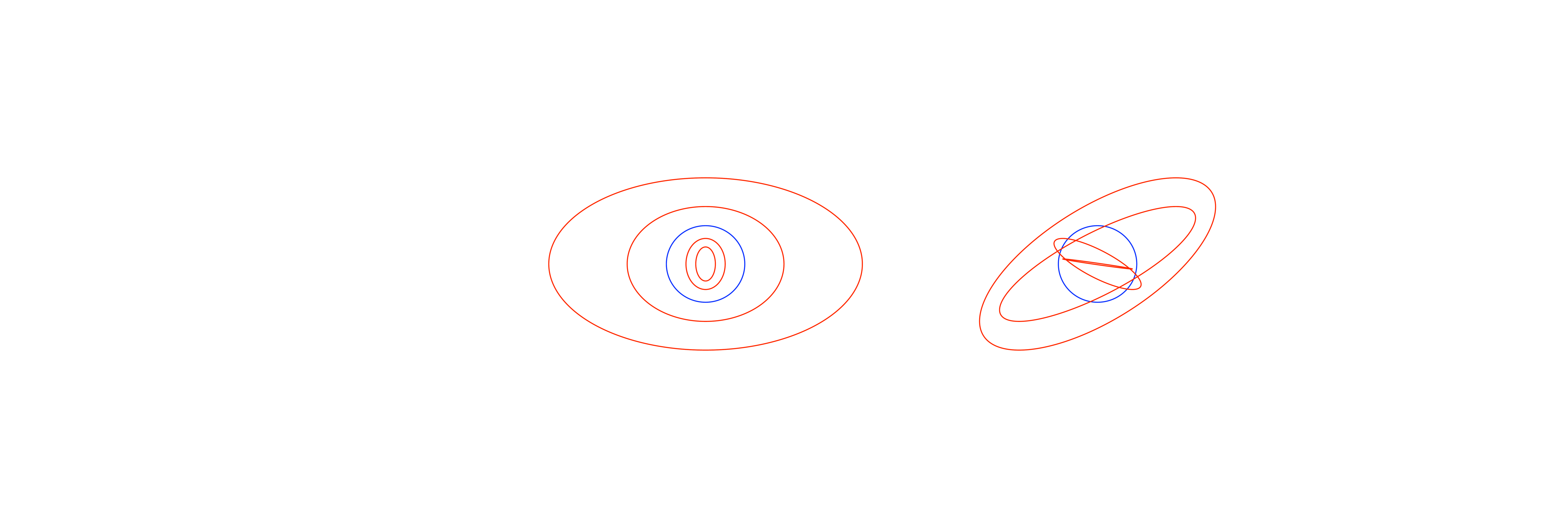}
  %}
  \caption{Dynamics of expanding elements in $\GL(2,\bbR)$.}
  \label{figure:expanding}
\end{figure}

\begin{figure}[htbp] 
  \centering
 % \hspace{-3.1cm}
  % \fbox{
    \includegraphics[clip=true, height= 2.2cm, trim= 32cm 12cm 10cm 12cm]{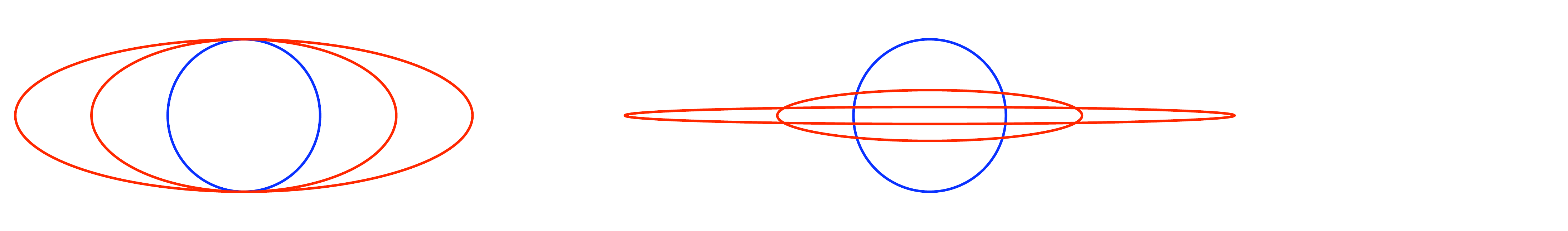}
  %}
  \caption{Dynamics of non-expanding elements in $\GL_{2}(\bbR)$.}
  \label{figure:nonexpanding}
\end{figure}

%Let $A \in \GL(2,\bbR)$ be an expansion. 
%Consequently,  every lift ${A}_{k}$ to the covering spaces 
%% $\widetilde \bbAo$ and 
%$X_{k}$ acts properly, as well. 

\begin{example}[Tori with expanding holonomy] 
\label{ex:expholonomy}
If $A$ is an expanding element then the quotient space 
$$ \cH_{A} = \langle A \rangle \lmod \, \bbAo$$
is  a flat affine two-torus with development image $\bbAo$.
If $A$ is an expansion, the torus $\cH_{A}$ is 
obtained by gluing an annulus $\cA_{A} \subset \bbAo$, as indicated 
in Figure \ref{figure:annuli}. Note, if $A$ is an expansion which is not a dilation then $\cH_{A}$ is a flat affine torus, which
is  \emph{not} homogenous.
\end{example}
%\begin{example}[Gluing of annuli]  \label{ex:annuliigluing}
%\end{example} 
%
%Similar to Hopf tori, the tori $\cH_{A}$ admit finite  covering spaces $$ H_{A,k} = \Gamma_{k} \, \lmod X_{k} \; ,  
%$$
%with holonomy group $h(\Gamma_{k}) = \Gamma$.
%Note,  if  $\cH_{A}$ is not a Hopf torus its only finite quotient 
%torus is $ H_{A, -1} =  \;  \langle  \R_{\pi} \rangle
% \,  \lmod \cH_{A}$.\\
%
%The following construction subsumes
%all of our present examples.

\subsubsection{Quotients of $\widetilde \bbAo$} \label{sect:gentori}
We consider the universal covering
flat affine manifold 
$$\mathsf{q}: \widetilde \bbAo\,  \longrightarrow \, \bbAo$$  of 
the open domain $\bbAo$ in $\bbA^2$. Let  $\Aff( \widetilde \bbAo)$ be its group of affine
diffeomorphisms. 

\paragraph{Universal covering of $ \GL(2,\bbR)$}
The development $\mathsf{q}$ induces a surjective homomorphism  
$$ \mathsf{p}: \Aff( \widetilde \bbAo) \lra \Aff(\bbAo) = \GL(2,\bbR) \; , $$ 
which exhibits $\Aff( \widetilde \bbAo)$ 
as the universal covering group of $\GL(2,\bbR)$. 

Let $\R_{\pi} \in \GL(2,\bbR)$ denote rotation by $\pi$. 
The center of the group $$ \Aff( \widetilde \bbAo) = % \widetilde 
 \tGL(2,\bbR)$$ is therefore generated by an element 
$\tau$  which satisfies $\mathsf{p}(\tau) = \R_{\pi}$, 
and the kernel of  $\mathsf{p}$ is generated by $\tau^2$.
(cf.\ \S \ref{sect:GL2R}.)

\paragraph{Polar coordinates.} 
We let  $(r, \theta)$ denote polar coordinates for $\widetilde \bbAo$.  Then  $$ \tau: (r, \theta) \mapsto (r, \theta + \pi) \, . $$
More generally, the universal covering $\widetilde \SO(2,\bbR)$ of the rotation group is a subgroup of  $\tGL(2,\bbR)$ which acts by translations in the $\theta$-direction. 

% \paragraph{Quotients by elements with non-zero rotation angle} 
\paragraph{Elements with non-zero rotation angle.} 
Let $B \in \GL^+\!(2,\bbR)$ have positive eigenvalues.
After conjugation, we may assume that $B$ preserves the horizontal coordinate
axis in $\bbA^2$. We let $\tilde B= \tilde B_{0}$ denote
the lift of $B$ to $\widetilde \bbAo$ which  preserves the line $\theta = 0$.
% \marginpar{what if $B$ is a rotation?}
It follows that
$\tilde B$ preserves all horizontal strips 
$${\bar \Omega}_{\ell} = \;  \{ \, (r, \theta) \mid \ell \, \pi \leq \theta \leq (\ell+1) \,  \pi \, \}$$ 
and their boundary components.
% which have a geodesic boundary with two components.
We observe that  (the group generated by) any other lift
$$ \tilde B_{k} = \tau^k \tilde B \, , \, k \neq 0 \; , $$  
acts properly on 
$\widetilde \bbAo$. 

% Since linear maps preseve antipodality, 
\begin{definition}[Non-zero angle of rotation] \label{def:posrot}
Let $\tilde B \in \widetilde \GL^+\!(2,\bbR)$.
We say that $\tilde B$ has a non-zero rotation angle if 
$\tilde B$ acts properly on $ \widetilde \bbAo$, and for every compact subset the $\theta$ coordinates are unbounded
under the iterates  $\tilde B^k$, $k \to \infty$.
\end{definition}

The property to have non-zero rotation is an affine invariant
% of $\tilde B$, 
that is, it is invariant by conjugation 
in $\tGL^+\!(2,\bbR)$. % as can be seen easily. 
In particular, 
$\tilde B \neq 1$ has non-zero angle of rotation if and only if $\tilde B$ is conjugate
to an element of $\widetilde \SO(2)$ or $B$ has positive eigenvalues and 
$\tilde B = \tilde B_{k} = \tau^k \tilde B_{0}$, $k \neq 0$, 
as above. 

\paragraph{Proper actions on $\widetilde \bbAo$.}
Let $\bar {\mathcal H}_{0}^k = \bigcup_{\ell = 0 \ldots k} 
{\bar \Omega}_{\ell}$ be the successive 
union of  $k$ strips  ${\bar \Omega}_{\ell}$.
Note that the development image of ${\bar \Omega}_{\ell}$ is
a closed halfspace with the origin $0$ removed, and
the development image of $\bar {\mathcal H}_{0}^k$ 
is $\bbAo$, $k \geq 1$  (compare also Figure 
\ref{figure:deviscov}).

\begin{example}[Affine cylinders without boundary]
Consider the quotient flat affine manifolds 
$$  X_{B,k} =   \;  {\langle \tilde B_{k} \rangle}
 \,  \lmod \, \widetilde  \bbAo \,  , \;  \,   k \geq 1 \; .$$
These are open affine cylinders which are obtained by gluing 
$\bar {\mathcal H}_{0}^k$  along its two incomplete 
boundary geodesics.   
The development image of $X_{B,k}$ is $\bbAo$, and 
its holonomy group is generated by  $\R_{\pi}^k B$.
\end{example}

Let $A$ be an expansion which commutes with $B$ and
$\tilde A= \tilde A_{0}$ the lift of $A$ which preserves the
line $\theta=0$. Then $\tilde A$ acts properly on 
$X_{\tilde B}$.  
\begin{example}[Quotients of $\tbbAo$] \label{ex:gentorusnh}
We  obtain the quotient affine torus
$$     \cT_{\tilde A, \tilde B} = \cT_{A,B,k}  =   \;  \langle \tilde A \rangle
 \,  \lmod   X_{B,k} \; , k \neq 0 \,   . $$
The holonomy homomorphism of 
$\cT_{A,B,k}$  is determined by $A$, $B$ and
the parity of $k$. 
%In particular, for fixed $A$, $B$
%we have an ifnite fmaily 
%  \end{example} 
%Note that the affine two-torus $\cT_{A,B,k}$ is homogeneous if 
%and only if its holonomy group $h(\Gamma) = \langle A , B \rangle$ is
%contained in the group of dilations, in which case $\Aff(\cT_{A,B,k})$ 
%is a finite covering group of $\PSL(2,\bbR)$, and $\cT_{A,B,k}$ is
%a Hopf torus $\cH_{\lambda,k}$. 
\end{example}

\begin{example}[Affine cylinders without boundary, general case]  \label{ex:acylinders1}
Let $\tilde B$ be an element of $\widetilde \GL^+\!(2,\bbR)$ which has non-zero rotation. 
Then the quotient flat affine
manifolds 
$$  X_{\tilde B} =   \;  {\langle \tilde B  \rangle}
 \,  \lmod \, \widetilde  \bbAo \,    $$
are open cylinders which are obtained by gluing a
strip  $$\bar {\mathcal H}_{\alpha}  = \{ (r, \theta) \mid 0 \leq \theta \leq \alpha \}$$
in $\widetilde \bbAo$ along its two boundary geodesics.

%$$  X_{B,k} =   \;  {\langle \tilde B_{k} \rangle}
% \,  \lmod \, \widetilde  \bbAo \,  , \;  \,   k \geq 1 $$
%are open affine cylinders which are obtained by gluing the successive 
%union of  $k$ strips  ${\bar \Omega}_{\ell}$ along its two boundary geodesics.   
%The development image of $X_{B,k}$ is 
%$\bar {\mathcal H}_{0}$, the closed upper
%halfspace with the origin $0$ removed, and 
%its holonomy group is generated by  $\R_{\pi}^k B$.
\end{example}

% We combine now  the previous constructions. 
%Let $B \in  \GL^+\!(2,\bbR)$ with real eigenvalues
%and $\tilde B = \tilde B_{k}$, $k \geq 1$,  a lift with non-zero rotation. 

Now let  $B \in \GL(1,\bbC)$ and $\tilde B$ a lift with 
non-zero rotation, and $A \in  \GL(1,\bbC)$ an expanding
element. Then $\tilde A$ acts properly on $X_{\tilde B}$ 
if and only if $ \tilde A$ and $ \tilde B$ generate a lattice in
$\widetilde  \GL(1,\bbC)$. 
\begin{example} \label{ex:gentorush}
The quotient flat affine torus
$$     \cT_{\tilde A, \tilde B} =   \;  \langle \tilde A \rangle
 \,  \lmod   X_{\tilde B,k} \; , $$
is a homogeneous flat affine two-torus 
with holonomy in $\GL(1,\bbC)$.
\end{example}

\subsection{Affine cylinders with geodesic boundary} 
\label{sect:acylinders}
We show that by gluing flat affine cylinders whose boundary 
curves are incomplete geodesics 
% along their geodesic boundary components flat affine tori 
we may construct  flat affine two-tori with development image $\bbAo$. This yields another construction of the manifolds $\cT_{A,B,k}$
which have been introduced in Example \ref{ex:gentorusnh}.
As a matter of fact, as a key step in the course of the proof of Theorem \ref{thm:classification},  we shall show that
% these constructions exhaust 
all \emph{non-homogeneous}  flat affine tori %with development image $\bbAo$,
% which are \emph{not} homogeneous, 
may be obtained in this way.\\

% We can give a complementary construction as follows.

\begin{example}[Affine cylinders with geodesic boundary]  \label{ex:acylinders2}
Let $\bar {\mathcal H}_{0}$ be  the closed upper
halfplane with the origin $0$ removed. If $A$ is an expansion then 
$$ \cC_{A} =    \;  \langle A \rangle
 \,  \lmod \bar {\mathcal H}_{0} $$
is  topologically an annulus with two boundary components, 
and it is also a flat affine manifold with (incomplete) geodesic boundary, see Figure  \ref{figure:affinecylinders} and Figure \ref{figure:affinecylinder3d}. We call  $\cC_{A}$ an 
affine cylinder.\footnote{Benoist \cite{Benoist_Ta} calls $\cC_{A}$ an annulus.}
%with geodesic boundary. See Figure \ref{figure:cylinders}.
\end{example} 

\begin{figure}[htbp] 
\begin{center}
\includegraphics[scale=0.6, clip=true, trim=  0cm 0.7cm 0cm 0.7cm ]{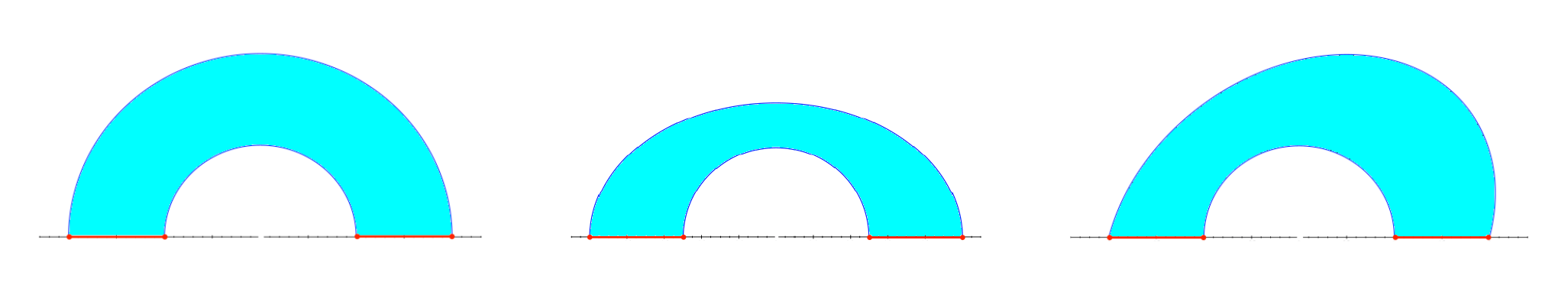}
\caption{{Gluing of flat affine cylinders $ \cC_{A} $.}}
 \label{figure:affinecylinders}
\end{center}
\end{figure}

More generally, let $\bar {\mathcal H}_{0}^k = \bigcup_{\ell = 0 \ldots k} 
{\bar \Omega}_{\ell}$, and $\tilde A$ the lift of $A$, which preserves the line
$\theta =0$.  Then the manifold with boundary 
$ \cC^k_{A} =    \;  \langle \tilde A \rangle
 \,  \lmod \bar {\mathcal H}_{0}^k$ is called an \emph{affine cylinder}. 

\begin{figure}[htbp] 
\begin{center}
\includegraphics[scale=0.2, clip=true, trim=  0cm 1.6cm 0cm 0.8cm]{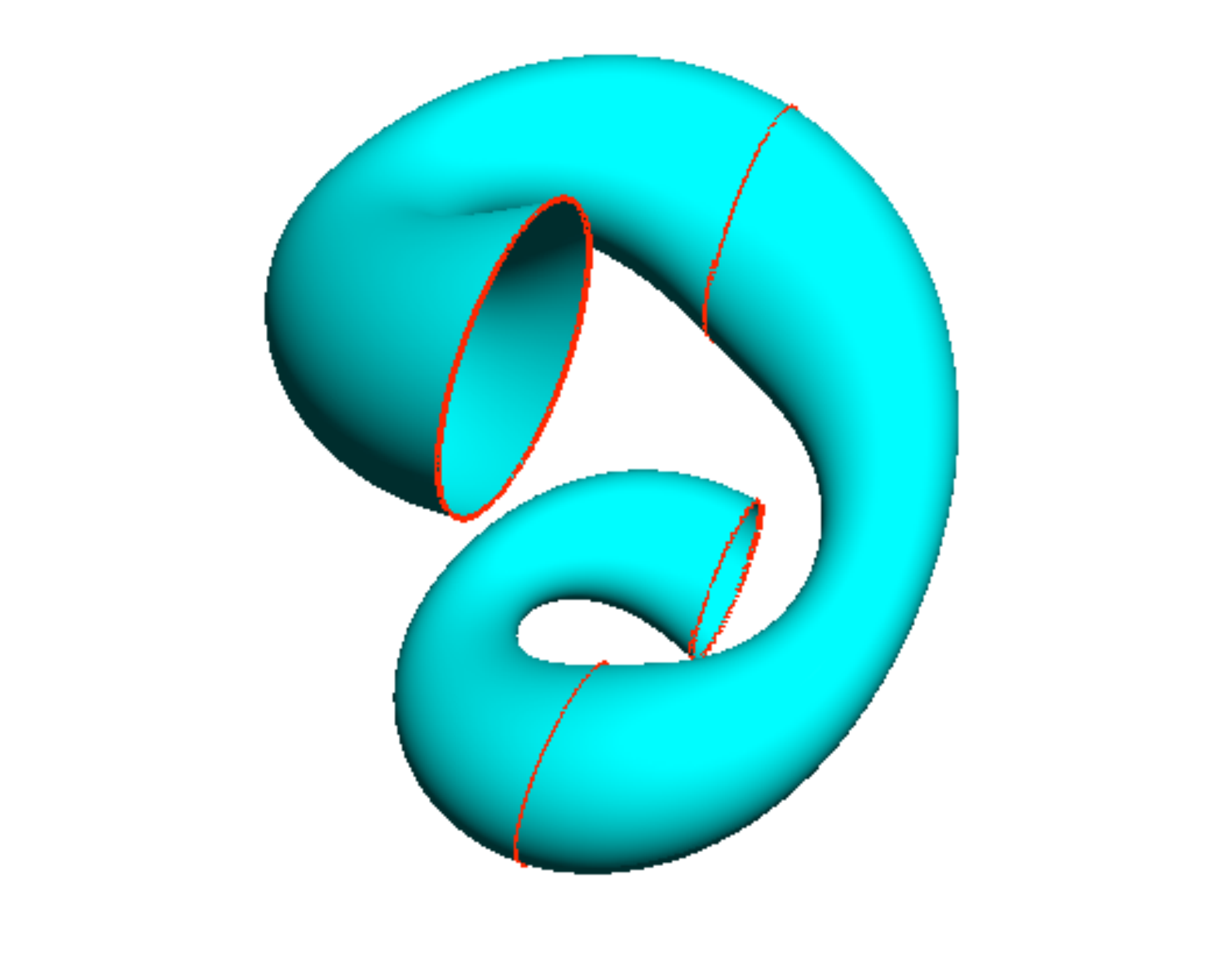}
\caption{{Affine cylinder $ \cC_{A}^3$ with geodesic boundary.}}
 \label{figure:affinecylinder3d}
\end{center}
\end{figure}

\subsubsection{Gluing of flat affine cylinders} \label{sect:TABk}
Let $A$ be an expansion and $B \in  \GL^+\!(2,\bbR)$, 
commuting with $A$, such that $B$ has positive eigenvalues.
Then, as shown by Example \ref{ex:gentorusnh},  
every  lift $\tilde B_{k}$, $k \geq 1$
acts properly on $   \langle \tilde A \rangle
 \,  \lmod  \widetilde \bbAo$  and yields the quotient flat affine torus
$$    \cT_{A,B,k}  =   \;  \langle \tilde A \rangle
 \,  \lmod   X_{B,k} \; . $$
Thus, geometrically, the flat affine torus  $\cT_{A,B,k}$ is constructed 
by gluing the flat affine cylinder $ \cC^k_{A}$ along its two boundary
geodesics using the transformation $\tilde B_{k}$,  see Figure \ref{figure:Gluedtorus}.
%This reinterprets the construction of the tori $\cT_{\tilde A, \tilde B}$ given in Example \ref{ex:gentorusnh}.
% of \S \ref{sect:gentori}. 
%  see Figure \ref{figure:Gluedtorus}.

\begin{figure}[htbp] 
\begin{center}
% \epsfile{file=,scale=0.8}
\includegraphics[scale=0.2, clip=true, trim= 0 4cm 0 4cm]{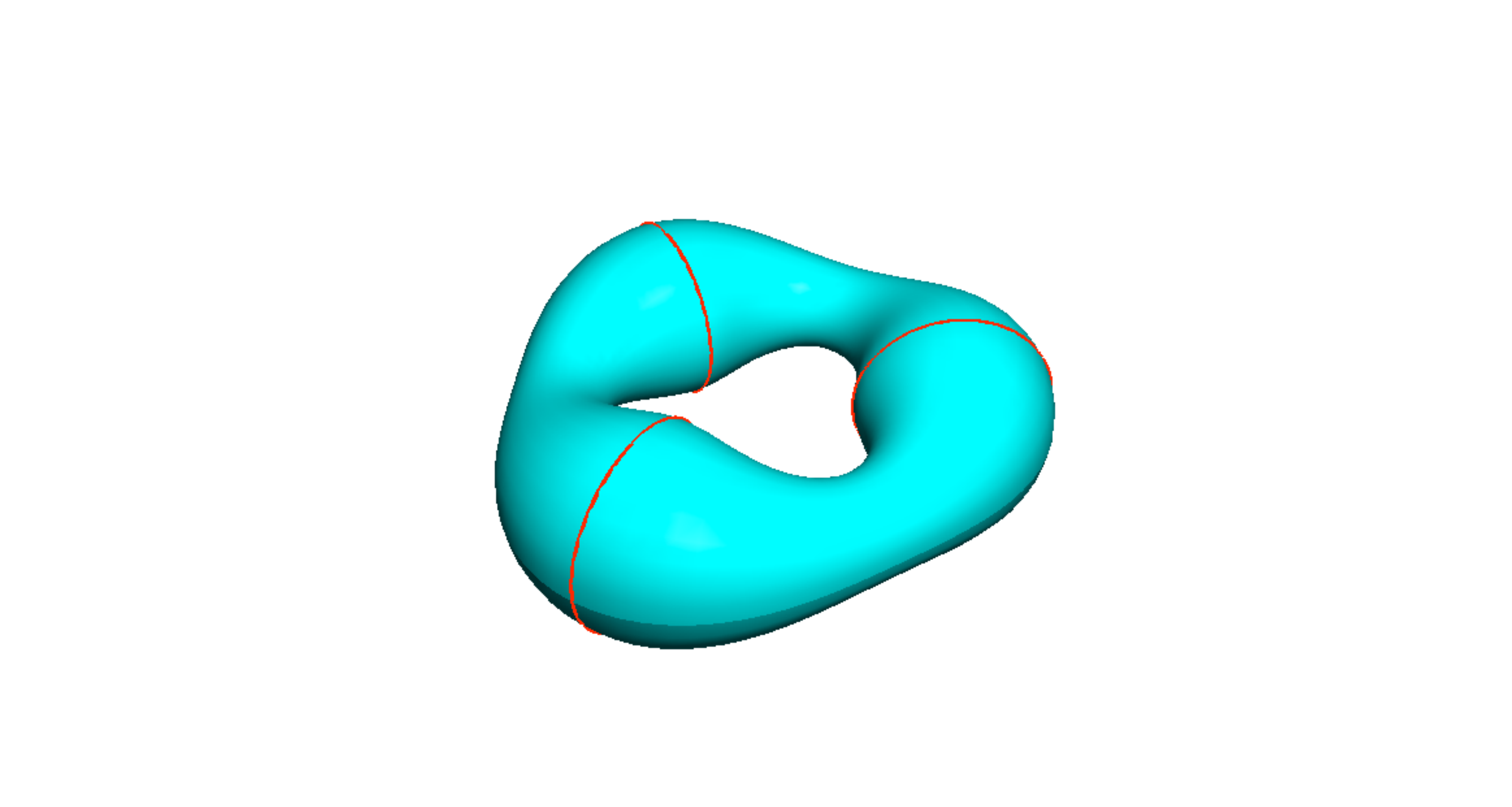}
\caption{{Gluing a flat affine torus $\cT_{A,B,3}$.}}
 \label{figure:Gluedtorus}
\end{center}
\end{figure}

\begin{remark}
Note that $M= \cT_{A,B,k}$ is a homogeneous flat affine two-torus if 
and only if its holonomy group 
$h(\Gamma) = \langle A , B \rangle$ is
contained in the group of dilations, in which case $\Aff(M)$ 
is a finite covering group of $\PSL(2,\bbR)$, and $M$ is
a Hopf torus. 
\end{remark}  

\begin{remark} Similarly, if $A$ is an expansion and $h(\Gamma) = \langle A , B \rangle$ is a discrete group of rank two then $h(\Gamma) $ acts properly
discontinuously and with compact quotient either on $\cH$ or on
$\mathcal Q$. The corresponding
torus is obtained by gluing $\cT_{A}$ or $\cT_{A, \alpha}$, $\alpha < \pi$. 
\end{remark}  

Finally, we remark that a homogeneous flat affine torus with \index{flat affine torus!homogeneous} 
development image $\bbAo$ may be constructed by gluing
cylinders if and only if it admits a closed (non-complete) geodesic: 

\begin{example}[Homogeneous tori with a  closed geodesic]  \label{ex:cylinders2}
Let $A_{\lambda}$ be a dilation, and $B \in \GL(1,\bbC)$. Then every 
lift of $B$ to $\widetilde \bbAo$  is of the form $$\tilde B_{k}: \;  (r, \theta) \mapsto 
( \lambda r, \theta + \alpha) \; ,  \;  \alpha = \alpha_{0} + 2k \pi$$
%  where $\alpha = \alpha_{0} + 2k \pi$, 
with $\alpha_{0} \in [0, 2 \pi)$.  If  $\alpha \neq 0$, we define $
  X_{B,k} =   \;  {\langle \tilde B_{k} \rangle}
 \,  \lmod \, \widetilde  \bbAo \,  , \;  \,   $  and 
$$     \cT^h_{\lambda,B, k}  =   \;  \langle \tilde A_{\lambda} \rangle
 \,  \lmod   X_{B,k} \; . $$
Let $\bar {\mathcal H}_{\alpha}  = \{ (r, \theta) \mid 0 \leq \theta \leq \alpha \}$
be a strip in $\widetilde \bbAo$, and 
$$ \cC_{\lambda, \alpha} =    \;  \langle \tilde A_{\lambda} \rangle
 \,  \lmod \bar {\mathcal H}_{\alpha} \; . $$ 
Then,  $\cT^h_{\lambda,B, k}$ is a homogeneous flat affine two-torus which
is obtained by gluing the cylinder $\cC_{\lambda, \alpha}$ with $\tilde B_{k}$.  
 \end{example} 
 
 \begin{figure}[htbp] 
\begin{center}
\includegraphics[scale=0.6, clip=true, trim=  0cm 1.15cm 0cm 0.3cm ]{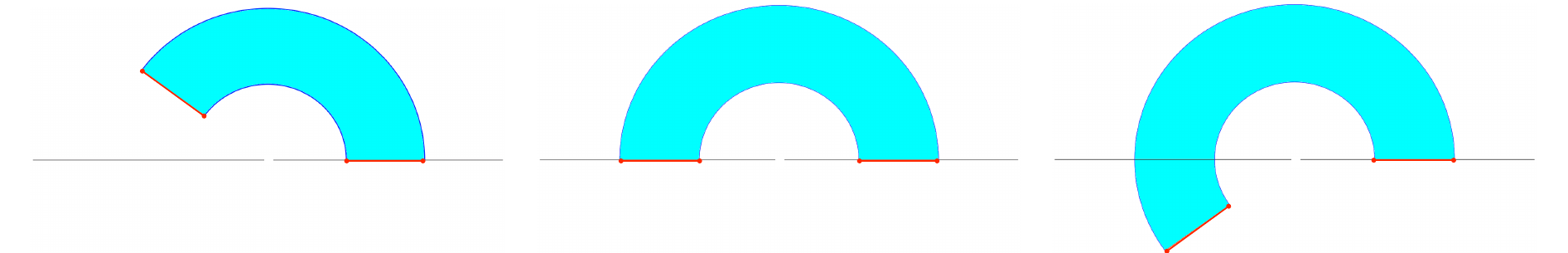}
\caption{{Affine cylinders $\cC_{\lambda, \alpha}$}, $\alpha <2\pi$.}
 \label{figure:genaffinecylinders}
\end{center}
\end{figure}

%\begin{example}[Klein botttle]
%\end{example}

\section{The classification of flat affine structures on the two-torus}
\label{sect:classification}

The classification of flat affine structures on the two-torus was carried
out by Kuiper \cite{Kuiper} and completed by Nagano-Yagi in 
\cite{NaganoYagi}, and independently also by Furness and Arrowsmith 
in \cite{FurArr}. Later on much of the work in \cite{NaganoYagi} was clarified 
and beautifully generalised in Benoist's paper \cite{Benoist_Np}.
In this section we describe the classification result % obtained  in \cite{NaganoYagi} 
in detail and explain its proof, following loosely along 
the lines of \cite{NaganoYagi},  and 
employing also the main ideas from \cite{Benoist_Np,Benoist_Ta}
to establish in Proposition \ref{prop:Deviscovering} the crucial fact that the development map of a flat affine two-torus is always a covering map onto its image.  \index{development maps!for flat affine two-torus}

\begin{theorem} \label{thm:classification}
Let $M$ be a flat affine two-torus. Then $M$ is affinely 
diffeomorphic to either
\begin{enumerate}
\item a quotient of a simply connected two-dimensional affine homogeneous domain
by a properly discontinuous group of affine transformations.
% \item a homogeneous affine two-torus,  as described in \S \ref{sect:homtori}, or 
\item  or a quotient space of the universal covering $\widetilde \bbAo$ of the once punctured plane.
\end{enumerate}
In particular, the universal covering flat  affine manifold of $M$ is affinely 
diffeomorphic to the affine plane $\bbA^2$,  the half-plane ${\cal H}$, or
the sector (quarter plane) ${\cal Q}$, in the first case, and to $\widetilde \bbAo$, in
the second case. 
\end{theorem}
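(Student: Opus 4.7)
My plan is to reduce the classification to a case analysis driven by the holonomy and the development image, with the crux being the assertion (stated as Proposition \ref{prop:Deviscovering}) that the development map is always a covering onto its image. First I would observe that since $\pi_{1}(T^{2}) = \bbZ^{2}$ is abelian, the holonomy image $h(\bbZ^{2}) \subset \Aff(2)$ is an abelian subgroup. I would then introduce the minimal connected closed abelian subgroup $N \leq \Aff(2)$ containing $h(\bbZ^{2})$; this $N$ has dimension $0$, $1$ or $2$, and after affine conjugation can be placed inside one of the normalizers described in Lemma \ref{lem:normalizers}, which is what restricts the holonomy to the six \'etale types of Example \ref{ex:domains}.

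Next I would invoke Proposition \ref{prop:liftofN} to lift the $N$-action on $\bbA^{2}$ to an affine action on $\tilde M$ that is intertwined by the development map $D \colon \tilde M \to \bbA^{2}$. Any open $N$-orbit $U \subset \bbA^{2}$ pulls back to a saturated open $N$-invariant subset of $\tilde M$ on which $D$ is, orbit by orbit, a covering of homogeneous spaces. The global statement I need is Proposition \ref{prop:Deviscovering}, which I would then cite: the development map $D \colon \tilde M \to \bbA^{2}$ is a covering onto its image $\Omega = D(\tilde M)$. This is the hard part; everything else is essentially bookkeeping. The proof (following Benoist and Nagano--Yagi) exploits the compactness of $M$ and the dynamics of the abelian holonomy to rule out the accumulation of sheets and the existence of boundary points in $\Omega$ that would obstruct covering. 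One uses that $N$ acts transitively on each component of $\Omega$, and that the complement of $\Omega$ in its closure would produce incompleteness along closed geodesics in $M$ incompatible with the holonomy dynamics of $\bbZ^{2}$.

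Once $D \colon \tilde M \to \Omega$ is known to be a covering, the classification of $\Omega$ as an open $N$-orbit gives four possibilities from Example \ref{ex:domains}: $\Omega$ is either $\bbA^{2}$, the halfplane $\mathcal{H}$, the sector $\mathcal{Q}$, or the punctured plane $\bbAo$. In the first three cases $\Omega$ is simply connected and contractible, hence $D \colon \tilde M \to \Omega$ is a diffeomorphism, so $\tilde M$ identifies with the homogeneous domain and $M = \tilde M / \pi_{1}(M)$ is a properly discontinuous quotient by an abelian group of affine transformations acting on $\Omega$; this is case (1) of the theorem. In the remaining case $\Omega = \bbAo$, the covering $D \colon \tilde M \to \bbAo$ lifts to $\widetilde{\bbAo}$, and since $\tilde M$ is simply connected while $\pi_{1}(\bbAo) = \bbZ$, $D$ must be the universal covering $\tilde M = \widetilde{\bbAo} \to \bbAo$; thus $M$ is a quotient of $\widetilde{\bbAo}$, which is case (2).

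Finally, I would verify properness of the deck action: compactness of $M$ forces the deck transformation group $\pi_{1}(M) \leq \Aff(\tilde M)$ to act properly discontinuously and cocompactly on $\tilde M$, which in case (1) gives a lattice in the corresponding abelian subgroup of $\Aff(\Omega)$ and in case (2) gives a discrete subgroup of $\Aff(\widetilde{\bbAo}) = \widetilde{\GL(2,\bbR)}$ acting properly on $\widetilde{\bbAo}$ with compact quotient, matching the constructions of \S \ref{sect:etale_affine} and \S \ref{sect:tori_bbao}. The main obstacle in this entire argument is Proposition \ref{prop:Deviscovering}; if it is granted, the rest of the classification is a short orbit analysis combined with the fundamental group of $\bbAo$.
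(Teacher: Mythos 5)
Your outline reproduces the paper's skeleton (determine the development image via an abelian subgroup $N$, establish that $D$ is a covering, then split into the simply connected cases and the $\bbAo$ case), but it has a genuine gap at exactly the point you yourself flag as the crux: Proposition \ref{prop:Deviscovering} is never proved, only paraphrased. The sentence ``one rules out the accumulation of sheets and the existence of boundary points in $\Omega$ \ldots the complement of $\Omega$ in its closure would produce incompleteness along closed geodesics incompatible with the holonomy dynamics'' is not an argument. The paper's actual proof splits into two cases. In the homogeneous case the covering property is immediate because $\tilde N$ acts simply transitively on $\tilde M$ and $N$ acts simply transitively on $D(\tilde M)$, so $D$ is an equivariant local diffeomorphism between principal homogeneous spaces. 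In the non-homogeneous case (development image $\bbAo$, $N$ of type $\mathsf{B}$ or $\mathsf{C}_1$) the covering property is \emph{not} established directly; instead one proves the Brick Lemma (Proposition \ref{prop:bricklemma}): $D$ restricted to the closure $\bar\Omega$ of an open $\tilde N$-orbit in $\tilde M$ is a diffeomorphism onto its image (because $\bar\Omega$ has exactly two one-dimensional boundary orbits, each mapping onto a ray), the stabilizer $\Gamma_0$ of $\bar\Omega$ in $\Gamma$ equals $\Gamma\cap\tilde N$ and yields a compact flat affine cylinder $\bar\Omega/\Gamma_0$ with geodesic boundary, and $h(\Gamma_0)$ is generated by an expansion. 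Finitely many such bricks glue to exhibit $M$ as a torus $\cT_{A,B,k}$ of Example \ref{ex:gentorusnh}, which is by construction a quotient of $\tbbAo$; the covering property of $D$ is then a \emph{consequence} of this identification, not an input to it. Without the brick decomposition (or an equivalent substitute) your proof of case (2) does not go through.

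Two secondary points. First, you take $N$ to be the \emph{minimal} connected abelian subgroup containing $h(\bbZ^2)$; this is wrong as stated, since for cyclic holonomy (e.g.\ a Hopf torus) that group is one-dimensional and has no open orbits, so the orbit analysis collapses. The paper takes $N$ to be the identity component of a \emph{maximal} abelian subgroup containing $h(\Gamma)$, precisely so that $N$ is one of the two-dimensional groups of Example \ref{ex:domains} or the unipotent group $\mathsf{N}$. Second, you never exclude the case $N=\mathsf{N}$ (simultaneous translations preserving a pencil of parallel lines); the paper does this by observing that the horizontal foliation would descend to a foliation of the compact surface $M$ whose leaf space is an open subset of $\bbR$, which is impossible. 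Both points are fixable, but as written your reduction to the six \'etale types of Example \ref{ex:domains} is incomplete.
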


%Note that the theorem also implies that the development map of a flat affine two-torus
%is a covering map. 
%
The first step in the proof of Theorem \ref{thm:classification}
consists of the determination of the 
open domains in $\bbA^2$ which appear as 
the development images of flat affine structures on the two-torus.
This is done in  \S  
\ref{sect:devimages}. If $M$ is homogeneous
then the development map is a covering map. 
The main step is then the determination of the structure of flat affine 
two-tori with development image $\bbAo$, which are not homogeneous. 
This is carried out in
 \S  \ref{sect:univcovs}.  We prove that such tori
may be obtained by gluing affine cylinders with geodesic boundary.  We deduce that also in this case
the  development map of $M$ is a covering map onto its image.\\

The following further consequences are implied by the  theorem or its proof.
 
\paragraph{Classification of divisible affine domains}
An affine domain is called \emph{divisible} if it admits a discontinuous affine action with compact quotient. Since they admit a simply transitive abelian group, all development images of flat affine structures on the two-torus are divisible by abelian discrete groups (isomorphic to $\bbZ$ or $\bbZ^2$).  Conversely, by Benz\'ecri's theorem, every divisible plane affine domain is the development image of a flat affine structure on the two-torus. 
By Theorem \ref{thm:classification}, the universal covering of a flat affine two-torus is % , in particular,
a homogeneous flat affine manifold, which covers a convex divisible 
homogeneous domain in $\bbA^2$.

\paragraph{The affine automorphism group of $M$}
% Let $\Aff(M)$ denote the group  of affine automorphisms of $M$. 
\begin{enumerate} 
\item If  $M$ has  development image $\bbA^2$,  the half-plane 
${\cal H}$, or the sector $ {\cal Q}$, then $M$ is a homogeneous
flat affine manifold. The connected component $\Aff(M)^0$ of the group
$\Aff(M)$ of affine diffeomorphisms of $M$ is a two-dimensional compact abelian 
Lie group, acting transitively and freely on $M$.  
\item If the development image is the once-punctured plane $\bbAo$ and
$M$ is homogeneous, 
then the group $\Aff(M)^0$ is either a quotient of $\widetilde{\GL}^+\!\!(2,\bbR)$,
as is the case for Hopf tori (Examples \ref{ex:hopftori}, \ref{ex:finhopftori}), or 
$\Aff(M)^0$ is a quotient of $\GL(1,\bbC)$, as in Example \ref{ex:finquothopftori}. In either case, the action of $\GL(1,\bbC)$
on $\bbAo$ descends to a transitive and free action of a two-dimensional compact abelian Lie group on $M$.
\item 
Otherwise, $\Aff(M)^0$ is a two-dimensional abelian connected Lie group
which has a one-dimensional compact factor. In this case,  
$M$ is not homogeneous, as in Example \ref{ex:expholonomy}.
\item The affine automorphism group of $M$ acts prehomogeneously on $M$,
that is, it has only finitely many orbits on $M$.  
\item The one-dimensional orbits of $\Aff(M)^0$ are non-complete
geodesics in $M$ along which
$M$ may be cut into flat affine cylinders. 
\end{enumerate} 

\paragraph{Homogeneous and complete flat affine tori}

\begin{enumerate}

\item  Every \emph{homogeneous} flat affine two-torus $M$ is 
affinely diffeomorphic to a quotient of an abelian \'etale affine Lie group of type $\mathsf{T}$, $\mathsf{D}$, $\mathsf{C}_1$, 
$\mathsf{C}_2$, $\mathsf{B}$ or $\mathsf{A}$ as listed in Example \ref{ex:domains}.
\item  Every \emph{complete} flat affine two-torus $M$ is affinely diffeomorphic to a quotient of an abelian simply transitive affine Lie group of type $\mathsf{T}$ or $ \mathsf{D}$. In particular, $M$
is also a homogeneous  flat affine two-torus.
\end{enumerate}

\subsection{Development images} \label{sect:devimages}

The classification of development images is as follows:
\index{development image!of flat affine structure}
\begin{proposition}  \label{prop:devimages}
Let $M$ be a flat affine two-torus. 
Then the development image of $M$ is either % affin for the affine structures on the two-torus are
the affine plane $\bbA^2$,  the half-plane ${\cal H}$, 
the sector (quarter plane) ${\cal Q}$ or the once-punctured plane  $\bbAo$,
respectively.  
\end{proposition}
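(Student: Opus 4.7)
The idea is to exploit the fact that $\pi_{1}(T^2) = \bbZ^2$ is abelian to promote the discrete holonomy action on the development image to the action of a connected Lie group, and then invoke the classification of étale abelian subgroups of $\Aff(2)$ listed in Example \ref{ex:domains}. Let $D : \tilde M \to \bbA^2$ be the development map, $U = D(\tilde M) \subseteq \bbA^2$ the (open, connected) development image, and $\Gamma = h(\bbZ^2) \leq \Aff(2)$ the holonomy group. Since $D$ is $\Gamma$-equivariant, $\Gamma$ acts on $U$; since $\bbZ^2$ is abelian, so is $\Gamma$.

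First I would produce a connected abelian Lie subgroup $N \leq \Aff(2)$ of dimension $2$ containing $\Gamma$ (or a finite-index subgroup of it), chosen so that $N$ still acts on $U$. The natural candidate is the identity component of the Zariski closure of $\Gamma$; if this is only $1$-dimensional (as for Hopf holonomy), one enlarges it by the centralizer of $\Gamma$ in $\Aff(2)$ to obtain a $2$-dimensional connected abelian subgroup. That the enlargement still preserves $U$ is precisely the content of the forthcoming Proposition \ref{prop:liftofN}, which I would invoke here.

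Second, I would classify: the $2$-dimensional connected abelian subgroups of $\Aff(2)$ admitting an open orbit in $\bbR^2$ are, up to affine conjugacy, exactly the six groups $\mathsf{T}, \mathsf{D}, \mathsf{C}_1, \mathsf{C}_2, \mathsf{B}, \mathsf{A}$ of Example \ref{ex:domains}, whose open orbits, as affine homogeneous domains, are exhausted by $\bbA^2,\, \mathcal{H},\, \mathcal{Q},\, \bbAo$. Since $U$ is open, $N$-invariant and connected, it is a union of $N$-orbits and must contain a single open $N$-orbit $O$; the complement $\bbR^2 \setminus O$ consists of finitely many lower-dimensional $N$-orbits (a point, a line, or a pair of lines depending on the type of $N$), and by $N$-invariance and openness $U$ cannot contain any of these closed lower-strata without ceasing to be open. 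Hence $U = O$, which yields the four listed possibilities.

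The main obstacle is establishing the group $N$ and its action on $U$, i.e.\ the input from Proposition \ref{prop:liftofN}; in the cases where $\Gamma$ is cyclic (Hopf tori and their relatives), the naive closure is too small and one must argue geometrically that centralizing flows of $\Gamma$ lift through $D$ to affine flows on $\tilde M$ that descend compatibly to the image. A secondary subtlety is eliminating degenerate kernels: one must note that $\ker h$ has rank at most one, for otherwise $D$ would descend to a local diffeomorphism from the compact surface $\tilde M / \ker h$ into $\bbR^2$, contradicting Benzécri's argument (Theorem \ref{thm:Benzecri}) in exactly the way that ruled out flat affine structures on $S^2$. Once $N$ is in hand, the combinatorial identification of $U$ with an open orbit is straightforward from the explicit description of the six étale models.
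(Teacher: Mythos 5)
Your overall strategy coincides with the paper's: replace the abelian holonomy $h(\Gamma)$ by the identity component $N$ of a maximal abelian subgroup of $\Aff(2)$ containing it, lift the $N$-action through the development map (this is exactly Proposition \ref{prop:liftofN}, proved by completing the flows of the $\Gamma$-invariant affine vector fields on the compact torus), and then read off the development image as a connected open union of $N$-orbits from the list in Example \ref{ex:domains}. The genuine gap is in your classification step. You enumerate only those two-dimensional connected abelian subgroups of $\Aff(2)$ \emph{admitting an open orbit}, but nothing in your construction guarantees that the group $N$ you produce has one. Up to conjugacy there is a seventh maximal connected abelian subgroup, the shear group $\mathsf{N}=\{(x,y)\mapsto(x+uy+v,\,y)\}$, all of whose orbits are horizontal lines. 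If $h(\Gamma)$ lies in $\mathsf{N}$, there is no open orbit $O$, and your assertion that $U$ ``must contain a single open $N$-orbit'' has no content: an open set can perfectly well be a union of one-dimensional orbits (a horizontal strip, for instance). The paper devotes a separate argument to excluding precisely this case: if $h(\Gamma)\subseteq\mathsf{N}$, the foliation of the development image by horizontal lines is holonomy-invariant, hence descends to a foliation of the compact torus $M$ whose leaf space is simultaneously a quotient of $M$ (hence compact) and an open subset of the real line --- a contradiction. Without this, or an equivalent argument, your proof does not close.

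Two smaller points. Your concluding step ``$U$ cannot contain any of these closed lower strata without ceasing to be open, hence $U=O$'' is too strong: for $N=\mathsf{B}$ the open upper half-plane is the union of two open quadrants and the one-dimensional orbit between them, and it is open; the paper instead argues only that the development image is a connected open finite union of $N$-orbits and then identifies the possibilities, which include such multi-orbit unions (this is how the half-plane can arise for holonomy in $\mathsf{B}$). Also, ``enlarging by the centralizer of $\Gamma$'' does not by itself yield an abelian group (for Hopf holonomy the centralizer is all of $\GL(2,\bbR)$); one should, as the paper does, pass directly to a maximal abelian subgroup containing $h(\Gamma)$. The remark about $\ker h$ is not needed for this proposition: the development image is open and connected automatically.
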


%In particular, all development images are which admit  a simply
%transitive abelian group of affine transformations.  

\paragraph{Proof of Proposition \ref{prop:devimages}}
Let $h(\Gamma) \leq \Aff(2)$ be the holonomy 
group of $M$.  Let $N$ be the 
identity component of a maximal
abelian subgroup of $\Aff(2)$ which contains $h(\Gamma)$. 
Note that  $N$ contains the
identity  component of the Zariski-closure of $h(\Gamma)$.
Therefore, $h(\Gamma) \cap N$ 
is of finite index in $h(\Gamma)$. 
% It satisfies $\Gamma \cap \ac{h(\Gamma)}^0$ is of finite index in $\Gamma$. 
We let $\tilde N$ denote the universal covering group of $N$. The first observation is 
that \emph{$N$ acts on the development image}, and it has only finitely many orbits on $\bbA^2$.

\begin{proposition} \label{prop:liftofN}
The action of $N$ on $\bbA^2$ lifts via the development map 
to an action of $\tilde N$ on the universal covering flat affine manifold $\tilde M$
of $M$. Moreover, it follows  that
\begin{enumerate} 
\item $N$ acts on the development image $\Omega$ of $M$.
\item $N$ has only finitely many  orbits on $\Omega$.
\end{enumerate}
\end{proposition}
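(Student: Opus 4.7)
The plan is to construct the action of $\tilde N$ on $\tilde M$ by integrating the lifts, via the local diffeomorphism $D$, of the fundamental vector fields of the $N$-action on $\bbA^2$; parts (1) and (2) will then follow quickly.

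First, I would set up the infinitesimal lift. For $X \in \mathfrak{n} := \mathrm{Lie}(N)$, write $X^{*}$ for the affine vector field on $\bbA^2$ associated with the one-parameter subgroup $\exp(tX) \subset N$. The key observation is that $h(\Gamma)$ centralises $N$: indeed, $h(\Gamma)$ lies inside a maximal abelian subgroup $H$ of $\Aff(2)$ with $H^{0} = N$, so $h(\gamma)\exp(tX) = \exp(tX)h(\gamma)$ for all $\gamma \in \Gamma$, and differentiating in $t$ yields $h(\gamma)_{*} X^{*} = X^{*}$. Since $D$ is a local diffeomorphism, the formula $\tilde X_{\tilde p} := (dD_{\tilde p})^{-1} X^{*}_{D(\tilde p)}$ defines a smooth vector field on $\tilde M$. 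A short computation combining $D \circ \gamma = h(\gamma) \circ D$ with the invariance of $X^{*}$ gives $\gamma_{*}\tilde X = \tilde X$ for every $\gamma \in \Gamma$, so $\tilde X$ descends to a smooth vector field on the compact quotient $M$.

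Second, I would integrate. Since $M$ is compact, the descended vector field is complete, hence $\tilde X$ itself is complete on $\tilde M$. The assignment $X \mapsto \tilde X$ preserves Lie brackets (as $D$ is a local diffeomorphism), so the vector fields $\{\tilde X \mid X \in \mathfrak n\}$ pairwise commute. Commuting complete vector fields parametrised by the abelian algebra $\mathfrak n$ integrate to a smooth action of the simply connected group $\tilde N$ on $\tilde M$ satisfying $D(\tilde n \cdot \tilde p) = \pi(\tilde n) \cdot D(\tilde p)$, where $\pi \colon \tilde N \to N$ is the universal covering homomorphism. Applying $D$ to a $\tilde N$-orbit produces an $N$-orbit in $\Omega$, and in particular $N \cdot \Omega = D(\tilde N \cdot \tilde M) = \Omega$, which is part~(1).

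For part~(2), I would first establish $\dim N = 2$ and then appeal to the classification of two-dimensional connected abelian subgroups of $\Aff(2)$. A direct check of $\mathfrak{aff}(2) = \mathfrak{gl}(2,\bbR) \ltimes \bbR^{2}$ shows that abelian Lie subalgebras have dimension at most two, so $\dim N \le 2$. On the other hand, $h(\Gamma)$ cannot be finite: otherwise the cover of $M$ corresponding to $\ker h$ would carry a development factoring through a local diffeomorphism from a compact surface to $\bbR^{2}$, whose image is a nonempty clopen subset of $\bbR^{2}$, contradicting compactness. Thus $h(\Gamma)$ contains an element of infinite order, whose centraliser in $\Aff(2)$ always contains a two-dimensional connected abelian subgroup; choosing the maximal abelian $H$ to contain this subgroup forces $\dim N = 2$. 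Then $N$ is, up to conjugation in $\Aff(2)$, one of the six groups $\mathsf{T},\mathsf{D},\mathsf{C}_{1},\mathsf{C}_{2},\mathsf{B},\mathsf{A}$ of Example~\ref{ex:domains}, and inspection shows that each of them has only finitely many orbits on all of $\bbA^2$; since $\Omega$ is $N$-invariant, the same holds for $\Omega$.

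The main obstacle is bootstrapping the commuting local flows into a global $\tilde N$-action: this rests on the completeness of the $\tilde X$, which requires descent to the compact quotient $M$, which in turn requires the commutation of $N$ with $h(\Gamma)$ that we arranged by choosing $N$ inside a maximal abelian subgroup. Once the global action is in place, parts~(1) and~(2) are essentially formal consequences.
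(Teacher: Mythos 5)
Your construction of the $\tilde N$-action and your proof of part~(1) follow the paper's argument essentially verbatim: lift the affine vector fields generating $N$ through the local diffeomorphism $D$, use the fact that $h(\Gamma)$ centralises $N$ to make the lifts $\Gamma$-invariant so that they descend to the compact manifold $M$, deduce completeness, and integrate the commuting complete fields to an action of the simply connected group $\tilde N$ intertwined with $N$ by $D$. No complaint there.

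Part~(2), however, has a genuine gap. The list of maximal connected abelian subgroups of $\Aff(2)$ up to conjugacy is \emph{not} exhausted by the six groups of Example~\ref{ex:domains}; there is a seventh conjugacy class, the unipotent group
$$ \mathsf{N} \;=\; \{\, (x,y) \longmapsto (x + uy + v,\; y) \mid u,v \in \bbR \,\} \; , $$
which is two-dimensional, connected, abelian and self-centralising, so your dimension count $\dim N = 2$ does nothing to exclude it. Its orbits are precisely the horizontal lines, so it has uncountably many orbits on $\bbA^2$, and conclusion~(2) is simply false for this group. Thus ``inspection shows that each of them has only finitely many orbits'' is being applied to an incomplete list, and the possibility that $N$ is conjugate to $\mathsf{N}$ must be ruled out by a separate argument. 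The paper does this as follows: if $h(\Gamma)$ were contained in $\mathsf{N}$, then by part~(1) the development image $\Omega$ would be a union of horizontal lines; the resulting $h(\Gamma)$-invariant one-dimensional foliation pulls back to a $\Gamma$-invariant foliation of $\tilde M$ and hence descends to a foliation of the compact surface $M$ whose leaf space is an open subset of the real line, which, being a continuous image of the compact $M$, would also have to be compact --- a contradiction. You need some argument of this kind before the case analysis in your last paragraph is complete.
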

\begin{proof} Let $Y$ be an affine vector field on $\bbR^2$ which
is tangent to the action of $N$, and let $\bar Y$ denote its lift to $\tilde M$ via $D$. 
Since $h(\Gamma)$ commutes with $N$, the vector field
$\bar Y$ is $\Gamma$-invariant and projects to a vector field on 
$M$. Since $M$ is compact, the flow of $\bar Y$ is complete. 
Therefore, the action of $N$ integrates to an action of the universal covering
group $\tilde N$ on $\tilde M$, such that $D(\tilde n x) = n D(x)$, where $\tilde n \in \tilde N$ and $n = h(\tilde n)$ is its image in $N$. This implies (1). 

To prove (2), we note that, up to affine conjugacy, every maximal 
abelian and connected subgroup $N$ of $\Aff(2)$ is either one of the abelian groups 
$\mathsf{T}$, $\mathsf{D}$,  $\mathsf{C_{1}}, \mathsf{C_{2}}$,  $\mathsf{B}$, 
$\mathsf{A}$ (as listed in Example 
\ref{ex:domains}),  or the group % of affine transformations 
$$ \mathsf{N} = \left\{ 
\begin{matrix}{ccc} 1 & u & v   \\  0 & 1 & 0 \\ 
0 &  0 & 1  
\end{matrix} \Mid  u,v \in \bbR
\right\}   \; . $$ 
All of the groups appearing in Example 
\ref{ex:domains} are simply transitive on an affine domain,
and they have finitely many orbits on $\bbA^2$. This shows (2).
  
To complete the proof, 
we contend that $h(\Gamma)$ is \emph{not} contained  in the group $ \mathsf{N}$. 
We remark that the orbits of $\mathsf{N}$ are the horizontal lines on $\bbR^2$. 
If $h(\Gamma)$ is contained in $\mathsf{N}$, then by (1), $\Omega$ is a 
union of orbits. Thus horizontal lines define a one-dimensional foliation of 
$\Omega$, which is preserved by $h(\Gamma)$. This, in turn, defines a 
foliation on the manifold $M$, which has an open subset of the real line 
as its space of leaves. This is not possible, since $M$ is compact: 
The space of leaves is a quotient of $M$, and therefore it is a compact and 
closed subset of the line as well. 
\end{proof}

It follows that the development image $\Omega$ is a finite union of orbits 
of one of the connected abelian groups listed in Example 
\ref{ex:domains}. Since $\Omega$ is a connected open subset
of $\bbA^{2}$, as well, it follows that $\Omega$ must be one of the 
domains listed in  Proposition \ref{prop:devimages}. 
Conversely, as follows from \S \ref{sect:etale_affine}, each of these
domains appears as the development image of a homogeneous
flat affine structure on the torus. This completes the proof of 
Proposition \ref{prop:devimages}.

\subsection{The classification of manifolds modeled on $(\bbAo, \GL(2, \bbR))$}
\label{sect:bbaoclass}

%(\widetilde \bbAo, 
%\widetilde \GL(2,\bbR)) \;  \ra \; (\bbAo, \GL(2,\bbR))$

As follows from the proof of Theorem \ref{thm:classification}, 
every compact manifold $M$ modeled on $(\bbAo, \GL(2, \bbR))$ 
is either complete and the development image of $M$ is $\bbAo$,
or $M$ is (isomorphic to) a quotient of the open quadrant $\mathcal Q$, or a quotient of an open 
half space $\mathcal H$. In the first case
$$ M =   \;  _{\mbox{$\Gamma$}} \,  \lmod \,  \tbbAo \; , $$ 
where $\Gamma \leq \widetilde \GL^+ \! (2,\bbR)$ is a 
discontinuous subgroup, and in the second case 
the affine holonomy group $\Gamma$ is a discrete subgroup of  
$\mathsf{B} \leq \GL(2, \bbR)$, $\mathsf{C}_{1} \leq \GL(2, \bbR)$
respectively. \\
% properly discontinuous 

%As a consequence of the proof of  Theorem \ref{thm:classification},
We arrive at the following classification theorem for 
 $(\bbAo, \GL(2, \bbR))$-manifolds which are complete (see also Corollary \ref{cor:inhomog}):

\begin{theorem} Let $M$ be a compact complete $(\bbAo,\GL(2, \bbR))$-manifold. If  $M$ is not homogeneous then it is 
isomorphic to a torus $\cT_{\tilde A, \tilde B}$, as constructed in \S \ref{sect:TABk} and Example \ref{ex:gentorusnh}.
Moreover,  % a  compact complete $(\GL(2, \bbR), \bbAo)$-manifold 
$M$ is homogeneous
if and only if it can be modeled on  $(\bbAo, \GL(1, \bbC))$.
\end{theorem}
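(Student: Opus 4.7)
The plan is to combine completeness with the polar-coordinate analysis of \S\ref{sect:TABk} to identify $M$ explicitly. First, since $M$ is compact, complete, and orientable (the latter by Benz\'ecri's theorem), the development lifts to a diffeomorphism $\tilde M \to \tbbAo$, so $M \cong \Gamma \lmod \tbbAo$ for a discrete rank-two subgroup $\Gamma \leq \widetilde{\GL}^+\!(2,\bbR)$ acting freely and cocompactly. Set $\bar\Gamma = \mathsf{p}(\Gamma) \leq \GL^+\!(2,\bbR)$; since $\Gamma \cong \bbZ^2$ is abelian, so is $\bar\Gamma$.

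For the homogeneity equivalence, if $M$ is modeled on $(\bbAo,\GL(1,\bbC))$, then applying Lemma \ref{lem:covering_geoms} to the covering of geometries $(\tbbAo,\widetilde{\GL(1,\bbC)}) \to (\bbAo,\GL(1,\bbC))$ places $\Gamma$ inside $\widetilde{\GL(1,\bbC)}$; this abelian group is simply transitive on $\tbbAo$ and descends to a transitive affine action on $M$, so $M$ is homogeneous. Conversely, if $M$ is homogeneous, then the identity component of $\Aff(M)$ lifts to a two-dimensional connected abelian subgroup of $\widetilde{\GL}^+\!(2,\bbR)$ centralizing $\Gamma$ and acting transitively on $\tbbAo$. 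Its image in $\GL^+\!(2,\bbR)$ is a two-dimensional connected abelian subgroup transitive on $\bbAo$, and from Example \ref{ex:domains} the only such subgroup is $\mathsf{A} = \GL(1,\bbC)$. Since its centralizer in $\GL^+\!(2,\bbR)$ is itself, $\bar\Gamma \leq \GL(1,\bbC)$, and the structure factors through $(\bbAo,\GL(1,\bbC))$.

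For the non-homogeneous case, $\bar\Gamma$ is not contained in any $\GL(1,\bbC)$-conjugate. By the analysis of commuting pairs in $\GL^+\!(2,\bbR)$ from Appendix A, the elements of $\bar\Gamma$ are then simultaneously real-diagonalizable up to $\pm I$; after conjugation we may take $\bar\Gamma$ to lie in the normalizer of $\mathsf{B}$. Choose a basis $\gamma_A, \gamma_B$ of $\Gamma \cong \bbZ^2$ so that $A := \mathsf{p}(\gamma_A)$ is an expansion --- possible because cocompactness of the $\Gamma$-action on $\tbbAo$ requires at least one generator moving the $r$-coordinate to infinity under iteration. Conjugate in $\widetilde{\GL}^+\!(2,\bbR)$ so that $\gamma_A = \tilde A$ is the lift preserving $\theta=0$. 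The orbit space $\langle\tilde A\rangle \lmod \tbbAo$ is then an infinite affine cylinder, and $\gamma_B$ must act cocompactly on it. Writing $\gamma_B = \tau^k \tilde B_{0}$ with $B = \mathsf{p}(\gamma_B)$, cocompactness in the $\theta$-direction forces $k \neq 0$: if $k = 0$, then $\gamma_B$ preserves each strip $\bar\Omega_\ell$ setwise and the quotient cylinder is not covered. Thus $(\tilde A, \tilde B_{k})$ realizes the data of Example \ref{ex:gentorusnh}, and $M \cong \cT_{A,B,k} = \cT_{\tilde A,\tilde B}$.

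The main obstacle lies in the third paragraph: rigorously justifying, via Appendix A, that in the non-homogeneous case $\bar\Gamma$ can be put (after conjugation) into the normalizer of $\mathsf{B}$ with an expansion among its generators, and then verifying $k \neq 0$ uniformly. The parity of $k$ (whether $B \in \mathsf{B}$ or $-B \in \mathsf{B}$) must be tracked carefully, and one must rule out anomalous rank-two lattices in the preimage of $\mathsf{B}$ under $\mathsf{p}$ that fail to fit the template of Example \ref{ex:gentorusnh}.
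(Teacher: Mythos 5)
Your third paragraph contains a step that is false, not merely under-justified. An abelian subgroup of $\GL^+(2,\bbR)$ that is not contained in a conjugate of $\GL(1,\bbC)$ need \emph{not} be simultaneously diagonalizable: it may lie in (a conjugate of) the group $\mathsf{C}_{1}$ of upper triangular matrices with equal positive diagonal entries, whose non-scalar elements have a single repeated eigenvalue and are not semisimple. Such holonomy genuinely occurs for compact complete non-homogeneous $(\bbAo,\GL(2,\bbR))$-manifolds --- this is exactly the family $M_{\epsilon}=\langle A_{\epsilon}\rangle\lmod\,\bbAo$ of Example \ref{ex:defissing}, where $A_{\epsilon}$ is a non-diagonalizable expansion, and more generally the tori $\cT_{A,B,k}$ in which $B$ is unipotent, or $A$ is a dilation paired with a non-semisimple $B$. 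Your reduction to the normalizer of $\mathsf{B}$ therefore omits an entire stratum of the classification. The paper avoids this by never diagonalizing: it takes $N$ to be the identity component of a maximal abelian subgroup containing the holonomy (which in the non-homogeneous case is $\mathsf{B}$ \emph{or} $\mathsf{C}_{1}$), lifts its action to $\tilde M$ via Proposition \ref{prop:liftofN}, and decomposes $M$ into affine cylinders along the closures of the open $\tilde N$-orbits (Proposition \ref{prop:bricklemma}); that $\Gamma_{0}=\Gamma\cap\tilde N$ is generated by an expansion is then deduced from properness and cocompactness of its action on an orbit closure, not from the eigenvalue structure of a chosen generator. Your appeal to ``cocompactness forces a generator to move the $r$-coordinate to infinity'' does not by itself yield real eigenvalues both greater than one.

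A second, more easily repaired, gap: in the converse homogeneity direction you assert that $\Aff(M)^{0}$ lifts to a \emph{two-dimensional abelian} subgroup of $\widetilde\GL^{+}(2,\bbR)$. For Hopf tori $\Aff(M)^{0}$ is a four-dimensional non-abelian quotient of $\widetilde\GL^{+}(2,\bbR)$, so this is not true as stated. What you can say is that the lift is a connected transitive subgroup centralizing $\Gamma$; either its image in $\GL^{+}(2,\bbR)$ is a two-dimensional abelian group transitive on $\bbAo$, hence conjugate to $\mathsf{A}=\GL(1,\bbC)$ and containing $\bar\Gamma$ in its centralizer $\GL(1,\bbC)$, or it contains $\SL(2,\bbR)$, in which case $\bar\Gamma$ centralizes $\SL(2,\bbR)$, consists of dilations, and again lands in $\GL(1,\bbC)$. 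With that correction this half of your argument goes through.
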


In particular, if $M$  % =  \cT_{\tilde A, \tilde B}$ 
is not homogeneous
then it is obtained by gluing flat affine cylinders  $ \cC^k_{A}$, where $A$ is an expansion and  $\tilde B \in \widetilde \GL^+ \! (2,\bbR)$ has non-zero angle of rotation and commutes with $A$. 
Furthermore if $A$ is a dilation, $B$ cannot be conjugate to an element of $\GL(1,\bbC)$. 

\begin{example}[Holonomy in $\GL^+(2,\bbR)$ is not injective] \label{ex:hol_notinj}
This phenomenon already occurs for homogeneous flat affine manifolds which are quotients of the universal covering 
flat affine Lie group 
$\tilde {\mathsf{A}}$ of the \'etale flat affine group $\mathsf{A}
= \GL(1,\bbC) \leq \GL(2,\bbR)$. Here different lattices $\Gamma_{1}$ and
$\Gamma_{2}$ of $\mathsf{A}$ determine non-isomorphic 
flat affine manifolds. However, different lattices may project to the same holonomy group in $\mathsf{A}$. 

More striking examples arise as a consequence of 
the construction of the tori $\cT_{\tilde A, \tilde B}$, as constructed in \S \ref{sect:TABk}. In fact, for every non-complete homogeneous flat affine manifold modeled on 
$\mathsf{C}_{1}$ or $\mathsf{B}$ one can construct 
a \emph{non-homogeneous} $(\bbAo,\GL^+(2,\bbR))$-manifold 
which has the same holonomy group in $\GL^+(2,\bbR)$.  
These examples show in particular that the affine holonomy group does \emph{not} determine the development image. 
\end{example}

We can consider also the corresponding
$(\tbbAo,\widetilde \GL^+\! (2,\bbR))$-manifolds.
Here we have:
\begin{theorem} 
\label{thm:bbao_rigid}
All compact $(\tbbAo,\widetilde \GL^+\! (2,\bbR))$-manifolds
are determined up to isomorphism by their holonomy group 
in $\widetilde \GL^+ \! (2,\bbR)$. 
\end{theorem}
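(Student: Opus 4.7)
The plan is to show that a compact $(\tbbAo,\tGL^+\!(2,\bbR))$-manifold $M$ with holonomy $\Gamma\leq\tGL^+\!(2,\bbR)$ is canonically isomorphic to a quotient $\Gamma\backslash\Omega$ for a $\Gamma$-invariant simply connected open set $\Omega\subseteq\tbbAo$, and then to verify that the choice of $\Omega$ is rigid modulo the centralising action of the center of $\tGL^+$, which automatically induces $(\tbbAo,\tGL^+)$-equivalences on the quotients.

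\smallskip

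First I would show that the development map $D\colon\tilde M\to\tbbAo$ is a homeomorphism onto its image $\Omega$. Since $(\tbbAo,\tGL^+)$ is a subgeometry of $(\bbAo,\GL^+\!(2,\bbR))$ via the covering map $\mathsf q$, the composition $\mathsf q\circ D$ is a development map for the induced flat affine structure on $M$. By Proposition~\ref{prop:Deviscovering} together with Theorem~\ref{thm:classification}, $\mathsf q\circ D$ is a covering of $\tilde M$ onto one of the domains $\bbAo$, $\mathcal H$, $\mathcal Q$ (up to $\GL^+$-conjugation). If the image is $\mathcal H$ or $\mathcal Q$, both simply connected, then $\mathsf q^{-1}$ of it decomposes into a disjoint union of simply connected sheets; the connected set $D(\tilde M)$ lies in exactly one sheet $\Omega$, and $D\colon\tilde M\to\Omega$ is a covering of a simply connected space by a simply connected space, hence a homeomorphism. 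If the image is $\bbAo$, then both $\mathsf q\circ D$ and $\mathsf q$ are universal coverings of $\bbAo$, so $D$ is a homeomorphism onto $\Omega=\tbbAo$. Thus $M\cong\Gamma\backslash\Omega$ and the structure is determined by the pair $(\Gamma,\Omega)$.

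\smallskip

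Next I would analyse the admissible $\Omega$'s for a fixed $\Gamma$. They fall into two mutually exclusive cases distinguished intrinsically by $\Gamma$: either $\Omega=\tbbAo$, and $\Gamma$ acts properly discontinuously on all of $\tbbAo$ with compact quotient (the complete case), or $\Omega$ is a connected component of $\mathsf q^{-1}(U)$ with $U$ a $\GL^+$-conjugate of $\mathcal H$ or $\mathcal Q$ (the non-complete case), in which case $\mathsf p(\Gamma)\leq\GL^+$ is a lattice in a unique conjugate of $\mathsf{C}_1$ or $\mathsf{B}$, and the corresponding shearing or diagonal elements of $\Gamma$ force orbit accumulation against the boundary of $U$, precluding proper discontinuity on any connected open subset of $\tbbAo$ strictly larger than $\Omega$. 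In particular the identity component $\tilde N\subseteq\tGL^+$ of the preimage of the abelian hull of $\mathsf p(\Gamma)$ is uniquely determined by $\Gamma$, and the candidate sheets $\Omega$ are precisely the open orbits of $\tilde N$ on $\tbbAo$.

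\smallskip

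The crux of the proof is then to show that all admissible sheets for a fixed $\Gamma$ yield $(\tbbAo,\tGL^+)$-equivalent quotients. The central element $\tau\in\tGL^+$ satisfying $\mathsf p(\tau)=-\Id$ centralises every subgroup of $\tGL^+$: its iterate $\tau^2$ generates the deck transformations of $\mathsf q$ and hence translates among sheets of $\mathsf q^{-1}(U)$ for a fixed $U$, while $\tau$ itself, using that $-\Id$ normalises $\mathsf{C}_1$ and $\mathsf{B}$ (Lemma~\ref{lem:normalizers}), exchanges sheets lying over opposite $\mathsf p(\Gamma)$-invariant halfplanes or quadrants of $\bbAo$. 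Since $\tau^k$ centralises $\Gamma$, left composition of $D$ with $\tau^k$ yields a development pair with image $\tau^k\Omega$ and identical holonomy, inducing an $(\tbbAo,\tGL^+)$-equivalence $\Gamma\backslash\Omega\to\Gamma\backslash\tau^k\Omega$. The main obstacle lies in the bookkeeping for the non-complete case, most delicately for $\mathsf{B}$-type holonomies where the four open $\mathsf B$-orbits partition into several $\tau$-orbits of sheets; here one must also exploit the non-central elements of the normaliser of $\tilde N$ in $\tGL^+$ described in Lemma~\ref{lem:normalizers} to identify all remaining admissible $\Omega$'s with ones related to $\Omega$ by an element centralising $\Gamma$, thereby securing the required $(\tbbAo,\tGL^+)$-equivalence.
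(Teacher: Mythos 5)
Your overall strategy runs parallel to the paper's: reduce to showing that the development map is a homeomorphism onto a single sheet $\Omega$, separate the complete from the non-complete case by an invariant read off from $\Gamma$ alone (the paper uses the non-zero rotation angle, i.e.\ the level), and then argue that $\Gamma$ determines the sheet up to an equivalence. Your first two steps are correct and essentially the paper's argument in expanded form. The problem is the last step, precisely at the point you yourself flag as ``the main obstacle'': the tool you invoke there does not do what you need.

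For a holonomy group $\Gamma$ lifted from a lattice in $\mathsf{B}$ there are four candidate base quadrants. The central element $\tau$ identifies the sheets over $\cQ$ with those over $-\cQ$, as you say, but to pass from the sheet over $\cQ$ to a sheet over an \emph{adjacent} quadrant you need $g \in \tGL^+\!(2,\bbR)$ with $g\Gamma g^{-1} = \Gamma$ and with $\mathsf{p}(g)$ carrying $\cQ$ to that adjacent quadrant. Since a generic lattice in $\mathsf{B}$ is Zariski-dense in $\mathsf{B}$, such a $\mathsf{p}(g)$ must normalize $\mathsf{B}$, hence be monomial. The monomial matrices that centralize $\Gamma$ and exchange adjacent quadrants are the reflections $\mathrm{diag}(-1,1)$ and $\mathrm{diag}(1,-1)$ of Lemma \ref{lem:normalizers}; these have determinant $-1$ and do not lift to $\tGL^+\!(2,\bbR)$. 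The orientation-preserving normalizer elements that exchange adjacent quadrants are the antidiagonal ones (e.g.\ $\R_{\pi/2}$), and conjugation by these swaps the two diagonal entries, so for a generic lattice they conjugate $\Gamma$ to the coordinate-swapped lattice $\Gamma' \neq \Gamma$; they therefore identify $\Gamma \backslash \Omega$ with $\Gamma' \backslash \Omega'$, not with $\Gamma \backslash \Omega'$. Meanwhile the full centralizer of a generic $\Gamma$ in $\GL^+(2,\bbR)$ consists of diagonal matrices of positive determinant, which preserve the pair $\{\cQ, -\cQ\}$. Hence no chain of the identifications you describe reaches the sheets over the adjacent quadrants: within the structure group $\tGL^+\!(2,\bbR)$ the quotients over adjacent quadrants are inequivalent manifolds with literally the same holonomy group, and your final step fails. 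The statement is rescued only by admitting equivalences coming from lifts of \emph{all} of $\GL(2,\bbR)$ --- note that the paper conjugates by $\tGL(2,\bbR)$ rather than $\tGL^+\!(2,\bbR)$ in Theorem \ref{thm:tbbaotop} --- since then the reflections above become available and do centralize $\Gamma$. You should either enlarge the structure group accordingly and say so explicitly, or record the quadrant as an additional invariant; as written, the $\mathsf{B}$-stratum bookkeeping cannot be completed.
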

\begin{proof} For the  complete manifolds the rigidity is shown 
in Example \ref{ex:toprig}. In particular, complete
and non-complete manifolds do not share the same holonomy
in $\widetilde \GL^+ \! (2,\bbR)$ (although they often do in $ \GL^+(2,\bbR)$).  In fact, the holonomy group of every complete manifold has an element with non-zero angle of rotation (cf.\ Definition \ref{def:posrot}), which is not possible if the
development image is one of the domains $\mathcal H$,
$\mathcal Q$. Similarly the non-complete examples are 
lattice quotients of a simply connected abelian Lie group contained
in $\GL^+(2,\bbR)$ which acts simply transitively on some open
domain in $\bbA^2$. Therefore, these manifolds are determined
by their affine holonomy group. 
\end{proof}

Also the  following is an immediate consequence of the above classification result: 

\begin{corollary} Let $\Gamma \leq  \widetilde \GL^+ \! (2,\bbR)$ 
be a non-finite discrete subgroup which is acting properly on $\widetilde \bbAo$. Then one of the following hold:
\begin{enumerate}
\item $\Gamma$ is isomorphic to $\bbZ$.  Moreover, it is generated by an expansion, or it is generated by an element of non-zero rotation. 
\item  $\Gamma$ is isomorphic to $\bbZ^2$ and it is
conjugate to one of the subgroups constructed in Examples 
\ref{ex:gentorusnh} or \ref{ex:gentorush}.
\end{enumerate}
\end{corollary}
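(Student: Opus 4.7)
The plan is to analyze $\Gamma$ by looking first at the dynamics of individual elements, and then at the centralizer structure forced by commutation among them.

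First I would study a single non-identity element $\gamma \in \widetilde{\GL}^+ \! (2,\bbR)$ and show that $\langle \gamma \rangle$ acts properly on $\tbbAo$ if and only if $\gamma$ is (the lift of) an expansion or $\gamma$ has non-zero rotation angle in the sense of Definition~\ref{def:posrot}. This is a case analysis based on the conjugacy classification in Appendix~A: every element of $\widetilde{\GL}^+ \! (2,\bbR)$ is conjugate to an element with image in one of the abelian subgroups $\mathsf{A}$, $\mathsf{B}$, $\mathsf{C}_1$ of Example~\ref{ex:domains} (or a unipotent lift), and for each class one either recognizes expanding dynamics on $\bbAo$ (Figure~\ref{figure:expanding}) or rotational drift in the $\theta$-coordinate on $\tbbAo$; the remaining possibilities (pure unipotents, elliptic elements of finite order, hyperbolic elements with neither eigenvalue of absolute value~$>1$) leave some compact set in $\tbbAo$ invariant or else send it back to itself infinitely often, contradicting properness (Calabi--Markus style argument, cf.\ Example~\ref{ex:bbAo1} and Figure~\ref{fig:CalabiMarcus}). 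This immediately disposes of case~(1): if $\Gamma \cong \bbZ$, its generator must be of one of the two claimed types.

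Next I would assume $\Gamma$ is not cyclic and establish that $\Gamma$ is abelian of rank two. Fix an element $\gamma_1 \in \Gamma$ of infinite order. By step~1 applied to $\gamma_1$, its centralizer $Z(\gamma_1)$ in $\widetilde{\GL}^+ \! (2,\bbR)$ is a connected two-dimensional abelian subgroup -- specifically, either the lift of a split Cartan (when $\gamma_1$ projects to an expansion with distinct eigenvalues), or the lift of $\GL(1,\bbC)$ (when $\gamma_1$ has non-zero rotation angle conjugate into $\mathsf{A}$), or the lift of the diagonal group of type $\mathsf{B}$. I would then argue that every $\eta \in \Gamma$ must commute with $\gamma_1$: otherwise, the conjugate $\eta \gamma_1 \eta^{-1}$ is a distinct element of $\Gamma$ of the same dynamical type, and for appropriate powers the commutator-like products $\eta \gamma_1^n \eta^{-1} \gamma_1^{-n}$ would accumulate at the identity in $\widetilde{\GL}^+ \! (2,\bbR)$ while moving a compact set in $\tbbAo$ a bounded distance, contradicting discreteness of $\Gamma$ or properness of the action. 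Hence $\Gamma \subseteq Z(\gamma_1)$; since $Z(\gamma_1)$ is $2$-dimensional abelian and $\Gamma$ is discrete of rank $\geq 2$, in fact $\Gamma \cong \bbZ^2$.

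Finally I would identify $\Gamma$ up to conjugacy with the groups built in Example~\ref{ex:gentorusnh} or~\ref{ex:gentorush}. Since $\Gamma \subset Z(\gamma_1)$ and there are only two conjugacy types of relevant two-dimensional centralizers, one gets two cases: either $Z(\gamma_1)$ is the lift of the diagonal Cartan, in which case $\Gamma$ is generated by an expansion $\tilde A$ and a lift $\tilde B_k$ of a commuting diagonal element with non-zero rotation angle (Example~\ref{ex:gentorusnh}); or $Z(\gamma_1)$ is the universal cover of $\GL(1,\bbC)$, and $\Gamma$ is a lattice there, necessarily conjugate to an Example~\ref{ex:gentorush} subgroup. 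The main obstacle I anticipate is the centralizer step in the middle paragraph: turning the intuitive dynamical argument ``non-commuting elements create accumulation'' into a clean proof. The cleanest route may be to bypass it by invoking Theorem~\ref{thm:classification} on the quotient $\Gamma \lmod \tbbAo$ (which, after passing to a finite-index torsion-free subgroup, is a complete flat affine surface) -- but then one must argue that rank two forces this quotient to be compact, which is the delicate geometric input.
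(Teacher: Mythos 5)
Your overall route is genuinely different from the paper's. The paper offers no standalone argument for this corollary: it is presented as an immediate by-product of the classification of \emph{compact} $(\tbbAo,\widetilde\GL^+\!(2,\bbR))$-manifolds obtained via the brick decomposition in \S\ref{sect:univcovs}, so the algebraic statement about $\Gamma$ is read off from the geometric classification of the quotient surfaces. You instead propose a direct dynamical/algebraic analysis of $\Gamma$ itself. Your first step (a single infinite-order element generates a proper action if and only if it is an expansion or has non-zero rotation angle, via the Calabi--Markus-type intersection argument and the $\theta$-coordinate) is sound and is essentially the content of Definition~\ref{def:posrot} together with Figures \ref{figure:expanding} and \ref{figure:nonexpanding}; it correctly settles case (1).

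The genuine gap is your middle step. The mechanism you invoke to force $\Gamma$ to be abelian --- that for non-commuting $\eta,\gamma_1$ the elements $\eta\gamma_1^n\eta^{-1}\gamma_1^{-n}$ ``accumulate at the identity'' --- is false in the relevant situations. Take $\gamma_1$ projecting to $\mathrm{diag}(2,3)$ and $\eta$ generic: the entries of $\eta\gamma_1^n\eta^{-1}$ grow like $3^n$ while those of $\gamma_1^{-n}$ only decay like $2^{-n}$, so the products \emph{diverge} in $\widetilde\GL^+\!(2,\bbR)$ rather than converge to $1$. This Zassenhaus/Margulis-type argument works for almost-unipotent small elements, not for hyperbolic ones, so abelianness of $\Gamma$ is not established. (A smaller issue in the same step: if $\gamma_1$ is central, e.g.\ a lift of a dilation, then $Z(\gamma_1)$ is all of $\widetilde\GL^+\!(2,\bbR)$, not a two-dimensional abelian group, so the centralizer dichotomy needs a separate case.) Your proposed fallback --- quotient by $\Gamma$ and invoke Theorem~\ref{thm:classification} --- does repair the rank-two case: the action is free (point stabilizers in $\widetilde\GL^+\!(2,\bbR)$ are contractible, hence torsion-free, so finite stabilizers are trivial), the quotient is a surface with $\pi_1\cong\Gamma$, and since an open surface has \emph{free} fundamental group, $\Gamma\cong\bbZ^2$ forces the quotient to be a closed orientable surface, hence by Benz\'ecri a torus, and the classification yields conjugacy into Example~\ref{ex:gentorusnh} or \ref{ex:gentorush}. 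But this still leaves unexcluded exactly the possibility your broken step was meant to kill, namely that $\Gamma$ is a non-abelian free group (with non-compact quotient). Neither your dynamical sketch nor the appeal to the compact classification rules this out, so as it stands the trichotomy ``$\bbZ$, $\bbZ^2$, or nothing else'' is not proved. To close this you need a separate argument that two proper, non-commuting elements cannot generate a discrete group acting properly on $\tbbAo$ --- for instance by exploiting that $\theta$ is a quasimorphism (Lemma~\ref{lem:theta}(3)) to control rotation numbers on commutators, or by running the orbit analysis of Proposition~\ref{prop:liftofN} for the Zariski closure of a non-abelian $\Gamma$.
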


\subsection{The global model spaces} \label{sect:univcovs}
The classification theorem, Theorem \ref{thm:classification}, 
implies that there do exists four simply connected flat 
affine manifolds 
% admit properly discontinuous and cocompact actions by affine transformations 
which appear as the universal covering space of a flat affine two-torus. 
These simply connected model spaces for
two-dimensional compact flat affine manifolds 
are the plane $\bbA^2$, the half-plane ${\cal H}$,
the sector ${\cal Q}$, and  $\widetilde{\bbAo}$, 
the universal covering space of the once punctured plane. \\
%It follows at once that these universal covering spaces are 
%also homogeneous affine manifolds. 
%
%The determination of  the universal covering model spaces for affine 
%two-tori follows from Proposition \ref{prop:devimages},
% the classification of development images given in \S \ref{sect:devimages}, 

This follows from the classification of development images 
once the following fact is established. \index{development maps!for flat affine two-torus}

\begin{proposition}  \label{prop:Deviscovering} 
The development map of a flat affine two-torus $M$ is a covering map. 
\end{proposition}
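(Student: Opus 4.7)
The strategy is to proceed case by case according to the classification of development images provided by Proposition \ref{prop:devimages}, in each case exploiting the lift of the $N$-action furnished by Proposition \ref{prop:liftofN}. Write $\Omega = D(\tilde M)$ and let $N \le \Aff(2)$ be the identity component of a maximal abelian subgroup containing the Zariski closure of the holonomy, with universal cover $\tilde N \to N$. Then $D$ intertwines the lifted $\tilde N$-action on $\tilde M$ with the $N$-action on $\Omega$, and this $\tilde N$-action descends (via the commuting $\Gamma$-action) to an $N$-action on $M$ with finitely many orbits.

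First I would treat the case in which $N$ is two-dimensional and acts simply transitively on some open orbit $U \subseteq \Omega$; by Proposition \ref{prop:devimages} and the list in Example \ref{ex:domains}, this covers all three cases $\Omega \in \{\bbA^2,\mathcal H,\mathcal Q\}$ (with $U=\Omega$) and the subcase $\Omega = \bbAo$ with $N$ conjugate to $\mathsf A = \GL(1,\bbC)$ (with $U=\bbAo$). Fix $\tilde x_0 \in \tilde M$ with image $\omega_0 = D(\tilde x_0) \in U$. Equivariance sends $\tilde N\cdot \tilde x_0$ onto $N\cdot \omega_0 = U$ and embeds the stabiliser $\tilde N_{\tilde x_0}$ into the trivial stabiliser $N_{\omega_0}$, so $\tilde N_{\tilde x_0}$ lies in the discrete kernel of $\tilde N\to N$. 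Hence $\tilde N\cdot \tilde x_0$ is open in $\tilde M$ and the restriction of $D$ factors as the covering of Lie groups $\tilde N/\tilde N_{\tilde x_0} \to N/N_{\omega_0} = U$. Since $U$ is the unique open $N$-orbit in $\Omega$, every $\tilde N$-orbit in $\tilde M$ is open, and connectedness of $\tilde M$ forces $\tilde N\cdot \tilde x_0 = \tilde M$; then $D$ is a covering onto $\Omega$, in fact a diffeomorphism when $\Omega$ is simply connected.

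The remaining and main case is $\Omega = \bbAo$ together with $M$ not homogeneous, so that no two-dimensional abelian subgroup of $\Aff(2)$ contains the holonomy; here $N$ is one-dimensional and its orbits in $\bbAo$ are one-dimensional. Following the approach of Benoist and of Nagano--Yagi, I would argue that the induced one-dimensional $N$-foliation of the compact torus $M$ has a closed orbit which, by Proposition \ref{prop:liftofN}, lifts in $\bbAo$ to (a piece of) a radial half-line, hence a non-complete geodesic of the flat affine connection. Cutting $M$ along such a closed geodesic produces a compact flat affine surface with two incomplete geodesic boundary components; the lift to $\tilde M$ of each piece is, after applying $D$, identified via the lifted $\tilde N$-action with a strip $\bar{\mathcal H}_0^k$ in $\tbbAo$, so each piece is an affine cylinder $\mathcal C_A^k$ as introduced in \S \ref{sect:acylinders}, with developing map the restriction of the covering $\tbbAo \to \bbAo$. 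Reassembling these cylinders using the holonomy element with non-zero angle of rotation as the gluing datum (cf.\ Proposition \ref{prop:agluing}) identifies $M$ with one of the tori $\cT_{A,B,k}$ constructed in \S \ref{sect:TABk}, and identifies its development map with the universal covering $\tbbAo \to \bbAo$, which is obviously a covering.

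The principal obstacle lies in this last case: one must show that the one-dimensional $N$-foliation on $M$ admits closed leaves, that these are non-complete geodesics along which $M$ can actually be cut into building blocks of the standard form $\mathcal C_A^k$, and that the reassembly recovers a genuine quotient of $\tbbAo$. Once this global cutting-and-pasting picture is established, the covering property of $D$ is tautological from the construction of $\cT_{A,B,k}$; the other cases reduce, as indicated above, to the formal observation that an equivariant map between homogeneous spaces of the Lie groups $\tilde N$ and $N$, with discrete fibres, is a covering.
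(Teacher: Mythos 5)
Your treatment of the homogeneous cases is correct and essentially the paper's own argument: when the maximal abelian connected group $N$ acts simply transitively on the development image (which happens for $\Omega=\bbA^2,\mathcal H,\mathcal Q$, and for $\Omega=\bbAo$ with $N$ conjugate to $\GL(1,\bbC)$), equivariance of $D$ for the lifted $\tilde N$-action forces $\tilde M$ to be a single $\tilde N$-orbit with discrete stabiliser, and $D$ becomes a covering of homogeneous spaces.

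The inhomogeneous case, however, contains a genuine gap, and on two counts. First, your setup is wrong there: $N$ is \emph{not} one-dimensional. The identity component of a maximal abelian subgroup of $\Aff(2)$ is always two-dimensional (the groups $\mathsf{T},\mathsf{D},\mathsf{C}_1,\mathsf{C}_2,\mathsf{B},\mathsf{A}$ and the group $\mathsf{N}$ appearing in the proof of Proposition \ref{prop:liftofN}), and the abelian holonomy of a flat affine two-torus always lies in one of these up to finite index. Inhomogeneity is reflected not in $N$ being small but in $N$ (then conjugate to $\mathsf{B}$ or $\mathsf{C}_1$) failing to act transitively on $\bbAo$: its open orbits are quadrants or half-planes, and $M$ decomposes into finitely many \emph{two}-dimensional pieces (closures of open $\tilde N$-orbits) separated by the one-dimensional orbits, which are the closed incomplete geodesics; there is no one-dimensional foliation whose closed leaves you must hunt for. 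Second, and more seriously, everything that actually makes this case work --- that the closure $\bar\Omega$ of an open $\tilde N$-orbit in $\tilde M$ develops \emph{injectively} onto a closed half-plane or sector with the origin removed, that its quotient by $\Gamma_0=\{\gamma\in\Gamma \mid \gamma\bar\Omega=\bar\Omega\}$ is a \emph{compact} affine cylinder with geodesic boundary whose holonomy is generated by an expansion, and that $M$ is a finite union of such bricks glued along boundary geodesics, hence affinely a torus $\cT_{A,B,k}$ covered by $\tbbAo$ --- is precisely what you defer as ``the principal obstacle.'' That content is the Brick Lemma (Proposition \ref{prop:bricklemma}) and the two statements following it, and it is the heart of the proof; without it the covering property of $D$ in the non-homogeneous case is asserted, not proved.
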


The main step in the proof of Proposition \ref{prop:Deviscovering} relies on the 
decomposition of $M$ into fundamental pieces, which are 
called bricks. This concept is due to Benoist \cite{Benoist_Np}. 
The bricks in this case are flat affine cylinders with geodesic boundary.
This brick decomposition for flat affine two-tori resembles the pants 
decomposition for closed hyperbolic surfaces (see \cite{Ratcliffe}). 
\emph{It will also
serve us in the parametrisation of the deformation space in
\S \ref{sect:cart}. }
%to a thourough understanding of the flat affine structures on the 
%
%

\subsubsection{The brick decomposition for flat affine two-tori}
Let $\tilde M$ be the universal covering flat affine manifold of $M$, 
$\Gamma \leq \Aff(\tilde{M})$ the 
group of covering transformations,  and
$D: \tilde M \ra \bbA^2$ the  development map. Let $N \leq \Aff(2)$ be
the identity component of a maximal abelian subgroup containing $h(\Gamma)$, 
as in \S \ref{sect:devimages}. 
By Proposition \ref{prop:liftofN}, the action of $N$ on the development image $D(\tilde M)$ lifts to an action of its universal covering 
$\tilde N$ on $\tilde M$, such that 
$D$ is an equivariant map $\tilde M \ra D(\tilde M)$. 

\paragraph{Homogeneous flat affine tori}
In case that  the development image $D(\tilde M)$ is  $\bbA^2$, or the half-plane ${ \cal H}$, or the sector ${\cal Q}$, $N$ is simply connected and
%we can deduce from Example \ref{ex:domains}
%that the abelian Lie group $N$ 
acts simply transitively on $D(\tilde M)$.
It follows that $\tilde N$ acts simply transitively on $\tilde M$, and
$D$ is an equivariant local diffeomorphism. Hence, 
$D: \tilde M \ra D(\tilde M)$  is an affine diffeomorphism. 
If the development image is $\bbAo$, and $N$ is conjugate to $\GL(1,\bbC)$ 
then $\tilde N$ acts simply transitively on $\tilde M$. It follows 
that $D$ is a covering map. We thus have an affine covering 
$$ \widetilde \bbAo \; \, \stackrel{D}{\longrightarrow} \; \, \bbAo \; \;  . $$
This proves that the development map is a covering map 
for all homogeneous flat affine tori $M$.

\paragraph{Inhomogeneous flat affine tori}
We assume now 
that $M$ is \emph{not} homogeneous. Therefore, 
the development image of $M$ is $\bbAo$ and
$N$ is different from $\GL(1,\bbC)$.  Then $N$ equals either
the group of diagonal matrices with positive entries $\mathsf{B}$
or the group $\mathsf{C}_{1}$ (compare Example \ref{ex:domains}).  
The open orbits of $N$  on $\bbAo$ are the open quadrant in the case $N= \mathsf{B}$, or the open half space in the case $N = \mathsf{C}_{1}$. 
% we can deduce from Proposition \ref{prop:liftofN} that
In particular,  in this case, $N$ does not act transitively on $\bbAo$, and therefore $M$ is not a homogeneous flat affine torus. However, 
the orbits of $N$ on $M$ decompose $M$ into finitely many pieces, 
the \emph{bricks},  from which $M$ is constructed.  

\begin{proposition}[Brick Lemma for the flat affine two-torus] \label{prop:bricklemma}
Let $\Omega= \tilde N \tilde x_{0}$ be an open orbit of $\tilde N$ on $\tilde M$,
and\/ $\bar \Omega$ the closure of\/ $\Omega$. Let\/ $\Gamma_{0} = 
\{ \gamma \in \Gamma \mid  \gamma \bar \Omega = \bar \Omega\}$. Then 
\begin{enumerate}
\item $D: \, \bar \Omega \, \ra  \,\bbAo$ is a diffeomorphism %covering map
 onto its image.
\item $\bar \Omega/ \, \Gamma_{0}$ is %a compact affine manifold with geodesic boundary. 
a flat affine cylinder with geodesic boundary. 
\end{enumerate}
\end{proposition}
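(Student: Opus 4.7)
The plan is to establish Part (1) by first handling the open orbit $\Omega$ via the simply-transitive $N$-action, then extending injectivity across the closed orbit $\bar\Omega$ by a Hausdorff-disjoint-neighborhoods argument. Part (2) is then a rank analysis of $\Gamma_{0}$ combined with cocompactness from compactness of $M$.

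Since $M$ is non-homogeneous with $D(\tilde M)=\bbAo$, Proposition \ref{prop:liftofN} forces $N$ to be either $\mathsf{B}$ or $\mathsf{C}_{1}$ from Example \ref{ex:domains}. Both are simply connected and act simply transitively on their open orbits $U\subset \bbAo$ (an open quadrant or an open half-plane). Hence $\tilde N=N$, and the $\tilde N$-stabilizer of $\tilde x_{0}\in \Omega$ maps isomorphically to the (trivial) $N$-stabilizer of $D(\tilde x_{0})\in U$; thus $N$ acts simply transitively on $\Omega$ and $D|_{\Omega}:\Omega\to U$ is an $N$-equivariant diffeomorphism.

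To extend injectivity to $\bar\Omega$, I would argue by contradiction. Suppose $\tilde y_{1}\ne \tilde y_{2}$ in $\bar\Omega$ with $D(\tilde y_{1})=D(\tilde y_{2})=p$. By Hausdorffness of $\tilde M$, choose disjoint open neighborhoods $V_{i}\ni \tilde y_{i}$ on which $D$ is a diffeomorphism onto open sets $W_{i}\ni p$. Since $\tilde y_{i}\in \bar\Omega$, each $V_{i}\cap \Omega$ is nonempty; since $p$ lies in the $\bbAo$-closure of $U$, the open set $W_{1}\cap W_{2}\cap U$ is nonempty. Picking $q$ in it, the unique local preimages $z_{i}\in V_{i}$ lie in $\Omega$ (as $q\in U$) and both satisfy $D(z_{i})=q$, so injectivity of $D|_{\Omega}$ gives $z_{1}=z_{2}\in V_{1}\cap V_{2}$, contradicting disjointness. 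Hence $D$ is injective on $\bar\Omega$, and being a local diffeomorphism, it is a diffeomorphism onto $D(\bar\Omega)$.

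For Part (2), $\Gamma_{0}$ acts freely and properly discontinuously on $\bar\Omega$ as a subgroup of the deck group $\Gamma\cong \bbZ^{2}$, and via the diffeomorphism $D$ this action corresponds to that of $h(\Gamma_{0})\subseteq N$ on $\bar U$. The restriction $h|_{\Gamma_{0}}$ must be injective, since any nontrivial kernel element would act trivially on $\bar U$ (as $N$ is simply transitive on $U$), hence on $\bar\Omega$, contradicting freeness. A rank-$2$ lattice $h(\Gamma_{0})\subset N\cong\bbR^{2}$ would project to $N/H\cong \bbR$, where $H$ is the $1$-dimensional isotropy subgroup of a boundary ray of $\bar U$, with image either dense (contradicting proper discontinuity on the ray) or with nontrivial kernel (contradicting free action on the ray). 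Hence $\Gamma_{0}$ is cyclic. The image $\mathsf{p}(\bar\Omega)\subseteq M$ is closed because $\{\gamma\bar\Omega\}_{\gamma\in\Gamma}$ is locally finite, hence compact; this forces $h(\Gamma_{0})=\langle A_{0}\rangle$ to act cocompactly on $\bar U$, which in turn rules out $A_{0}$ belonging to the boundary-ray isotropy subgroup and identifies $A_{0}$ as a contraction/expansion-type element of $N$. The quotient $\bar\Omega/\Gamma_{0}\cong \bar U/\langle A_{0}\rangle$ is therefore a compact flat affine annulus whose two boundary circles are closed non-complete geodesics, i.e.\ a flat affine cylinder with geodesic boundary in the sense of Example \ref{ex:acylinders2}. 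The main technical obstacle is the injectivity of $D$ on $\bar\Omega$ across distinct boundary $\tilde N$-orbits; once that is in hand via the disjoint-neighborhood argument, the rank analysis of $\Gamma_{0}$ and the identification of the quotient as a cylinder follow from standard dimension and proper-discontinuity considerations.
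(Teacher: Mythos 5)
There is a genuine gap: you never establish the boundary structure of $\bar\Omega$, namely that $\bar\Omega \setminus \Omega$ is nonempty and consists of exactly two one-dimensional $\tilde N$-orbits, each mapping onto a full boundary ray of $U$, so that $D(\bar\Omega)=\bar U$. Your Part (1) proves only injectivity of $D$ on $\bar\Omega$, but Part (2) then repeatedly uses the boundary rays without justification: the rank analysis excluding $\mathrm{rank}\,\Gamma_0=2$ argues via ``proper discontinuity on the ray'' and ``free action on the ray,'' the passage from compactness of $\mathsf{p}(\bar\Omega)$ to cocompactness of $h(\Gamma_0)$ on $\bar U$ presumes $D(\bar\Omega)=\bar U$, and the final identification $\bar\Omega/\Gamma_0\cong\bar U/\langle A_0\rangle$ with \emph{two} geodesic boundary circles is exactly the statement being assumed. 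A priori $\bar\Omega$ could equal $\Omega$ (giving an open annulus) or contain only one boundary ray (giving a non-compact half-open annulus), and in either case the conclusion ``cylinder with geodesic boundary'' fails. The paper's proof of (1) does double duty here: it shows that $\bar\Omega\setminus\Omega$ consists of precisely two one-dimensional orbits mapping to the corresponding rays in $\bbAo$, and injectivity is a byproduct. Your argument can be repaired: $\bar\Omega\neq\Omega$ because otherwise $\Omega$ is clopen in the connected space $\tilde M$, forcing $\tilde M=\Omega\cong U$ and a development image $U\neq\bbAo$; then one ray suffices for the rank analysis, compactness of $\bar\Omega/\Gamma_0$ follows, and compactness in turn forces the second ray to be present (a compact set containing the dense image of $U/\langle A_0\rangle$ in $\bar U/\langle A_0\rangle$ must be everything). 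But none of this is in your write-up, and it is the heart of the statement.

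Two smaller points. First, in your injectivity argument the step ``the unique local preimages $z_i\in V_i$ lie in $\Omega$ (as $q\in U$)'' is not automatic: $D^{-1}(U)$ is a disjoint union of open $\tilde N$-orbits and $z_i$ could a priori lie in one different from $\Omega$. You need to shrink $W_i$ so that $W_i\cap U$ is connected (true for the quadrant and half-plane at points of $\bar U\cap\bbAo$), whence $V_i\cap D^{-1}(U)$ is connected, meets $\Omega$, and is therefore contained in $\Omega$. Second, deducing cocompactness of $\Gamma_0$ on $\bar\Omega$ from compactness of $\mathsf{p}(\bar\Omega)$ requires knowing that $\gamma\bar\Omega\cap\bar\Omega\neq\emptyset$ forces $\gamma\in\Gamma_0$ (up to passing to a finite cover with $h(\Gamma)\subseteq N$); the paper proves this explicitly using injectivity of $D$ on $\bar\Omega$, and you should too, since otherwise $\mathsf{p}(\bar\Omega)$ need not be homeomorphic to $\bar\Omega/\Gamma_0$.
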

\begin{proof}  
Observe that $N = \tilde N$ is simply connected.
Put $x_{0}= D \tilde x_{0}$. It follows that $D: \tilde N \tilde x_{0} \ra N x_{0}$ is a diffeomorphism. The complement of $N \tilde x_{0}$ in its 
closure  $\overline{N \tilde x_{0}}$ consists of 
one-dimensional orbits for $N$, which are diffeomorphic to a ray in $\bbAo$. 
Since $D$ is a local diffeomorphism on $\tilde M$,  
$\overline{N \tilde x_{0}}$ has precisely two such orbits in its closure,
which map to their corresponding orbits in $\bbAo$, see Figure \ref{figure:strips}.
It follows that $D$ is injective on the closure  $\overline{N \tilde x_{0}}$ and, 
in fact, $ D:  \overline{N \tilde x_{0}}  \ra \bbAo$ is a diffeomorphism onto its image. 
This proves (1).

To prove (2), remark first that $\tilde N$ has at most finitely many orbits on
the compact manifold $M$. This implies that there are only finitely 
many orbits of $\Gamma \tilde N$  on $\tilde M$. Since $\Gamma$ acts
properly on $\tilde M$, it follows that 
every compact subset $\kappa$ of $\tilde M$ intersects only finitely many 
orbits of $\tilde N$. In particular, $\kappa$ intersects
only finitely many  components of $\Gamma \bar \Omega$. Therefore,
$\Gamma \bar \Omega$ is closed in $\tilde M$. Hence, $\bar \Omega$
projects to a compact subset in $M$.

We may assume (by replacing $M$ with  a finite covering manifold if necessary) 
that $h(\Gamma)$ is contained in $N$. 
Note then, if $\gamma \in \Gamma$ such that $\gamma \bar \Omega 
\cap \bar \Omega \neq \emptyset$ then $\gamma \in \Gamma_{0}$. In fact, since
$h(\gamma) \in N$, there exists $\tilde n \in \tilde N$ such that 
$h(\gamma \tilde n) = 1$. Since $D$ is a diffeomorphism on $\bar \Omega$
and $\gamma \tilde n \bar \Omega \cap \bar \Omega \neq \emptyset$, we conclude that   $\gamma \tilde n $ preserves both boundary components
of $\bar \Omega$. Thus $\gamma \tilde n \Omega = \Omega$,
and therefore $\gamma = n^{-1} \in \tilde N$. In particular, $\Gamma_{0} = \Gamma \cap 
\tilde N$. Moreover, it follows that $\bar \Omega/ \Gamma_{0}$  is the image of 
$\bar \Omega$ in $M = \tilde M/\Gamma$.
% , which is a finite cover of $M$.  

We thus proved that $\bar \Omega/ \Gamma_{0}$ is compact. 
Since $\bar \Omega$ has two boundary components which are
geodesic rays, $\bar \Omega/ \Gamma_{0}$ must be a flat affine cylinder
with geodesic circles as boundary components.  This proves (2).
\end{proof}

\begin{figure}[htbp] 
\begin{center}
\includegraphics[scale=0.6, clip=true, trim=  0cm 1.15cm 0cm 0.3cm ]{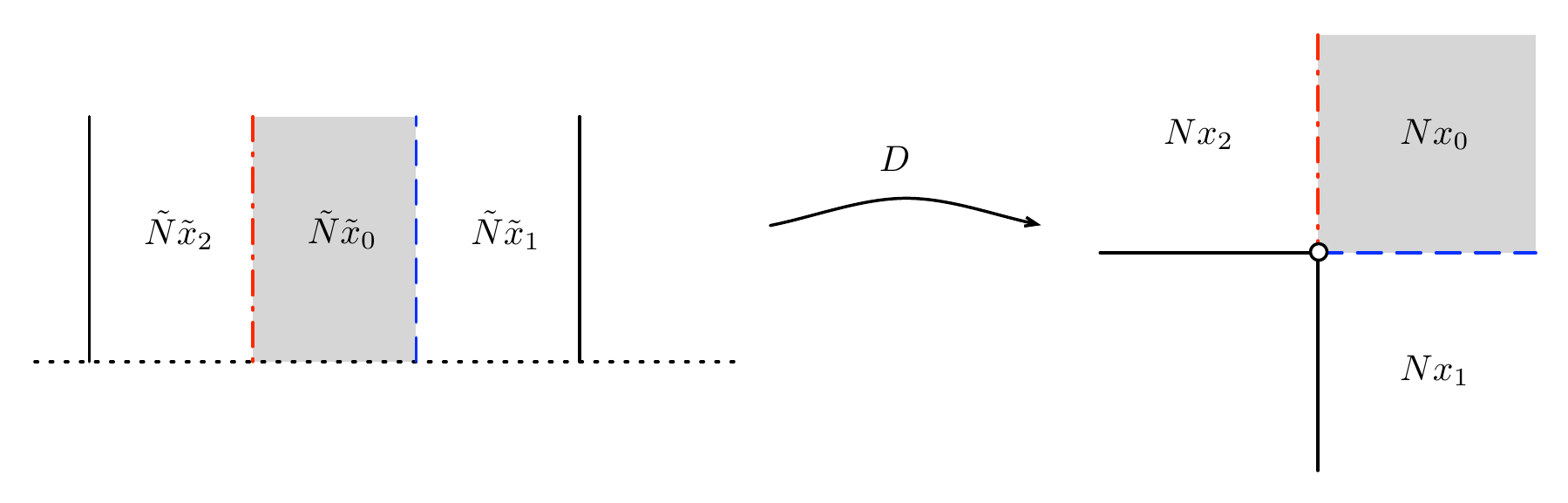}
\caption{{The orbits of $\tilde N$ in $\tilde M$.}}
 \label{figure:strips}
\end{center}
\end{figure}

The development image of $\bar \Omega$ is a closed half space 
or a sector. This implies:

\begin{proposition} If the development image is $\bbAo$ then 
the holonomy $h(\Gamma_{0})$ is generated by an expansion.
\end{proposition}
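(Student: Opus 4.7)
The plan is to use Proposition~\ref{prop:bricklemma}. Since $\bar\Omega/\Gamma_0$ is a compact flat affine cylinder, $\Gamma_0$ is infinite cyclic; I pick a generator $\gamma_0$ and set $A = h(\gamma_0)\in N$. Via the diffeomorphism $D\colon\bar\Omega\to D(\bar\Omega)$ provided by that proposition, the $\Gamma_0$-action transports to a free, properly discontinuous, cocompact action of $\langle A\rangle$ on $D(\bar\Omega)$. Because $M$ is non-homogeneous and $D(\tilde M) = \bbAo$, the group $N$ is $\mathsf{B}$ or $\mathsf{C}_1$ from Example~\ref{ex:domains}, and $D(\bar\Omega)$ is correspondingly a closed sector, respectively a closed half-plane, with the origin removed; in each case its boundary consists of two geodesic rays.

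For $N=\mathsf{C}_1$, write $A = \left(\begin{smallmatrix} e^{t} & z \\ 0 & e^{t} \end{smallmatrix}\right)$. On each boundary ray (the positive or negative $x$-axis) $A$ acts by the scaling $x\mapsto e^t x$, so compactness of the two boundary circles of $\bar\Omega/\Gamma_0$ forces $e^t\neq 1$, i.e., $t\neq 0$. Since $A$ has $e^t$ as its only eigenvalue (with multiplicity two), whichever of $A$, $A^{-1}$ has eigenvalue strictly greater than $1$ is an expansion. For $N=\mathsf{B}$, writing $A=\mathrm{diag}(a,b)$ with $a,b>0$, the same boundary argument forces $a\neq 1$ and $b\neq 1$.

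The main obstacle is to rule out the mixed-signature case $a>1>b>0$ (and its inverse) in the $\mathsf{B}$ setting. I plan to show the $\langle A\rangle$-action on $D(\bar\Omega)=\bar{\mathcal Q}\setminus\{0\}$ then fails to be properly discontinuous, contradicting Proposition~\ref{prop:bricklemma}. Take the compact ``corner'' set $K=\{(x,y)\in\bar{\mathcal Q}\setminus\{0\}: 1\leq x+y\leq 2\}$, which meets both boundary axes. For each $k$ sufficiently large, the point $(a^{-k},\, 2-a^{-k})$ lies in $K$ and is mapped by $A^k$ to $(1,\, b^k(2-a^{-k}))$, which tends to $(1,0)\in K$ and hence lies in $K$; thus $A^k K\cap K\neq\emptyset$ for infinitely many $k$. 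Equivalently, the $\langle A\rangle$-orbits of $(1,0)$ and $(0,1)$ cannot be separated in the quotient, violating the Hausdorffness of the compact cylinder $\bar\Omega/\Gamma_0$. Hence $\log a$ and $\log b$ share a sign, so $A$ or $A^{-1}$ is an expansion, and combining the two cases gives that $h(\Gamma_0)=\langle A\rangle$ is generated by an expansion.
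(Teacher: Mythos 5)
Your proof is correct and follows essentially the same route as the paper: restrict the holonomy to the closed orbit via the diffeomorphism supplied by the brick lemma, use cocompactness of the action on the boundary rays to force eigenvalues different from $1$, and use proper discontinuity on the whole closed orbit to exclude the mixed (saddle) case. The only difference is that you spell out, with a concrete compact set meeting both boundary rays, why properness fails for a saddle element, a point the paper disposes of by appealing to the figures illustrating the dynamics of non-expanding elements of $\GL(2,\bbR)$.
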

\begin{proof}
The proof of the previous proposition 
implies that $h(\Gamma_{0})$ is contained in $N$.
Since 
$\Gamma_{0}$ acts properly on $\overline{N \tilde x_{0}}$, and since $D$ is a
diffeomorphism onto its image $\overline{N x_{0}}$, 
it follows that $h(\Gamma_{0})$ acts properly and with compact quotient on the orbit closure $\overline{N x_{0}}$. 
This implies that $h(\Gamma_{0})$ has 
positive eigenvalues on one-dimensional orbits, and a fortiori, by properness
on $\overline{N x_{0}}$, it must be a group of expansions of $\GL(2,\bbR)$ (see Figures \ref{figure:nonexpanding}
and \ref{figure:expanding}). 
\end{proof}

\paragraph{Final step in the proof.} 
By the brick lemma (Proposition \ref{prop:bricklemma}), $M$ decomposes as 
a finite union of copies of a flat affine cylinder $\bar \Omega/ \Gamma_{0}$, which
are glued along their boundary geodesics. Therefore, there exists  a finite union 
$\bar {\mathcal{H}}$ of neighbouring copies of  $\bar \Omega$ in $\tilde M$, such that the torus $M$ is obtained by identifying the two boundary geodesics of $\bar {\mathcal H}/\Gamma_{0}$ by an affine 
transformation  $\tilde B$ in $\Aff(\tilde M)$.  
% such that $\tilde B$ identifies the two boundary geodesics of $\cH$.
Since $h(\tilde B) \in \GL^+\!(2,\bbR) $ commutes with $N$, it follows that 
$B= h(\tilde B)$ is contained in $N$ or $B \in   \R_{\pi} N$. It follows that
the development image of $\bar {\mathcal H}$ must be $\bbAo$ or $\bar {\mathcal H}_{0}$, the closed half space with the origin removed,
respectively, see Figure \ref{figure:deviscov}.
Hence, $\bar {\mathcal H}$ is affinely 
diffeomorphic to one of the strips $\bar {\mathcal{H}}_{0}^k$,
which are defined in \S \ref{sect:acylinders}. Let $A \in  \GL^+\!(2,\bbR)$
be the expansion which generates $\Gamma_{0}$. Then $\bar {\mathcal H}/\Gamma_{0}$ is a flat affine cylinder $\mathcal C^k_{A}$, as defined in Example \ref{ex:acylinders2}. Therefore,
$M$ is affinely diffeomorphic to a flat affine torus  
${\mathcal T}_{A,B,k}$ constructed in Example 
\ref{ex:gentorusnh}. In particular, $M$ is affinely diffeomorphic to 
a quotient of $\widetilde \bbAo$, by a properly discontinuous subgroup 
$$ \Gamma = \langle \tilde A, \tilde B_{k} \rangle$$ of affine
transformations in $\Aff(\widetilde \bbAo)$.

\begin{figure}[htbp] 
\begin{center}
\includegraphics[scale=0.6, clip=true, trim=  0cm 0.8cm 0cm 0.4cm ]{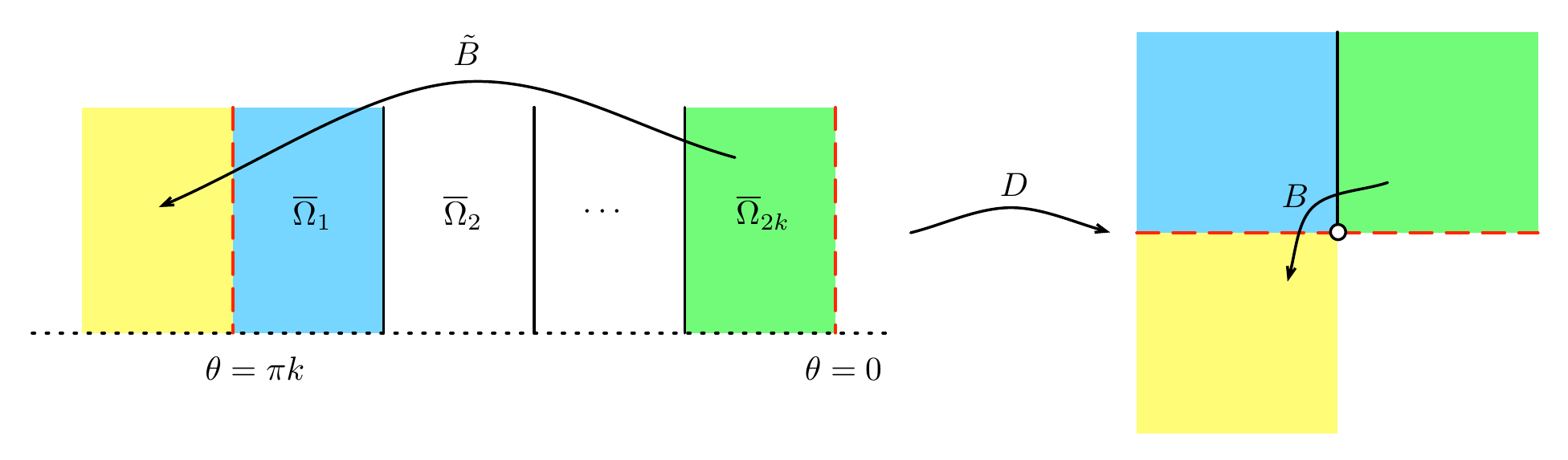}
\caption{{\bf $\bar {\mathcal H} = \bar \Omega_{1} \cup \cdots \cup \bar \Omega_{2k}$, $B = \R_{\pi}$}.}
 \label{figure:deviscov}
\end{center}
\end{figure}

\begin{corollary} \label{cor:inhomog}  \index{flat affine torus!non-homogeneous}
Every non-homogenous flat affine two-torus $M$ is affinely
diffeomorphic to a two-torus $\cT_{A,B,k}$ 
(see Example \ref{ex:gentorusnh}), and $M$ 
is obtained by gluing
a flat affine cylinder $\mathcal{C}^k_{A}$, where $A \in  \GL^+\!(2,\bbR)$
is an expansion, and $B \in  \GL^+\!(2,\bbR)$ has 
positive eigenvalues and commutes with $A$.
\end{corollary}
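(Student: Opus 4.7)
My plan is to deduce the corollary as a consequence of the brick decomposition established in Proposition \ref{prop:bricklemma}, together with the classification of development images in Proposition \ref{prop:devimages} and the analysis of $h(\Gamma_0)$ given just above.

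First I would observe that, since $M$ is not homogeneous, Proposition \ref{prop:devimages} forces the development image of $M$ to be $\bbAo$, and the maximal connected abelian subgroup $N \leq \Aff(2)$ containing (a finite index subgroup of) $h(\Gamma)$ cannot be $\GL(1,\bbC)$, for then $N$ would act transitively on $\bbAo$ and $M$ would be homogeneous. Up to affine conjugacy, this leaves only $N = \mathsf{B}$ or $N = \mathsf{C}_1$, and each of these has $\bbAo$ as a finite union of orbits, with the open orbits being either the open quadrant or an open half space, and one-dimensional orbits being radial rays.

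Next I would apply the brick lemma: for any open $\tilde N$-orbit $\Omega$ in $\tilde M$, the closed orbit $\bar\Omega$ develops diffeomorphically into $\bbAo$, and $\bar\Omega/\Gamma_0$ is a flat affine cylinder with two geodesic boundary circles, where $\Gamma_0 = \Gamma \cap \tilde N$. The proposition preceding Corollary \ref{cor:inhomog} shows that $h(\Gamma_0)$ is an infinite cyclic group generated by an expansion $A \in \GL^+\!(2,\bbR)$, so that $\bar\Omega/\Gamma_0$ is (affinely diffeomorphic to) the cylinder $\mathcal{C}^1_{A}$ of Example \ref{ex:acylinders2}.

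Then I would assemble $M$ by walking across its finitely many bricks. Picking $\tilde x_0$ in some open $\tilde N$-orbit and following consecutive $\tilde N$-translates across the intermediate one-dimensional orbits, I obtain a closed strip $\bar{\mathcal H} = \bar\Omega_1 \cup \cdots \cup \bar\Omega_k$ in $\tilde M$ whose interior is a disjoint union of open $\tilde N$-orbits. Since $M$ is closed and the brick decomposition is finite, there exists an affine deck transformation $\tilde B \in \Aff(\tilde M)$ which identifies the two free boundary geodesics of $\bar{\mathcal H}/\Gamma_0$; together with $\tilde A$, this realises $M$ as the quotient of $\bar{\mathcal H}$ by a rank two subgroup of $\Aff(\tilde M)$. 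The holonomy image $B = h(\tilde B)$ must normalise $h(\Gamma_0) = \langle A\rangle$, and since $[h(\Gamma):h(\Gamma)\cap N]$ is finite and abelian, one checks that $B$ centralises $N$, so $B \in N$ or $B \in \R_\pi N$ according to whether $k$ is even or odd. Comparing with the construction of the strips $\bar{\mathcal H}_0^k$ and the lifts $\tilde B_k = \tau^k \tilde B_0$ in \S \ref{sect:acylinders}, this identifies $\bar{\mathcal H}$ with $\bar{\mathcal H}_0^k$ and exhibits $M$ as precisely the torus $\cT_{A,B,k}$ of Example \ref{ex:gentorusnh}.

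The main obstacle I anticipate is the last step: showing that the gluing transformation $\tilde B$ can be chosen so that $B = h(\tilde B)$ lies in $\GL^+(2,\bbR)$, commutes with $A$, and has positive eigenvalues (the sign/eigenvalue data ends up being absorbed into the integer $k$ and the parity of the lift). Orientation of $M$ forces $B \in \GL^+(2,\bbR)$; the commutation $[A,B]=1$ follows because the whole holonomy group is abelian after passing to a finite cover, which is harmless since the construction of $\cT_{A,B,k}$ is equivariant under such covers; the positivity of eigenvalues is what remains after moving possible sign factors into the parity of $k$ via the central element $\tau \in \tGL^+\!(2,\bbR)$ with $\mathsf{p}(\tau)=\R_\pi$. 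Once these bookkeeping issues are settled, the identification $M \cong \cT_{A,B,k}$ is immediate from the explicit construction of $\cT_{A,B,k}$.
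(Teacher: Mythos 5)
Your proof follows the paper's own argument essentially step for step: reduce to $N \in \{\mathsf{B}, \mathsf{C}_1\}$ via the classification of development images, invoke the brick lemma together with the fact that $h(\Gamma_0)$ is generated by an expansion, assemble the finitely many bricks into a strip $\bar{\mathcal H}$ glued by a deck transformation $\tilde B$ with $h(\tilde B) \in N$ or $\R_\pi N$, and absorb the sign into the parity of $k$ via the central lift $\tau$. The only slight imprecision is calling a single brick $\bar\Omega/\Gamma_0$ the cylinder $\mathcal C^1_A$ --- when $N = \mathsf{B}$ it is a quotient of a closed sector rather than of a closed half-plane --- but this is harmless, since only the assembled strip $\bar{\mathcal H}$ needs to be identified with $\bar{\mathcal H}_0^k$.
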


\section{The topology of the deformation space} \label{sect:thedefspace}

In this section we describe the global and local structure
of the deformation space $\Def(T^2,\bbA^2)$ of all flat affine
structures on the two-torus. The deformation
space decomposes into two \emph{overlapping}  
subsets: the open subspace $\Def(T^2,\tbbAo)$ of structures modeled on the once punctured plane $\tbbAo$ and
the closed subspace $\Def_{h}(T^2,\bbA^2)$ of homogeneous flat  affine structures.  We describe the structure and topology of these
two subspaces separately in Sections 6.3 to  6.4.  
In Section 6.5 we deduce our main result that the
holonomy map for the deformation space $\Def(T^2,\bbA^2)$
is a local homeomorphism. 

\subsection{Flat affine connections} \label{sect:affine_conns}
In this subsection we introduce flat affine connections. These
provide another point of view on flat affine structures,
which turns out to be particularly useful in the study of homogeneous flat affine manifolds. \\

An {\em affine connection\/} on the tangent bundle of $M$ is determined by a covariant differentiation
operation on vector fields which is
a $\bbR$-bilinear map
\begin{align*}
{\nabla}: \Vect(M) \times \Vect(M) & \longrightarrow \Vect(M) \\
(X,Y) & \longmapsto \nabla_X(Y)
\end{align*}
and for $f\in C^\infty(M)$,
satisfies
\begin{equation*}
\nabla_{fX} Y = f \nabla_{X} Y, \qquad \text{and } \qquad
\nabla_{X}(f Y) = f \nabla_{X} Y + (Xf) Y
\end{equation*}
(where $Xf\in C^\infty(M)$ denotes the directional derivative of 
$f$ with respect to $X$).
The connection is \emph{torsion free} if and only if, for all $X,Y \in \Vect(M)$, 
\begin{equation} \label{eq:torsion}
\nabla_X Y - \nabla_Y X = [X,Y]  \; , 
\end{equation}
and it is \emph{flat} if and only if the curvature tensor $R^\nabla$
vanishes. That is, if
\begin{equation}  \label{eq:curv}
R^\nabla(X,Y) = \nabla_X \nabla_Y - \nabla_Y \nabla_X - \nabla_{[X,Y]} = 0  \; .
\end{equation}

\subsubsection{Correspondence with flat affine structures} 
Specifying an affine structure on $M$ is equivalent 
to giving a torsion free flat affine connection on the tangent bundle
of $M$. Indeed, let $M$ be a flat affine manifold.  Then the affine structure 
defines a unique torsion free flat affine connection on $M$ by pulling back the canonical affine connection on $\bbA^n$
(that is, the usual derivative on $\bbR^n$) via a development map. Conversely, given any torsion free flat affine connection $\nabla$ on $M$, for each $p \in M$, the exponential map for $\nabla$ at $p$ % $\exp_{p}$ 
is a connection preserving diffeomorphism from an open subset of the tangent vector space $T_{p} M$ (with the canonical flat affine connection) to a neighborhood  of $p$, compare \cite[VI. Theorem 7.2]{KN1}.  This gives rise to an atlas of locally affine coordinates and therefore determines a unique flat affine structure on $M$. 
Thus, there is a natural one to one correspondence  
\begin{equation} \label{eq:correspondac} 
   \Str(M,\bbA^n)  \, \longleftrightarrow \; \{ \text{torsion free flat affine connections on $M$} \}  
\end{equation}  
of the set of flat affine structures  $\Str(M,\bbA^n)$ with a set of affine connections.
An affine connection is called \emph{complete} if all of its geodesics can be extended to infinity. Under the correspondence \eqref{eq:correspondac} complete affine structures are in bijection with complete affine connections.
\\

% \paragraph{The set 
Observe that the difference of two affine connections is a tensor field on $M$ and therefore the set of all affine connections forms an affine space. 
%The above correspondence yields the
%following: 

\begin{example}[Flat connections form a closed 
subset of an affine space] \label{ex:conn_affine_space}
Let $E$ denote the tangent bundle of the flat affine manifold 
$M$. Let $\nabla_{0}$ be the natural flat connection induced on $M$ by its flat affine structure. 
% invariant Riemannian structure on $T^2$. 
We choose $\nabla_{0}$ as a basepoint in the space of all affine connections on $M$. Every torsion free affine connection on $M$ is of
the form $\nabla = \nabla_{0} + S$, where $S \in \Gamma(S^2 E^*   \tensor{}  E)$ is a vector valued symmetric form on $M$.
The set of all torsion free affine connections $\nabla$ on $M$ is 
thus an affine space modeled on the vector space $\Gamma(S^2 E^*   \tensor{}  E)$.  Every torsion free \emph{flat}
affine connection on $M$ is of
the form $\nabla = \nabla_{0} + S$, where $S$ is contained 
in the closed subset $\cC$ of $\Gamma(S^2 E^*   \tensor{}  E)$
defined by the equation \eqref{eq:curv}, which encodes the vanishing of curvature. 
%The connection $\nabla_{0}$ 
%belonging to the zero form in $\mathcal C$
%represents the canonical translation structure on $T^2$.
%Then $\Str(M,\bbA^n)$ is in bijection with a closed subset $\mathcal C$ in $\Gamma(S^2 E^*   \tensor{}  E)$.
%
\end{example}

%From the previous example, we may already deduce that
%the deformation space $\Def(T^2,\bbA^2)$ of affine 
%structures on $T^2$ and, therefore, also its holonomy image in 
%$\Hom(\bbZ^2,\Aff(2))$ are connected. Moreover, every 
%flat affine structure on the two-torus admits a canonical 
%path to the translation structure. 

The space of sections 
$\Gamma(S^2 E^*   \tensor{}  E)$ carries the $C^\infty$-topology of maps. This defines
a topology on the space of torsion free affine connections. 

\begin{proposition} \label{prop:corres_is_homeo}
The natural correspondence \eqref{eq:correspondac} 
% $\Str(M,\bbA^n) \leftrightarrow \cC$ 
of flat affine structures with flat  torsion free affine connections is a homeomorphism. In particular,  the space of  flat affine structures $\Str(M,\bbA^n)$ is homeomorphic to the closed subset $\mathcal C$ in the tensor space $\Gamma(S^2 E^*   \tensor{}  E)$ as described above.
\end{proposition}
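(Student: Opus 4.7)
The plan is to reduce the statement to showing that, under the natural identification of both spaces with framed data, the bijection and its inverse are continuous in the $C^\infty$-topologies. Both continuities ultimately reduce to local coordinate formulas and smooth dependence of ODE solutions on parameters.

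Following the framed-slice discussion in Example \ref{ex:framepreserving}, I would fix a base frame $F_{\tilde m_0}$ at a chosen basepoint $\tilde m_0 \in \tilde M$ and a base frame $E_{x_0}$ on $\bbA^n$. The retraction $\Dev(M,\bbA^n) \to \Dev_f(M,\bbA^n)$ exhibits $\Str(M,\bbA^n)$ as homeomorphic to the space $\Dev_f(M,\bbA^n)$ of frame-preserving development maps. Under this identification, the correspondence \eqref{eq:correspondac} becomes a bijection
\[ \Phi: \Dev_f(M,\bbA^n) \;\longrightarrow\; \mathcal{C}, \qquad D \;\longmapsto\; D^*\nabla_0, \]
onto the closed set $\mathcal{C}$ of flat torsion-free connections inside $\Gamma(S^2 E^* \tensor{} E)$ defined by \eqref{eq:curv}. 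It then suffices to verify that $\Phi$ is a homeomorphism.

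For continuity of $\Phi$, I would work in local affine coordinates on $\tilde M$. The Christoffel symbols of $D^*\nabla_0$ in such a chart are given by an explicit polynomial expression in the first and second partial derivatives of the components of $D$, together with entries of the matrix inverse of its Jacobian (which is nowhere singular since $D$ is a local diffeomorphism). Since the $C^\infty$-topology on $\Dev_f(M,\bbA^n)$ controls all partial derivatives of $D$ uniformly on compacta, continuity of $\Phi$ into $\Gamma(S^2 E^* \tensor{} E)$ is immediate; equivariance under $\pi_1(M)$ descends the resulting invariant tensor on $\tilde M$ continuously to $M$.

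For continuity of $\Phi^{-1}$, I would use that the framed map $D_\nabla$ is built from $\nabla$ by analytic continuation of $\nabla$-exponential charts. Given $\nabla^{(0)} \in \mathcal{C}$ and a compact set $K \subset \tilde M$, cover $K$ by a finite chain of closed balls $B_1,\ldots,B_N$ with $\tilde m_0 \in B_1$ and $B_i \cap B_{i+1}\neq\emptyset$, each contained in a normal neighborhood for $\nabla^{(0)}$. Smooth dependence of solutions of the geodesic ODE on parameters implies that the exponential map $\exp^\nabla_p$ varies $C^\infty$-continuously in $(\nabla,p)$ on compacta; hence, shrinking a $C^\infty$-neighborhood $U$ of $\nabla^{(0)}$, each $B_i$ remains a normal $\nabla$-ball for all $\nabla \in U$ and the corresponding exponential charts vary $C^\infty$-continuously in $\nabla$. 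On $B_1$ the frame-preserving condition pins $D_\nabla|_{B_1}$ as $\exp^\nabla_{\tilde m_0}$ followed by a fixed affine identification; on $B_{i+1}$, $D_\nabla$ is determined by the affine extension of its values on $B_i\cap B_{i+1}$ composed with $\exp^\nabla$. A straightforward induction on $i$ then shows that $D_\nabla|_K$ depends $C^\infty$-continuously on $\nabla \in U$, establishing continuity of $\Phi^{-1}$.

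The principal obstacle is this second direction: pulling back the flat connection by a development is a purely algebraic operation on derivatives, but inverting the construction requires a nonlocal analytic continuation of exponential charts whose domains themselves vary with $\nabla$. The uniform-in-$\nabla$ covering of each compactum in $\tilde M$ by finitely many normal balls, combined with classical smooth dependence on parameters, is the technical device that makes this work.
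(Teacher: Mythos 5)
Your proposal is correct and follows essentially the same route as the paper: continuity of $D \mapsto \nabla$ via the Christoffel symbols being explicit algebraic expressions in the derivatives of $D$, and continuity of the inverse via smooth dependence of the $\nabla$-exponential charts on the connection. You merely add more detail than the paper does in the second direction (the chain of normal balls and the uniform-in-$\nabla$ control of their domains), which the paper compresses into the remark that normal coordinates are determined by an ODE depending smoothly on the Christoffel symbols.
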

\begin{proof} Let us fix a flat affine structure on $M$ and let  $\nabla_{0}$ be its compatible torsion free flat affine connection. Let $\nabla = \nabla_{0}+S$ be another torsion free flat  affine connection on $M$. 
In a local flat affine coordinate chart for $M$, 
$S \in \cC$ is represented by a set of functions $\Gamma_{ij}^k$ which are called Christoffel symbols for $\nabla$, see \cite[III. Proposition 7.10]{KN1}. We observe
that the functions $\Gamma_{ij}^k$ also coincide with the coordinate representation of the tensor $S$. 
Therefore, a sequence $\nabla_{n}$ of affine connections is convergent if and only if the corresponding  Christoffel symbols converge in all local flat affine coordinate systems. 

Let $D_{n}$ be a sequence of development maps and 
consider the corresponding sequence of flat affine connections
$\nabla_{n}$. 
Since the Christoffel symbols for  $\nabla_{n}$ are polynomials in the first and second derivatives of $D_{n}$ (see \cite{KN1}), convergence of $D_{n}$ implies convergence of $\nabla_{n}$.
Therefore, the correspondence \eqref{eq:correspondac} is continuous. 

Conversely, for any torsion free flat affine connection $\nabla$ on $M$,  normal coordinate systems on $M$ define compatible  
coordinate charts for the flat affine structure defined by 
$\nabla$, see \cite[VI. Theorem 7.2]{KN1}.
Normal coordinate systems for $\nabla$ are determined by an ordinary differential equation whose solutions depend smoothly on the Christoffel symbols for $\nabla$. %  see \cite{KN1}. 
Hence, the flat affine coordinate charts for $\nabla$ 
depend smoothly on $\nabla$. This shows that the correspondence \eqref{eq:correspondac} is a homeomorphism. 
\end{proof}

%\begin{corollary} The set $\Str(T^2,\bbA^2)$ of all flat affine structures on the two-torus is a contractible Hausdorff space.
%\end{corollary}
%\begin{proof} Let $\nabla_{0}$ be the canonical 
%this is not yet clear.
%\end{proof}
%
\subsubsection{Translation invariant flat affine connections} \label{sect:transinv_cons}

An affine connection $\nabla$ on the two-torus $$ T^2 = S^1 \times S^1 $$ is called \emph{translation invariant} if the group $S^1 \times S^1$ acts by affine transformations.  Let $\nabla_{0}$ be the natural Riemannian flat affine connection on $T^2$.
It is characterized by the property that the translation vector fields of the $S^1 \times S^1$-actions are parallel. Then any other connection $\nabla=\nabla_{0} +S $ is translation invariant iff the covariant derivatives of the translation vector fields of the $S^1 \times S^1$-action are parallel with respect to $\nabla$. 
This condition is satisfied, if and only if the Christoffel symbols $S$ are constant functions in the flat coordinates for $\nabla_{0}$.
% In view of Example \ref{ex:conn_affine_space} 
Therefore, the set of all translation invariant torsion free flat affine connection is in bijection with the subset $ \cC(T^2,\bbR)$ of $\cC$ which consists of all \emph{constant} (that is, of all $\nabla_{0}$-parallel) tensors contained in $\cC$. 
% \end{example}

\begin{remark} Equation \eqref{eq:curv} shows that $\cC(T^2,\bbR)$ is a \emph{quadratic cone} in the vector space of symmetric bilinear maps  $S^2 \, \bbR^2 \tensor{} \bbR^2$.
% and the point $0 \in \cC(\bbR)$ represents the affine connection $\nabla_{0}$.
Every element $S \in \cC(\bbR)$ 
%which corresponds to the translation invariant flat affine connection $\nabla$ on $T^2$ 
represents a symmetric bilinear product  
% $\cdot_{\nabla} \in S^2 \bbR^2 \tensor{} \bbR^2$: 
 $$ \cdot_{\nabla}: \, \bbR^2 \times \bbR^2 \ra \bbR^2 \, , \; \,  u \cdot_{\nabla}  v := \, S(u,v)  \: , $$ 
which, for all $u,v,w$, satisfies  the associativity relation 
$$  (u \cdot_{\nabla} v) \cdot_{\nabla} w =  u \cdot_{\nabla} (v\cdot_{\nabla} w) \; . $$
This product defines a left-invariant flat affine connection on the abelian Lie group $\bbR^2$ by extending the covariant derivative from left-invariant vector fields to all vector fields. 
Indeed, there is a general  correspondence of \emph{associative}, and more generally \emph{left-symmetric} algebra products with left-invariant torsion free flat affine connections on Lie groups, see for example \cite[\S 5.1]{BauesPSR}. Under this correspondence
complete connections are represented by products which have 
the property that all maps $v \mapsto u \cdot_{\nabla} v$ have trace zero (compare \cite[Corollary 5.7]{BauesPSR}). 
\end{remark}
%In particular, this holds for any  translation invariant flat affine connection on $T^2$. 
We summarize this discussion by the following: 
% (see also \cite{BC_1}): 

\begin{corollary} 
\begin{enumerate}
\item The set of all translation invariant flat affine connections on $T^2$ is homeomorphic to a four-dimensional 
homogeneous quadratic cone $\cC(T^2, \bbR)$ in the six-dimensional vector space $S^2 \bbR^2 \tensor{} \bbR^2$. 
\item 
The subset of \emph{complete} translation invariant flat affine structures on $T^2$ is homeomorphic to a two-dimensional homogeneous quadratic cone in the vector space $\bbR^4$.
\end{enumerate}
\end{corollary}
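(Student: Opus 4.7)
The plan is to exploit the correspondence, outlined in the remark preceding the corollary, between translation invariant torsion-free flat affine connections $\nabla = \nabla_{0} + S$ on $T^2$ and commutative, associative products $\cdot_{\nabla}$ on $\bbR^2$. In this correspondence, $\nabla_{0}$-parallel (constant) tensors $S$ correspond to bilinear products $\cdot_{\nabla}$, torsion-freeness translates to commutativity, and, since the underlying Lie algebra of $\bbR^2$ is abelian, the flatness equation \eqref{eq:curv} reduces to $[L_{u}, L_{v}] = 0$ for all $u, v \in \bbR^2$ where $L_{u}(w) = u \cdot_{\nabla} w$. As $\cdot_{\nabla}$ is commutative, this is equivalent to associativity of the product.

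To establish part (1), I would first observe that the space of $\nabla_{0}$-parallel symmetric tensors in $\Gamma(S^2 E^* \tensor{} E)$ is naturally identified with $S^2 \bbR^{2 \, *} \tensor{} \bbR^2$ and has dimension $3 \cdot 2 = 6$. The associativity relation, expressed as the vanishing of the traceless commutator matrix $[L_{e_{1}}, L_{e_{2}}]$, amounts to three homogeneous quadratic equations in the components of $S$, so $\cC(T^2, \bbR)$ is a homogeneous quadratic cone. To verify that its dimension is $4$, I would compute the rank of the Jacobian of these three equations at a smooth point of the variety---for instance at the associative product corresponding to $\bbC$ or to $\bbR \oplus \bbR$, where $\GL(2,\bbR)$ acts with finite stabiliser (and hence already contributes a $4$-dimensional orbit inside $\cC(T^2, \bbR)$).

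For part (2), I would invoke the completeness criterion recalled in the remark: $\nabla$ is complete if and only if every operator $L_{u}$ has trace zero, which by linearity in $u$ reduces to the two linear conditions $\tr L_{e_{1}} = \tr L_{e_{2}} = 0$. These cut the six-dimensional ambient space down to a four-dimensional linear subspace $V$. Substituting these conditions into the three associativity equations yields, in natural coordinates $(a,b,c,f) \in V \cong \bbR^4$, a reduced system
\begin{equation*}
ac + bf = 0, \quad af = c^{2}, \quad bc = -a^{2},
\end{equation*}
and a direct parametrisation on the open locus $ac \neq 0$ (solve $b = -a^{2}/c$, $f = c^{2}/a$) shows that the last two equations imply the first, so the solution set is a two-dimensional homogeneous quadratic variety in $V$.

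The main technical obstacle is the dimension count in part (1): the three quadratic associativity equations are not algebraically independent along the whole variety, so the naive count $6 - 3 = 3$ is incorrect. One must verify---either by a Jacobian rank computation at a smooth point as indicated, or via the orbit analysis of the $\GL(2,\bbR)$-action on commutative associative products on $\bbR^2$---that the actual generic dimension of $\cC(T^2,\bbR)$ is $4$.
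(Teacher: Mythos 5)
Your proposal is correct and follows essentially the same route as the paper: the identification of translation invariant connections with constant tensors $S$, the reformulation of the flatness equation as commutativity/associativity of the product $\cdot_{\nabla}$ (equivalently $[L_{e_1},L_{e_2}]=0$), and the trace-zero criterion for completeness are exactly the ingredients the paper summarizes before stating the corollary. The only difference is cosmetic: where you verify the dimensions $4$ and $2$ by writing out the quadratic equations and a Jacobian/parametrisation argument, the paper obtains them from the classification of abelian \'etale affine representations and the resulting $\GL(2,\bbR)$-orbit stratification of $\cC(T^2,\bbR)$ (finitely many orbits, the open ones of type $\mA$, $\mB$ having dimension four and the complete ones $\mD$, $\mathsf{T}$ dimension two), which is also the observation you need to promote your local dimension count at a smooth point to a statement about the whole cone.
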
 

In particular, one can deduce from (2) that the set of complete  translation invariant flat affine connections on the two-torus is homeomorphic to $\bbR^2$. In  view of Lemma \ref{lem:transhomotopic}, this 
gives yet another proof of the fact (cf.\ Example \ref{ex:completeas}) that the deformation space of complete affine structures on the two-torus is homeomorphic to $\bbR^2$.

\subsection{Translation invariant flat affine structures} \label{sect:homog_structs}

The usual representation of the two-torus $ T^2 = \bbR^2 / \bbZ^2$  as a quotient of the vector group $\bbR^2$ by its integral lattice tacitly induces various extra structures.
The translation action of the vector space $\bbR^2$
gives a simply transitive action of the abelian Lie group $S^1 \times S^1$ and the vector space structure on $\bbR^2$ descends to a compact abelian Lie group  structure
on $T^2$. Similarly, the ordinary flat affine structure on $\bbR^2$ induces the natural Riemannian flat affine structure on $T^2$
which is invariant by the translation
group $S^1 \times S^1$. 
%Also the translation vector fields 
%are parallel with respect to this connection. \\
%
%As before we fix a representation 
%%in \S \ref{sect:transinv_cons} we consider the $S^1 \times \
%%S^1$ action which is fixed by representing 
%of the two-torus in the form  $T^2 = \bbR^2 / \bbZ^2$. 

\begin{definition} A flat affine structure on $T^2 = S^1 \times S^1$ is called {\em translation invariant} if the group $S^1 \times S^1$ acts by affine transformations.
\end{definition} 
A  translation invariant flat affine structure is thus compatible with the Lie group structure on $T^2$. In partciular, every flat affine torus with translation invariant flat affine structure is also a homogeneous flat affine torus. 
Note that the set of all  
translation invariant flat affine structures $\cT(T^2,\bbA^2)$
corresponds to the set of translation invariant flat affine connections $\cC(T^2, \bbR)$ under the map \eqref{eq:correspondac}.

\subsubsection{Relation with homogeneous flat affine tori} 
% Translation invariant flat affine tori} 
\label{sect:homandtransinv_structs}
% in \S \ref{sect:homandtransinv_structs} 
Let $M$ be a homogeneous flat affine two-torus, and $\Aff(M)_{0}$ the identity component of its affine automorphism 
group.  By the classification Theorem \ref{thm:classification},  
the following two cases occur: \begin{enumerate}
\item 
Either the Lie group $\Aff(M)_{0}$ is isomorphic to $S^1 \times S^1$ and it develops to an % affine action is induced by the
action of an affine Lie group 
%of type $\mathsf{T}$, $\mathsf{D}$, $\mathsf{C}_1$, 
%$\mathsf{C}_2$, $\mathsf{B}$ or $\mathsf{A}$ 
as listed in Example \ref{ex:domains}. 
\item Or $M$ is affinely diffeomorphic to a Hopf torus. In this case, $\Aff(M)_{0}$ contains a simply transitive group isomorphic to $S^1 \times S^1$, and this subgroup
develops to the action of an affine Lie group of 
type $\mathsf{A}$. 
\end{enumerate}
In particular,  for every homogeneous flat affine two-torus $M$,
%as a  consequence of the 
%classification Theorem \ref{thm:classification} 
the identity 
component $\Aff(M)_{0}$ of the affine 
automorphism group of $M$ contains a two-dimensional compact abelian Lie group,  which acts transitively and freely on $M$. 
This shows that every homogeneous flat affine two-torus is affinely diffeomorphic to a translation invariant flat affine torus.
\\
%Equivalently, the flat affine structure on $\bbR^2$ which is obtained by $\mathsf{p}$-pull back of the flat affine structure on $T^2$ is invariant by the translation action of $\bbR^2$.
%Equivalently, the flat affine connection 
%induced by the affine structure is translation invariant.

Recall that the diffeomorphism group $\Diff(T^2)$ acts 
on the set of all flat affine structures, and two flat affine structures on $T^2$ are called homotopic if they are equivalent by a diffeomorphism of $T^2$ which is homotopic to the identity.  \index{flat affine torus!homogeneous}
 \index{flat affine torus!translation invariant}
\begin{lemma} \label{lem:transhomotopic}
Every homogeneous flat affine two-torus is 
homotopic (isotopic) to a unique translation invariant flat 
affine two-torus.
\end{lemma}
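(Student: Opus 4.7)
My plan is to prove existence and uniqueness separately. For existence, I will invoke the facts just recalled above the lemma: for any homogeneous flat affine two-torus $M$, the identity component $\Aff(M)^0$ of its affine automorphism group contains a two-dimensional compact abelian subgroup $K \cong S^1 \times S^1$ acting freely and transitively on $M$. The strategy is to use this $K$-action to transport the given affine structure $\sigma$ on $T^2$ to a translation invariant one on the standard torus $T^2 = \bbR^2/\bbZ^2$ via a diffeomorphism isotopic to the identity.

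Concretely, I would fix a basepoint $x_0 \in T^2$ and form the orbit diffeomorphism $\phi: K \to T^2$, $k \mapsto k \cdot x_0$, which intertwines left translation on $K$ with the $K$-action on $(T^2,\sigma)$ by affine transformations. For any Lie group isomorphism $\psi: T^2 \to K$, the composition $\Phi = \phi \circ \psi$ is then a self-diffeomorphism of $T^2$ whose pullback $\Phi^*\sigma$ is translation invariant, since $\Phi$ conjugates the standard translation action of $T^2$ on itself into the $K$-action on $(T^2,\sigma)$. To arrange $\Phi \in \Diff_0(T^2)$, I will use that the group of Lie group automorphisms of $T^2$ is $\GL(2,\bbZ)$, which acts transitively (by precomposition) on Lie isomorphisms $T^2 \to K$; I may therefore choose $\psi$ so that $\Phi_*: \pi_1(T^2) \to \pi_1(T^2)$ is the identity on $\bbZ^2$. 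The classical fact that every self-diffeomorphism of $T^2$ inducing the identity on $\pi_1$ is isotopic to the identity then finishes the existence step.

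For uniqueness, suppose two translation invariant structures $\sigma_1, \sigma_2$ on $T^2$ are related by some $\Psi \in \Diff_0(T^2)$ with $\Psi^*\sigma_1 = \sigma_2$. Composing with a translation, which lies in $\Diff_0(T^2)$ and is affine for both structures, I may assume $\Psi(0)=0$ and lift $\Psi$ to a $\bbZ^2$-equivariant based affine equivalence $\tilde\Psi: \bbR^2 \to \bbR^2$ between the lifted translation invariant structures. The core rigidity step is that such a based affine equivalence is automatically linear: from the relation $D_1 \circ \tilde\Psi = g \circ D_2$ with $g \in \GL(2,\bbR)$ and the translation invariance $D_i(x+v) = T_i(v)D_i(x)$ of the development maps, a direct computation shows that $\tilde\Psi(x+v) - \tilde\Psi(x)$ is independent of $x$, so $\tilde\Psi$ is a linear map $L \in \GL(2,\bbR)$. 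The $\bbZ^2$-equivariance $L(x+n) = L(x) + n$ for $n \in \bbZ^2$ then gives $L = \id$, whence $\tilde\Psi = \id$ and $\sigma_1 = \sigma_2$. The main obstacle I anticipate is making this rigidity step uniform across all six types of translation invariant structures listed in Example \ref{ex:domains}, particularly the Hopf torus case where the translation homomorphism $T: \bbR^2 \to \Aff(2)$ is non-injective and one must invoke smoothness and connectedness to rule out discrete corrections in the formal calculation.
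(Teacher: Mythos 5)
Your existence argument is correct and coincides with the paper's: pull the structure back along the orbit map of a compact simply transitive subgroup $K\leq\Aff(M)^0$, precomposed with the Lie group isomorphism chosen so that the resulting self-map of $T^2$ induces the identity on $\pi_1$ and is therefore isotopic to the identity.

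The uniqueness argument, however, fails at its ``core rigidity step''. Your computation yields, with $b=g\rho_2(v)g^{-1}$, $q=D_1(\tilde\Psi(x))$ and $F_v(x)=\tilde\Psi(x+v)-\tilde\Psi(x)$, only the pointwise identity $\rho_1(F_v(x))\,q = b\,q$. To conclude that $F_v$ is independent of $x$ you need $b$ to lie in the group $G_1=\rho_1(\bbR^2)$ (so that simple transitivity forces $\rho_1(F_v(x))=b$ for every $x$). The holonomy relation $g\rho_2(n)g^{-1}=\rho_1(n)$, $n\in\bbZ^2$, does give this whenever the holonomy group determines $G_1$ — e.g.\ when it is a lattice in $G_1$, or contains a non-dilation of $\GL(1,\bbC)$ — but it fails precisely for Hopf tori, where the holonomy consists of dilations and $g\rho_2(\bbR^2)g^{-1}$ may be a different conjugate of $\GL(1,\bbC)$. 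In that case $F_v$ genuinely depends on $x$, so the difficulty is \emph{not} the discrete ambiguity in $\rho_1^{-1}$ that you propose to remove by connectedness; the assertion that a based, $\bbZ^2$-equivariant affine equivalence between translation invariant structures is linear is simply false. For a concrete counterexample take a translation invariant Hopf structure with development $D(v)=e^v$ in complex notation and holonomy in the real dilations, and let $g=\mathrm{diag}(2,1)$; then $\tilde\Psi(v)=\log\bigl(g(e^v)\bigr)$ is a $\bbZ^2$-equivariant affine self-equivalence lying over an element of $\Diff_{0}(T^2)$, fixes a point after composing with a translation, and is not linear. (This does not contradict the lemma, since here $\sigma_1=\sigma_2$; it shows your argument proves too much.) The paper proves uniqueness by a different, non-computational route: the translation invariant representative is obtained from a construction shown to be independent of all choices — of basepoint and of the compact simply transitive subgroup $\cA\leq\Aff(M)^0$, all such subgroups being conjugate in $\Aff(M)^0$ — together with the uniqueness of the Lie group isomorphism $T^2\ra M$ within a homotopy class of markings. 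To repair your proof you must either add such a canonicity argument or treat the Hopf stratum separately by a method that does not assume $\tilde\Psi$ linear.
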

\begin{proof} Let $(f,M)$ be a marked homogeneous flat affine two-torus, where $f: T^2 \ra M$ is a diffeomorphism. By the above remarks, we may choose a Lie subgroup $\cA$ of $\Aff(M)_{0}$ which acts simply transitively on $M$. The subgroup $\cA$ is unique  up to conjugacy in $\Aff(M)_{0}$. 
We also choose a basepoint $m_{0} \in M$. 
This fixes the structure of a compact abelian Lie group on $M$ 
which is isomorphic to $\cA$. Then there exists a \emph{unique} isomorphism of Lie groups $\phi: T^2 \ra M$ such that $\phi^{-1} \circ f$ is homotopic to the identity of $T^2$. In other words, 
$(\phi,M)$ and $(f,M)$ are equivalent markings (cf.\ \S \ref{sect:markedstructures}). By construction, the affine structure on $T^2$ induced by $\phi$ is translation invariant, and it is homotopic to the original homogeneous structure on $T^2$, which is induced by $(f,M)$. It is also independent of the choice of basepoint since $\cA$ acts transitively on $M$ (compare also Example \ref{ex:hom_structures}). Neither does it depend on the choice of the subgroup $\cA$ in $\Aff(M)_{0}$, since the conjugacy class of $\cA$ in $\Aff(M)_{0}$ is uniquely determined.  In particular, this argument implies that every two translation invariant structures which are homotopic do coincide. This shows
uniqueness.
\end{proof} 

The Lemma asserts that every orbit of the identity component $\Diff_{0}(T^2)$ of the group of all diffeomorphisms $\Diff(T^2)$ acting on homogeneous flat affine
structures intersects the subset of translation invariant 
structures in precisely a single point. The proof also shows that
on the subset of marked homogeneous tori which are in the complement of Hopf tori, we have a continuous projection 
onto translation invariant tori.  This proves that outside the 
Hopf tori the subset of translation invariant flat affine structures on $T^2$ defines a slice for the action of $\Diff_{0}(T^2)$ on the set of all homogeneous flat affine structures.

\subsubsection{Translation invariant development maps} \label{sect:devsection}
% The development section on translation invariant structures
We construct an explicit \emph{continuous section} from the set of  translation invariant flat affine structures to development maps.
%Thus  \eqref{eq:correspondac}  restricts to a bijection 
%\begin{equation} \label{eq:correspondtransinv} 
% \cT(T^2,\bbA^2)   \, \longleftrightarrow \; \cC(T^2,{\bbR}) \; , 
%\end{equation}  
More specifically, we construct a continuous map 
\begin{equation} \label{eq:LSAsection} 
  \cT(T^2,\bbA^2)   =  \cC(T^2,{\bbR}) 
\; \stackrel{\mathcal E}{\longrightarrow} \;  \Dev(T^2,\bbA^2) \; ,\; \, 
S \mapsto D_{S} % \; \, 
 % S \, \mapsto \, {\mathcal E}(S) = D_{S}
\end{equation} 
such that the development map $D_{S}$ defines  an affine structure on $T^2$ which has associated flat affine connection $\nabla = \nabla_{0}+ S$.
The construction is based on the relation of translation invariant flat affine structures
with the set % $\cC(T^2, \bbR)$ 
of commutative associative algebra products on $\bbR^2$ as follows:

\begin{example}[Associated \'etale affine representation] \label{ex:developing_section}  \index{etale@\'etale representation!affine}
For $S \in \cC(T^2, \bbR)$, and $v \in \bbR^2$ we define 
an element $$ \bar \rho(v) = 
\begin{matrix}{cc} S(v,  \cdot ) &  v \\ 
0 & 0 
\end{matrix} \in \mathfrak{aff}(2)
$$ 
of the Lie algebra $\mathfrak{aff}(2)$ of the 
affine group $\Aff(2)$. In fact, 
the map $v \mapsto \bar \rho(v)$ is a Lie algebra 
homomorphism and the associated homomorphism of Lie groups 
$$ \rho = \rho_{S}: \bbR^2 \lra \Aff(2) \, , \; \, v \mapsto \, \rho(v) = \exp \bar \rho(v)$$
defines an affine representation of the 
Lie group $\bbR^2$ on $\bbA^2$ which is  \'etale in $0 \in \bbA^2$ 
(cf.\ Definition \ref{def:etale} and also the discussion in \cite[\S 2.1]{BC_1}).
\end{example}
Let $\nabla$ be the translation invariant flat affine connection
on $\bbR^2$ which is represented by $S \in \cC(T^2, \bbR)$.
The orbit map of the  \'etale representation  $\rho_{S}$  is  
$$ D_{S} = o_{S}: \bbR^2 \lra \bbA^2 \, , \; \, v \mapsto 
 \rho_{S} \cdot 0 \; $$ 
and it  is a development map for a translation invariant flat affine structure on $T^2 = \bbR^2 / \bbZ^2$ with associated affine connection $\nabla$.  (In fact, $D_{S}$ is also a \emph{frame preserving} development map. Compare Example \ref{ex:framepreserving} and \S \ref{sec:framedXG}.) 
Since, $D_{S}$ depends smoothly on $S$, $$ {\mathcal E}(S) = D_{S}$$  defines the required continuous section.\\ 

Note that the holonomy homomorphism $h = h^\nabla: \bbZ^2 \ra \Aff(2)$ for $D_{S}$ satisfies  $$ h^\nabla(\gamma) = \rho(\gamma) , \text{ for all $\gamma \in \bbZ^2$}  \; .$$  

We state without proof: 
\begin{lemma} The continuous map 
$  \cC(T^2, \bbR) \rightarrow \Hom(\bbZ^2,\Aff(2)$, $\nabla \mapsto h^\nabla$, 
is locally injective. 
\end{lemma}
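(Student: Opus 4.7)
My strategy is to reduce the local injectivity of $\nabla \mapsto h^\nabla$ to the Deformation theorem (Theorem \ref{thm:Deformations}) combined with the uniqueness statement in Lemma \ref{lem:transhomotopic}, using the explicit continuous developing section $\mathcal{E}$ of \eqref{eq:LSAsection} as the bridge from tensors to development maps. This avoids a direct (and delicate) analysis of the exponential $\exp\colon \mathfrak{aff}(2) \to \Aff(2)$ at the elements $\bar\rho_{S}(e_i)$.

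First, I factor the map as
\[
\cC(T^2, \bbR) \xrightarrow{\,\mathcal{E}\,} \Dev(T^2, \bbA^2) \xrightarrow{\,\mathsf{hol}\,} \Hom(\bbZ^2, \Aff(2)).
\]
The first arrow is continuous since $D_S$ is built from the matrix exponential of $\bar\rho_S$, which depends smoothly (indeed analytically) on $S$, and by construction $h^\nabla = \mathsf{hol}(\mathcal{E}(\nabla))$. Fix $\nabla_0 \in \cC(T^2, \bbR)$ and set $D_0 := \mathcal{E}(\nabla_0)$.

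Next, the Deformation theorem asserts that the induced map
\[
\overline{\mathsf{hol}}\colon \Diff_0(T^2, x_0) \backslash \Dev(T^2, \bbA^2) \longrightarrow \Hom(\bbZ^2, \Aff(2))
\]
is a local homeomorphism, so there is an open neighborhood $W$ of $[D_0]$ on which $\overline{\mathsf{hol}}$ is injective. Continuity of $\mathcal{E}$ followed by the quotient projection yields a neighborhood $V$ of $\nabla_0$ in $\cC(T^2, \bbR)$ whose image lies in $W$. If $\nabla \in V$ satisfies $h^\nabla = h^{\nabla_0}$, the injectivity of $\overline{\mathsf{hol}}$ on $W$ forces $[\mathcal{E}(\nabla)] = [D_0]$, so $\mathcal{E}(\nabla) = D_0 \circ \tilde\Phi$ for some $\Phi \in \Diff_0(T^2, x_0)$. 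Pulling back the standard flat connection of $\bbA^2$ along both development maps yields $\pi^*\nabla = \pi^*(\Phi^*\nabla_0)$ on $\bbR^2$ (with $\pi\colon \bbR^2 \to T^2$ the covering), and therefore $\nabla = \Phi^*\nabla_0$ on $T^2$.

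To conclude, since $\Phi$ is isotopic to the identity, $\nabla = \Phi^*\nabla_0$ and $\nabla_0$ are isotopic flat affine structures on $T^2$, and both are translation-invariant (hence homogeneous). Lemma \ref{lem:transhomotopic} states that every homogeneous flat affine two-torus is isotopic to a \emph{unique} translation-invariant one, which forces $\nabla = \nabla_0$ and proves local injectivity. The main point one must check with some care is the continuity of $\mathcal{E}$ with respect to the $C^\infty$-topology on $\Dev(T^2, \bbA^2)$: explicitly, that $D_S = o_S$ and all its derivatives depend continuously on $S$ uniformly on compact subsets of $\bbR^2$. This follows from analyticity of the matrix exponential and the polynomial dependence of $\bar\rho_S$ on $S$; once granted, the rest of the argument is a formal consequence of Theorem \ref{thm:Deformations} and Lemma \ref{lem:transhomotopic}.
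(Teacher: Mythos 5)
The paper states this lemma explicitly \emph{without proof} (``We state without proof''), so there is no argument of the author's to measure yours against; I can only assess your proposal on its own terms, and it is correct. You factor the map through the developing section $\mathcal E$ of \eqref{eq:LSAsection}, invoke Theorem \ref{thm:Deformations} to obtain a neighbourhood $W$ of $[D_{0}]$ in $\Diff_{0}(T^2,x_{0})\backslash \Dev(T^2,\bbA^2)$ on which $\mathsf{hol}$ is injective, and thereby reduce the problem to showing that two translation invariant connections related by an element of $\Diff_{0}(T^2,x_{0})$ coincide --- which is exactly the uniqueness clause established in the proof of Lemma \ref{lem:transhomotopic}. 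All three ingredients (continuity of $\mathcal E$, the deformation theorem, and that uniqueness statement) are available at this point in the paper and are used correctly, and the pullback step identifying $\nabla$ with $\Phi^{*}\nabla_{0}$ is sound.

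Two small remarks. First, as written you only check that $h^{\nabla}=h^{\nabla_{0}}$ forces $\nabla=\nabla_{0}$ for $\nabla$ in a neighbourhood $V$ of the basepoint; for genuine local injectivity you should note that the identical argument applies to an arbitrary pair $\nabla_{1},\nabla_{2}\in V$, since both classes $[\mathcal E(\nabla_{i})]$ lie in $W$ and $\mathsf{hol}$ is injective there --- a one-line addition. Second, be aware of where the real content of your proof sits: it is the uniqueness assertion of Lemma \ref{lem:transhomotopic}, which you are using in full strength, including at Hopf tori, where the paper's continuous retraction onto translation invariant structures is noted to break down (the uniqueness itself is asserted for all homogeneous tori, so the citation is legitimate). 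A more self-contained alternative would analyse directly when $\exp\bar\rho_{S}(e_{i})=\exp\bar\rho_{S'}(e_{i})$ for nearby $S,S'$ in the cone $\cC(T^2,\bbR)$; your route avoids the failure of injectivity of $\exp$ on $\mathfrak{aff}(2)$ (relevant precisely for the rotational, type $\mathsf{A}$, products) at the cost of importing the deformation theorem.
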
 

%\paragraph{The development section on translation invariant tori}
%For each translation invariant affine structure we construct a development map $D: \bbR^2 \ra \bbA^2$ which represents 
%the affine structure. The resulting map translation invariant structures $\ra \Dev(T^2, \bbA^2)$ defines a continuous section
%to $s: \ra \Dev_{h}(T^2, \bbA^2)$, and also defines a
%holonomy section $h: \ra \Hom_{h}(T^2, \bbA^2)$. We show 
%that the holonomy section $h$ is a locally injective map. 
%This proves that translation invariant affine structures define 
%a slice for the action of $\Diff_{0}(T^2)$ on the set of homogeneous flat affine structures. 

%\subsection{The deformation space of $(\GL(2,\bbR), \bbAo)$-structures}
\subsection{The space of  structures modeled on $(\bbAo,\GL(2, \bbR))$}

%\subsubsection{Structures modeled 
%on the universal covering of the once-punctured plane.} 
%
%\subsubsection{Structures modeled 
%on the once-punctured plane.} 

Here we discuss in detail the subspace of the deformation space
of flat affine structures on the two-torus which consists 
of structures which have the once-punctured plane as
development image. Our main observation is that the holonomy map is a local homeomorphism on such structures.

\subsubsection{The holonomy map}
We consider first the subgeometry of structures 
which are modeled  on the universal covering of 
the once-punctured plane.
The topology of the deformation space of 
% $(\tbbAo,\widetilde \GL(2,\bbR))$-structures
such structures 
is completely controlled by the holonomy map into
the space of conjugacy classes of homomorphisms
(the ``character variety''): 
%of the fundamental group to $\tGL^+ \! (2,\bbR)$:
 \index{holonomy map!is a local homeomorphism} 
 \index{holonomy map} 
 
\begin{theorem} \label{thm:tbbaotop}
The holonomy map  
$$ \Def(T^2, \tbbAo) \, \stackrel{hol}{\longrightarrow} \, 
\Hom(\bbZ^2, \tGL^+\!(2,\bbR))/ \tGL (2,\bbR)$$
embeds the deformation space homeomorphically as an open connected subset of  the character variety.
% $\Hom(\bbZ^2, \tGL^+\! (2,\bbR))/ \tGL^+ \! (2,\bbR) $. 
\end{theorem}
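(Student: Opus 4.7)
The plan is to verify four properties of $hol$: continuity, injectivity, openness, and connectedness of its image. Continuity is immediate from the construction of the topologies on $\Def(T^2, \tbbAo)$ and the character variety. Openness of $hol$ follows directly from the Thurston--Weil deformation theorem (Theorem \ref{thm:Deformations}) applied to the geometry $(\tbbAo, \tGL^+(2,\bbR))$: every small deformation of holonomy lifts to a small deformation of the development map, and hence of the structure in $\Def(T^2, \tbbAo)$.

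The essential step is injectivity, which I deduce from the topological rigidity of $(\tbbAo, \tGL^+(2,\bbR))$-manifolds (Theorem \ref{thm:bbao_rigid}). Suppose $[D_1], [D_2] \in \Def(T^2, \tbbAo)$ satisfy $hol([D_1]) = hol([D_2])$. After replacing $D_2$ by $g \circ D_2$ for a suitable $g \in \tGL(2,\bbR)$ (which does not alter the class in $\Def$), we may assume the two holonomy homomorphisms coincide, $h_1 = h_2 =: h$. The corresponding $(\tbbAo, \tGL^+(2,\bbR))$-manifolds $M_1, M_2$ then have identical holonomy groups, so Theorem \ref{thm:bbao_rigid} provides an $(X,G)$-equivalence $\phi: M_1 \to M_2$. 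Through the markings, $\phi$ becomes a self-diffeomorphism of $T^2$, whose lift $\tilde\phi: \tbbAo \to \tbbAo$ satisfies $\tilde\phi \circ \gamma = \phi_*(\gamma) \circ \tilde\phi$ for all $\gamma \in \pi_1(T^2) = \bbZ^2$ and, after absorbing a further left composition by an element of $G$, $D_2 = D_1 \circ \tilde\phi^{-1}$. Comparing the two equivariance identities for $D_2 \circ \gamma$ then forces $h \circ \phi_*^{-1} = h$ on $\bbZ^2$. By the classification (Theorem \ref{thm:classification}), each compact $(\tbbAo, \tGL^+(2,\bbR))$-manifold arises as a quotient in which $\pi_1(T^2)$ is identified with a discrete subgroup of the relevant affine automorphism group and $h$ is the inclusion; in particular $h$ is injective. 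Therefore $\phi_* = \id$ on $\bbZ^2$, and since $T^2$ is a $K(\bbZ^2, 1)$, this forces $\phi$ to be isotopic to the identity. Consequently $[D_1] = [D_2]$ in $\Def(T^2, \tbbAo)$.

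The image of $hol$ is automatically open in the character variety by openness of $hol$, and connectedness of the image reduces to connectedness of $\Def(T^2, \tbbAo)$ itself. For this I invoke the explicit description given by the classification: every $(\tbbAo, \tGL^+(2,\bbR))$-manifold is either a lattice quotient of a homogeneous domain (Hopf tori and their finite covers/quotients, or lattice quotients in $\mathsf{B}$ or $\mathsf{C}_1$) or a gluing torus $\cT_{A,B,k}$ of Corollary \ref{cor:inhomog}, and each of these families depends continuously on its parameters. Continuous one-parameter families of development maps joining the various strata can be read off from the gluing constructions of Section 4 and are visualized in the tiled pictures of Figures \ref{fig:sect-plane}--\ref{fig:hplane-plane}. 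I expect the main obstacle in the proof not to be injectivity, which is a short algebraic consequence of Theorems \ref{thm:bbao_rigid} and \ref{thm:classification} once the equivariance is unwound, but rather the rigorous verification that these limit processes yield continuous paths in the $C^\infty$-topology on development maps. This verification proceeds by combining the explicit development-section construction from the proof of Theorem \ref{thm:Deformations} with the gluing parametrizations of Section 4.
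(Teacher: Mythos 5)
Your overall architecture coincides with the paper's: continuity and openness come from Theorem \ref{thm:Deformations}, injectivity is reduced to the rigidity statement of Theorem \ref{thm:bbao_rigid}, and connectedness is deferred to the explicit stratification of the deformation space (the paper does this in \S\ref{sect:cart}). However, your unwinding of the injectivity step contains a genuine error. From an $(X,G)$-equivalence $\phi\colon M_{1}\to M_{2}$ one gets $D_{2}\circ\tilde\phi = g'\circ D_{1}$ for some $g'\in G$, and comparing equivariance identities yields $h(\phi_{*}\gamma) = g'\,h(\gamma)\,g'^{-1}$, \emph{not} $h\circ\phi_{*}=h$. Absorbing $g'$ into $D_{2}$ does not remove it from this relation, because that replacement changes the holonomy of $D_{2}$ to $g'^{-1}hg'$ and so breaks the normalization $h_{1}=h_{2}$ you set up at the start. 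The element $g'$ only normalizes $h(\bbZ^{2})$, and it need not centralize it: for instance a lattice in the diagonal group $\mathsf{B}$ can be preserved but permuted by the coordinate swap, in which case the equivalence produced by unmarked rigidity induces a nontrivial automorphism of $\bbZ^{2}$ and is \emph{not} isotopic to the identity. So the implication ``$h$ injective $\Rightarrow \phi_{*}=\id$'' is applied to a hypothesis you have not established.

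The repair is to avoid the unmarked rigidity theorem altogether and argue directly with the development maps, which is what Example \ref{ex:toprig} provides in the complete case: if $D_{1},D_{2}\colon\tilde M\to\tbbAo$ are both coverings (here diffeomorphisms, since $\tbbAo$ is simply connected) equivariant with respect to the \emph{same} homomorphism $h$, then $\tilde\Phi = D_{1}^{-1}\circ D_{2}$ is a self-diffeomorphism of $\tilde M$ that \emph{centralizes} the deck group, hence descends to a diffeomorphism of $T^{2}$ inducing the identity on $\pi_{1}$ and therefore isotopic to the identity; thus $[D_{1}]=[D_{2}]$. For the incomplete structures (development image a strip over $\cQ$ or $\cH$), both development maps are equivariant diffeomorphisms onto open orbits of the simply connected abelian group $N\supseteq h(\bbZ^{2})$; after composing one of them with a central power of $\tau$ (which does not change the class in $\Def$ and fixes $h$) the images agree and the same centralizer argument applies. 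With this correction your proof matches the intended one. Your treatment of connectedness is consistent with the paper's, which likewise postpones the continuity of the connecting families to the explicit parametrizations of the strata.
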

\begin{proof} Note, since $T^2$ is orientable, the holonomy takes values in $\tGL^+\!(2,\bbR)$. The map $hol$ is injective, by Theorem \ref{thm:bbao_rigid}. Since $hol$ is also continuous and
open, it is a homeomorphism onto an open subset. 
%It remains to show that 
%its image in the character variety is connected.....
Connectedness of the deformation space 
will follow from the considerations 
in \S  \ref{sect:cart} below.
\end{proof}

Now we look at the deformation space of structures which
modeled on the once-punctured plane. For such structures the holonomy map is not injective, as we already remarked in Example \ref{ex:hol_notinj}. However, as we show now at least \emph{locally} the topology of the deformation space of $( \bbAo, \GL(2,\bbR))$ structures is fully controlled by the character variety:

\begin{corollary}   \label{cor:bbaotop}
The holonomy map %  for the deformation space 
$$  \Def(T^2, \bbAo) \, \stackrel{hol}{\lra} \, 
\Hom(\bbZ^2, \GL^+(2,\bbR))/ \GL(2,\bbR)$$
is a local homeomorphism onto its image, which is 
a connected open subset in the
character variety.  
% $$ \Hom(\bbZ^2, \GL^+(2,\bbR)^+)/  \GL^+(2,\bbR) . $$ 
\end{corollary}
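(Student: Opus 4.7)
My plan is to reduce the corollary to Theorem \ref{thm:tbbaotop} via the covering of geometries $\mathsf{p}: (\tbbAo, \tGL^+(2,\bbR)) \to (\bbAo, \GL^+(2,\bbR))$. Being a covering of geometries in the sense of Definition \ref{def:subgeometry}, Lemma \ref{lem:covering_geoms} yields a homeomorphism $\Def(T^2, \tbbAo) \to \Def(T^2, \bbAo)$. This fits into the commutative square \eqref{eq:subgeometry1}, in which the top edge is a homeomorphism onto an open subset by Theorem \ref{thm:tbbaotop} and the right vertical edge is the map on character varieties
$$ \pi:\ \Hom(\bbZ^2, \tGL^+(2,\bbR))\big/\tGL(2,\bbR) \;\lra\; \Hom(\bbZ^2, \GL^+(2,\bbR))\big/\GL(2,\bbR) $$
induced by $\mathsf{p}$. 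It therefore suffices to show that $\pi$ is a local homeomorphism on the image of the upper $hol$ map.

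The key observation is that the covering homomorphism $\mathsf{p}: \tGL^+(2,\bbR) \to \GL^+(2,\bbR)$ has discrete central kernel $Z$, which is infinite cyclic. Since $\bbZ^2$ is free abelian, each homomorphism $h: \bbZ^2 \to \GL^+(2,\bbR)$ admits a lift by independently lifting its two generators, and any two lifts differ by an element of the discrete group $\Hom(\bbZ^2, Z) \cong \bbZ^2$. Hence the induced map $\Hom(\bbZ^2, \tGL^+) \to \Hom(\bbZ^2, \GL^+)$ is a regular covering of topological spaces. Centrality of $Z$ ensures that conjugation in $\tGL$ projects to conjugation in $\GL$ with matching stabilizers, so a $\tGL$-orbit in $\Hom(\bbZ^2, \tGL^+)$ maps bijectively and locally homeomorphically onto the corresponding $\GL$-orbit in $\Hom(\bbZ^2, \GL^+)$. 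Descending to quotients, $\pi$ is a local homeomorphism.

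Assembling the diagram, the bottom $hol$ map is the composition of the inverse of a homeomorphism (Lemma \ref{lem:covering_geoms}), a homeomorphism onto an open subset (Theorem \ref{thm:tbbaotop}), and the local homeomorphism $\pi$; hence it is itself a local homeomorphism onto its image. Openness of the image follows from the fact that $hol$ is open by Theorem \ref{thm:Deformations}, and connectedness of the image is inherited from connectedness of $\Def(T^2, \tbbAo) \cong \Def(T^2, \bbAo)$ as asserted in Theorem \ref{thm:tbbaotop}. The main technical point is the second step, namely verifying that the local covering property of $\mathsf{p}$ at the level of Hom-spaces descends faithfully to the character varieties without being spoiled by the conjugation action; this rests entirely on the centrality of the kernel $Z$ and on the fact that $\Hom(\bbZ^2, Z)$ is discrete, so that any two nearby lifts of a given $\GL$-conjugacy class must in fact coincide.
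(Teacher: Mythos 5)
Your overall strategy — factor $hol$ through the homeomorphism $\Def(T^2,\tbbAo)\approx\Def(T^2,\bbAo)$ of Lemma \ref{lem:covering_geoms}, apply Theorem \ref{thm:tbbaotop} to the top edge of the diagram \eqref{eq:subgeometry1}, and reduce everything to showing that the right vertical map $\pi$ on character varieties is a local homeomorphism — is exactly the route the paper takes. The gap is in your justification of that last step. You argue that since the kernel $Z$ of $\mathsf{p}:\tGL^+(2,\bbR)\to\GL^+(2,\bbR)$ is discrete and central, the covering $\Hom(\bbZ^2,\tGL^+(2,\bbR))\to\Hom(\bbZ^2,\GL^+(2,\bbR))$ ``descends'' to a local homeomorphism of character varieties. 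This implication is not formal, and it is in fact \emph{false} as a general principle: the covering $\widetilde\GL(2,\bbR)\to\PGL(2,\bbR)$ also has discrete central kernel, yet the induced map on conjugacy classes is a twofold \emph{branched} covering near the class of a rotation by $\pi/2$ (Example \ref{ex:CPGL}), and correspondingly the holonomy map for $\Def(T^2,\PbbAo)$ fails to be a local homeomorphism (Proposition \ref{prop:branchedhol}, Appendix B). The problem is that the character variety is a non-Hausdorff quotient: the deck group $\kappa\times\kappa$, $\kappa=\Hom(\bbZ,Z)$, acts on $\mathcal{X}(\bbZ^2,\tGL^+(2,\bbR))$, and $\pi$ is a local homeomorphism only if this action is properly discontinuous \emph{and free on conjugacy classes}. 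Freeness can fail even though $Z$ is central and discrete, because a central translate $zg$ may happen to be conjugate to $g$ — which is precisely what happens in the $\PGL$ example. Your remark that ``two nearby lifts must coincide'' addresses lifts of a single homomorphism, not the relevant question of whether two lifted \emph{conjugacy classes} differing by a nontrivial deck element can collide or accumulate in the quotient topology.

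To close the gap you need the content of Propositions \ref{prop:ind_cov1} and \ref{prop:ind_cov2} (quoted in the paper's proof via Corollary \ref{cor:ind_cov2}): using the rotation-angle function $\theta$ on $\tGL^+(2,\bbR)$ and the estimates of Lemma \ref{lem:theta}, one constructs explicit fundamental neighborhoods $\cC U_\epsilon$ of each conjugacy class and checks that the translates $\tau^{2k}\,\cC U_\epsilon$ are pairwise disjoint, so that $\langle\tau^2\rangle\times\langle\tau^2\rangle$ acts discontinuously and freely on $\mathcal{X}(\bbZ^2,\tGL^+(2,\bbR))$. It is exactly here that the difference between the kernel $\langle\tau^2\rangle$ (for $\GL^+(2,\bbR)$) and the kernel $\langle\tau\rangle$ (for $\PGL(2,\bbR)$) becomes decisive; no argument that uses only discreteness and centrality of the kernel can distinguish the two cases. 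The remaining parts of your write-up (openness of the image from Theorem \ref{thm:Deformations}, connectedness from Theorem \ref{thm:tbbaotop}) are fine.
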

\begin{proof} Since the subgeometry 
$$ (\tbbAo, \widetilde \GL(2,\bbR)) \lra (\bbAo, \GL(2,\bbR))$$ 
is a covering, the induced map on deformation spaces  (cf.\ \S \ref{sect:sub_geom}) 
$$  \Def(T^2, \tbbAo) \lra  \Def(T^2, \bbAo) $$
is a homeomorphism by Lemma \ref{lem:covering_geoms}. 
%That is, 
%\emph{the deformation spaces for $(\tbbAo,\widetilde \GL(2,\bbR))$ and $( \bbAo, \GL(2,\bbR))$ structures on $T^2$ are naturally homeomorphic.} 
The commutative diagram \eqref{eq:subgeometry1} for the subgeometry  takes the form
\begin{align} \label{eq:subgeometry2}
 \xymatrix{
\; \; \Def(T^2, \tbbAo) \; \; \ar[d]_{\approx} \ar@{^{(}->}[r]^(0.35){hol}%_(0.35){\approx}   
& \; \;\Hom(\bbZ^2, \tGL^+\!(2,\bbR))/ \tGL\! (2,\bbR) \; \; \ar[d]  \\
\; \; \Def(T^2, \bbAo) \; \; \ar[r]^(0.35){hol}  &  \; \; 
 \Hom(\bbZ^2, \GL^+(2,\bbR))/  \GL(2,\bbR)  \,  .}  
\end{align}
Note that, by Theorem \ref{thm:tbbaotop}, the top horizontal map
is a topological embedding.  Furthermore, by Corollary \ref{cor:ind_cov2},  the right vertical map is a local homeomorphism. 
We deduce that the bottom
map $hol$ for $ \Def(T^2, \bbAo) $ 
is locally injective, and therefore it is a local homeomorphism onto an open subset.
%  which is connected,  since $ \Def(T^2, \tbbAo)$ is. 
\end{proof}

In the situation of Corollary \ref{cor:bbaotop}, 
all local topological properties of the deformation space 
are reflected in the character variety and also vice versa. 
For instance, singularities in the character variety give rise to singularities in the deformation space, as is the case in Example \ref{ex:defissing}. \emph{This shows that 
$ \Def(T^2, \bbAo) $ is not Hausdorff, and it is not even a $\mathrm{T}_{1}$-topological space.} \index{deformation space!non-closed point of}
% since the conjugacy class of the 
%holonomy of certain non-homogeneous tori has the
%holonomy of Hopf tori in its closure. 

\subsubsection{Cartography of the deformation space} \label{sect:cart}
Important strata in the   \index{deformation space!stratification of}
deformation space $$ \Def(T^2, \tbbAo)   $$  
arise from the orbit types of the action of 
$\widetilde \GL^+ \! (2,\bbR)$ on the image of
the holonomy map $$ \mathsf{hol}: \Dev(T^2, \tbbAo) \, \lra \, \Hom(\bbZ^2, \widetilde \GL^+ \! (2,\bbR)) \, \; .$$ 
We introduce several such strata and describe their topological relations with each other. We use this information to establish the connectedness of the deformation space.

\paragraph{Overview}
According to the classification theorem, tori which are modeled on $\tbbAo$ fall into three main classes distinguished by their development images. Namely the classes are formed by structures which have 
development image  equivalent to either
%Namely,  the complete 
%% $(\widetilde \GL(2,\bbR), \tbbAo)$-
%manifolds  and non-complete manifolds. 
\begin{enumerate}
\item the once punctured plane $\tbbAo$ (``complete structures''), 
\item or a sector $\cQ$,
\item or the open half space $\cH$.
\end{enumerate}
The structures with development image $\tbbAo$ comprise 
non-homogeneous flat affine tori and the homogeneous
structures which arise from (lifts of) \'etale representations 
of type $\mathsf{A}$. 
The latter two strata arise from (the lifts of) \'etale representations 
of type $\mathsf{B}$ and $\mathsf{C}_{1}$ respectively
(see Example \ref{ex:domains} for notation). Therefore
all corresponding tori in these two strata are homogeneous. \\
%As can be seen
%from Section \ref{sect:lattices} the corresponding strata in the 
%deformation space are connected.

 Another decomposition of the
deformation space is obtained by considering the subset 
$\mT$ of non-homogeneous structures and its complementary subspace $\Def_{h}(T^2, \tbbAo)$ consisting of homogeneous structures. The 
space of non-homogeneous structures can be decomposed 
into connected components parametrized by the
level of a non-homogeneous structure.
The subset of homogeneous 
structures is connected.  The subspace $\Def_{h}(T^2, \tbbAo)$ contains a two-dimensional
stratum $\mH$ of Hopf tori as a distinguished subset.  Non-homogenous structures are connected to homogeneous
ones only along the space $\mH$ of Hopf tori.
%We start by discussing the stratum of Hopf tori first.

\paragraph{Hopf tori}  Recall that a torus which is modeled on 
$\tbbAo$ is called a \emph{Hopf torus} \index{Hopf torus}
if its holonomy is contained in the center of 
$\widetilde \GL^+\! (2,\bbR)$. The holonomy 
homomorphisms of marked Hopf tori form the closed subset of fixed points for the $\widetilde \GL(2,\bbR)$-conjugation action on the holonomy image of $\tbbAo$ - structures. Therefore, the Hopf tori form a closed subset  $$\mH \, \subset \; \, \Def(T^2, \tbbAo) \; . $$
\hspace{1cm} 

All Hopf tori are derived
from the \'etale affine representation of type $\mathsf{A}$
as follows.
Let 
$$ o: \bbR^2\,  \lra \, \bbAo\, , \; \, \,  (t,\theta) \,  \mapsto \, \exp(t) (\cos \theta, \sin \theta) $$
be the orbit map associated to the
representation $\mathsf{A}$. For $k_{1}, k_{2} \in \bbZ$ and 
$\lambda_{1},  \lambda_{2}>0$,  let 
 $\phi: \bbR^2 \ra \bbR^2$ be the linear map, which satisfies
$$ \phi(e_{1}) = (\log \lambda_{1}, k_{1}  \pi) \; , \; \phi(e_{2}) = (\log \lambda_{2}, k_{2}   \pi) \; . $$
Then development maps of the form 
$$  D = o \circ \phi : \bbR^2 \ra \bbAo $$  define 
a  two-parameter family of marked Hopf tori  
$$ \cH_{\lambda_{1},\lambda_{2},k_{1}, k_{2} }\, \; 
\, % \; \;  \text{where }  
%
% _{\lambda_{i} >0, \, k_{i} \in \bbZ, \; 
\scriptstyle  , \; \; 
(\log \lambda_{1}) k_{2} - (\log \lambda_{2}) k_{1} \, \neq \; 0  . $$ 
% of $\cH_{\lambda_{1},\lambda_{2},k_{1}, k_{2}}$ 
(See \S \ref{sect:lattices} for the general construction.)
The corresponding holonomy homomorphisms $h: \bbZ^2 \ra \Hom(\bbZ^2,\widetilde \GL(2,\bbR)) $  satisfy 
% by the matrices 
$$ h(e_{i}) = \, \mathrm{diag}(\lambda_{i}) \, \tau^{k_{i}} \in  \widetilde \GL^+ \! (2,\bbR) \, .$$
(Here $e_{1}, e_{2}$ denote generators of $\bbZ^2$, and $ \mathrm{diag}(\lambda) \in A$ the diagonal
matrix which has both diagonal entries
equal to  $\lambda$.) 
% . Moreover,  we require 
% $\lambda_{i} >0$, $k_{i} \in \bbZ$.
%   and $ \log \lambda_{1} k_{2} - \log \lambda_{2} k_{1} \neq 0$.)
Observe that every marked Hopf torus is equivalent in the deformation space to precisely one of these tori. Forgetting about the marking, we note that every Hopf torus is affinely equivalent to a torus of the form $\cH_{\lambda_{1},\lambda_{2},k, 0}$, where we call 
$k = \mathrm{gcd}(k_{1}, k_{2}) \neq 0$ the \emph{level} of $\cH$.\\

For fixed $k_{1}, k_{2} \in \bbZ$, the set of all
 $\cH_{\lambda_{1},\lambda_{2},k_{1}, k_{2}}$ parametrizes
a closed (and also connected subset) of Hopf tori 
$\mH_{k_{1}, k_{2}}$, 
and the subset of all Hopf tori decomposes as 
$$  \mH \; = \bigcup_{(k_{1}, k_{2}) \,  \neq \, 0 } \mH_{k_{1}, k_{2}} \;  \subset  \; \,  \Def(T^2, \tbbAo) \, . 
$$

\paragraph{Non-homogeneous tori}  \index{flat affine torus!non-homogeneous}
We let $$ \mathfrak{T} \, \subset \; \, \Def(T^2, \tbbAo) $$ denote the subset of non-\-ho\-mog\-enous structures. Every marked manifold 
 $\cT$ which represents an element 
 of $\mT$ is equivalent as an $(\tbbAo,  \tGL^+\!(2,\bbR))$-\-manifold to a torus $$ \cT_{A,B,k} $$
 as constructed in Corollary \ref{cor:inhomog}.
Here $A \in \GL^+(2,\bbR)$ is  an expansion and $B\in \GL^+(2,\bbR)$ is upper triangular and commuting with $A$. 
 We call the number $k \in \bbZ - \{ 0 \}$ 
 the \emph{level} of $\cT$, respectively the level
 of its class in $\mT$. Since the level cannot be zero, 
 claim (2) of Lemma \ref{lem:tGL2con} implies that 
 \emph{the set  
 $\mathfrak{T}$ is indeed an open subset of the deformation space.} \\ %  $\Def(T^2, \tbbAo)$}.  \\
 
 Let $\mT_k$ denote 
 the set of all elements in $\mT$ of level $k$.
% In view of Theorem \ref{thm:tbbaotop}, 
%Moreover, the subsets $\mT_k$ are also connected. \marginpar{connected really?}
We have the disjoint decomposition % into connected components
$$ \mT = \bigcup_{k \in \bbZ - \{ 0 \}} \mT_k \; \, .$$
%is therefore the decomposition of $\mT$ in its connected 
%components.
Proposition \ref{prop:levclosed}
implies that all subsets $\mT_k$ 
and their complements  are closed subsets of $\mT$.

\paragraph{The closure of non-homogeneous tori}
We show now that the boundary of the set of non-homogeneous structures $\mT$
in the deformation space is formed by Hopf tori. 

\begin{proposition} \label{prop:nonhom_clos}
The closure of $\mT_{k}$ in 
$\Def(T^2, \tbbAo)$ is $\mT_{k} \cup \mH_{k}$.
\end{proposition}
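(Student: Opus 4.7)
The plan is to establish the two inclusions $\mH_{k} \subseteq \overline{\mT_{k}}$ and $\overline{\mT_{k}} \subseteq \mT_{k} \cup \mH_{k}$ separately.

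For the first inclusion, I would mimic the construction of Example~\ref{ex:defissing}. Given $\cH \in \mH_{k}$, represent it by commuting central generators $\lambda_{1}I$ (of level $0$) and $\lambda_{2} I \cdot \tau^{k}$ (of level $k$) in $\tGL^{+}(2,\bbR)$. I then perturb these scalar matrices to nearby commuting upper-triangular expansions
\[ A_{\epsilon} = \begin{matrix}{cc} \lambda_{1} & \epsilon \\ 0 & \lambda_{1} \end{matrix}, \qquad B_{\epsilon} = \begin{matrix}{cc} \lambda_{2} & \epsilon \\ 0 & \lambda_{2} \end{matrix}, \]
which commute since both are polynomials in the same nilpotent matrix; after possibly replacing generators by inverses I may assume $\lambda_{1}>1$, so that $A_{\epsilon}$ is an expansion while $B_{\epsilon}$ retains positive eigenvalues. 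For $\epsilon \neq 0$ the pair $(A_{\epsilon},B_{\epsilon})$ is not scalar, hence Corollary~\ref{cor:inhomog} identifies $\cT_{A_{\epsilon},B_{\epsilon},k}$ as a genuinely non-homogeneous element of $\mT_{k}$. As $\epsilon \to 0$ the holonomy homomorphisms converge to the holonomy of $\cH$; combining the Deformation Theorem~\ref{thm:Deformations} with Theorem~\ref{thm:tbbaotop} (which embeds $\Def(T^{2},\tbbAo)$ homeomorphically in the character variety) then yields $\cT_{A_{\epsilon},B_{\epsilon},k} \to \cH$ in the deformation space.

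For the second inclusion, suppose $\cT_{n} \in \mT_{k}$ converges to $\cT \in \Def(T^{2},\tbbAo)$. Using Theorem~\ref{thm:tbbaotop}, I lift this to a convergent sequence of holonomy homomorphisms $(\tilde A_{n}, \tilde B_{n,k}) \to (\tilde A, \tilde B)$ in $\Hom(\bbZ^{2}, \tGL^{+}(2,\bbR))$. Each pair $(A_{n},B_{n})$ consists of commuting matrices with positive real eigenvalues, so after a uniform conjugation I may assume each pair is upper triangular, and the limit $(A,B)$ is then also upper triangular with positive eigenvalues. If $\cT$ is non-homogeneous, then $\cT \in \mT$ and Proposition~\ref{prop:levclosed} (closedness of $\mT_{k}$ in $\mT$) forces $\cT \in \mT_{k}$. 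Otherwise the classification Theorem~\ref{thm:classification} leaves two possibilities for the homogeneous torus $\cT$: either its holonomy lies in $\widetilde{\GL(1,\bbC)}$, or it is a Hopf torus. The first option is excluded by the key linear-algebraic observation that any element of $\GL(1,\bbC) \leq \GL^{+}(2,\bbR)$ which is upper triangular with positive eigenvalues must be a scalar multiple of the identity. Hence $A$ and $B$ are dilations, the lifts $\tilde A$ and $\tilde B$ are central in $\tGL^{+}(2,\bbR)$, and the level indices $0$ and $k$ persist into the limit by continuity of the (locally constant) lift index under the covering $\tGL^{+}(2,\bbR) \to \GL^{+}(2,\bbR)$. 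Thus $\cT \in \mH_{k}$.

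The principal technical obstacle will be the second inclusion: establishing the uniform simultaneous triangularisation of the commuting pairs $(A_{n}, B_{n})$ so that the limiting pair lies in a tractable abelian subgroup, and verifying that the discrete level invariant of the second generator is preserved in the limit. Both issues will be handled using the product decomposition of $\tGL^{+}(2,\bbR)$ recalled in Appendix~A together with the continuity of the holonomy map guaranteed by Theorems~\ref{thm:Deformations} and \ref{thm:tbbaotop}.
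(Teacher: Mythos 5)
Your overall strategy is the same as the paper's (approximate Hopf tori by perturbing dilations to non-scalar commuting expansions; control limits of $\mT_k$ through the holonomy), and the first inclusion $\mH_k \subseteq \overline{\mT_k}$ is essentially the paper's parenthetical ``just deform $A$ and $B$ to dilations'' run in reverse. The second inclusion, however, contains a genuine gap: your case analysis of the possible homogeneous limits is incomplete. By Theorem \ref{thm:classification} (see the Overview in \S\ref{sect:cart}), a homogeneous structure modeled on $\tbbAo$ need not have development image $\tbbAo$; it may instead develop onto the sector $\cQ$ (type $\mathsf{B}$) or the half-plane (type $\mathsf{C}_1$). These structures are not Hopf tori, their holonomy does \emph{not} lie in a lift of $\GL(1,\bbC)$, and — crucially — their holonomy \emph{is} upper triangular with positive eigenvalues, so your linear-algebraic observation does not exclude them. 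They are in fact the dangerous cases: by Example \ref{ex:hol_notinj}, a non-homogeneous torus $\cT_{A,B,k}$ can have literally the same holonomy group in $\GL^+(2,\bbR)$ as a homogeneous torus of type $\mathsf{B}$ or $\mathsf{C}_1$, so nothing downstairs rules out such a limit. What does rule it out is the level: all generators of a type-$\mathsf{B}$ or $\mathsf{C}_1$ structure have level $0$ in $\tGL^+(2,\bbR)$, whereas the second generator of every element of $\mT_k$ has level $k\neq 0$, and Proposition \ref{prop:levclosed} shows that the level is eventually constant along a convergent sequence of conjugacy classes. This is exactly how the paper disposes of these cases; you have the level-preservation ingredient in hand at the end of your argument, but you never apply it to the cases your enumeration omits.

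A secondary weakness: you assert that convergence in $\Def(T^2,\tbbAo)$ can be lifted to a convergent sequence of homomorphisms in $\Hom(\bbZ^2,\tGL^+(2,\bbR))$, and you then take limits of matrices after a ``uniform conjugation.'' Convergence in a quotient by a non-proper group action does not in general lift to convergence of representatives, and your subsequent steps (simultaneous triangularisation of the limit pair, positivity of its eigenvalues) depend on having actual convergent matrices. The paper sidesteps this by working throughout with convergence of \emph{conjugacy classes}: Proposition \ref{prop:levclosed} is stated for sequences $\cC(g_i)\to\cC(h)$ and already delivers both conclusions you need (the limit class meets $AN$, and the level stabilises), using the continuity of the discriminant and eigenvalues on conjugacy classes together with the covering property of Proposition \ref{prop:ind_cov1}. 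You should reformulate the second half of your argument at the level of conjugacy classes and invoke that proposition directly.
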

\begin{proof} Note first that the Hopf tori $\cH_{\lambda_{1},\lambda_{2},k,0} \in \mH_{k}$, are in the closure of the elements $\cT_{A,B,k}$ of $\mT_{k}$. (Just deform $A$ and $B$ to dilations.) It remains 
to show that every homogeneous  torus $M_{o}$ in the closure of 
$\mT_{k}$ is a Hopf torus:  
Let $M_{\epsilon}$ equivalent to $\cT_{A,B,k}$ be a marked non-homogeneous torus of level $k \neq 0$ which is in the vicinity of $M_o$ in the deformation space. Let $h_{\epsilon}$ denote its holonomy homomorphism. We assume that $h_{\epsilon}$ converges to $h_{o}$ in the space of conjugacy classes of
homomorphisms. The holonomy group of $M_{\epsilon}$ is generated 
by $h_{\epsilon}(e_{i}) \in \widetilde\GL^+ \! (2,\bbR)$, $i = 1,2$.
The conjugacy class $\cC h_{\epsilon}(e_{i})$ is thus
in the vicinity of the class $\cC h_{o}(e_{i})$.  Proposition 
\ref{prop:levclosed} implies that the projections
$\mathsf{p} (h_o(e_{i})) \in \GL^{+}(2,\bbR)$
are conjugate to an element of $AN$. Since $M_{o}$ is 
homogeneous, $M_{o}$ is either a Hopf torus or $\lev h_o(e_{1}) = \lev h_o(e_{2}) = 0$ (in which case $M_{o}$ has development image $\mathcal{H}$ or $\mathcal{Q}$). In the latter case, if $M_{\epsilon}$ is close enough, we must
have $\lev h_\epsilon(e_{1}) = \lev h_\epsilon(e_{2}) =0$, 
again by Proposition 
\ref{prop:levclosed}. This contradicts the fact that the
level of the non-homogeneous torus $M_{\epsilon}$ is 
different from zero. Therefore,  $M_{o}$ is a Hopf torus.
\end{proof}

\paragraph{Homogeneous tori modeled on $\bbAo$}
The subset of homogeneous structures  % $\Def(T^2, \tbbAo)$
$$ \Def_{h}(T^2, \tbbAo) \subset \Def(T^2, \tbbAo) $$
decomposes into three strata $\mA$, $\mB$ and
 $\mC_{1}$, which are distinguished according to 
 the development images 
 $\tbbAo$, $\cQ$ and $\cH$ respectively. These structures arise from the \'etale affine representations of the abelian Lie group $\bbR^2$ 
 of type $\mathsf{A}$, $\mathsf{B}$ and $\mathsf{C}_{1}$
 respectively. Moreover, it follows (using the construction in \S \ref{sect:lattices}) that   
 the three strata are continuous images of homogeneous spaces
 via maps 
 $$ \GL(2,\bbR) / N  \lra \,  \Def(T^2, \tbbAo) \, , $$
 where $N$ describes the group of those automorphisms of 
 $\bbR^2$ which are induced by the conjugation action of the normalizers in $\GL(2,\bbR)$ for the groups 
 $\mathsf{A}$, $\mathsf{B}$ and $\mathsf{C}_{1}$. 
 (The normalizers  are listed in Lemma \ref{lem:normalizers}.) In particular, it follows that the strata $\mA$ and  $\mB$ are images of connected four-dimensional manifolds, while $\mC_{1}$  is a connected manifold of dimension three. 

Note that structures in $\mA$ may be continuously deformed to structures in $\mC_{1}$, as follows from (4) of Lemma \ref{lem:tGL2con}. Compare also Figure \ref{figure:deform2}. 
 Similarly structures in $\mB$ can be deformed to structures in
  $\mC_{1}$, see Figure \ref{figure:BC1}. This shows in particular that $ \Def_{h}(T^2, \tbbAo)$ is connected.
  
\paragraph{Connectedness}
The deformation space 
$$   \Def(T^2, \tbbAo) \, =  \;  \mT \, \cup \,  \Def_{h}(T^2, \tbbAo)$$
is connected. Indeed, by Proposition \ref{prop:nonhom_clos} 
every marked non-homogeneous flat affine torus in $ \mT$ can be
deformed to a Hopf torus contained in some $\mH_{k_{1},k_{2}}$. 
Since  $\mH_{k_{1},k_{2}}$ is a subset of the connected
space $ \Def(T^2, \tbbAo)_{h }$ it follows that $ \Def(T^2, \tbbAo)$
is connected.

\subsection{The subspace of homogeneous structures} 
\label{sect:def_homogeneous} \index{flat affine torus!homogeneous} 
% \subsection{The deformation space of homogeneous structures}
We describe now the properties of the subset  $\Def_{h}(T^2,\bbA^2)$ of homogeneous flat affine structures as a subspace of the deformation space of all flat affine structures on $T^2$. 
Since the non-homogeneous structures form an open subset
the space $\Def_{h}(T^2,\bbA^2)$ is closed. The complement
of Hopf tori $$\Def_{h}(T^2,\bbA^2) - \mathfrak H$$ forms
a dense subset which is also open in the deformation space
of all structures  $\Def(T^2,\bbA^2)$.
We established in the previous subsections: 

\begin{proposition}
The set of all homogeneous structures $\Def_{h}(T^2,\bbA^2)$ is the continuous and injective image of the  quadratic cone $\cC(T^2,\bbR)$ under the map $\mathcal E$ in \eqref{eq:LSAsection}.
The map is a homeomorphism in the complement of Hopf tori. 
\end{proposition}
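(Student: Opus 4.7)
My plan is to verify the three assertions of the proposition in turn: continuity of $\mathcal E$, bijectivity onto $\Def_{h}(T^2,\bbA^2)$, and the homeomorphism property on the complement of Hopf tori. The first is immediate from the construction of Example \ref{ex:developing_section}: since $\bar\rho_S(v)$ depends polynomially on $S$, the orbit map $D_S$ depends smoothly on $S$, so $\mathcal E$ is continuous from $\cC(T^2,\bbR)$ into $\Dev(T^2,\bbA^2)$ with the $C^\infty$-topology. Composing with the continuous projection $\Dev(T^2,\bbA^2) \to \Def(T^2,\bbA^2)$ yields continuity of $\mathcal E$ into $\Def(T^2,\bbA^2)$. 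The affine structure on $T^2 = \bbR^2/\bbZ^2$ defined by $D_S$ is translation invariant (the \'etale image of $\bbR^2$ in $\Aff(2)$ descends to an affine action of $T^2$ on itself), hence homogeneous, so $\mathcal E$ takes values in $\Def_h(T^2,\bbA^2)$.

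For bijectivity, the key input is Lemma \ref{lem:transhomotopic}, which says that every homogeneous flat affine two-torus is isotopic to a unique translation invariant one. Translation invariant structures correspond bijectively to translation invariant flat torsion-free connections on $T^2$ via the homeomorphism of Proposition \ref{prop:corres_is_homeo}, and the latter are parametrised precisely by $\cC(T^2,\bbR)$ through the identification of \S \ref{sect:transinv_cons}. Under this chain of bijections, $S$ corresponds to the connection $\nabla_{0} + S$, which is exactly the connection defined by $D_S = \mathcal E(S)$, so surjectivity of $\mathcal E$ onto $\Def_h(T^2,\bbA^2)$ follows. Injectivity is also a consequence: if $\mathcal E(S_1)$ and $\mathcal E(S_2)$ are isotopic, then the two underlying translation invariant structures are isotopic and hence equal by the uniqueness clause of Lemma \ref{lem:transhomotopic}, whence $S_1 = S_2$.

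The main obstacle is showing that $\mathcal E$ is a homeomorphism over $\Def_{h}(T^2,\bbA^2) \setminus \mH$. For this I would construct an explicit continuous inverse. Given a non-Hopf marked homogeneous torus $(f,M)$ with basepoint $m_0 = f(0)$, the classification results of \S \ref{sect:homandtransinv_structs} show that $\Aff(M)_{0}$ itself is a two-dimensional compact torus acting simply transitively on $M$. This endows $M$ with a canonical compact Lie group structure with identity $m_0$, and there is a unique group isomorphism $\phi:(T^2,0)\to(M,m_0)$, which is automatically isotopic to $f$ (as in the proof of Lemma \ref{lem:transhomotopic}). Pulling back the affine structure via $\phi$ yields a translation invariant structure on $T^2$ and hence the desired $S \in \cC(T^2,\bbR)$. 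The crux is then showing that $\Aff(M)_{0}$, viewed inside $\Diff(M)$, depends continuously on the structure in the $C^\infty$-topology away from Hopf tori. This follows because the space of infinitesimal affine vector fields on $M$ is the kernel of a linear differential operator whose coefficients depend continuously on the connection, and in the non-Hopf case this kernel has constant dimension two. At Hopf tori the dimension of the Lie algebra of $\Aff(M)_{0}$ jumps (the symmetry group becomes a quotient of $\tGL^+\!(2,\bbR)$ or $\GL(1,\bbC)$), which destroys continuous selection and accounts for the failure of the homeomorphism property there; this is also compatible with the non-Hausdorff behaviour exhibited in Example \ref{ex:defissing}.
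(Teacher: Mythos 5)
Your proposal is correct and follows essentially the same route as the paper, which derives the proposition from the continuity of $\mathcal E$, the uniqueness statement of Lemma \ref{lem:transhomotopic} combined with the connection correspondence of Proposition \ref{prop:corres_is_homeo}, and the observation that translation invariant structures form a slice for $\Diff_0(T^2)$ away from the Hopf tori. Your extra detail on why the projection onto translation invariant structures is continuous off the Hopf locus (constancy of the dimension of the space of affine vector fields) simply fleshes out a step the paper leaves implicit.
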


In particular:
\begin{corollary}
The deformation space $\Def_{h}(T^2,\bbA^2)$ of all homogeneous flat affine structures on the two-torus contains 
the complement of Hopf tori  
$$ \Def_{h}(T^2,\bbA^2) - \mathfrak H$$  as a dense open subset, which is a Hausdorff space and homeomorphic to a Zariski-open subset in a four-\-dimen\-sio\-nal quadratic cone in $\bbR^6$.
\end{corollary}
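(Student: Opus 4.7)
The plan is to reduce the Corollary to the preceding Proposition, which provides a continuous bijection $\mathcal E:\cC(T^2,\bbR) \to \Def_h(T^2,\bbA^2)$ restricting to a homeomorphism on the complement of the Hopf preimage $\mathcal E^{-1}(\mH)$, together with Proposition \ref{prop:nonhom_clos} and a dimension count. Three assertions have to be checked: (i) $\mathcal E^{-1}(\mH)$ is a proper Zariski-closed subset of $\cC(T^2,\bbR)$; (ii) $\Def_h-\mH$ is open in $\Def(T^2,\bbA^2)$; (iii) $\Def_h-\mH$ is dense in $\Def_h(T^2,\bbA^2)$.

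For step (i), I would translate the condition for $S \in \cC(T^2,\bbR)$ to represent a Hopf torus --- namely, that the \'etale representation $\rho_S:\bbR^2 \to \Aff(2)$ from Example \ref{ex:developing_section} sends $\bbZ^2$ into the centre of the linear part of $\Aff(2)$, or equivalently, that $\rho_S(\bbR^2)$ be conjugate in $\Aff(2)$ to the one-parameter group $\mathsf A = \GL(1,\bbC)$ of Example \ref{ex:domains}(4) --- into polynomial equations on the components of $S$. In the associative-algebra picture this is the condition that the product $\cdot_\nabla$ attached to $S$ be isomorphic to complex multiplication on $\bbR^2 \iso \bbC$. These equations cut out a Zariski-closed subset of $\cC(T^2,\bbR) \subset \bbR^6$, properly contained in the four-dimensional cone (as witnessed, for instance, by any non-Hopf homogeneous torus of type $\mathsf B$ or $\mathsf C_1$). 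The preceding Proposition then identifies $\Def_h-\mH$ with a Zariski-open subset of $\cC(T^2,\bbR)$, and as a subspace of $\bbR^6$ this set is automatically Hausdorff.

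For step (ii), $\mH$ is itself closed in $\Def(T^2,\bbA^2)$, since centrality of holonomy is a closed condition on $\Hom(\bbZ^2,\Aff(2))/\Aff(2)$ and the holonomy map is continuous. Proposition \ref{prop:nonhom_clos} yields $\overline{\mT}\subseteq \mT\cup\mH$, so
\[
  \Def_h-\mH \;=\; \Def(T^2,\bbA^2) \,-\, (\overline{\mT}\cup\mH)
\]
is the complement of a closed subset, hence open. For step (iii), $\mathcal E^{-1}(\mH)$ has strictly smaller dimension than the four-dimensional irreducible components of $\cC(T^2,\bbR)$ and is therefore nowhere dense in the cone; continuity and bijectivity of $\mathcal E$ transport density of its complement to density of $\Def_h-\mH$ in $\Def_h(T^2,\bbA^2)$.

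The principal obstacle is step (i): extracting the explicit algebraic description of the Hopf locus in $\cC(T^2,\bbR)$. A conceptually cleaner route is to observe that $\mathcal E^{-1}(\mH)$ is stable under the natural $\GL^+(2,\bbR)$-action on $\cC(T^2,\bbR)$ (change of basis in $\bbR^2$ corresponding to affine conjugation of $\rho_S$), and to identify it, via the classification of abelian \'etale representations in Example \ref{ex:domains} together with Lemma \ref{lem:normalizers}, with the orbit of the associative algebra structure corresponding to $\mathsf A$; one then argues by a dimension count and the description of its normalizer in Lemma \ref{lem:normalizers} that this orbit is indeed Zariski-closed in $\cC(T^2,\bbR)$.
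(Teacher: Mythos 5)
Your overall strategy (reduce everything to the Proposition preceding the Corollary, to Proposition \ref{prop:nonhom_clos}/\ref{prop:nonhom_clos2}, and to a dimension count in the cone $\cC(T^2,\bbR)$) is the same route the paper takes implicitly, but step (i) contains a genuine error. The two characterizations of the Hopf locus that you declare equivalent are not: the condition that $\rho_S(\bbR^2)$ be conjugate to $\mathsf{A}=\GL(1,\bbC)$ (equivalently, that the algebra $\cdot_\nabla$ be isomorphic to $\bbC$) cuts out the entire four\--dimensional open stratum $\mA$, whereas a Hopf torus additionally requires that the holonomy of the \emph{lattice} $\bbZ^2$ be central, i.e.\ that $\exp\bigl(S(e_i,\cdot)\bigr)$ be a scalar matrix for $i=1,2$. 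For the standard complex algebra itself, $h(e_2)$ has linear part equal to multiplication by $e^{i}$, a rotation by one radian, which is not central; so that structure lies in $\mA$ but not in $\mH$ (it is one of the tori with non-discrete holonomy of Figure \ref{figure:non-discrete}). The paper records this dimension discrepancy explicitly: $\mA$ is four\--dimensional while $\mH$ is two\--dimensional. If you run your argument with the algebra-isomorphism condition, you remove the whole open stratum $\mA$ and your density claim in step (iii) collapses; if you run it with the correct centrality condition, the locus $\mathcal E^{-1}(\mH)=\bigcup_{(k_1,k_2)\neq 0}\mathcal E^{-1}(\mH_{k_1,k_2})$ is a \emph{countable union} of two\--dimensional algebraic subsets indexed by the level, so it is not cut out by finitely many polynomial equations and your ``Zariski-closed orbit'' argument does not apply to it. Your fallback in the last paragraph is also false on its own terms: the $\GL(2,\bbR)$-orbit of the $\bbC$-algebra is \emph{not} Zariski-closed in $\cC(T^2,\bbR)$ --- by the degeneration diagram (Figure \ref{fig:degenerations}) its closure contains the strata $\mC_1$, $\mD$ and $\mathsf{T}$.

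A secondary, smaller gap is in step (ii): ``centrality of the linear holonomy'' is indeed a closed condition, but it does not characterize $\mH$ inside $\Def(T^2,\bbA^2)$ --- translation tori and the structures of type $\mathsf{T}$, $\mD$ also have central (indeed trivial) linear holonomy --- so closedness of $\mH$ does not follow from that observation alone; one must also rule out that a sequence of Hopf tori converges to a complete or half-plane structure (for instance by noting that the limiting holonomy would have to be trivial or unipotent, which forces the development maps to degenerate). Once $\mH$ is known to be closed and the Hopf locus is correctly identified as a lower-dimensional, nowhere dense (meager) subset of the cone, your steps (ii) and (iii) and the transport of density through the continuous surjection $\mathcal E$ are fine and agree with the paper's intended argument.
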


%Let $\Str_{h}(T^2,\bbA^2)$ denote the set of all homogeneous
%flat affine structures on $T^2$.
%
%\begin{corollary}  Outside the Hopf tori $\Str_{h}(T^2,\bbA^2)$ is a trivial $\Diff_{0}(T^2)$ principal bundle with base homeomorphic to a four-dimensional quadratic cone in $\bbR^6$. 
%\end{corollary}
%
%
%
Note that the space of complete affine structures $\Def_{c}(T^2,\bbA^2)$ forms a two-dimensional closed subcone which is homeomorphic to $\bbR^2$, see \cite{BC_1}. 

\subsubsection{The action of the linear group on translation invariant structures  and conjugacy  of \'etale affine groups} 
\label{sect:orbit_closures}
The linear group $\GL({2},\bbR)$ naturally acts on the variety $\cC(T^2,\bbR)$ of commutative and associative algebra products on 
$\bbR^2$. The orbits of this action correspond to the isomorphism
classes of algebra products. Since the section map 
$$ {\mathcal E} :  \cC(T^2,\bbR) \, \lra \, \Def_{h}(T^2,\bbA^2)$$
is a continuous bijection this constructs a \emph{natural} induced 
action of $\GL(2,\bbR)$ on the deformation space of homogeneous structures $\Def_{h}(T^2,\bbA^2)$ which 
is continuous on the complement of Hopf tori.  This
group action may be used to reveal some of the 
topology of $\Def_{h}(T^2,\bbA^2)$
and the possible deformations of structures. \\

Recall the classification of  \'etale affine representations which is described in Section \ref{sect:etale_affine}.
Each orbit of $\GL({2},\bbR)$ in $\Def_{h}(T^2,\bbA^2)$ corresponds to exactly one of the 
affine conjugacy classes of abelian almost simply transitive groups of affine 
transformations on $\bbA^2$. We label the orbits accordingly with the symbols 
$\mA$, $\mB$, $\mC_{1}$, $\mC_{2}$, $\mD$ and $\mathsf{T}$. The decomposition 
of  $\Def_{h}(T^2)$ into the six orbit types of $\GL({2},\bbR)$ defines 
a natural stratification on $\Def_{h}(T^2)$ into manifolds which are
homogeneous spaces of $\GL(2,\bbR)$, and each orbit is 
a subcone of $\Def_{h}(T^2)$. Each such stratum may be also computed as the induced image of the subgeometry which is defined by the corresponding \'etale affine representation, see
the examples in Section \ref{sect:cart}, as well as Section \ref{sect:lattices} and Lemma \ref{lem:normalizers}.  \index{deformation space!stratification of}

%\marginpar{\bf what is a stratum? alway closed?}
The closure of each stratum 
consists of strata of lower dimensions and contains the unique closed stratum $\mathsf{T}$, which is a point. There are two open strata of dimension four labeled $\mA$ and $\mB$, which  correspond to homogeneous flat affine 
structures whose development images are the punctured plane, and
the sector respectively. In their closure are the three-dimensional 
orbits  $\mC_{1}$ and  $\mC_{2}$, whose corresponding flat affine structures
develop into the halfplane.  
The complete structures correspond to a
two dimensional orbit  $\mD$ and the translation structure $\mathsf{T}$.
We say that the orbit $\cO_{1}$ degenerates to the orbit $\cO_{2}$
if  $\cO_{2}$ is in the closure of $\cO_{1}$. Degeneration induces
a partial ordering on the strata of $\Def_{h}(T^2,\bbA^2)$, with the translation action
the unique minimal point.  By the theorem of Hilbert-Mumford if $\cO_{2}$
degenerates to $\cO_{1}$ then there exists a one-parameter group
$\lambda: \bbR \ra \GL(2,\bbR)$ such that $\lim_{t \ra 0} \lambda(t) o_{2} \in \cO_{2}$. Therefore, every degeneration may be constructed explicitly as a limit of a curve of flat affine structures in the stratum. Moreover, every point of $\Def_{h}(T^2,\bbA^2)$ directly degenerates to the translation structure,
compare, for example, Figure \ref{figure:qHopfTo trans}.
 \\

The graph shown in Figure \ref{fig:degenerations} describes all possible degenerations in the orbit stratification of $\Def_{h}(T^2,\bbA^2)$ with respect to the natural action of $\GL(2,\bbR)$.

\begin{figure}[htbp]

 $$ \xymatrix@1@R=-1ex@M=2ex{   \bullet \ar[r] &   \bullet \ar[r] &  \bullet \ar[r] & \bullet &  \bullet \ar[l]   &  \bullet \ar[l]  %  \ar@/_/ [llll]
\ar@(ul,ur) [llll]  \\
          	\mA &  \mC_{1}  & \mD  & \mathsf{T} &  \mC_{2} &   \mB   }$$
 \caption{Degenerations of $\GL({2},\bbR)$-orbit types in the deformation space.}
 \label{fig:degenerations}
\end{figure}
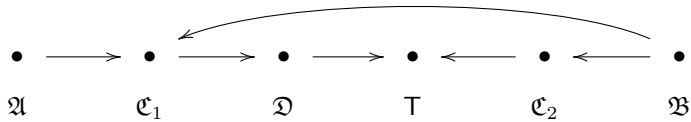

These degenerations are illustrated in Figures 1-5, Figure \ref{figure:qHopfTo trans} and Figure  \ref{figure:deform2}. 
\subsection{The deformation space of all flat affine structures on the two-torus} \label{sect:holislh}

Our main result is the following. \index{character variety} 
\index{deformation space!holonomy map} \index{holonomy map!for flat affine structures}
 \index{holonomy map!is a local homeomorphism} 
 \index{deformation space!of flat affine structures}
 
\begin{theorem} \label{thm:holonomy_main}
The holonomy map for flat affine 
structures on the two-torus 
$$ hol: \Def(T^2,\bbA^2) \ra \Hom(\bbZ^2, \Aff(2)) / \Aff(2) $$
is a local homeomorphism onto an open connected subset of  the character variety.
\end{theorem}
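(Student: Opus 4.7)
The plan is as follows. Since $hol$ is continuous and open by Thurston's deformation theorem (Theorem~\ref{thm:Deformations}), the image of $hol$ is automatically an open subset of the character variety, and I only need to establish local injectivity of $hol$ at every point of $\Def(T^{2},\bbA^{2})$ together with connectedness of the deformation space. I will exploit the open cover
\[
\Def(T^{2},\bbA^{2}) \,=\, \Def(T^{2},\tbbAo)\,\cup\,\bigl(\Def_{h}(T^{2},\bbA^{2})-\mathfrak{H}\bigr),
\]
whose first piece is the open subspace of $\tbbAo$-structures and whose second piece is the non-Hopf homogeneous stratum, noted in Section~\ref{sect:def_homogeneous} to be open in $\Def(T^{2},\bbA^{2})$. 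That this is indeed a cover follows from Theorem~\ref{thm:classification}, since every Hopf torus is in particular a $\tbbAo$-structure.

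On the first piece I will invoke Corollary~\ref{cor:bbaotop}, which already gives local injectivity of $hol$ into the linear character variety $\Hom(\bbZ^{2},\GL^{+}(2,\bbR))/\GL(2,\bbR)$. To upgrade this to local injectivity into the affine character variety $\Hom(\bbZ^{2},\Aff(2))/\Aff(2)$, I will use the elementary identity that conjugating a linear representation $\rho\colon\bbZ^{2}\to\GL(2,\bbR)$ by $(B,v)\in\Aff(2)$ produces the affine representation with translation part $\gamma\mapsto (I-B\rho(\gamma)B^{-1})v$, which vanishes identically only when $v$ is a common fixed vector of all the $B\rho(\gamma)B^{-1}$. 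A case-by-case inspection of the classification in Section~\ref{sect:bbaoclass}---in every admissible case at least one generator fails to have eigenvalue~$1$, being either an expansion, commuting with one, or lifting an element of non-zero angle of rotation---shows that the holonomy images have no nonzero common fixed vector, and this is an open condition preserved in a neighborhood. Consequently any $\Aff(2)$-conjugation between two nearby linear holonomies must actually be a $\GL(2,\bbR)$-conjugation, and Corollary~\ref{cor:bbaotop} then delivers local injectivity of $hol$ on $\Def(T^{2},\tbbAo)$.

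On the second piece the developing section $\mathcal{E}\colon\cC(T^{2},\bbR)\to\Def_{h}(T^{2},\bbA^{2})$ of \eqref{eq:LSAsection} is, by the proposition of Section~\ref{sect:def_homogeneous}, a homeomorphism onto $\Def_{h}(T^{2},\bbA^{2})-\mathfrak{H}$, so the question reduces to local injectivity of the map $\nabla\mapsto[h^{\nabla}]$ from the corresponding open subset of $\cC(T^{2},\bbR)$ into the character variety. The lemma stated after the construction of $\mathcal{E}$ provides local injectivity of $\nabla\mapsto h^{\nabla}$ into $\Hom(\bbZ^{2},\Aff(2))$, and I plan to descend to the conjugacy quotient using the local sections of $\mathsf{hol}$ supplied by Theorem~\ref{thm:Deformations}: if $\nabla_{1},\nabla_{2}$ close to $\nabla_{0}$ had $\Aff(2)$-conjugate holonomies, then composing one of the frame-preserving developments $\mathcal{E}(\nabla_{i})$ with the conjugating affine transformation produces a pair of nearby development maps with equal holonomy, forcing them to be equivalent modulo $\Diff_{0}(T^{2},x_{0})$; hence $\mathcal{E}(\nabla_{1})$ and $\mathcal{E}(\nabla_{2})$ define the same class in $\Def(T^{2},\bbA^{2})$, and injectivity of $\mathcal{E}$ outside $\mathfrak{H}$ yields $\nabla_{1}=\nabla_{2}$. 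Finally, connectedness of $\Def(T^{2},\bbA^{2})$ will follow by combining the connectedness of $\Def(T^{2},\tbbAo)$ established in Section~\ref{sect:cart} with the degeneration graph of Section~\ref{sect:def_homogeneous}, which joins the remaining strata $\mathsf{T}$, $\mD$, $\mC_{2}$ to the $\tbbAo$-region through continuous paths of homogeneous structures. The principal obstacle of the plan is this last descent from $\Hom(\bbZ^{2},\Aff(2))$ to its quotient by $\Aff(2)$-conjugation on the homogeneous piece; removing the ambiguity hinges on careful use of the frame-preserving character of $\mathcal{E}$ together with Thurston's local-section assertion.
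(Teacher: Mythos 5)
Your handling of the first piece is essentially correct: the observation that an $\Aff(2)$-conjugation between two purely linear holonomies whose images admit no common nonzero fixed vector must itself be linear does reduce local injectivity on the $\tbbAo$-region to Corollary~\ref{cor:bbaotop}. (The paper obtains the same reduction more cheaply in Proposition~\ref{pro:Uo}: ``take the linear part'' is a continuous, conjugation-equivariant retraction of the affine character variety onto the linear one over the relevant image, so no eigenvalue analysis is needed.)

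The gap is in the homogeneous piece, and it sits exactly where you flag it --- but the fix you sketch does not close it. If $h^{\nabla_1}=g\,h^{\nabla_2}\,g^{-1}$ for some $g\in\Aff(2)$, then $g\circ\mathcal{E}(\nabla_2)$ is indeed a development map with holonomy $h^{\nabla_1}$, but there is no reason for it to be \emph{close} to $\mathcal{E}(\nabla_1)$: the conjugating element $g$ is completely uncontrolled (it ranges over a coset of the centralizer of the holonomy group, which is noncompact for the structures at issue, namely those of type $\mathsf{T}$, $\mathsf{D}$, $\mathsf{C}_2$), and neither the frame-preserving normalization of $\mathcal{E}$ nor Theorem~\ref{thm:Deformations} constrains it. Thurston's theorem identifies two \emph{nearby} development maps with \emph{identical} holonomy; it says nothing about development maps whose holonomies are merely conjugate by an unbounded element. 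This is not a removable technicality: the global non-injectivity of $hol$ (the coverings of Hopf tori) and its failure to be a local homeomorphism for the geometry of Appendix~B both arise from precisely this phenomenon. What is needed is control of the conjugation action near the relevant orbits --- either a local section of $\Hom(\bbZ^2,\Aff(2))\ra\Hom(\bbZ^2,\Aff(2))/\Aff(2)$ over the holonomy image, or an explicit open set on which $hol$ is injective outright. The paper supplies the latter: using the rotation-angle function $\theta$ on $\tGL^{+}(2,\bbR)$ and the covering results of Appendix~A (Propositions~\ref{prop:ind_cov1} and~\ref{prop:ind_cov2}), it manufactures an open set $\mU_{1}$, containing all structures of types $\mathsf{T}$, $\mathsf{D}$, $\mathsf{C}_2$ and obtained as the complement of a closed set of ``large-angle'' $\bbAo$-structures, on which $hol$ is genuinely injective, and then covers $\Def(T^2,\bbA^2)$ by $\mU_{o}\cup\mU_{1}$. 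Your outline contains no substitute for this conjugacy-class analysis, so local injectivity of $hol$ at the complete structures and along the $\mathsf{C}_2$-stratum remains unproved.
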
 

\paragraph{The holonomy map for flat affine structures}
For the proof of Theorem \ref{thm:holonomy_main} we consider first the subgeometry of
$( \bbAo, \GL(2,\bbR))$-structures 
and its induced map on deformation spaces
 (cf.\ \S \ref{sect:sub_geom}):

\begin{proposition} \label{pro:Uo}
The induced map on deformation spaces $$  \Def(T^2, \bbAo) \,  \lra  \, \Def(T^2,\bbA^2)$$
is an embedding onto an open subset $\mU_{o}$ of  the space 
$\Def(T^2,\bbA^2)$. Moreover, the holonomy map for 
$\Def(T^2,\bbA^2)$ restricts to a local homeomorphism 
on this subset.
\end{proposition}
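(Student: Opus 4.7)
My strategy is to characterize $\mU_o$ as the set of classes whose holonomy admits a common fixed point in $\bbA^2$, prove openness via the implicit function theorem, and then derive the local homeomorphism property by reduction to Corollary~\ref{cor:bbaotop}.

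The image $\mU_o$ consists precisely of those $[\tau] \in \Def(T^2,\bbA^2)$ whose holonomy $h(\bbZ^2) \subset \Aff(2)$ admits a common fixed point in $\bbA^2$. The forward inclusion is clear, since a $(\bbAo, \GL(2,\bbR))$-holonomy fixes the origin. Conversely, given a common fixed point $p$ of $h(\bbZ^2)$, translation of $p$ to $0$ conjugates $h$ into $\GL(2,\bbR)$; the resulting development image then automatically avoids $0$, since $\{0\}$ is a single $h(\bbZ^2)$-orbit in $\bbA^2$ and the development image is a union of open orbits, while the complete case would force $h(\bbZ^2)$ to be finite, which is impossible for a flat affine structure on $T^2$. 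This identification also yields injectivity of $\iota$ immediately.

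For openness of $\mU_o$: by Example~\ref{ex:domains} and Corollary~\ref{cor:inhomog}, every holonomy of a $\bbAo$-structure on $T^2$ contains a generator $h(\gamma)$ whose linear part $L_{h(\gamma)}$ has no eigenvalue equal to $1$---this holds for the rank-two groups $\mathsf{A}, \mathsf{B}, \mathsf{C}_1$ in the homogeneous case, and because every non-homogeneous $\bbAo$-torus has holonomy containing an expansion. The equation $(L_{h(\gamma)} - I)\, p = -v_{h(\gamma)}$ therefore has a unique, non-degenerate solution $p$, which persists under small perturbations $h' \in \Hom(\bbZ^2,\Aff(2))$ by the implicit function theorem; commutativity of $\bbZ^2$ then forces the perturbed $p'$ to be fixed by all of $h'(\bbZ^2)$. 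Combined with continuity and openness of $hol$ from Theorem~\ref{thm:Deformations}, this shows $\mU_o$ is open, and continuity of $\iota^{-1}$ on $\mU_o$ follows from Fact~\ref{fact:convergence} together with the continuous variation of the fixed point.

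For the local homeomorphism claim, the commutative square~\eqref{eq:subgeometry1} applied to the subgeometry $(\bbAo, \GL(2,\bbR)) \hookrightarrow (\bbA^2, \Aff(2))$ reads
\[
\xymatrix{
\Def(T^2, \bbAo) \ar[d]_{\iota} \ar[r]^(0.32){hol_o} & \Hom(\bbZ^2, \GL^+(2,\bbR))/\GL(2,\bbR) \ar[d]^{\rho} \\
\Def(T^2, \bbA^2) \ar[r]^(0.32){hol} & \Hom(\bbZ^2, \Aff(2))/\Aff(2).
}
\]
Here $hol_o$ is a local homeomorphism by Corollary~\ref{cor:bbaotop}, and $hol$ is continuous and open by Theorem~\ref{thm:Deformations}, so it suffices to show local injectivity of $\rho$ at holonomies of $\bbAo$-structures. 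If $g h_1 g^{-1} = h_2$ with $h_1, h_2: \bbZ^2 \to \GL^+(2,\bbR)$ and $g \in \Aff(2)$, then $g^{-1}(0)$ is a common fixed point of $h_1(\bbZ^2)$; by the isolation established above, $g^{-1}(0) = 0$, so $g \in \GL(2,\bbR)$. The main obstacle is the uniform non-degeneracy of the common fixed point across all strata, especially at Hopf tori with scalar dilation generators, where the argument nevertheless survives because $\lambda I - I$ is invertible whenever $\lambda \neq 1$.
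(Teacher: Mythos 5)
Your proof is correct and reaches both conclusions of the proposition, but it runs on a different engine than the paper's. The paper characterizes $\mU_{o}$ as the set of structures whose linear holonomy contains an expanding element (openness then being read off from the classification), and it obtains the local homeomorphism by exhibiting a continuous \emph{section} of the right vertical map $\rho$ over $hol(\mU_{o})$ --- namely ``take the linear part of the holonomy homomorphism'' --- from which it concludes that $\rho$ is a topological embedding there. You instead characterize $\mU_{o}$ by the existence of a common fixed point, prove openness by the unique-fixed-point-plus-commutativity argument, and prove injectivity of $\rho$ on the relevant classes directly from uniqueness of that fixed point. These are two packagings of the same underlying fact --- an affine conjugacy between two linear holonomies must carry the unique common fixed point $0$ to itself and hence be linear --- and indeed the paper's ``linear part'' section is well defined and inverse to $\rho$ precisely because of your fixed-point uniqueness. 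What your route buys is a more self-contained openness argument: the condition that some $h(\gamma)$ have no eigenvalue $1$ in its linear part is manifestly open on $\Hom(\bbZ^2,\Aff(2))$, and by commutativity of $\bbZ^2$ it automatically forces a common fixed point for \emph{every} nearby homomorphism, not just the perturbed holonomies of actual structures. Two small repairs are needed. First, ``contains a generator whose linear part has no eigenvalue $1$'' should read ``contains an element'': the lattice in $\mathsf{B}$ generated by $\mathrm{diag}(2,1)$ and $\mathrm{diag}(1,2)$ has no such generator, although $\mathrm{diag}(2,2)$ does the job; your perturbation argument only ever needs one fixed $\gamma\in\bbZ^2$, so nothing else changes. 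Second, the development image avoids the fixed point not because it is ``a union of open orbits'' of the (discrete) group $h(\bbZ^2)$, but because by Proposition \ref{prop:liftofN} it is an open connected union of orbits of the connected abelian group $N$, which fixes $0$ since it commutes with an element having no eigenvalue $1$; for $N$ of type $\mathsf{A}$, $\mathsf{B}$ or $\mathsf{C}_{1}$ any open connected union of $N$-orbits containing $0$ would be all of $\bbA^2$, contradicting incompleteness, which is excluded exactly as you say.
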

\begin{proof} The commutative diagram \eqref{eq:subgeometry1} for the subgeometry  takes the form
\begin{align*} %  \label{eq:subgeometry3}
 \xymatrix{
\; \; \Def(T^2, \bbAo) \; \; \ar[d]  \ar [r]^(0.35){hol}%_(0.35){\approx}   
& \; \;\Hom(\bbZ^2, \GL^+\!(2,\bbR))/ \GL\! (2,\bbR) \; \; \ar[d]  \\
\; \;  \Def(T^2,\bbA^2) \; \; \ar[r]^(0.35){hol}  &  \; \; 
\Hom(\bbZ^2, \Aff(2)) / \Aff(2)   \,  .}  
\end{align*}
The image of $ \Def(T^2, \bbAo)$ in $\Def(T^2,\bbA^2)$
consists of precisely those structures in $\Def(T^2,\bbA^2)$ 
whose linear part of the holonomy contains an expansion. Therefore, the image $\mU_{o}$ of the induced map is open in 
$\Def(T^2,\bbA^2)$. The left vertical map is
clearly injective. Note further that the operation of taking the
linear part of a homomorphism defines a continuous section of the right vertical map which is defined on the holonomy image $hol(\mU_{o})$. The latter set is open in $\Hom(\bbZ^2, \Aff(2)) / \Aff(2)$.  By Corollary \ref{cor:bbaotop}, the upper map $hol$ is a local homeomorphism.
Therefore, the right vertical map is a topological embedding, and
the lower map $hol$ is a local homeomorphism on $\mU_{o}$. \end{proof}

Below we construct an open neighborhood $\mU_{1}$ of the
translation structure $\mathsf{T} \in \Def(T^2,\bbA^2)$,  which
has the following properties: 
\begin{enumerate} 
\item $\mU_{1} \subset  \Def_{h}(T^2,\bbA^2)$ 
is contained in the subset of homogeneous flat affine structures, 
\item the restriction of the holonomy map $hol: \mU_{1} \ra \Hom(\bbZ^2, \Aff(2)) / \Aff(2)$ is injective,
\item  $\Def(T^2,\bbA^2) = \mU_{o} \cup \mU_{1}$.
\end{enumerate} 
Together with Proposition \ref{pro:Uo} this shows that
$$ hol:  \Def(T^2,\bbA^2) \ra \Hom(\bbZ^2, \Aff(2)) / \Aff(2)$$
is locally injective and therefore finishes the proof of
Theorem \ref{thm:holonomy_main}. \\

We observe the following refinement of Proposition
\ref{prop:nonhom_clos}: 

\begin{proposition} \label{prop:nonhom_clos2}
The closure of the subset $\mT$ of non-homogeneous structures in  the deformation space 
$\Def(T^2, \bbA^2)$ consists of Hopf tori. 
\end{proposition}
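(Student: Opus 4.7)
The plan is to reduce to Proposition \ref{prop:nonhom_clos}, which already supplies the analogous statement inside the subspace $\Def(T^2, \tbbAo)$, and then to show that no additional limit points of $\mT$ appear in the larger ambient space $\Def(T^2, \bbA^2)$.

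First, by the classification theorem (Theorem \ref{thm:classification}), every non-homogeneous flat affine two-torus has development image exactly $\bbAo$, so $\mT \subset \mU_o$, where $\mU_o$ is the open subset of $\Def(T^2, \bbA^2)$ of Proposition \ref{pro:Uo}. Since $\mU_o$ is \emph{open}, the closure of $\mT$ in $\Def(T^2, \bbA^2)$ intersected with $\mU_o$ agrees with the closure of $\mT$ as a subspace of $\mU_o \approx \Def(T^2, \bbAo) \approx \Def(T^2, \tbbAo)$; by Proposition \ref{prop:nonhom_clos} this latter closure equals $\mT \cup \mH$.

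It therefore remains to rule out limit points of $\mT$ outside $\mU_o$, that is, in the complementary subspace $\Def_c(T^2, \bbA^2)$ of complete structures. Assume for contradiction that a sequence $M_n \in \mT$ converges in $\Def(T^2, \bbA^2)$ to a complete torus $M_o$. By Theorem \ref{thm:Deformations} we may choose representative development maps $D_n, D_o: \bbR^2 \to \bbA^2$ with $D_n \to D_o$ in the compact $C^\infty$-topology. By the classification, the image of each $D_n$ is exactly $\bbAo$; in particular $0 \notin D_n(\bbR^2)$. On the other hand, completeness of $M_o$ forces $D_o$ to be a diffeomorphism $\bbR^2 \to \bbR^2$, so there is a unique $\tilde p \in \bbR^2$ with $D_o(\tilde p) = 0$ and with $dD_o(\tilde p)$ invertible. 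A standard quantitative inverse function theorem argument then yields a compact neighborhood $U$ of $\tilde p$ and a radius $r > 0$ such that any $C^1$-map $D'$ sufficiently $C^1$-close to $D_o$ on $U$ satisfies $B(D'(\tilde p), r) \subset D'(U)$; applying this with $D' = D_n$ for $n$ large and using $D_n(\tilde p) \to 0$, we conclude $0 \in D_n(U)$, contradicting $D_n(\bbR^2) \subset \bbAo$.

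The main obstacle is the third step. Convergence of holonomies alone does not rule out degenerations of non-homogeneous tori to complete tori, since a limit of expansions can a priori be unipotent with all eigenvalues equal to $1$. The decisive ingredient is the $C^1$-convergence of development maps furnished by Theorem \ref{thm:Deformations}, combined with the sharp geometric fact that non-complete structures on $T^2$ literally miss the origin, so that the open mapping property forbids a $C^\infty$-limit whose image contains the origin.
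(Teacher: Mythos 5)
Your reduction to ruling out limit points outside $\mU_{o}$ is sound and consistent with how the paper finishes (it likewise quotes Proposition \ref{prop:nonhom_clos} at the end), but two things go wrong afterwards. The smaller problem: the complement of $\mU_{o}$ in $\Def(T^2,\bbA^2)$ is \emph{not} $\Def_{c}(T^2,\bbA^2)$. By Proposition \ref{pro:Uo} it also contains the half-plane structures of type $\mathsf{C}_{2}$, whose holonomy fixes no point of $\bbA^2$ (linear parts $\mathrm{diag}(1,e^{t})$, never an expansion), so they are not modeled on $(\bbAo,\GL(2,\bbR))$ and are not complete. Your argument, which needs the origin to lie in the limit development image, says nothing about a degeneration onto such a structure, whose development image is an open half-plane.

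The fatal problem is the step ``$0\notin D_{n}(\bbR^2)$''. The development image of a non-homogeneous torus is only \emph{affinely equivalent} to $\bbAo$: a representative $D_{n}$ chosen so that $D_{n}\to D_{o}$ has image $\bbA^2-\{q_{n}\}$, where $q_{n}$ depends on the $\Aff(2)$-normalization, and your inverse-function argument only shows that $q_{n}$ eventually avoids each fixed compact set --- that is, $q_{n}\to\infty$, which is no contradiction. Indeed your argument uses nothing about non-homogeneity beyond the development image being a punctured plane, so if it were valid it would also forbid the homogeneous type-$\mathsf{A}$ structures (quotients of Hopf tori) from converging to the translation torus; they do converge to it (Example \ref{ex:embeddings}, Figure \ref{figure:qHopfTo trans}), with the puncture escaping to infinity. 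The paper's proof uses a genuinely different invariant: the limit development map $D$ (for any limit with development image $\bbA^2$, $\cH$ or $\cQ$) is globally injective; injectivity on a compact set is $C^{1}$-open (Example \ref{ex:embeddings}); and the development maps of the tori $\cT_{A,B,k}$ of level $k\neq 0$ (Corollary \ref{cor:inhomog}) wrap around the puncture and therefore fail to be injective on a fixed compact union of translates of the fundamental domain. The nonzero level, i.e.\ non-homogeneity itself, is the ingredient that has to enter the argument, and it is absent from yours.
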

\begin{proof} Suppose there is a sequence  
of non-homogeneous marked tori $M_{i}$ which converge
in the deformation space  $\Def(T^2,\bbA^2)$ to a
flat affine torus $M$. Let $h_{i}: \bbZ^2 \ra \Aff(2)$ 
be their corresponding holonomy homomorphisms. 
By Corollary \ref{cor:inhomog}, we may assume that the 
linear parts of the $h_{i}$ are contained in the group of upper triangular 
matrices $AN \cup -AN$, where $AN$ is the index two subgroup
with positive diagonal entries.  Now if $M$ is homogeneous and
has development image different from the once-punctured 
plane, the linear parts of all $h_{i}$ are contained in $AN$ for sufficiently large $i$.
Let $D_{i}$ be a corresponding sequence of development maps for the $M_{i}$ which converges to a development map $D$ which represents $M$. 
Since $M$ is not modeled  on  the once-punctured plane the development map 
$D$ is injective. By Example \ref{ex:embeddings}, there is a neighborhood of $D$ 
in the space of development maps such that $D$ is injective on the fundamental
domain for the standard action of $\bbZ^2$ on $\bbR^2$. However, the development maps $D_{i}$ are not injective on this fundamental domain by construction of the non-homogeneous tori $M_{i}$, see Example \ref{ex:gentorusnh}. This contradicts the
fact that the $D_{i}$ converge to $D$.
%Therefore, $\bar h$ is the holonomy homomorphism
%of a homogeneous torus which has development image different from the once-punctured plane. In particular, the holonomy group
%$\bar h(\bbZ^2)$ contains no elements with negative eigenvalues.
%This contradicts the fact, that the $\bar h_{i}$ converge to $\bar h$ in the character variety. Therefore, $M$ is a torus which has development image conjugate to $\bbA_{o}$. 
The claim now follows from Proposition \ref{prop:nonhom_clos}. 
\end{proof}

Now the construction of $\mU_{1}$ goes as follows: 
Following the notation in Appendix A, define $U_{\epsilon}
= \{ g \in \tGL^+\!(2,\bbR) \mid |\theta(g)| < \epsilon \}$.
By the proof of
Proposition \ref{prop:ind_cov2}, we may choose 
an  open set  $$ \cU_{\epsilon} \subset \; \Hom(\bbZ^2, \tGL^+\!(2,\bbR))/ \tGL\! (2,\bbR) \, , $$ 
where 
$\cU_{\epsilon}$ is of the form  $\cC(U_{\epsilon} \times U_{\epsilon})$ such that the projection $$ \cU_{\epsilon} \, \lra \,  \Hom(\bbZ^2, \GL^+\!(2,\bbR))/ \GL\! (2,\bbR)$$ is injective. 
The holonomy preimage $hol^{-1}(\cU_{\epsilon})$ is a non-empty open subset of 
$ \Def(T^2, \tbbAo)$ which contains certain homogeneous structures of type $\mA$, and the strata $\mB$ and $\mC_{1}$. 
It corresponds to a non-empty
open subset $\cV_{\epsilon}$ of $\Def(T^2, \bbAo)$ such that the restriction 
$$ hol: \cV_{\epsilon} \, \lra \, \Hom(\bbZ^2, \GL^+\!(2,\bbR))/ \GL\! (2,\bbR)$$ is injective. Let $\cM = \Def(T^2, \bbAo) -  \cV_{\epsilon}$ be the complement (containing the non-homogeneous flat affine tori and also homogeneous structures of type $\mA$). Then we observe that $\cM$ is closed 
not only in $\Def(T^2, \bbAo)$, but also in $\Def(T^2, \bbA^2)$.
(Indeed, this follows since the closure of the space $\mT$ of non-homogeneous tori is contained in the space $\mH$ of Hopf tori.)
Now we put $\mU_{1} = \Def(T^2, \bbA^2) - \cM$.

%%%%%   Further but dropped  topics     %%%%%%%%%%%

%
%
%\section{The geometry of the 
%deformation space of complete flat affine structures on the two-torus}
%
%% dimension, connectedness properties.
%
%\subsection{Construction of coordinates} 
%\subsection{Real algebraic structure} 
%- Via the space of crystallographic homomorphisms\\
%- Via Matsushimas argument and underlying polynomial geometry\\ 
%% - Via the tangent cone
%\subsection{$\GL_{2}(\bbR)$-action}
%\subsection{Rational points}
%\subsection{Singularity at the origin and smooth structure} 
%\subsection{The boundary of the deformation space}
%
%\section{Further topics} 
%
%\subsection{Flat affine structures on the Klein-bottle}
%\subsection{Projective structures on the two-torus}
%\subsection{Projective structures on surfaces of higher genus}
%\subsection{Complete flat affine structures on Heisenberg manifolds}
%The deformation space of complete flat affine structures on
%three dimensional Heisenberg manifolds projects onto the deformation
%space of the two-torus. 
%

%%%%%%%%     Anhang      %%%%%%%%%%

\newpage 
\begin{appendix}

\section{Conjugacy classes in the universal covering group of $\GL(2,\bbR)$}
\label{sect:GL2R}
Let $\GL^+(2,\bbR)$ be the group of $2 \times 2$ matrices with positive determinant, and  let $$ \mathsf{p}: \widetilde \GL^+\!(2,\bbR) \ra 
\GL^+(2,\bbR)$$ be its universal covering group. 

\paragraph{Iwasawa decomposition}
Recall the Iwasawa decomposition
$$\GL^+(2,\bbR) =  {K}AN \;  ,  $$
where $K= \SO(2,\bbR)$ is the subgroup of rotations, $A$ is the group of diagonal matrices with positive 
entries, and $N$ the group of unipotent upper triangular matrices. 
Furthermore, we let $D$ be the central subgroup of $\GL^+(2,\bbR)$ contained in $A$ which consists of all elements of $A$ with 
identical diagonal entries.

Let $\tilde K \ra K$ be the universal covering of the
rotation group. 
There is an induced Iwasawa decomposition
$$  \widetilde \GL^+\!(2,\bbR) = \tilde{K} AN \; ,$$ 
where $A$ and $N$ are considered as subgroups of  $\widetilde \GL^+\!(2,\bbR)$.
%The decomposition shows that the
%group manifold $\widetilde \GL^+\!(2,\bbR)$ is diffeomorphic to $\bbR^4$.
%

Let $\mathcal Z$ be the subgroup of $\tilde{K}$, which is 
mapped by the covering projection  onto $\{+1, -1\} = \{ \E_{2} , \R_{\pi} \} \subset  \SO(2,\bbR)$. 
Note that the center of  $ \widetilde \GL^+\!(2,\bbR)$ consists 
of the subgroup $D$  extended by the group $\mathcal Z$.  
We choose a generator $\tau \in \tilde{K}$ for the infinite cyclic group 
$\mathcal Z$. Then $\mathsf{p}(\tau) = \R_{\pi} \, ( \, = - \E_{2}) $  and the element 
$\tau^2 \in \mathcal Z$ generates the kernel of the covering 
projection $\mathsf{p}$.

\paragraph{The rotation angle function}
We consider the diffeomorphism 
$\theta: \tilde K \ra \bbR$  which satisfies $\theta(1) = 0$ and the relation 
$$ \mathsf{p} = \begin{matrix}{rc} \cos \theta & \sin \theta \\
- \sin \theta & \cos \theta   \end{matrix}  \; . $$
Using the Iwasawa decomposition we construct an angular map 
$$ \theta:  \widetilde \GL^+\!(2,\bbR) \ra \bbR  \;  $$ 
by extending $\theta: \tilde K \ra \bbR$. This means that  
 $\theta(g) = \theta(k)$, where $g \in \widetilde \GL^+\!(2,\bbR)$
 has decomposition $g= kan$.  
 Geometrically, $\theta(g)$ thus is  the angle of rotation or polar angle for the image 
$\mathsf{p}(g) (e_{1})$ of the first standard basis vector $e_{1}$.
More specifically, when considering the action 
of  $\widetilde \GL^+\!(2,\bbR)$ on $\tbbAo$ (see \S \ref{sect:gentori}), the function $\theta$ can also be read off as the 
$\theta$-coordinate of $$ g \cdot (r,0) = \; (s, \theta(g)) \,  \in \tbbAo \; . $$
\hspace{1cm}\\
%column vector of
%$\mathsf{p}(g) \in   \GL^+\!(2,\bbR)$. 

The following properties of the function $\theta$ are easy to
verify:
\begin{lemma} \label{lem:theta}
Let $g,h \in  \widetilde \GL^+\!(2,\bbR)$. Then 
\begin{enumerate}
\item $\theta(g) = 0$ if and only if $g \in AN$. 
\item $\theta(kg) = \theta(k) + \theta(g)$, for all $k \in \tilde K$;  
$\theta(\tau^m) = m \pi$.
\item $ | \theta(gh) - \theta(g) - \theta(h)| < \pi$.
\item $ | \theta(g) - \theta(g^{-1}) | <  \pi$.
\item $  | \theta(gkg^{-1}) -\theta(k) | < \pi$, for all $k \in \tilde K$.
\item $  | \theta (g h  g^{-1})|  < \pi$, for all $h \in AN$.
% \item $ | \theta(ghg^{-1} h ^{-1} )| < 2 \pi$.
\end{enumerate}
\end{lemma}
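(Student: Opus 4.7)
The strategy is to realise $\theta$ as a rotation-number cocycle for the action of $\tilde G = \widetilde{\GL}^+\!(2,\bbR)$ on the angular coordinate of $\tbbAo$. Each $g\in \tilde G$ preserves the radial foliation of $\tbbAo$, so induces an orientation-preserving diffeomorphism $\tilde\Phi_g:\bbR\to\bbR$ via $g\cdot(r,\phi) = (s,\tilde\Phi_g(\phi))$. This is functorial, $\tilde\Phi_{gh} = \tilde\Phi_g\circ\tilde\Phi_h$, and comparison with the geometric description of $\theta$ at the end of \S \ref{sect:GL2R} gives $\theta(g)=\tilde\Phi_g(0)$. Since $\tau^2$ is central and acts on $\tbbAo$ as $(r,\phi)\mapsto(r,\phi+2\pi)$, one has $\tilde\Phi_g(\phi+2\pi)=\tilde\Phi_g(\phi)+2\pi$; the key refinement is that $\tau$ itself is central and acts as $(r,\phi)\mapsto(r,\phi+\pi)$, yielding
\[ \tilde\Phi_g(\phi+\pi) \;=\; \tilde\Phi_g(\phi)+\pi. \]
Combined with strict monotonicity, this gives the Lipschitz-type bound: if $|\phi_2-\phi_1|<\pi$, then $|\tilde\Phi_g(\phi_2)-\tilde\Phi_g(\phi_1)|<\pi$, and the function $f_g(\phi):=\tilde\Phi_g(\phi)-\phi$ is $\pi$-periodic.

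Assertions (1) and (2) are then immediate from the definition of $\theta$ on $\tilde K$ and uniqueness of the Iwasawa decomposition. For (3) I rewrite $\theta(gh)-\theta(g)-\theta(h)=\tilde\Phi_g(\theta(h))-\tilde\Phi_g(0)-\theta(h)=f_g(\theta(h))-f_g(0)$; reducing $\theta(h)$ modulo $\pi$ to $[0,\pi)$ via the $\pi$-periodicity of $f_g$, strict monotonicity of $\tilde\Phi_g$ inside a single $\pi$-fundamental domain forces $|f_g(\theta(h))-f_g(0)|<\pi$. Statement (4) (which I read as $|\theta(g)+\theta(g^{-1})|<\pi$, since for $g=\tau^m$ one has $\theta(g)-\theta(g^{-1})=2m\pi$, unbounded) is the special case $h=g^{-1}$ of (3), using $\theta(1)=0$.

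For (5), an element $k\in\tilde K$ acts on the angular coordinate as the pure translation $\tilde\Phi_k(\phi)=\phi+\theta(k)$. Setting $\beta:=\tilde\Phi_g^{-1}(0)=\theta(g^{-1})$, a direct computation gives
\[ \theta(gkg^{-1})-\theta(k) \;=\; \tilde\Phi_g(\beta+\theta(k))-\tilde\Phi_g(\beta)-\theta(k) \;=\; f_g(\beta+\theta(k))-f_g(\beta), \]
which has absolute value less than $\pi$ by the same $\pi$-periodicity argument. For (6), I use that any $h\in AN$ satisfies $\theta(h)=0$ by (1), hence $\tilde\Phi_h$ fixes $0$; by $\pi$-periodicity it then fixes the whole lattice $\pi\bbZ$, so monotonicity forces $|\tilde\Phi_h(\beta)-\beta|<\pi$ for every $\beta\in\bbR$. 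Applying the Lipschitz bound to $\tilde\Phi_g$ and using $\tilde\Phi_g(\beta)=0$ together with $\tilde\Phi_g(\tilde\Phi_h(\beta))=\theta(ghg^{-1})$ gives $|\theta(ghg^{-1})|<\pi$.

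The one genuinely substantive step is the promotion of the tautological $2\pi$-periodicity (coming from the covering $\tbbAo\to\bbAo$) to the finer $\pi$-periodicity, which ultimately reflects the fact that $-\mathrm{Id}\in\GL^+(2,\bbR)$ centralises every element and lifts to the central generator $\tau$. Once this refinement is in hand, all six inequalities reduce to the same elementary monotonicity estimate for orientation-preserving homeomorphisms of $\bbR$ commuting with translation by $\pi$.
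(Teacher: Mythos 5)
Your argument is correct and rests on the same geometric facts the paper uses — the action of $\widetilde\GL^+\!(2,\bbR)$ on the angular coordinate of $\tbbAo$, the centrality of $\tau$ forcing the induced homeomorphisms of $\bbR$ to commute with translation by $\pi$, and the consequent fixing of the lattice $\pi\bbZ$ by $AN$ — only you package them more systematically via the cocycle $g\mapsto\tilde\Phi_g$ and the $\pi$-periodic displacement $f_g$, and you actually derive all six items where the paper proves only (5) and (6) and declares the rest easy. You are also right that item (4) as literally stated fails for $g=\tau^m$ and must be read as $|\theta(g)+\theta(g^{-1})|<\pi$, which is the case $h=g^{-1}$ of (3); this is a genuine typo in the statement that your proof correctly identifies and repairs.
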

\begin{proof} Recall (see \S \ref{sect:gentori}) that the action of $AN$ on $\tbbAo$ preserves all lines $(r, \ell \pi) \in \tbbAo$, where $\ell \in \bbZ$,
and the interior of all strips 
$$\bar \Omega_{\ell} = \{ (r, \theta) \mid \ell \pi \leq \theta \leq (\ell +1) \pi \} \, \subset \,  \tbbAo \, . $$
Therefore, for any $g \in 
\GL^+\!(2,\bbR)$ with $\theta(g) \in (\ell \pi,  (\ell +1) \pi)$ and $h \in AN$, we have $$ \ell \pi < \theta(h gh^{-1}) < (\ell +1) \pi \; . $$ In particular, one deduces (5) and (6). 
\end{proof}

\subsection{The induced covering on conjugacy classes}
Let $G$ be a Lie group, and 
$$\cC{G} = \{ C(g) = \Ad(G) g \mid g \in G \}$$
its set of conjugacy classes. The set $\cC{G}$ 
carries the quotient topology induced
from $G$. Observe that the center of $G$ acts on $\cC{G}$.
Indeed, for any $z \in Z(G)$, we have $z C(g) = C(zg)$.
Given a covering 
projection of Lie groups 
$\mathsf{p}: G' \ra G$,  there is a natural induced surjective map on
conjugacy classes $$ \cC{G'} \lra \cC{G} \,  , \; \,  C(g) \mapsto C(\mathsf{p}(g)) \; . $$  The kernel $\kappa$ of the covering is a central subgroup 
of $G'$ which acts on $\cC{G'}$. As a matter of fact, 
$ \cC{G} =  \cC{G'}/ \kappa$ is the quotient space of this
action.

\begin{proposition} \label{prop:ind_cov1}
The natural projection map on conjugacy classes 
$$ \cC{ \widetilde \GL^+\!(2,\bbR)} \, \longrightarrow \, \cC{ \GL^+\!(2,\bbR)} $$
is a covering map.
\end{proposition}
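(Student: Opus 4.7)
The plan is to identify the projection with the quotient map for a free and properly discontinuous action of the infinite cyclic deck group $\langle\tau^2\rangle = \ker\mathsf{p}$ on the space of conjugacy classes. Since $\tau^2$ is central in $\widetilde\GL^+\!(2,\bbR)$, left multiplication $\tilde g \mapsto \tau^{2n}\tilde g$ commutes with conjugation and descends to a continuous $\bbZ$-action $C(\tilde g) \mapsto C(\tau^{2n}\tilde g)$ on $\cC{\widetilde\GL^+\!(2,\bbR)}$. Since $\mathsf{p}$ is an open surjective homomorphism and both source and target carry the quotient topology from the conjugation actions of their respective groups, the induced map on conjugacy classes is an open continuous surjection whose fibres are exactly the $\langle\tau^2\rangle$-orbits. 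It therefore suffices to show that this $\bbZ$-action is free and properly discontinuous.

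For freeness, suppose $x\tilde g x^{-1} = \tau^{2n}\tilde g$ for some $n \neq 0$. Applying $\mathsf{p}$ shows that $\mathsf{p}(x)$ centralises $g_0 := \mathsf{p}(\tilde g)$ in $\GL^+\!(2,\bbR)$. When $g_0$ is non-central, its centraliser is, by the classification of conjugacy classes in $\GL^+\!(2,\bbR)$, a Lie subgroup with abelian identity component---a diagonal Cartan $A$ in the hyperbolic case, the complex torus $\GL(1,\bbC)$ in the elliptic case, or the parabolic $\{aI + bU : a\neq 0\}$ in the parabolic case---and the full preimage in $\widetilde\GL^+\!(2,\bbR)$ remains abelian (the identity component lifts simply-connectedly, while the other connected component is a central $\langle\tau\rangle$-translate of the first). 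Hence $[x,\tilde g] = 1$, contradicting $[x,\tilde g] = \tau^{2n} \neq 1$. When $g_0$ is central, $\tilde g$ is also central, so $C(\tilde g) = \{\tilde g\}$ and $\tau^{2n}\tilde g \neq \tilde g$ for $n\neq 0$ directly.

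For proper discontinuity, I would use the continuous rotation-angle function $\theta$ of Lemma \ref{lem:theta}, which by part (2) satisfies $\theta(\tau^{2n}\tilde g) = \theta(\tilde g) + 2n\pi$. The essential estimate to establish is that the image $\theta(C(\tilde g))$ of any single conjugacy class is contained in an open interval of length $2\pi$. I would prove this by reducing to normal forms: after conjugation, either $\tilde g = r\tilde R_\alpha$ (elliptic), in which case $rI$ is central and absorbed into the $A$-factor of the Iwasawa decomposition so that Lemma \ref{lem:theta}(5) gives
\[
\theta(x\tilde g x^{-1}) \,=\, \theta(x\tilde R_\alpha x^{-1}) \,\in\, (\alpha - \pi,\; \alpha + \pi);
\]
or $\tilde g = \tau^m h$ with $h \in AN$ (hyperbolic, parabolic, or central), in which case $\tau^m$ is central and Lemma \ref{lem:theta}(2) and (6) yield
\[
\theta(x\tilde g x^{-1}) \,=\, m\pi + \theta(xhx^{-1}) \,\in\, (m\pi - \pi,\; m\pi + \pi).
\]
Distinct $\tau^{2n}$-translates shift the containing interval by $2n\pi$, producing pairwise disjoint open intervals. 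Given $C_0 = C(\tilde g_0)$, choose a $\tau^2$-saturated open neighbourhood $\widetilde V$ of $\tilde g_0$ small enough that $\theta(\widetilde V)$ lies in an interval strictly narrower than the separation gap; then any $\tilde g_1 \in \widetilde V$ conjugate to $\tau^{2n}\tilde g_2$ with $\tilde g_2 \in \widetilde V$ would force the $\theta$-values to match, hence $n=0$. Descending to $\cC{\widetilde\GL^+\!(2,\bbR)}$ yields a neighbourhood of $C_0$ disjoint from each of its non-trivial $\langle\tau^2\rangle$-translates.

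The chief technical obstacle is the sharp variation bound $\theta(C(\tilde g)) \subset (m\pi - \pi,\; m\pi + \pi)$: naive iteration of Lemma \ref{lem:theta}(3)--(5) only produces a band of width up to $6\pi$, which is far too weak. The normal-form reduction is what brings the bound down to the required $2\pi$, by exploiting that central scalars commute with everything and can be absorbed into the Iwasawa $A$-factor before conjugating. A minor additional subtlety appears at the boundaries between strata---for instance where elliptic classes degenerate to central ones the integer $m$ jumps---but this only affects how the local stratification looks, not the fundamental disjointness of $\langle\tau^2\rangle$-translates that ensures the covering property.
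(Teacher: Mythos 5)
Your proof is correct and follows essentially the same route as the paper's: both reduce the statement to showing that the central deck group $\langle\tau^2\rangle$ acts freely and properly discontinuously on $\cC\,\widetilde\GL^+\!(2,\bbR)$, and both get the disjointness of translates from the estimate, via parts (5) and (6) of Lemma \ref{lem:theta} applied to normal forms in $\tilde K D$ and $\tau^m AN$, that $\theta$ stays within $\pi$ of the normal-form angle on each conjugacy class, played against the shift of $\theta$ by $2n\pi$ under $\tau^{2n}$. The only differences are minor: you add an independent centralizer argument for freeness (the paper extracts freeness from the disjointness of translates), and the phrase ``strictly narrower than the separation gap'' should be discarded---adjacent $2\pi$-translates of an open interval of length $2\pi$ have zero gap---in favour of the comparison you actually make next, namely that $\theta(\tilde g_1)$ lies within $\pi+\epsilon$ of the normal-form angle while $\theta(\tau^{2n}\tilde g_2)=2n\pi+\theta(\tilde g_2)$ lies within $\epsilon$ of that angle shifted by $2n\pi$, which forces $n=0$ exactly as in the paper.
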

\begin{proof} We consider the action  of the kernel $\kappa = \langle \tau ^2 \rangle$ of the covering map $\mathsf{p}$ on  $\cC{ \widetilde \GL^+\!(2,\bbR)}$. 
For this, let $g \in  \widetilde \GL^+\!(2,\bbR)$ and consider its neighborhood  $$ U_{\epsilon} = U_{\epsilon}(g) = \{ h \in \widetilde \GL^+\!(2,\bbR) \mid | \theta(g) - \theta (h)| < \epsilon \}  \; . $$
We also put  $$\cC U_{\epsilon} = \{  C(h) \mid h \in U_{\epsilon} \}$$ 
for the corresponding neighborhood of $C(g)$ in the
space of conjugacy classes. % $\cC{ \widetilde \GL^+\!(2,\bbR)}$.

Let us assume first that $g \in  \tilde K D$, $g \notin \mathcal Z D$.
Let $V_{\epsilon} \subset U_{\epsilon}(g)$ be a neighborhood
of $g$, such that all its elements are conjugate 
to an element of $ \tilde K \cdot D$. Let $h \in V_{\epsilon}$. 
By using (5) of Lemma \ref{lem:theta}, we deduce that, for all 
$\ell \in C(h)$, \begin{equation} \label{eq:Ueps}
| \theta(g) - \theta(\ell)| < \pi  + \epsilon \; . \end{equation}
%Remark now that  $\tau^k U_{\epsilon}(g) = 
%U_{\epsilon}(\tau^k g)$. 
%
We observe that the open subsets $\cC  V_{\epsilon}$ and  $\tau^{k}\cC  V_{\epsilon} = \cC \tau^{k} V_{\epsilon}$ of $\cC{ \widetilde \GL^+\!(2,\bbR)}$ intersect if and only if there exist elements
$h,\ell \in V_{\epsilon}$ such that $$ \tau^k  \ell \in C(h) \, . $$ 
If this is the case then, by the above estimate \eqref{eq:Ueps}, we have
 \begin{equation} \label{eq:Ueps2} | \theta(g) - \theta(\tau^k \ell) | = |  \theta(g) - k \pi - \theta(\ell)|< \pi  + \epsilon \; . \end{equation}
Furthermore, $  |\theta(g) -\theta(\ell)| < \epsilon$, since $\ell \in U_{\epsilon}$. If $\epsilon$ is small \eqref{eq:Ueps2} is possible if and only if $k \in \{0,1,-1\}$. For $\epsilon$ small enough, this implies that all neighborhoods of the form 
$\tau^{2k}\cC  V_{\epsilon} = \cC  \tau^{2k} V_{\epsilon}$ are mutually disjoint. Therefore,  $\cC V_{\epsilon}$ is a fundamental neighborhood of $\cC(g)$ for the action of $\kappa$ on 
$\cC\!  \GL^+\!(2,\bbR)$.

Assume next that $g \in AN$ is upper triangular. Since $g$ has real and positive eigenvalues, we may choose a small neighborhood $V_{\epsilon}$ as above such that all its elements
are conjugate to an element of $AN$ or of $\tilde K D$. 
In particular, for all $h \in V_{\epsilon}$ which are conjugate to
an element of  $AN$, we deduce from (6) of Lemma \ref{lem:theta} that the range of $\theta$ on the conjugacy class 
$\cC(h)$ is contained in the open interval $(- \pi, \pi)$. 
Consequently, 
$\theta (\cC( \tau^k h)) $ is contained in $( (k -1) \pi, (k +1) \pi)$.
It follows that all neighborhoods of the form 
$\tau^{2 k } \cC  V_{\epsilon}$ are mutually disjoint, and 
thus $\cC V_{\epsilon}$
is a fundamental neighborhood of $\cC(g)$ for the action of $\kappa$.

An analogous argument works for $g$ with negative eigenvalues, that is,
$ g \in \tau AN$. 
Therefore $\kappa$ acts discontinuously and freely on $\cC{ \widetilde \GL^+\!(2,\bbR)}$. This implies the proposition.
\end{proof}

\begin{corollary} \label{cor:ind_cov1}
The natural map on conjugacy classes 
$$  \cC{ \widetilde \GL\!(2,\bbR)} \, \longrightarrow \, \cC{ \GL\!(2,\bbR)} $$
is a local homeomorphism.
\end{corollary}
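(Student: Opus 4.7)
The plan is to reduce to Proposition \ref{prop:ind_cov1}, which handles the identity component. Since conjugation preserves the sign of the determinant, both spaces of conjugacy classes split disjointly as
\[
\cC\,\GL(2,\bbR) = \cC^+ \sqcup \cC^- \, , \qquad \cC\,\widetilde\GL(2,\bbR) = \widetilde\cC^+ \sqcup \widetilde\cC^- \, ,
\]
according to the sign of $\det$ of a representative. The projection $\mathsf{p}$ respects connected components, so the induced map of the corollary respects these splittings, and it suffices to establish the local homeomorphism property on each piece separately.

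For the positive part, I would express $\cC^+$ as the quotient of $\cC\,\GL^+(2,\bbR)$ by the $\bbZ/2$-action induced by conjugation with the fixed reflection $\sigma = \mathrm{diag}(1,-1)$; likewise $\widetilde\cC^+$ is the quotient of $\cC\,\widetilde\GL^+(2,\bbR)$ by conjugation with any chosen lift $\tilde\sigma \in \widetilde\GL^-(2,\bbR)$. The covering map of Proposition \ref{prop:ind_cov1} intertwines these two involutions. The key point is that the fundamental neighborhoods $\cC V_\epsilon$ constructed in the proof of Proposition \ref{prop:ind_cov1} can be chosen $\tilde\sigma$-invariant, since the angular function $\theta$ transforms predictably under conjugation by $\tilde\sigma$. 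Hence the $\bbZ/2$-action permutes the sheets of the covering consistently, and passing to quotients preserves the local homeomorphism property. For the negative part, every element of $\GL^-(2,\bbR)$ has negative determinant, hence two distinct real eigenvalues of opposite sign, so it is diagonalizable and its conjugacy class is determined by the unordered eigenvalue pair. Analogously, $\widetilde\GL^-$ is a single coset $\tilde\sigma\,\widetilde\GL^+$, and $\widetilde\GL$-conjugacy classes there are precisely $\widetilde\GL^+$-orbits. A direct repetition of the $\theta$-estimates of Lemma \ref{lem:theta} on this coset shows that the deck group $\kappa = \langle \tau^2\rangle$ acts discontinuously and freely on $\widetilde\cC^-$, so the restriction of the projection to the negative part is again a covering map.

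The main obstacle I foresee is the descent to the $\bbZ/2$-quotient in the positive-determinant case, particularly near conjugacy classes fixed by the involution (for instance, classes of elements in $\tilde K D$, where conjugation by $\tilde\sigma$ reverses the sign of the rotation angle). For such classes the $\bbZ/2$-action on the sheets is not free, but by symmetrizing the fundamental neighborhoods from Proposition \ref{prop:ind_cov1} under $\tilde\sigma$-conjugation one still obtains a locally injective continuous and open map between quotients, which is what is needed. No estimates beyond those of Lemma \ref{lem:theta} should be required; the argument is essentially Proposition \ref{prop:ind_cov1} performed equivariantly for the full conjugation action of $\widetilde\GL(2,\bbR)$.
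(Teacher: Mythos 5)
Your argument is correct and, for the positive-determinant classes, is essentially the paper's own proof: the paper simply invokes the commutative square relating the covering $\cC\,\widetilde\GL^+\!(2,\bbR)\to\cC\,\GL^+\!(2,\bbR)$ of Proposition \ref{prop:ind_cov1} to the map in question, which is exactly the descent through the residual $\bbZ/2$-conjugation action that you carry out, and your insistence on $\tilde\sigma$-invariant fundamental neighborhoods at classes fixed by the involution is precisely what is needed to make that one-line descent rigorous. You go beyond the paper in treating the orientation-reversing classes explicitly (the paper's proof is silent on that component), and your extra case --- diagonalizability of negative-determinant $2\times 2$ matrices plus the discontinuity of $\langle\tau^2\rangle$ on the conjugacy classes of the nontrivial coset --- is a sound and welcome completion rather than a departure from the paper's method.
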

\begin{proof} Indeed, local injectivity is implied 
by the commutative diagram 
\begin{align*} % \label{eq:subgeometry2}
 \xymatrix{
\; \;  \cC{ \widetilde \GL^+\!(2,\bbR)} \; \; \ar[d] \ar[r]%_(0.35){\approx}   
& \; \;  \cC{ \GL^+\!(2,\bbR)} \; \; \ar[d]  \\
\; \;  \cC{ \widetilde \GL\!(2,\bbR)} \; \; \ar[r] &  \; \; 
 \cC{ \GL\!(2,\bbR)}   \,  .}  
\end{align*}
\end{proof}

%
%The decomposition shows that the
%group manifold $\widetilde \SL(2,\bbR)$ is diffeomorphic to $\bbR^3$.
%Let $\N(B)$ denote the normalizer of a subgroup $B$ in
%$\widetilde \SL(2,\bbR)$. 
%The connected subgroups of $\GL(2,\bbR)$  and their normalizers are
%listed in table \ref{table1}. Here $\times$ denotes the direct
%product of subgroups, and $V \leq \bar{K}$ the four element subgroup
%of $\bar{K}$ which normalizes $A$.
%
%\begin{table}[h!]
%\begin{center}
%\begin{tabular}{|l|l|l|}    \hline
%         &                         & \\
%$B$ & $\N(B)$ & $\widetilde \SL(2,\bbR) \big/\N(B)$ \\
%\hline
%          &                          &     \\
% $N$,\ $A N$ & $ \mathcal Z \times A N$  &\ \ \ $K$\ \\
%          &                               &     \\
%\hline
%          &      &     \\
%\ \ $A$       &\ \ $\mathcal Z (V  A)$          &\ \ $K N\big/ V $\ \\
%          &                               &     \\
%\hline
%      &                               &     \\
%    \  \ $ \tilde K $ &  \  \  $\tilde K $      &  \ \  $AN $\\
%          &                               &     \\
%          \hline
%\end{tabular}
%\end{center}
%\vspace{1ex}
%\caption{Subgroups of $\widetilde \SL(2,\bbR) $}\label{table1}
%\end{table}

\paragraph{Closures of sets of conjugacy classes }
For any subset $M \subset G$ we define $\cC M$ to be the set of conjugacy classes of elements in $M$, and $\overline{\cC M}$ its closure in $\cC G$.
%As before let $A N \leq \widetilde \GL^+\!(2,\bbR)$ be the group of upper triangular matrices, and $D$ the subgroup of $A$ with identical diagonal entries. 
%
We shall require the following lemma: 

\begin{lemma} \label{lem:tGL2con}
With the above convention the following hold in the space of
 conjugacy classes $\cC  \widetilde \GL^+\!(2,\bbR)$:
\begin{enumerate}
\item Let $g_{i} \in  \widetilde \GL^+\!(2,\bbR)$ be a sequence of elements such that each $g_{i}$ is  conjugate to an element of $A N$ and such that
the sequence $|\theta(g_{i})|$ converges to $\pi$. Then the sequence 
$g_{i}$ leaves every compact subset of\/ $\widetilde \GL^+(2,\bbR)$. 
\item $\cC(AN) = \overline{\cC(AN)}$ is a closed subset of $\cC  \widetilde \GL^+\!(2,\bbR)$.
\item $\cC N = \overline{\cC N} 
%=  
%\overline{\cC\! \begin{matrix}{cc} 1 & 1 \\ 0 & 1 \end{matrix}}  
%
=  \left\{ \cC\!\begin{matrix}{cc} 1 & 1 \\ 0 & 1 \end{matrix}, 
\cC\!\begin{matrix}{cc} 1 & -1 \\ 0 & 1 \end{matrix}, \E_{2} \right\}$  consists of three conjugacy classes.
\item $\overline{\cC \tilde K} = \cC \tilde K \, \cup \, \bigcup_{k} 
\! \cC \, \tau^k  N 
%\cup \, 
% {- \E_{2}} \, 
%\tau \,  \cC N
 $, $\; \overline{\cC D \tilde K} =\,  D \,\overline{\cC \tilde K}$.  
\end{enumerate}
\end{lemma}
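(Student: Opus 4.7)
The plan is to establish (1) as a key compactness–parity estimate, derive (2) and (3) from it together with elementary conjugacy-class computations, and settle (4) using the covering structure from Proposition~\ref{prop:ind_cov1}. For (1), I would argue by contradiction: extracting a convergent subsequence $g_i \to g$, continuity of $\theta$ gives $|\theta(g)| = \pi$. Each projection $\mathsf{p}(g_i) \in \GL^+(2,\bbR)$ has two positive real eigenvalues, a closed condition (namely $\tr > 0$ together with $\tr^2 \ge 4 \det$), so $\mathsf{p}(g)$ is conjugate to some $a \in AN$ in $\GL^+(2,\bbR)$. Lifting the conjugating element to $\widetilde\GL^+(2,\bbR)$ and using that $\tau^2$ generates $\ker \mathsf{p}$, one obtains $g = \tau^{2k} \tilde h \tilde a \tilde h^{-1}$ for some $k \in \bbZ$, where $\tilde a \in AN$ is the canonical lift with $\theta(\tilde a)=0$. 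By item~(6) of Lemma~\ref{lem:theta}, $|\theta(\tilde h \tilde a \tilde h^{-1})| < \pi$, so $\theta(g) \in 2k\pi + (-\pi, \pi)$; the requirement $|\theta(g)| = \pi$ has no integer solution for $k$, contradiction.

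Claim~(2) is then immediate. The quotient map $\widetilde\GL^+(2,\bbR) \to \cC\widetilde\GL^+(2,\bbR)$ reduces closedness of $\cC(AN)$ to closedness of the saturated set $AN^{\mathrm{conj}} \subset \widetilde\GL^+(2,\bbR)$, and for a limit $g_n \to g$ with $g_n \in AN^{\mathrm{conj}}$, claim~(1) yields $|\theta(g)| < \pi$, so the lifting argument of~(1) with $k=0$ places $g$ in $AN^{\mathrm{conj}}$. Claim~(3) runs identically for closedness, with the closed condition ``$\tr = 2$ and $\det = 1$'' in place of ``two positive real eigenvalues''. The three-class description of $\cC N$ reduces to the observation that conjugation in $\GL^+(2,\bbR)$ by $\mathrm{diag}(\lambda,\mu)$ rescales the upper-right entry of a unipotent by the positive factor $\lambda/\mu$, preserving its sign, while a general conjugation preserves this sign invariant via Jordan-type invariants; the canonical lift of $N$ to $\widetilde\GL^+(2,\bbR)$ inherits the same three-class partition since conjugacy in the cover projects to conjugacy in $\GL^+(2,\bbR)$.

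For claim~(4), the key tool is that $\pi: \cC\widetilde\GL^+(2,\bbR) \to \cC\GL^+(2,\bbR)$ is a covering. Downstairs, $\overline{\cC K}$ equals the closed locus $\{\det = 1,\ |\tr| \le 2\}$ and decomposes as $\cC K \cup \cC N \cup \cC(-N)$. Pulling back via $\pi$ gives $\pi^{-1}(\overline{\cC K}) = \cC\tilde K \cup \bigcup_k \cC(\tau^k N)$, where even (resp.\ odd) $k$ lifts $N$ (resp.\ $-N$); continuity of $\pi$ yields the inclusion $\overline{\cC\tilde K} \subseteq \cC\tilde K \cup \bigcup_k \cC(\tau^k N)$. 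For the reverse inclusion, each $\cC(\tau^k n)$ is realized as a limit of $\cC\tilde K$-classes by the classical deformation in $\GL^+(2,\bbR)$ (conjugate $k_\theta$ by $\mathrm{diag}(t, t^{-1})$ with $\theta \to 0$ and $t^2 \sin\theta \to 1$, producing a path $\cC(k_\theta) \to \cC(n)$), lifted via covering path-lifting starting at the prescribed fiber point. The second identity $\overline{\cC(D\tilde K)} = D \cdot \overline{\cC\tilde K}$ follows because $D$ is central in $\widetilde\GL^+(2,\bbR)$, so left multiplication by each $d \in D$ is a homeomorphism of $\cC\widetilde\GL^+(2,\bbR)$, combined with the analogous identity $\overline{\cC(DK)} = D \cdot \overline{\cC K}$ downstairs pulled back through the covering.

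The main obstacle is claim~(1): its proof crucially combines a closed eigenvalue condition in $\GL^+(2,\bbR)$ with the strict inequality in item~(6) of Lemma~\ref{lem:theta} to produce a parity obstruction from the central subgroup $\langle \tau^2 \rangle$. This single compactness–parity mechanism is what forces the closedness in~(2) and~(3), and it underlies the fiber decomposition used in~(4); once it is in hand, the rest is routine covering-space topology.
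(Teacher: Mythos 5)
Your proof is correct and follows essentially the same route as the paper's: closedness of the positive-real-eigenvalue condition on $\mathsf{p}(g)$ combined with the estimates on $\theta$ from Lemma \ref{lem:theta} for parts (1)--(3), and an explicit degeneration of elliptic conjugacy classes to unipotent ones together with the centrality of $\tau$ and $D$ for part (4). The only (harmless) variation is in (1), where the paper observes directly that $|\theta(g)|=\pi$ forces $\mathsf{p}(g)$ to have a negative eigenvalue, while you reach the same contradiction through the parity of the $\tau^{2k}$ factor; you are in fact somewhat more explicit than the paper on (3) and on the second identity in (4).
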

\begin{proof} 
Let  $g \in \widetilde \GL^+\!(2,\bbR)$ such that 
$|\theta(g)|= \pi $, and $\bar g \in \GL^+\!(2,\bbR)$ the projection
of $g$. Clearly, by definition of $\theta$, $\bar g$  
has at least one negative eigenvalue. 
The sequence $g_{i}$ can not have a subsequence convergent
to $g$, since the corresponding $\bar g_{i}$ have positive eigenvalues. 
Thus  (1) follows.

To prove (2), we consider the subset $C(AN) \subset\widetilde \GL^+\!(2,\bbR)$ which is the preimage of $\cC(AN)$. For $g \in \widetilde \GL^+\!(2,\bbR)$, let $\dis(g)$ denote  the discriminant of the characteristic polynomial of $\mathsf{p}(g) \in \GL^+(2,\bbR)$. Then $g \in C(AN)$ if and only 
if the following hold: 
\begin{enumerate}
\item[i)] $|\theta(g)| < \pi$,
\item[ii)] $\dis(g) \geq 0$, 
\item[iii)] both eigenvalues of $\mathsf{p}(g)$ are positive.
\end{enumerate}
In view of (1), the condition i) is closed. Therefore,
$C(AN)$ is a closed subset of  $\widetilde \GL^+\!(2,\bbR)$, proving (2). 

Regarding (4), remark first that the closure of $\cC K$ in 
$\cC  \GL^+(2,\bbR)$ is contained in the union of $\cC K$
and $\cC N$, and $\cC \! -\E_{2}N$.  Now here is an example of  a  sequence 
$$k_{\varphi} = \begin{matrix}{cc} \cos \varphi + \sqrt{\sin \varphi} & - \sin \varphi - 1 \\ 
 \sin \varphi &  \cos \varphi - \sqrt{\sin \varphi} 
 \end{matrix}$$
 of matrices, where $k_{\varphi}$ is conjugate to the rotation $\R_{\varphi} \in \tilde K$, and 
 which, for $\varphi \rightarrow 0$ is converging to 
 $$ 
 \begin{matrix}{lr} 1 &  - 1 \\ 
 0 &  1
 \end{matrix} \, \in  N  \; .
 $$ Therefore, $\cC N$ is in the closure of $\cC \tilde K$. 
 Since $\cC \tilde K$ is invariant by left-multiplication with $\tau$,
 the same is true for its closure. 
 This shows that
 $\tau^k  \, \cC N \subset \overline{\cC \tilde K}$. 
\end{proof}

Every element $g \in \widetilde \GL^+\!(2,\bbR)$ with 
$\mathsf{p}(g) \in AN \leq  \GL^+\!(2,\bbR)$ is of the form 
$\tau^k g_{o}$, 
where $g_{o} \in AN  \leq  \widetilde \GL^+\!(2,\bbR)$. 
The integer $\lev g = k  \in \bbZ$ is
called the \emph{level} of $g$. The notion of level is 
defined for the conjugacy class $\cC(g)$ of $g$. 
The following states that the level separates 
these conjugacy classes. %  in $\cC \widetilde \GL^+\!(2,\bbR)$.
In particular, the subset of conjugacy classes 
$$ \tau^k \cC AN \, \subset \, \cC \widetilde \GL^+\!(2,\bbR)$$
is closed.

\begin{proposition}  \label{prop:levclosed}
Let $g_{i} \in  \widetilde \GL^+\!(2,\bbR)$ 
be a sequence  such that each $\mathsf{p}(g_{i})$  is  conjugate to an element of $A N$. If the sequence of conjugacy classes $\cC(g_{i})$ converges to $\cC(h) \in \cC \widetilde \GL^+\!(2,\bbR)$ then $\mathsf{p}(h)$ is conjugate to an element of  $AN$, and there exists $i_{0}$, such that for all $i \geq i_{0}$, $\lev g_{i} = \lev  h$. 
\end{proposition}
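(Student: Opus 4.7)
The plan is to control the sequence of levels via the angle function $\theta$ from Lemma~\ref{lem:theta}, and then to conclude using the closedness of $\cC(AN)$ from Lemma~\ref{lem:tGL2con}(2). Observe that since $\tau$ is central in $\widetilde\GL^+(2,\bbR)$, multiplication by $\tau^k$ is a self-homeomorphism of $\cC\widetilde\GL^+(2,\bbR)$, so each translate $\tau^k\cC(AN)$ is also closed.

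\emph{Step 1 (levels are bounded).} Writing $g_i = \tau^{k_i}g_i'$ with $g_i'\in \widetilde{AN}$ and $k_i = \lev g_i$, centrality of $\tau^{k_i}$ gives $\tilde h g_i\tilde h^{-1} = \tau^{k_i}(\tilde h g_i'\tilde h^{-1})$ for every $\tilde h$. By Lemma~\ref{lem:theta}(2) and (6) we then have $\theta(\tilde h g_i\tilde h^{-1}) = k_i\pi + \theta(\tilde h g_i'\tilde h^{-1}) \in ((k_i-1)\pi,(k_i+1)\pi)$. Convergence $\cC(g_i)\to \cC(h)$ in the quotient topology provides representatives $\tilde g_i\in\cC(g_i)$ with $\tilde g_i\to h$ in $\widetilde\GL^+(2,\bbR)$; continuity of $\theta$ then gives $\theta(\tilde g_i)\to \theta(h)$, which combined with the interval inclusion above forces the integer sequence $(k_i)$ to be bounded.

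\emph{Step 2 (level strata are pairwise disjoint).} If $\tau^k g_0 = \tilde h g_1\tilde h^{-1}$ with $g_0,g_1 \in \widetilde{AN}$, applying $\theta$ to both sides and invoking Lemma~\ref{lem:theta}(1),(2),(6) yields $k\pi\in(-\pi,\pi)$, hence $k=0$. Therefore the family $\{\tau^k\cC(AN)\}_{k\in\bbZ}$ is pairwise disjoint.

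\emph{Step 3 (level stability).} Any subsequence of the bounded integer sequence $(k_i)$ on which $k_i = k$ is constant has $\cC(g_i)\in \tau^k\cC(AN)$, so by closedness $\cC(h)\in \tau^k\cC(AN)$. Step~2 then identifies this $k$ as the unique level of $\cC(h)$; in particular $\mathsf{p}(h)$ is conjugate to an element of $AN$ (the parity of $k$ is inherited from that of the $k_i$, since each $\mathsf{p}(g_i)$ is conjugate to an element of $AN$). Hence every subsequential limit of $(k_i)$ equals $\lev h$, and as $(k_i)$ is a bounded integer sequence it is eventually constant equal to $\lev h$.

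The main technical point is Step~1, where the abstract convergence of conjugacy classes is converted into control on $\theta$ by choosing explicit convergent representatives; the remainder is a formal consequence of the closedness and disjointness of the level strata.
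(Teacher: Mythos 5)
Your proof is correct, and it reaches the conclusion by a somewhat different route than the paper. The paper's argument is in two short strokes: the discriminant and the eigenvalues of $\mathsf{p}(g_{i})$ are continuous class functions, so passing to the limit shows at once that $\mathsf{p}(h)$ is conjugate into $AN$; then Proposition~\ref{prop:ind_cov1} (the group $\langle \tau^{2}\rangle$ acts properly discontinuously and freely on $\cC\, \tGL^+\!(2,\bbR)$) supplies a single fundamental neighbourhood of $\cC(h)$ on which the level is constant. You instead first bound the integer sequence $\lev g_{i}$ by lifting the convergent sequence of classes to convergent representatives and comparing $\theta$-values, and then finish via the pairwise disjointness and closedness of the translates $\tau^{k}\cC(AN)$ from Lemma~\ref{lem:tGL2con}(2). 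Both arguments ultimately rest on the same estimate, Lemma~\ref{lem:theta}(6), which confines $\theta$ on a conjugacy class of level $k$ to the interval $((k-1)\pi,(k+1)\pi)$; the paper channels it through the covering statement for conjugacy classes, while you channel it through the closedness of the level strata. Your version avoids quoting Proposition~\ref{prop:ind_cov1} and makes the parity bookkeeping explicit (why $\lev h$ is even, hence why $\mathsf{p}(h)$ lands in $\cC(AN)$ rather than in $-\cC(AN)$), which the paper handles instead by the spectral-invariant argument; the paper's version is shorter. The one step you should justify is the existence of representatives $\tilde g_{i}\in \cC(g_{i})$ with $\tilde g_{i}\to h$: this is not automatic for an arbitrary quotient, but holds here because the orbit map $\tGL^+\!(2,\bbR)\to \cC\, \tGL^+\!(2,\bbR)$ is open (the conjugation-saturation of an open set is open) and the source is metrizable. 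Alternatively, you can avoid lifting entirely by observing that the image of $\{g : |\theta(g)-\theta(h)|<1\}$ under the (open) orbit map is a neighbourhood of $\cC(h)$, so for large $i$ the class $\cC(g_{i})$ meets this set, which already bounds $k_{i}$.
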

\begin{proof} The discriminant of the characteristic polynomial $\dis \mathsf{p}(g_{i})$ and the 
eigenvalues of $\mathsf{p}(g_{i})$ are continuous functions 
on the conjugacy classes. Therefore, $\mathsf{p}(h)$ is conjugate in  $\GL^+\!(2,\bbR)$ to an element of  $AN$. %  or $ -\E_{2}\,  AN$. 
By assumption, all $\mathsf{p}(g_{i})$ are contained 
in $\cC AN$. 
%  (or $ - \E_{2} \, \cC AN$ respectively.) Assuming, without loss 
% of generality, the first case, 
Therefore, we have $g_{i} \in \tau^{2k_{i}} \cC(h_{i})$ with 
$h_{i} \in  \cC AN$. By Proposition \ref{prop:ind_cov1}, 
the group generated by $\tau^2$ acts properly discontinuously
on $\cC \widetilde \GL^+\!(2,\bbR)$.
In particular, there exists a neighbourhood $\cC U$ of $\cC(h)$ 
such that $\cC(g_{i}) \in \cC U$ implies $\lev g_{i} = \lev h$.
\end{proof}
%In particular, the subsets of conjugacy classes 
%$\cC(\tau^k AN)$ are closed in the space of all conjugacy classes $\cC(\widetilde \GL^+\!(2,\bbR))$.
%
Incidentally, the assertion of Proposition \ref{prop:ind_cov1} fails 
to be true when considering the situation for
the covering $$ \mathrm{P}\mathsf{p}: \widetilde \GL(2,\bbR)  \ra \PGL(2,\bbR) = \GL(2,\bbR) / \{ \pm \mathrm{E}_{2} \} \; . $$
\begin{example} \label{ex:CPGL}
We consider the induced map 
\begin{equation} \label{eq:cPGL}
\cC \widetilde \GL(2,\bbR)  \lra \cC \PGL(2,\bbR)
\end{equation} on conjugacy 
classes. It is the quotient map of $\cC \widetilde  \GL^+\!(2,\bbR)$ with 
respect to the action of the central subgroup $\cZ = \langle \tau \rangle$, generated by the element $\tau$.
Then the  $\widetilde \GL(2,\bbR)$-conjugacy class $\cC(g_{a})$, where $g_{a} \in \widetilde \GL^+\!(2,\bbR)$ is a lift of 
$$  \bar g_{a } = \begin{matrix}{cc} 0 & a \\ -a & 0 \end{matrix}  \in  \GL^+\!(2,\bbR) \; , \; a \neq 0 , $$
is fixed by translation with $\tau$. Indeed,  $\tau \cC(g_{a}) =
\cC(g_{-a}) =  \cC(g_{a})$. Therefore, the map \eqref{eq:cPGL} cannot be
a covering. It is \emph{not even a locally injective map}:
Indeed, in every neighborhood of $g_{a}$ there exist elements $g_{a,\epsilon}$ projecting to matrices of the form 
$$  \bar g_{a,\epsilon } = \begin{matrix}{rc} \epsilon  & a \\ -a  & \epsilon \end{matrix}  \in  \GL^+\!(2,\bbR) \; , \; \epsilon \neq 0 .$$
Then for the conjugacy classes in $\GL(2,\bbR)$, we have  $\cC \, \mathsf{p}(g_{a,\epsilon}) = \cC \, \mathsf{p}(g_{-a,\epsilon})$ and therefore 
$\cC\,  \mathrm{P}\mathsf{p}(g_{a,\epsilon}) = \cC\,  \mathrm{P}\mathsf{p} ( \tau g_{-a, \epsilon}) =  \cC\,  \mathrm{P}\mathsf{p} ( g_{ a,- \epsilon})$.  But clearly, $g_{a,\epsilon}$ and $g_{a,- \epsilon}$ are not
conjugate in $\widetilde \GL(2,\bbR)$ unless $\epsilon = 0$. 
Therefore,  \emph{the map \eqref{eq:cPGL}
is a twofold branched covering near $g_{a}$}.
\end{example}

%%%%%%%%%%%%%%%%%%%%%%%%%%%%%%
\subsection{Conjugacy classes of homomorphisms} 
Let $G$ be a Lie group. 
Recall that the evaluation map on the generators 
 $$ \Hom(\bbZ^2,  G) \ra G \times G \, , \;   \rho \mapsto \left(\rho(e_{1}), \rho(e_{2}) \right)$$
% given by $ \rho \mapsto \left(\rho(e_{1}), \rho(e_{2}) \right)$ 
identifies the space $\Hom(\bbZ^2,  G)$ of all homomorphisms 
$\bbZ^2 \ra G$ homeomorphically with an analytic subvariety of $G \times G$. 
With respect to this map,  the orbits of the conjugation action
of $G$ on $\Hom(\bbZ^2,  G)$ correspond to sets of the form 
$$ C(g_{1}, g_{2}) =  \{ (g g_{1} g^{-1}, g g_{2} g^{-1}) \mid g \in G\} \subset G \times G \; .$$
We put 
$$ {\mathcal X}(\bbZ^2,  G) = \Hom(\bbZ^2, G) / G $$
for the space of conjugacy  classes of homomorphisms
$\bbZ^2 \ra G$ (also called the character variety). 
Given a covering homomorphism $\mathsf{p}: G' \ra G$  there is a natural induced surjective map 
\begin{equation} \label{eq:indmap}
{\mathcal X}(\bbZ^2,  G') \lra   {\mathcal X}(\bbZ^2,  G) \; . 
\end{equation}
Returning to our specific context we introduce the following extension of Proposition \ref{prop:ind_cov1}:

\begin{proposition} \label{prop:ind_cov2}
The induced map on conjugacy classes of homomorphisms
$$  \Hom(\bbZ^2,  \widetilde \GL^+\!(2,\bbR)) / 
\widetilde \GL^+\!(2,\bbR) \lra  \Hom(\bbZ^2,   \GL^+\!(2,\bbR)) / 
 \GL^+\!(2,\bbR)$$
 is a covering map.
\end{proposition}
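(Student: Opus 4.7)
The plan is to realise the map as the orbit projection for a free, properly discontinuous action of $\bbZ^2$ on the character variety $\mathcal X(\bbZ^2,\widetilde\GL^+\!(2,\bbR))$. Let $\kappa=\langle\tau^2\rangle$ be the kernel of $\mathsf{p}$. Since $\kappa$ is central in $\widetilde\GL^+\!(2,\bbR)$, the formula
\[ \big((a_1,a_2)\cdot\rho\big)(e_i)\,=\,\tau^{2a_i}\rho(e_i), \qquad i=1,2, \]
defines a $\kappa\times\kappa\cong\bbZ^2$-action on $\Hom(\bbZ^2,\widetilde\GL^+\!(2,\bbR))$ — the two images still commute because $\tau^2$ is central — which commutes with the $\widetilde\GL^+\!(2,\bbR)$-conjugation action and therefore descends to $\mathcal X(\bbZ^2,\widetilde\GL^+\!(2,\bbR))$. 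Moreover $\mathsf{p}\circ(a_1,a_2)\cdot\rho=\mathsf{p}\circ\rho$, so the orbits of this action are contained in the fibres of the induced map.

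First I would verify surjectivity by showing that every commuting pair $(\bar g_1,\bar g_2)$ in $\GL^+\!(2,\bbR)$ admits a commuting lift. A case distinction on the centralizer $Z_{\GL^+}(\bar g_1)$ suffices: if $\bar g_1$ is a scalar, every lift is central in $\widetilde\GL^+\!(2,\bbR)$ and commutes with every lift of $\bar g_2$; otherwise $Z_{\GL^+}(\bar g_1)^0$ is conjugate to one of the two-dimensional abelian subgroups $A$, $DN$, or $KD$, each of which has an abelian preimage in $\widetilde\GL^+\!(2,\bbR)$ (since $A$ and $DN$ are simply connected and $\widetilde K\cdot D\cong\bbR\times\bbR_{>0}$). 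Applied to $\bar g_2\in Z_{\GL^+}(\bar g_1)$, this yields commuting lifts. The same case analysis gives freeness of the $\bbZ^2$-action: if $h\in\widetilde\GL^+\!(2,\bbR)$ satisfies $hg_ih^{-1}=\tau^{2a_i}g_i$ for $i=1,2$, then $\bar h$ lies in the common centralizer $Z_{\GL^+}(\bar g_1,\bar g_2)$, whose preimage is abelian, so $h$ commutes with $g_1,g_2$ inside $\widetilde\GL^+\!(2,\bbR)$, forcing $a_1=a_2=0$. This also shows that two commuting lifts of the same pair in the same $\widetilde\GL^+\!(2,\bbR)$-conjugacy class differ by a (unique) element of $\kappa\times\kappa$.

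The technical heart is to show that the $\bbZ^2$-action on $\mathcal X(\bbZ^2,\widetilde\GL^+\!(2,\bbR))$ is properly discontinuous, which I would establish by adapting the argument of Proposition \ref{prop:ind_cov1} coordinate-wise. Given a commuting pair $(g_1,g_2)$, choose for small $\epsilon>0$ a neighbourhood of the form $V_\epsilon=V_\epsilon^{(1)}\times V_\epsilon^{(2)}\cap\Hom(\bbZ^2,\widetilde\GL^+\!(2,\bbR))$, where $V_\epsilon^{(i)}$ is a neighbourhood of $g_i$ inside $\widetilde\GL^+\!(2,\bbR)$ on which $\theta$ stays within $\epsilon$ of $\theta(g_i)$ and whose elements fall into one of the generic types (elliptic, hyperbolic, or unipotent) treated in Proposition \ref{prop:ind_cov1}. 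By Lemma \ref{lem:theta}(5)--(6) — with the same case-by-case argument used for the singleton covering — the simultaneous $\widetilde\GL^+\!(2,\bbR)$-conjugate of any $(\rho(e_1),\rho(e_2))\in V_\epsilon$ still satisfies $|\theta(h\rho(e_i)h^{-1})-\theta(g_i)|<\pi+\epsilon$. Translation by $(a_1,a_2)\in\bbZ^2$ shifts $\theta$ in the $i$-th factor by $2a_i\pi$, so for $\epsilon$ small enough the $\widetilde\GL^+\!(2,\bbR)$-saturations of the non-trivial $\bbZ^2$-translates of $V_\epsilon$ have disjoint $\theta$-ranges in at least one factor. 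Thus the images of $V_\epsilon$ in $\mathcal X(\bbZ^2,\widetilde\GL^+\!(2,\bbR))$ form the required evenly-covered neighbourhoods.

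Combining the three steps, the induced map is the quotient projection of a free properly discontinuous $\bbZ^2$-action, and its image is the whole character variety on the right, so it is a covering. The main obstacle is the last step: unlike in Proposition \ref{prop:ind_cov1}, one has to control $\theta$ on the simultaneous orbit of two commuting elements, and the three possible types of conjugacy class — elliptic, hyperbolic, and unipotent, with the further subtleties at negative-eigenvalue boundary classes already encountered in Proposition \ref{prop:ind_cov1} — may occur independently in each factor, so the case analysis has to be run product-wise while respecting the commutation constraint that ties the two factors together.
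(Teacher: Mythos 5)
Your proposal follows essentially the same route as the paper: realise the map as the quotient of the character variety by the central $\kappa\times\kappa\cong\bbZ^2$-action and verify that this action is free and properly discontinuous using product neighbourhoods $U_{\epsilon}(g_1)\times U_{\epsilon}(g_2)$ controlled coordinate-wise by the rotation-angle function exactly as in Proposition \ref{prop:ind_cov1} (your explicit checks of surjectivity and of freeness via abelian preimages of centralizers are welcome additions that the paper leaves implicit). One phrasing slip in the technical heart: for $a_i=\pm 1$ the $\theta$-ranges of the saturations of $V_{\epsilon}^{(i)}$ and of $\tau^{2a_i}V_{\epsilon}^{(i)}$ are \emph{not} literally disjoint (they overlap on an interval of length about $2\epsilon$ near $\theta(g_i)\pm\pi$), so one must conclude as in Proposition \ref{prop:ind_cov1}: a nonempty intersection of the saturated translates produces elements $h,\ell\in V_{\epsilon}^{(i)}$ with $\tau^{2a_i}\ell$ conjugate to $h$, and combining the $(\pi+\epsilon)$-bound on conjugates with the $\epsilon$-bound on $V_{\epsilon}^{(i)}$ itself forces $|2a_i\pi|<\pi+2\epsilon$, which is what actually rules out $a_i\neq 0$.
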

\begin{proof} 
%We claim that this is the case 
%for the covering $ \mathsf{p}: \widetilde \GL^+\!(2,\bbR) \ra 
%\GL^+(2,\bbR)$, and  
Let $\Z(G)$ denote the center of $G$.
This representation of  $\Hom(\bbZ^2,  G)$ as a subset of
$G \times G$ gives rise 
to an action
of $\Z(G) \times \Z(G)$ on $\Hom(\bbZ^2,  G)$ which is determined by 
$$   \left( (z_{1}, z_{2})\cdot \rho\right) \, (e_{i}) \, =  \; z_{i}  \rho(e_{i})\; , $$ where $z_{i} \in \Z(G)$.  
Moreover, % Furthermore, this 
it factors to an action of $\Z(G) \times \Z(G)$ 
on the space of conjugacy classes 
${\mathcal X}(\bbZ^2,  G)$.  

Let  $\kappa \leq Z(G')$ denote the kernel of $\mathsf{p}: G' \ra G$. By the above, $\kappa \times \kappa$ acts on $\Hom(\bbZ^2, G')$, and the action factors to an action on the space of conjugacy classes ${\mathcal X}(\bbZ^2,  G')$, and, as is easily verified, the natural map 
$$ {\mathcal X}(\bbZ^2,  G')/ \kappa \times \kappa \lra {\mathcal X}(\bbZ^2,  G) \;  $$ which is induced on the quotient 
is a homeomorphism. Therefore, \ref{eq:indmap}
% the map $ {\mathcal X}(\bbZ^2,  G') \lra {\mathcal X}(\bbZ^2,  G)$
 is a covering if and only if 
 $\kappa \times \kappa$ acts discontinuously and freely on 
 $ {\mathcal X}(\bbZ^2,  G')$. 

Here  we consider only the case
$G'= \widetilde \GL^+\!(2,\bbR)$ and $G=
\GL^+(2,\bbR)$. 
%We carry over the constructions in the proof of 
%Proposition \ref{prop:ind_cov1} 
For any $(g_{1}, g_{2}) \in G' \times G'$, choose
open neighborhoods $U_{\epsilon}(g_{i})$ as 
in the proof of Proposition \ref{prop:ind_cov1}.
As follows from this previous proof,  the open neighborhood 
$$ U_{\epsilon}(g_{1},g_{2})  = U_{\epsilon}(g_{1}) \times U_{\epsilon}(g_{2}) \;  $$ 
projects to a fundamental neighborhood $ \mathcal C  U_{\epsilon}$ for  the action of $\kappa \times \kappa$ on the set of all $G'$-orbits. 
Hence, $\kappa \times \kappa$ acts discontinuously on $G'$-orbits. 
\end{proof}

\begin{corollary} \label{cor:ind_cov2}
The natural map on conjugacy classes of homomorphisms
$$  \Hom(\bbZ^2,  \widetilde \GL^+\!(2,\bbR)) / 
\widetilde \GL\!(2,\bbR) \lra  \Hom(\bbZ^2,   \GL^+\!(2,\bbR)) / 
 \GL\!(2,\bbR)$$
 is a local homeomorphism.
\end{corollary}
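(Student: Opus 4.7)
The plan is to deduce the corollary from Proposition \ref{prop:ind_cov2} by descending along one further quotient. Writing
\begin{align*}
\mathcal{X}^+_o &= \Hom(\bbZ^2, \widetilde\GL^+\!(2,\bbR))/\widetilde\GL^+\!(2,\bbR), &
\mathcal{X}^+ &= \Hom(\bbZ^2, \GL^+\!(2,\bbR))/\GL^+\!(2,\bbR), \\
\mathcal{X}_o &= \Hom(\bbZ^2, \widetilde\GL^+\!(2,\bbR))/\widetilde\GL\!(2,\bbR), &
\mathcal{X} &= \Hom(\bbZ^2, \GL^+\!(2,\bbR))/\GL\!(2,\bbR),
\end{align*}
one assembles the commutative square
\[ \xymatrix{
\mathcal{X}^+_o \ar[r]^\phi \ar[d]_{\pi_o} & \mathcal{X}^+ \ar[d]^{\bar\pi} \\
\mathcal{X}_o \ar[r]^{\bar\phi} & \mathcal{X}
} \]
in which $\phi$ is the covering map supplied by Proposition \ref{prop:ind_cov2}, while $\pi_o$ and $\bar\pi$ are the quotient maps for the residual conjugation action of the two-element group $\widetilde\GL\!(2,\bbR)/\widetilde\GL^+\!(2,\bbR) \cong \GL\!(2,\bbR)/\GL^+\!(2,\bbR)$. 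Fixing $\sigma \in \widetilde\GL^-\!(2,\bbR)$ with image $\bar\sigma \in \GL^-\!(2,\bbR)$, this action is implemented by conjugation with $\sigma$ and $\bar\sigma$ respectively, and $\phi$ is equivariant.

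I would dispose first of openness of $\bar\phi$, which is routine: $\phi$ is open (being a covering), the quotient maps $\pi_o, \bar\pi$ are open, and commutativity of the square together with surjectivity of $\pi_o$ forces $\bar\phi$ to carry open sets to open sets. The substantive step is local injectivity.

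For local injectivity at $[\rho]_o \in \mathcal{X}_o$, I would fix a lift $x_1 \in \pi_o^{-1}([\rho]_o) \subset \mathcal{X}^+_o$, so that the fiber $\pi_o^{-1}([\rho]_o) = \{x_1, \sigma x_1\}$ has size one or two, and choose a neighborhood $U$ of $x_1$ on which Proposition \ref{prop:ind_cov2} makes $\phi$ a homeomorphism onto $V = \phi(U)$. I would then shrink $U$ so that it is adapted to the involution: in the \emph{fixed-point case} $\sigma x_1 = x_1$, replace $U$ by $U \cap \sigma U$ to obtain a $\sigma$-invariant neighborhood, and a correspondingly $\bar\sigma$-invariant $V$ by equivariance; in the \emph{free case} $\sigma x_1 \neq x_1$, use the covering structure (sheets of $\phi$ over $V$ are disjoint) to arrange $U \cap \sigma U = \emptyset$. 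For $y_1, y_2 \in \pi_o(U)$ with $\bar\phi(y_1) = \bar\phi(y_2)$, any lifts $\tilde y_i \in U$ satisfy $\phi(\tilde y_2) \in \{\phi(\tilde y_1),\, \bar\sigma\phi(\tilde y_1)\} = \{\phi(\tilde y_1),\, \phi(\sigma \tilde y_1)\}$; the first alternative forces $\tilde y_2 = \tilde y_1$ by injectivity of $\phi|_U$, while the second is either excluded by disjointness (free case) or yields $\tilde y_2 = \sigma \tilde y_1$ and hence $y_1 = y_2$ in $\mathcal{X}_o$ (fixed-point case).

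The delicate point, and really the only obstacle, is the fixed-point case, where $\pi_o$ fails to be a local homeomorphism near $x_1$. There the argument rests entirely on the equivariance of $\phi$, which guarantees that the involution on $U$ corresponds, through $\phi$, to the involution on $V$. This is precisely what rules out the branched-covering pathology illustrated in Example \ref{ex:CPGL}: in the present situation the upstairs and downstairs $\bbZ/2\bbZ$-actions are induced from compatible elements $\sigma$ and $\bar\sigma = \mathsf{p}(\sigma)$, rather than from a central element whose fixed-point behavior changes under the covering.
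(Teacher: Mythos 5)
Your global strategy coincides with what the paper tacitly intends: Corollary \ref{cor:ind_cov2} is stated without proof, and the parallel Corollary \ref{cor:ind_cov1} is justified only by exhibiting the commutative square, so the fibrewise analysis of the residual $\bbZ/2$-quotient that you carry out is exactly the argument that needs to be supplied, and your fixed-point case is handled correctly. The gap is in the free case. From $U \cap \sigma U = \emptyset$ you cannot conclude that the alternative $\phi(\tilde y_2) = \bar\sigma\phi(\tilde y_1) = \phi(\sigma\tilde y_1)$ is excluded: the points $\tilde y_2 \in U$ and $\sigma\tilde y_1 \in \sigma U$ may be distinct points lying on different sheets over the \emph{same} point of the base, and $\phi$ is injective on $U$ and on $\sigma U$ separately but not on their union. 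What the argument actually requires is $\phi(U) \cap \bar\sigma\phi(U) = \emptyset$, and this is unattainable precisely when $\sigma x_1 \neq x_1$ but $\phi(\sigma x_1) = \phi(x_1)$, that is, when the free orbit $\{x_1, \sigma x_1\}$ is contained in a single fibre of $\phi$. Your closing claim --- that equivariance rules out the branching of Example \ref{ex:CPGL} --- is where this possibility is silently discarded: equivariance only shows that $\sigma$-fixed points map to $\bar\sigma$-fixed points, and the converse fails here, because $\bar\sigma$ centralises $-\E_{2}$ whereas $\sigma \tau \sigma^{-1} = \tau^{-1}$ (conjugation by the non-identity component inverts $\tilde K$). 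The fixed-point behaviour of the involution therefore \emph{does} change under the covering, by the same mechanism as for the central element in Example \ref{ex:CPGL}.

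Moreover the bad configuration is realised, so this is not a technicality you can tighten. Take $\rho(e_1) = \tau d_\lambda$ and $\rho(e_2) = d_\mu$ with $d_\lambda, d_\mu \in D$; then $[\rho]$ and $[\sigma\rho\sigma^{-1}]$ are distinct points of $\Hom(\bbZ^2, \widetilde\GL^+\!(2,\bbR))/\widetilde\GL^+\!(2,\bbR)$ (the generators involve the distinct central elements $\tau$ and $\tau^{-1}$) lying in the same $\phi$-fibre. Perturb to $\rho_{\pm\epsilon}(e_1) = \tau\, \tilde u_{\pm\epsilon}\, d_\lambda$, where $\tilde u_{\pm\epsilon}$ denotes the lift in $N \leq \widetilde\GL^+\!(2,\bbR)$ of $\begin{matrix}{cc} 1 & \pm\epsilon \\ 0 & 1 \end{matrix}$. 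The classes $[\rho_\epsilon]$ and $[\rho_{-\epsilon}]$ are not $\widetilde\GL(2,\bbR)$-conjugate: not by the identity component, because $u_\epsilon$ and $u_{-\epsilon}$ lie in the two distinct unipotent conjugacy classes of $\GL^+(2,\bbR)$ (cf.\ Lemma \ref{lem:tGL2con}(3)); and not by the other component, because such a conjugation replaces $\tau$ by $\tau^{-1}$ and the level separates conjugacy classes (Proposition \ref{prop:levclosed}). Yet $\mathsf{p}\circ\rho_\epsilon$ and $\mathsf{p}\circ\rho_{-\epsilon}$ are conjugate by $\mathrm{diag}(1,-1) \in \GL(2,\bbR)$. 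Since both classes converge to $[\rho]$, the map of the corollary is two-to-one on every punctured neighbourhood of $[\rho]$ and injective at $[\rho]$ itself --- a branched covering rather than a local homeomorphism at such points. You should therefore treat the free sub-case with $\phi(x_1) = \phi(\sigma x_1)$ not as a loose end but as a genuine obstruction to the statement as formulated; note that it does not affect Proposition \ref{prop:ind_cov2} itself, where the conjugating group is $\widetilde\GL^+\!(2,\bbR)$ and the involution never enters.
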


\section{Example of a two-dimensional geometry where
$hol$ is not a local homeomorphism}

Let $(X,G)$ be the homogeneous geometry which is defined by
the natural action of $\PGL(2,\bbR) =  \GL(2,\bbR)/ \{ \pm 1 \}$ 
on the space
$$ X = \PbbAo = \bbAo / \{ \pm 1 \} \; , $$ that is, $X$ is the quotient space
of $\bbR^2 - \{0 \}$ by the action of the center $\{ \E_{2}, -\E_{2}\}$ of
$\SL(2,\bbR)$. The natural map $$ (\bbAo, \GL(2,\bbR)) \; \lra  \; 
 ( \PbbAo, \PGL(2,\bbR))$$ is a covering of geometries in the
sense of Definition \ref{def:subgeometry}.
By Lemma \ref{lem:covering_geoms}, the induced map 
on deformation spaces 
\begin{equation}  \label{eq:indmap_p}
\Def(T^2, \bbAo) \; \lra \;  \Def(T^2, \PbbAo) 
\end{equation}
is a homeomorphism. \\

We claim that the \emph{holonomy 
for the deformation space $ \Def(T^2, \PbbAo)$ is \emph{not} 
a local homeomorphism}.
For this we recall first the $ (\bbAo, \GL(2,\bbR))$-manifolds  
$\cH_{\lambda, {\pi \over 2},k}$ constructed in Example \ref{ex:finquothopftori}  (finite quotients  of Hopf tori).
Then we observe:  \index{holonomy map!is not a local homeomorphism} 

\begin{proposition}   \label{prop:branchedhol}
The holonomy map %  for the deformation space 
$$  \Def(T^2, \PbbAo) \, \stackrel{hol}{\lra} \, 
\Hom(\bbZ^2, \PGL(2,\bbR))/ \PGL(2,\bbR)$$
is a twofold branched covering near the image 
of a homogeneous flat affine torus $\cH_{\lambda, {\pi \over 2},k}$
under the map \eqref{eq:indmap_p}.  \end{proposition}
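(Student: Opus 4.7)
The plan is to reduce the question to a local statement about the natural projection on character varieties, and then to adapt the branching analysis of Example~\ref{ex:CPGL} to pairs of commuting matrices.

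\textbf{Reduction.} The subgeometry $(\bbAo,\GL(2,\bbR)) \to (\PbbAo,\PGL(2,\bbR))$ is a covering of geometries (Definition~\ref{def:subgeometry}) with deck group $\{\pm I\}$, so by Lemma~\ref{lem:covering_geoms} the map~\eqref{eq:indmap_p} on deformation spaces is a homeomorphism. Combined with Corollary~\ref{cor:bbaotop} and the functorial diagram~\eqref{eq:subgeometry1}, the holonomy map for $\Def(T^2,\PbbAo)$ is identified, up to local homeomorphism, with the natural projection
$$\pi\colon\Hom(\bbZ^2,\GL^+(2,\bbR))/\GL(2,\bbR)\,\longrightarrow\,\Hom(\bbZ^2,\PGL(2,\bbR))/\PGL(2,\bbR).$$
It therefore suffices to show that $\pi$ is a twofold branched cover near the $\GL(2,\bbR)$-class of the holonomy $\rho_0$ of $\cH_{\lambda,\pi/2,k}$.

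\textbf{Form of $\rho_0$ and local slice.} Unwinding the construction of $\cH_{\lambda,\pi/2,k}$ in Example~\ref{ex:finquothopftori} by describing the deck group of its universal cover $\bbR^2$ in log-polar coordinates, one checks that, for a suitable choice of generators of $\bbZ^2$, $\rho_0(e_1) = \lambda I$ is a scalar dilation and $\rho_0(e_2) = R_{\pi/2}$. The centralizer of $R_{\pi/2}$ in $\GL(2,\bbR)$ is $\GL(1,\bbC)\cong\bbC^*$, and its normalizer is $\GL(1,\bbC)\rtimes\langle S\rangle$, where $S=\mathrm{diag}(1,-1)$ acts on $\GL(1,\bbC)$ by complex conjugation. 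Since $\lambda I$ is central, a local slice through $\rho_0$ for the $\GL(2,\bbR)$-conjugation action consists of pairs $(z_1,z_2)\in\bbC^*\times\bbC^*$ near $(\lambda,i)$, modulo the action of $S$ sending $(z_1,z_2)\mapsto(\bar z_1,\bar z_2)$. Since $i\ne -i$, $S$ acts freely near $(\lambda,i)$, and the quotient is a smooth four-dimensional manifold, consistent with the local homeomorphism statement of Corollary~\ref{cor:bbaotop}.

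\textbf{Branching analysis.} The $\PGL(2,\bbR)$-conjugation action augments $S$ by the freedom to multiply each $z_j$ independently by $-1$ (from $-I\in\ker(\GL\to\PGL)$ applied to each generator). Of the resulting group elements, exactly one nontrivial element stabilises $(\lambda,i)$: the composition of $S$ with the sign change on $z_2$, which maps $(z_1,z_2)\mapsto(\bar z_1,-\bar z_2)$. Writing $z_1=\lambda+w_1$, $z_2=i+w_2$ with $w_j=x_j+iy_j$, this stabiliser acts on the slice by
$$(x_1,y_1,x_2,y_2)\,\longmapsto\,(x_1,-y_1,-x_2,y_2),$$
a $\bbZ/2$-action whose $+1$- and $-1$-eigenspaces are each two-dimensional. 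Locally near $[\rho_0]$, $\pi$ is therefore the quotient by this $\bbZ/2$—precisely the local model of a twofold branched cover, ramified along the two-dimensional fixed locus (and exhibiting the corresponding singular stratum in the $\PGL$-character variety).

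\textbf{Expected obstacle.} The main subtlety lies in verifying that no additional elements produce further identifications near $[\rho_0]$. Sign changes on $z_1$ alone, or on $z_2$ alone (without $S$), send $(\lambda,i)$ to $(-\lambda,i)$ or $(\lambda,-i)$; the latter lies already in the $\GL$-orbit of $(\lambda,i)$ via $S$, whereas the former represents a \emph{different} $\GL$-class $[(-\lambda I,R_{\pi/2})]$ that projects to the same $\PGL$-class as $[\rho_0]$ but sits in a disjoint neighborhood of $\Hom(\bbZ^2,\GL^+(2,\bbR))/\GL(2,\bbR)$. Thus the global $\pi$-fiber over $[\bar\rho_0]$ consists of two $\GL$-classes, only one of which lies in the neighborhood of $[\rho_0]$ under consideration; this is precisely what is required for a twofold branched covering, but the argument hinges on ruling out any subtler identification between these two components in a small neighborhood of the branch point.
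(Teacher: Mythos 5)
Your proof is correct, and its reduction step coincides with the paper's: both use Lemma \ref{lem:covering_geoms}, Corollary \ref{cor:bbaotop} and the commutative diagram \eqref{eq:subgeometry1} to reduce the claim to showing that the natural projection $\Hom(\bbZ^2,\GL^+(2,\bbR))/\GL(2,\bbR)\to\Hom(\bbZ^2,\PGL(2,\bbR))/\PGL(2,\bbR)$ is a twofold branched cover near the class of $(\lambda\,\E_{2},\R_{\pi/2})$. Where you differ is in how that last point is established. The paper simply invokes Example \ref{ex:CPGL}, which exhibits the twofold branching for the map on \emph{single} conjugacy classes $\cC\,\widetilde{\GL}(2,\bbR)\to\cC\,\PGL(2,\bbR)$ at the class of a rotation by $\pi/2$, and leaves implicit the passage from one generator to a commuting pair (the $\kappa\times\kappa$ formalism of Proposition \ref{prop:ind_cov2}). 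You instead compute an explicit local slice: nearby commuting pairs sit in $\bbC^{*}\times\bbC^{*}$ modulo the reflection $S$, and among the eight elements of $\langle S\rangle\times\{\pm1\}^{2}$ exactly one nontrivial one, $(z_1,z_2)\mapsto(\bar z_1,-\bar z_2)$, fixes $(\lambda,i)$. This buys the precise local model $(x_1,y_1,x_2,y_2)\mapsto(x_1,-y_1,-x_2,y_2)$ with its two-dimensional branch locus, and it settles affirmatively the bookkeeping that the paper leaves to the reader --- in particular that the dilation generator contributes no further identifications, since the classes involving $-\lambda\,\E_{2}$ lie in disjoint neighbourhoods. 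Your closing ``expected obstacle'' is therefore already disposed of by your own stabiliser computation; the paper's proof is shorter but strictly less explicit at exactly this point.
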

\begin{proof}  The commutative diagram \eqref{eq:subgeometry1} for the subgeometry takes the form
\begin{align*} % \label{eq:subgeometry2}
 \xymatrix{
\; \; \Def(T^2, \bbAo) \; \; \ar[d]_{\approx} \ar[r]^(0.35){hol}%_(0.35){\approx}   
& \; \;\Hom(\bbZ^2, \GL^+\!(2,\bbR))/ \GL\! (2,\bbR) \; \; \ar[d]  \\
\; \; \Def(T^2, \PbbAo) \; \; \ar[r]^(0.35){hol}  &  \; \; 
 \Hom(\bbZ^2, \PGL(2,\bbR))/  \PGL(2,\bbR)  \,  .}  
\end{align*}
Note that, by Corollary \ref{cor:bbaotop}, the top horizontal map
is a local homeomorphism.  Furthermore, by Example \ref{ex:CPGL},
the right vertical map is a twofold branched covering near the
holonomy homomorphism of every flat affine 
torus $\cH_{\lambda, {\pi \over 2},k}$. 
We deduce that the bottom
map $hol$ for $ \Def(T^2, \PbbAo) $ 
is locally a twofold branched covering at the
images of $\cH_{\lambda, {\pi \over 2},k}$.
\end{proof}

\end{appendix}

\printindex 
\end{document}